\documentclass{paper}
\usepackage{fullpage}

\usepackage{amsmath,amsfonts}
\usepackage{algorithmic}
\usepackage{algorithm}
\usepackage{array}
\usepackage[caption=false,font=normalsize,labelfont=sf,textfont=sf]{subfig}
\usepackage{textcomp}
\usepackage{stfloats}
\usepackage{url}
\usepackage{verbatim}
\usepackage{graphicx}

\usepackage[numbers,sort&compress]{natbib}

\usepackage{amssymb}
\usepackage{mathrsfs}
\usepackage{dsfont}
\usepackage{bbm}

\usepackage{multirow}
\usepackage{mathtools}
\mathtoolsset{showonlyrefs}

\usepackage{hyperref}

\usepackage{amsthm}
\makeatletter
\renewenvironment{proof}[1][\proofname]{%
    \par
    \pushQED{\qed}%
    \normalfont
    \topsep6\p@\@plus6\p@\relax
    \trivlist
    \item[\hskip\labelsep\bfseries\sffamily #1\@addpunct{.}]%
    }{%
        \popQED\endtrivlist\@endpefalse
    }
    \makeatother

    \newtheoremstyle
    {mystyle}
    {6pt}
    {6pt}
    {\itshape}
    {}
    {\sffamily\bfseries}
    {.}
    {0.5em}
    {}

    \theoremstyle{mystyle}

    \newtheorem{theorem}{Theorem}
    \newtheorem{lemma}[theorem]{Lemma}
    \newtheorem{proposition}[theorem]{Proposition}
    \newtheorem{corollary}[theorem]{Corollary}

    \theoremstyle{definition}

    \usepackage{xcolor}

    \DeclareMathOperator{\unif}{Unif}

    \newcommand{\ind}{\mathbbm{1}}
    \newcommand\simiid{\overset{iid}{\sim}}
    \newcommand\nb{\mathbf{NB}}
    \newcommand{\dist}{\mathsf{d}}
    \newcommand{\supp}{\operatorname{supp}}

    \def\sA{\mathcal{A}}

    \def\sF{\mathcal{F}}

    \def\sM{\mathcal{M}}
    
    \def\sP{\mathcal{P}}

    \def\sS{\mathcal{S}}

    \def\sX{\mathcal{X}}
    \def\sY{\mathcal{Y}}

    \def\sMexp{\mathcal{M}_\mathrm{exp}}

    \def\tO{\widetilde{O}}

    \newcommand{\R}{\mathbb{R}}
    \newcommand{\C}{\mathbb{C}}
    \def\N{\mathbb{N}}

    \def\cL{\mathscr{L}}
    \def\law{\mathcal{L}}
    \def\schwartz{\mathfrak{S}}
    \def\fourier{\mathscr{F}}

    \def\spn{\operatorname{span}}
    \def\projtimes{\widehat{\otimes}_\pi}
    \def\injtimes{\widehat{\otimes}_\varepsilon}
    \def\ball{\mathbb{B}}
    \def\sphere{\mathbb{S}}

    \newcommand{\towstar}{\xrightarrow{\mathrm{w}^*}}

    \def\sil{V}
    \def\silset{\mathcal{S}}
    \def\rankmeas{\mathcal{R}}
    
    \newcommand\numberthis{\addtocounter{equation}{1}\tag{\theequation}}

\begin{document}

    \title{The Silhouette Operator: Identifiability of Low-Rank Measures from One-Dimensional Projections}

    \author{Robert A. Vandermeulen}

    \maketitle
    \begin{abstract}
        Structured recovery phenomena, such as restricted isometry properties in compressed sensing, have shown that high-dimensional objects can often be reconstructed from remarkably low-dimensional linear measurements.
        This work develops an analogous recovery framework for low-rank signed measures on \(\R^2\), defined here as measures that can be expressed as finite sums of product measures with one-dimensional factors.
        The framework is based on linear operators, termed \emph{silhouette operators}, that map a measure to a fixed finite collection of one-dimensional linear pushforwards.
        The main results show that a suitably chosen collection of \(2k\) projected marginals suffices to identify every compactly supported rank-\(\le k\) signed measure, that this number is optimal, and that the projection directions cannot be chosen arbitrarily.
        The framework is also extended to higher-dimensional sums of product measures by establishing sufficient conditions under which collections of pairwise marginals identify the full model.
        Building on this framework, a computationally efficient estimator, termed \emph{silhouette mixture estimation} (SME), is introduced for constructing a low-rank empirical measure from data by matching its one-dimensional projected marginals to the corresponding empirical marginals in Wasserstein distance.
        When combined with one-dimensional density estimators, SME yields an efficient nonparametric density estimator that performs strongly relative to a range of parametric, nonparametric, and deep-learning baselines in settings of moderate dimension and sample size.
    \end{abstract}
    \begin{keywords}
        Cramér--Wold theorem,
        Identifiability,
        Low-rank measures,
        Mixture-of-products models,
        Multiview models,
        Nonparametric density estimation,
        Nonparametric mixture models,
        Restricted isometry property,
        Sliced Wasserstein distance,
        Wasserstein distance
    \end{keywords}
    \begingroup
    \renewcommand{\thefootnote}{}
    \begin{NoHyper}
    \footnotetext{\texttt{rvdm@umich.edu}}
    \end{NoHyper}
    \endgroup
    \section{Introduction}

    High-dimensional problems have become a central focus of modern statistics and machine learning. A perennial challenge in this setting is the \emph{curse of dimensionality}: statistical and computational difficulty often grows rapidly with dimension. Numerous techniques have been proposed to mitigate these difficulties. Among the most widely used and theoretically grounded techniques are those that exploit sparsity in vectors and low-rank structure in matrices or tensors. Within these frameworks, the \emph{restricted isometry property} (RIP) \cite{candes05,recht10} and the closely related notion of restricted strong convexity \cite{negahban11} play central roles. For matrices, an RIP operator \(T : \R^{m\times n} \to \R^\ell\) (with \(\ell\) typically on the order of \(k(m+n)\)) is a linear operator that approximately preserves the geometry of the space of rank-\(\le k\) matrices. Injectivity of \(T\) on rank-\(\le k\) matrices is equivalent, after appropriately rescaling \(T\), to matrix-RIP of order \(2k\); see Appendix \ref{appx:rip-rsc-inj-equiv}. Although the existence of such an embedding is consistent with the intrinsic dimensionality \(\Theta(k(m+n))\) of the model class, it is noteworthy that the embedding can be realized by a linear map.

    This work establishes an analogous phenomenon for a structured class of measures on Euclidean space. In this work, a finite signed measure \(\mu\) on \(\R^2\) is said to have \emph{rank at most \(k\)} if it admits a representation of the form
    \begin{equation}\label{eqn:basic-model-measures-d=2} 
        \mu = \sum_{i=1}^k \mu_{i,1} \times \mu_{i,2},
    \end{equation} 
    where each \(\mu_{i,j}\) is a finite signed measure on \(\R\), and \(\mu_{i,1}\times\mu_{i,2}\) denotes their product measure on \(\R^2\). This notion is directly analogous to matrix rank. In particular, a measure has rank exactly \(k\) if and only if \(k\) is the smallest integer for which it admits a representation of the form \eqref{eqn:basic-model-measures-d=2}. Equivalently, it has rank exactly \(k\) if and only if it admits such a representation for which \(\mu_{1,1},\ldots,\mu_{k,1}\) and \(\mu_{1,2},\ldots,\mu_{k,2}\) are each linearly independent (Proposition~\ref{prop:rank-characterization}).

    The core structural contribution of this work, established in Theorems~\ref{thm:main} and \ref{thm:weakstar-homeo}, is that a broad class of rank-\(\le k\) measures, including all compactly supported and, more generally, all sub-Gaussian measures, can be uniquely identified from linear pushforwards to \(\R\) along a suitably chosen collection of \(2k\) vectors \(v_1,\ldots,v_{2k}\in\R^2\). Given such a collection, the map sending a measure to the corresponding tuple of projected marginals will be referred to as a \emph{silhouette operator}, and the resulting tuple as the \emph{silhouette} of the measure. For probability measures, the result says that the joint distribution \(\mu\) of \([X,Y]^\top\) is uniquely determined by the distributions of \(2k\) suitably chosen linear combinations of \(X\) and \(Y\). Writing \(\law\) for the law of a random variable, these projected distributions are
    \[
        \law(\langle v_1,[X,Y]^\top\rangle),
        \ldots,
        \law(\langle v_{2k},[X,Y]^\top\rangle).
    \]
    Figure~\ref{fig:silhouette-illustration} illustrates this construction for \(k=2\).

    This number of projections is optimal: Theorem~\ref{thm:2k-necessary} shows that no collection of \(2k-1\) projection directions identifies all rank-\(\le k\) signed measures. Even with \(2k\) projections, the directions cannot be chosen arbitrarily: Proposition~\ref{prop:2k-geometry-necessary} shows that certain symmetric configurations fail to provide identifiability. Moreover, the silhouette operator is a weak\(^*\) homeomorphic embedding on the set of rank-\(\le k\) probability measures supported in a fixed compact set, and more generally on any uniformly total-variation-bounded set of rank-\(\le k\) signed measures supported in that compact set. 

    \begin{figure}[!t]
        \centering
        \includegraphics[width=0.9\textwidth]{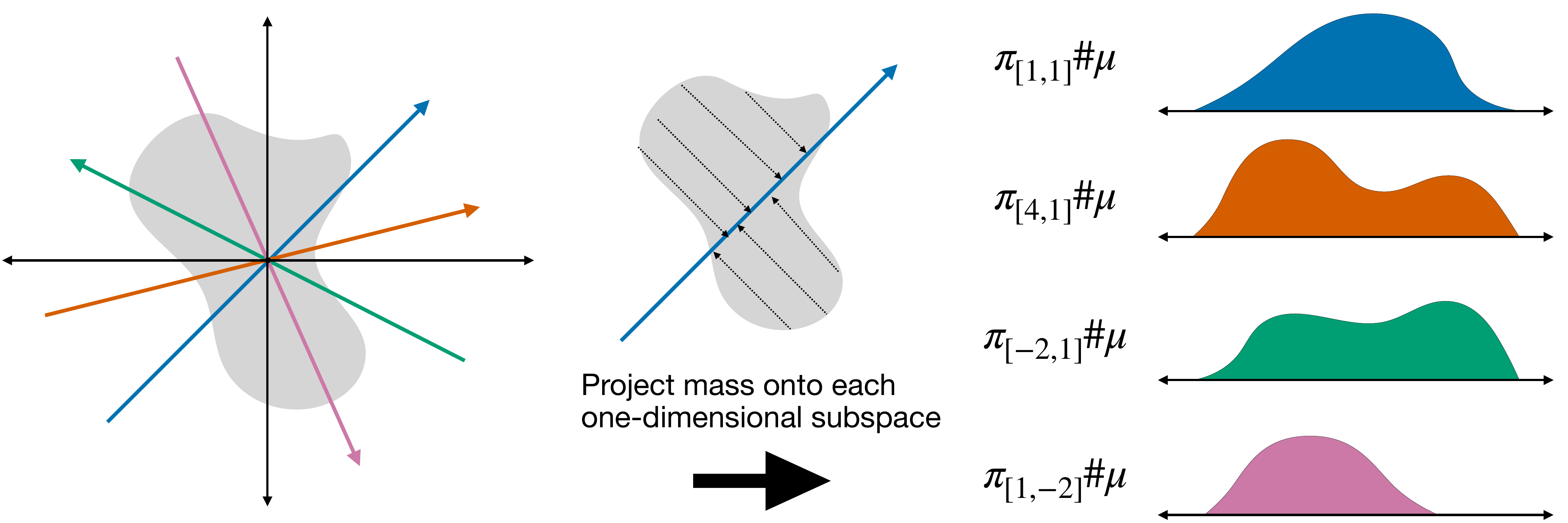}
        \caption{Illustration of a silhouette operator for \(k=2\), corresponding to \(a=-2\) and \(b_1=b_2=1\) in Theorem~\ref{thm:main}. A measure \(\mu\) on \(\R^2\) is mapped to its one-dimensional pushforwards along the resulting four projection directions. When \(\mu\) has rank at most \(2\), these four projected measures uniquely determine \(\mu\).}
        \label{fig:silhouette-illustration}
    \end{figure}

    These results provide a mathematically principled basis for exploiting the statistical and algorithmic advantages of one-dimensional methods in multi-dimensional settings.
    For example, convergence in Wasserstein distance of the projected measures implies convergence in Wasserstein distance on $\R^2$. 
    Projection-based approaches of this type underlie the sliced Wasserstein distance and related methods \cite{bonnotte13,bonneel15,nadjahi20,nguyen21distributional}; this work shows that a finite collection of such projections preserves the full weak$^*$ topology on a rich structured class of measures.

    The silhouette framework is extended to higher dimensions by identifying a measure from a collection of its two-dimensional marginals, each of which can be determined through its silhouette operator. For example, one natural approach to inferring the joint distribution of \((X_1,X_2,X_3)\) is to couple the silhouettes of \((X_1,X_2)\) and \((X_2,X_3)\) through their shared coordinate. To formalize this approach, Section~\ref{sec:extending-2d} establishes measure-theoretic identifiability results for models of the form
    \begin{equation}\label{eqn:basic-model-measures-d=general}
        \mu = \sum_{i=1}^k w_i \prod_{j=1}^d \mu_{i,j},
    \end{equation}
    where each \(\mu_{i,j}\) is a probability measure and the weights \(w_1,\ldots,w_k\) are nonnegative and sum to one. Probability measures of this form will be referred to as \emph{mixture-of-products models} (also known in various settings as multiview models, nonparametric mixture models, or naive Bayes mixtures). Here \(\prod_{j=1}^d \mu_j\) denotes the usual product measure on the product \(\sigma\)-algebra. It is shown that these models can be identified from collections of two-dimensional marginals under assumptions adapted from standard identifiability conditions for matrix and tensor models (including linear independence and joint irreducibility \cite{scott13,blanchard14}, with the latter serving as a measure-theoretic analogue of the anchor-word assumption \cite{arora12}).

    In nonparametric density estimation on \(\R^d\), mixture-of-products models \eqref{eqn:basic-model-measures-d=general} are known to exhibit favorable statistical behavior under suitable regularity assumptions, particularly in high-dimensional regimes \cite{hall03,hall05,vandermeulen21,kwon21,amiridi22I,chhor25,lyu25}.
However, practical estimation of these models remains challenging in the continuous-density setting. Existing estimation methods include approaches based on matrix or tensor factorization. A natural example is low-rank factorization of a joint histogram, where the bin resolution must be selected and finer resolutions lead to larger factorization problems. Other approaches repeatedly estimate component densities during model fitting, which can also entail substantial computational cost \cite{benaglia09,vandermeulen25pmode}. These approaches are discussed further in Section~\ref{ssec:related-mixture-products}.

Building on the silhouette-based identifiability results above, this work develops a concrete algorithm, termed \emph{silhouette mixture estimation} (SME), for estimating low-rank empirical measures of the form \eqref{eqn:basic-model-measures-d=general}. SME separates fitting the empirical mixture structure from estimating component densities, so choices such as density-estimation bandwidths need not enter the mixture-fitting objective. Several downstream uses of the resulting representations are discussed, with density estimation being a particularly natural application.

SME is computationally efficient because its objective is built from univariate Wasserstein distances that can be evaluated rapidly and in parallel. Combined with standard univariate density estimators, the approach appears particularly effective in moderate-dimensional settings with limited sample sizes. Empirically, the resulting estimator performs strongly on benchmark datasets, outperforming a range of parametric, nonparametric, and deep-learning baselines.

    Taken together, the theoretical and algorithmic developments presented above establish a formal connection between sliced methods and nonparametric mixture models of the form \eqref{eqn:basic-model-measures-d=general}. The silhouette operator provides a tractable linear transformation from multivariate measures to collections of one-dimensional measures, enabling computation and estimation to be carried out in the projected space without sacrificing identifiability of the underlying low-rank measure. Although these results bear a close structural resemblance to phenomena in compressed sensing, the proofs are wholly independent of compressed-sensing arguments and use original techniques introduced in this work. 

    Section~\ref{sec:sharpness} establishes the sharpness of the projection count for identifiability and shows that, even with \(2k\) projections, the directions cannot be chosen arbitrarily. It further shows that the tail and boundedness assumptions cannot in general be omitted from the identifiability and homeomorphism results, respectively. Open questions and directions for further investigation are also identified.

    \section{Main Theorems}\label{sec:main-theorems}
    This section establishes the core injectivity and topological results for the silhouette operator on classes of low-rank measures in $\R^2$. These results form the foundation for the subsequent theoretical and algorithmic developments. Necessary notation and definitions are introduced first.

    \subsection{Notation and Definitions}\label{ssec:notation-definitions}
    For a natural number \(n\), let \([n] \triangleq \{1,\ldots,n\}\). For a measurable space \((\Omega,\mathcal{F})\), write \(\mathcal{M}(\Omega)\) for the Banach space of finite signed measures on \((\Omega,\mathcal{F})\), equipped with the total variation norm \(\|\mu\|\triangleq|\mu|(\Omega)\). When $\Omega$ is a topological space (e.g., $\R^d$), measures are understood to be finite Borel measures.
    Throughout, a measure \(\mu\) on \(\R^d\) is said to have a density \(f_\mu\) if
    \[
        \mu(A)=\int_A f_\mu(x)\,dx
    \]
    for every measurable set \(A\).
    If \(\mu\) is a measure on \(\Omega\) and \(f : \Omega \to \Omega'\) is a measurable function into another measurable space \((\Omega',\mathcal{F}')\), the pushforward of \(\mu\) under \(f\), denoted \(f\#\mu\), is defined by
    \[
        f\#\mu(A) \triangleq \mu\bigl(f^{-1}(A)\bigr), \qquad A \in \mathcal{F}'.
    \]
    For fixed \(f\), the map \(\mu\mapsto f\#\mu\) is linear.

    Denote by \(\sMexp(\R^d) \subset \mathcal{M}(\R^d)\) the class of measures whose Laplace transform exists and is finite on a neighborhood of the origin. Measures with compact support or sub-Gaussian tails lie in \(\sM_{\exp}(\mathbb{R}^d)\), so this class includes many standard nonparametric settings. A superscript \(+\), as in \(\sM^{+}(\R)\), denotes the cone of finite nonnegative (unsigned) measures. For a vector \(a = [a_1,\ldots,a_d]^{\top} \in \R^d\), define \(\pi_a(x) \triangleq \langle a,x\rangle\). When convenient, the same map will also be written as \(\pi_{[a_1,\ldots,a_d]}(x)\), suppressing the transpose in the subscript.

    \subsection{Injectivity of the Silhouette Operator}
    Theorem~\ref{thm:main} below gives an explicit family of silhouette operators that are injective on low-rank measures admitting representations whose component marginals belong to \(\sMexp(\R)\). The theorem is stated below in measure-theoretic form. A probabilistic interpretation in terms of the distributions of linear combinations of random variables is given immediately after Corollary~\ref{cor:main-multiview}.

    \begin{samepage}
        \begin{theorem}\label{thm:main}
            Let $k\in\mathbb{N}$ and let $a,b_1,b_2\in\R$ be nonzero with $|a|\neq 1$.
            Let $\mu,\mu'\in\sM(\R^2)$ admit representations
            \begin{equation}\label{eqn:main-target}
                \mu=\sum_{i=1}^k \mu_{i,1}\times\mu_{i,2},
                \qquad
                \mu'=\sum_{i=1}^k \mu'_{i,1}\times\mu'_{i,2},
            \end{equation}
            where $\mu_{i,j},\mu'_{i,j}\in\sMexp(\R)$ for all $i\in[k]$ and $j\in[2]$.
            If
            \begin{align}
                \pi_{[a^m b_1,b_2]}\#\mu &= \pi_{[a^m b_1,b_2]}\#\mu'
                                         && \text{for all } m\in\{1,\ldots,k\}, \label{eqn:silhoulette-vector1}\\
                \pi_{[b_1,a^m b_2]}\#\mu &= \pi_{[b_1,a^m b_2]}\#\mu'
                                         && \text{for all } m\in\{0,\ldots,k-1\}, \label{eqn:silhoulette-vector2}
            \end{align}
            then $\mu=\mu'$.
        \end{theorem}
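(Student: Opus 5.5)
The plan is to pass to the difference $\nu\triangleq\mu-\mu'$, translate the projection hypotheses into vanishing conditions on the Laplace transform of $\nu$, and reduce $\nu=0$ to a rank statement about a coefficient matrix. By \eqref{eqn:main-target}, $\nu=\sum_{i=1}^{2k}\nu_{i,1}\times\nu_{i,2}$ with every factor in $\sMexp(\R)$, so $\nu$ has rank at most $2k$.

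\textbf{Step 1 (Laplace transform and moments).} Since all factors lie in $\sMexp(\R)$, the transform
\[
F(s,t)\triangleq\int e^{sx+ty}\,d\nu(x,y)=\sum_{i=1}^{2k}\hat\nu_{i,1}(s)\,\hat\nu_{i,2}(t)
\]
is finite and real-analytic on a neighborhood of the origin. It determines $\nu$, so it suffices to show $F\equiv 0$ near $0$. The pushforward $\pi_{[\alpha,\beta]}\#\nu$ has Laplace transform $u\mapsto F(\alpha u,\beta u)$. Hence the hypotheses say that $F$ vanishes on the $2k$ lines through the origin spanned by $[a^mb_1,b_2]$ for $m\in[k]$ and by $[b_1,a^mb_2]$ for $m\in\{0,\ldots,k-1\}$.

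Expand $F$ in moments $c_{p,q}=\int x^py^q\,d\nu$ and set $N_{p,q}\triangleq c_{p,q}b_1^pb_2^q/(p!\,q!)$. For each $n$, put $h_n(z)\triangleq\sum_{p+q=n}N_{p,q}z^p$. Then the vanishing conditions become:
\begin{itemize}
\item $h_n(a^m)=0$ for $m\in[k]$;
\item $h_n(a^{-m})=0$ for $m\in\{0,\ldots,k-1\}$, using $z^{-n}h_n(z)$ with $z=a^{-m}$.
\end{itemize}
Because $a\neq0$ and $|a|\neq1$, the nodes $a^{-(k-1)},\ldots,a^{-1},1,a,\ldots,a^{k}$ are $2k$ distinct nonzero numbers. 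So every antidiagonal polynomial $h_n$ is divisible by $L(z)\triangleq\prod_{j}(z-z_j)$, a polynomial of degree $2k$ with nonzero roots.

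Meanwhile, the infinite matrix $N=(N_{p,q})_{p,q\ge0}$ has rank at most $2k$. Indeed, $c=\sum_i m(\nu_{i,1})\,m(\nu_{i,2})^\top$ with $m(\cdot)$ the moment sequence, and the diagonal rescaling preserves rank.

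\textbf{Step 2 (the key lemma).} The core claim is: a matrix $N$ of rank at most $2k$ all of whose antidiagonal polynomials are divisible by $L$ must vanish. The extremal example $N\leftrightarrow L(z)$ placed on a single antidiagonal is anti-triangular with $2k+1$ nonzero entries, hence has rank $2k+1$. This strongly suggests the lemma is sharp and true, and it matches Theorem~\ref{thm:2k-necessary}.

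To prove it, I would argue by contradiction. Let $n$ be the smallest index with $h_n\neq 0$; necessarily $n\ge 2k$, because a nonzero polynomial of degree at most $n<2k$ cannot have $2k$ roots. Then I would try to exhibit a $(2k+1)\times(2k+1)$ submatrix of $N$ that is triangular with nonzero diagonal. The zeros above antidiagonal $n$ give the triangular pattern. The difficulty is that the entries of $h_n$ strictly between its lowest and highest nonzero coefficients may vanish, so a naive triangular minor need not have a nonzero diagonal.

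\textbf{Step 3 (main obstacle and fallback).} I expect Step 2 to be the hardest part. If the direct minor argument fails, I would exploit the geometric structure of the nodes through dilations. Let $D_a(x,y)=(ax,y)$. The $k$ conditions $\pi_{[a^mb_1,b_2]}\#\nu=0$ say that $\pi_{[b_1,b_2]}\#\big((D_a^m)\#\nu\big)=0$, and similarly for the other family with $y$ dilated.

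This makes the condition on $F$ a statement about the functions $u\mapsto F(a^mb_1u,b_2u)$. Writing $F=\sum_i f_i\otimes g_i$, the vectors $(f_i(a^m s))_{m}$ and $(g_i(a^m t))_m$ form Vandermonde-like systems in the $2k$ dilation parameters. The aim is to show that the $2k$ linear constraints force the $2k\times 2k$ "dilation Gram" system to be nonsingular unless every term cancels. That would let me run an induction on $k$, peeling off one product component per pair of dilations (one $x$-dilation and one $y$-dilation).

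Either route ends with $N=0$, hence all moments of $\nu$ vanish. Then $F\equiv0$ near the origin, and since $\nu\in\sMexp(\R^2)$ its transform determines it, giving $\nu=0$, that is, $\mu=\mu'$.
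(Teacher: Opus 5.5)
Your Step 1 is sound. With $\nu=\mu-\mu'$, the rescaled moment matrix $N$ has rank at most $2k$, and each antidiagonal polynomial $h_n$ vanishes at the $2k$ distinct nodes $a^{j}$, $j\in\{-(k-1),\ldots,k\}$. So the theorem does reduce to your key lemma.

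\textbf{The gap.} The key lemma is never proved. Step 2 offers only a heuristic extremal example and then stops at exactly the obstruction that matters. Step 3 is too vague to evaluate: the ``dilation Gram system'' is not defined and no induction is actually set up.

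Moreover, the Step 2 sketch uses only that $h_n$ has $2k$ distinct nonzero roots, and in that generality the lemma is false. Take the symmetric directions $[\pm a,1]^\top$ of Proposition~\ref{prop:2k-geometry-necessary} with $k=1$, and let $\eta=\delta_{(0,0)}-\delta_{(1,a)}-\delta_{(1,-a)}+\delta_{(2,0)}$.
\begin{itemize}
\item $\eta$ has rank $2$ and is annihilated by both projections.
\item With $b_1=b_2=1$, one computes $h_0=h_1=0$ and $h_2(z)=z^2-a^2$.
\item So $N\neq 0$, yet its lowest nonzero antidiagonal has a vanishing middle coefficient, and the $3\times 3$ triangular minor is singular.
\end{itemize}
Any proof of the lemma must therefore use the specific structure of the nodes: they are consecutive powers of $a$ with $|a|\neq 1$.

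\textbf{How to close it.} The missing ingredient is short, and with it your route goes through.
\begin{itemize}
\item \emph{Nonconsecutive minor.} The triangular minor need not use consecutive indices. Let $p_0<\cdots<p_{2k}$ be any exponents at which $h_n$ has nonzero coefficients. Take the submatrix of $N$ with rows $p_0,\ldots,p_{2k}$ and columns $n-p_0,\ldots,n-p_{2k}$. Its entries above the diagonal have index sum $<n$, so they vanish, and its diagonal is nonzero. Hence it has rank $2k+1$.
\item \emph{Enough nonzero coefficients.} A nonzero $h(z)=\sum_{l=1}^{s}\gamma_l z^{p_l}$ vanishing at all $2k$ nodes must have $s\ge 2k+1$. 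Suppose $s\le 2k$ and set $j_0=-(k-1)$. Evaluating at the consecutive nodes $a^{j_0+i}$, $i=0,\ldots,s-1$, gives $\sum_l(\gamma_l a^{j_0p_l})(a^{p_l})^i=0$. This is a Vandermonde system in $y_l=a^{p_l}$, which are distinct because $|a|\neq 1$. Hence every $\gamma_l=0$.
\end{itemize}
Nodes and exponents trade places here precisely because the nodes are powers of $a$; this is what fails for $\pm a$. Together these give $N=0$, hence $\nu=0$ by Corollary~\ref{cor:mgf-equal}, and Step 3 becomes unnecessary.

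\textbf{Comparison with the paper.} The completed argument is a genuinely different route. The paper keeps $\mu$ and $\mu'$ separate and uses the rank-$\le k$ structure of each, together with Proposition~\ref{prop:full-rank}. This propagates equality to the infinitely many directions $[b_1,b_2a^m]^\top$. It then concludes with the countable Cram\'er--Wold theorem and treats the lower-rank case separately. Your version works with a single rank-$\le 2k$ difference and needs only uniqueness of the Laplace transform.
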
\end{samepage}
        Proofs for all results in this subsection can be found in Section~\ref{sec:proofs-injectivity}. The component marginals (e.g., \(\mu_{i,j}\)) may be the zero measure, so the theorem applies to measures of rank at most \(k\), not only those of rank exactly \(k\).
        When each component marginal \(\mu_{i,j}\) has a density \(f_{\mu_{i,j}}\), the density of \(\mu\) is
        \[
            f_\mu(x,y)=\sum_{i=1}^k
            f_{\mu_{i,1}}(x)f_{\mu_{i,2}}(y).
        \]

        When applying Theorem~\ref{thm:main} in the context of estimation, if \(\mu\) is a target measure with \(\mu_{i,j}\in\sMexp(\R)\) and \(\mu'\) is an estimator, the component marginals \(\mu'_{i,j}\) must also belong to \(\sMexp(\R)\) for equality of their silhouettes to imply \(\mu=\mu'\). The tail condition in the signed setting cannot be substantially weakened: Lemma~\ref{lem:proj-null} shows that the nullspace of any silhouette operator formed from finitely many projection directions contains a nonzero rank-one signed measure whose component marginals have smooth densities that decay faster than any polynomial. The required tail condition can nevertheless be ensured simply by working on a fixed compact interval, such as \([-1,1]\). In the nonnegative setting, however, no corresponding tail condition on the estimator is required: provided \(\mu_{i,j}\in\sMexp^+(\R)\), Corollary~\ref{cor:main-multiview} allows \(\mu'_{i,j}\) to be arbitrary elements of \(\sM^+(\R)\).
        \begin{corollary}\label{cor:main-multiview}
            The conclusion of Theorem~\ref{thm:main} remains valid under the assumptions
            \[
                \mu_{i,j}\in\sMexp^+(\R),
                \qquad
                \mu'_{i,j}\in\sM^+(\R),
            \]
            for all \(i\in[k]\) and \(j\in[2]\).
        \end{corollary}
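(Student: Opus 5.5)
The plan is to reduce Corollary~\ref{cor:main-multiview} to Theorem~\ref{thm:main}. It suffices to show that, when the silhouettes agree, every component marginal \(\mu'_{i,j}\) is forced into \(\sMexp^+(\R)\). The mechanism is that the projected marginals of \(\mu\) have exponential moments, and for nonnegative measures this transfers back to the factors.

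\textbf{Step 1 (the projections of \(\mu\) lie in \(\sMexp\)).} Take any direction \(v=[v_1,v_2]\) appearing in \eqref{eqn:silhoulette-vector1}--\eqref{eqn:silhoulette-vector2}. For each \(i\), the measure \(\pi_v\#(\mu_{i,1}\times\mu_{i,2})\) is the convolution of the dilations of \(\mu_{i,1}\) by \(v_1\) and of \(\mu_{i,2}\) by \(v_2\). Its Laplace transform at \(t\) is therefore
\[
L_{\mu_{i,1}}(tv_1)\,L_{\mu_{i,2}}(tv_2).
\]
This is finite for \(|t|\) small, so \(\pi_v\#\mu\in\sMexp^+(\R)\).

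\textbf{Step 2 (transfer to \(\mu'\) via positivity).} By hypothesis \(\pi_v\#\mu'=\pi_v\#\mu\). Hence
\[
\sum_i L_{\mu'_{i,1}}(tv_1)\,L_{\mu'_{i,2}}(tv_2)<\infty \quad\text{for } |t|<\delta,
\]
where all terms lie in \([0,\infty]\) by nonnegativity. So each product is finite. Discard any \(i\) with \(\mu'_{i,1}=0\) or \(\mu'_{i,2}=0\), since those summands vanish and may be replaced by zero measures, which lie in \(\sMexp\). For the remaining \(i\), both factors have positive mass.

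The key step, and the main obstacle, is to extract finiteness of each factor from finiteness of the product. Write \(X\sim\mu'_{i,1}\) and \(Y\sim\mu'_{i,2}\) after normalizing. If \(E[e^{s(v_1X+v_2Y)}]<\infty\) with \(X,Y\) independent, then by Fubini, for \(\mu'_{i,2}\)-a.e.\ \(y\),
\[
E[e^{sv_1X}]\,e^{sv_2y}<\infty .
\]
Since \(\mu'_{i,2}\neq0\), such a \(y\) exists, and therefore \(E[e^{sv_1X}]<\infty\). Using \(t=\pm s\) gives a two-sided neighborhood in the variable \(v_1t\). Since \(v_1\neq0\) (note \(a,b_1,b_2\neq0\)), \(L_{\mu'_{i,1}}\) is finite near \(0\). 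The same argument applied to \(v_2\neq0\) gives \(\mu'_{i,2}\in\sMexp\). A single direction suffices here, for instance \(m=0\) in \eqref{eqn:silhoulette-vector2}, i.e.\ \(v=[b_1,b_2]\).

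\textbf{Step 3 (conclude).} We now have \(\mu'_{i,j}\in\sMexp^+(\R)\subset\sMexp(\R)\) and \(\mu_{i,j}\in\sMexp(\R)\), and the silhouettes agree by assumption. Theorem~\ref{thm:main} therefore applies directly and yields \(\mu=\mu'\).
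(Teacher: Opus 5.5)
Your proposal is correct and follows essentially the same route as the paper. Positivity isolates a single summand $\pi_c\#(\mu'_{i,1}\times\mu'_{i,2})$, whose Laplace transform factors in $[0,\infty]$ by Tonelli, with the nonzero factor contributing a strictly positive term; finiteness of $\cL(\pi_c\#\mu)$ therefore forces each nonzero $\mu'_{i,j}$ into $\sMexp^+(\R)$, after which zero terms are replaced by $0\times 0$ and Theorem~\ref{thm:main} applies. The only difference is presentational: the paper argues by contradiction and packages the factorization step as Lemma~\ref{lem:nonexp-convolution}, whereas you run the contrapositive directly.
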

        For a probabilistic interpretation, suppose that \(\mu\) and \(\mu'\) are probability measures satisfying the assumptions of Corollary~\ref{cor:main-multiview}, and let
        \[
            [X,Y]^\top\sim\mu,
            \qquad
            [X',Y']^\top\sim\mu'.
        \]
        Let \(v_1,\ldots,v_{2k}\) denote the \(2k\) directions in \eqref{eqn:silhoulette-vector1}--\eqref{eqn:silhoulette-vector2}. Then \(\mu=\mu'\) if and only if, for every \(i\in[2k]\), the inner products
        \[
            \langle v_i,[X,Y]^\top\rangle
            \quad\text{and}\quad
            \langle v_i,[X',Y']^\top\rangle
        \]
        have the same distribution.

        The projection count \(2k\) in Theorem~\ref{thm:main} and Corollary~\ref{cor:main-multiview} is optimal. Theorem~\ref{thm:2k-necessary} shows that, for any collection of \(2k-1\) projection directions, there exist two distinct probability measures of mixture-of-products form, each with at most \(k\) components, whose projections agree along every prescribed direction, even when all component marginals have smooth, compactly supported densities. Proposition~\ref{prop:2k-geometry-necessary} complements this projection-count lower bound by showing that the geometry of the directions also matters. For any distinct \(a_1,\ldots,a_k>0\), the \(2k\) symmetrically paired directions \([a_r,1]^\top\) and \([-a_r,1]^\top\), \(r\in[k]\), fail to identify mixture-of-products models with as few as \(\lfloor k/2\rfloor+1\) components. Identifiability also depends on the tail behavior of the component marginals when \(k\ge2\). Indeed, Corollary~\ref{cor:proj-null} shows that, for any finite collection of projection directions, there exist two distinct probability measures of mixture-of-products form, each with two components, whose projections agree along every prescribed direction, even though all component marginals have densities that decay faster than any polynomial.

        If desired, potential tail issues can be mitigated by applying a coordinate-wise transformation that maps \(\R\) into a compact set, such as \(\arctan\). More generally, in \(d\) dimensions, let \(f_1,\ldots,f_d:\R\to\R\) and define \(f:\R^d\to\R^d\) by
        \[
            f(x) = (f_1(x_1), \ldots, f_d(x_d)).
        \]
        Then
        \begin{align}
            f\#\!\left(\sum_{i=1}^k \prod_{j=1}^d \mu_{i,j}\right)
    &= \sum_{i=1}^k f\#\!\left(\prod_{j=1}^d \mu_{i,j}\right) \\
    &= \sum_{i=1}^k \prod_{j=1}^d f_j\# \mu_{i,j},
    \label{eqn:compressed-low-rank}
        \end{align}
        so coordinate-wise transformations preserve the sum-of-products structure. In the probabilistic setting, one may therefore use \(f\) to map the data into a compact set, estimate the transformed model in \eqref{eqn:compressed-low-rank}, and, when appropriate, map back via \(f^{-1}\). Notably, when \(f\) is invertible, the linear projections defining the silhouette operator in Theorem~\ref{thm:main} may be replaced by generally nonlinear maps of the form \(\pi_v\circ f\).

        \subsection{Topological and Geometric Properties of the Silhouette Operator}
        The preceding results establish injectivity of the silhouette operator on low-rank measures. Beyond injectivity, it is natural to ask whether, and under what additional conditions, convergence of silhouettes guarantees convergence of the underlying measures. This subsection examines the topological and geometric structure induced by the silhouette operator. Proofs of all results in this subsection can be found in Section~\ref{sec:proofs-core}.

        For $k\in\N$, let $\silset_{2k}$ denote the set of linear operators $\sil:\sM(\R^2)\to\sM(\R)^{2k}$ obtained from Theorem~\ref{thm:main} using the $2k$ projection directions in \eqref{eqn:silhoulette-vector1}–\eqref{eqn:silhoulette-vector2}. Thus, each choice of admissible parameters $(a,b_1,b_2)$ determines an element $\sil\in\silset_{2k}$. For measurable spaces $\Omega_1,\Omega_2$, with $\Omega_1\times\Omega_2$ equipped with the product $\sigma$-algebra, define
        \[
            \rankmeas_{\le k}(\Omega_1,\Omega_2)
            \subseteq
            \sM(\Omega_1\times\Omega_2)
        \]
        to be the set of finite signed measures that can be written in the form
        \[
            \rankmeas_{\le k}(\Omega_1,\Omega_2)
            =
            \left\{
                \sum_{i=1}^{k} \mu_{i,1}\times \mu_{i,2}
                :\;
                \mu_{i,j}\in\sM(\Omega_j)
            \right\}.
        \]    
        This is the class of measures of \emph{rank at most \(k\)}. A measure is said to have \emph{rank exactly \(k\)} if \(k\) is the smallest integer for which it belongs to \(\rankmeas_{\le k}(\Omega_1,\Omega_2)\). By Proposition~\ref{prop:rank-characterization}, this is equivalent to admitting a representation with \(k\) terms in which \(\mu_{1,1},\ldots,\mu_{k,1}\) and \(\mu_{1,2},\ldots,\mu_{k,2}\) are each linearly independent.

        To study continuity properties of the silhouette operator, it is necessary to specify the ambient topologies. For a measurable space \(\Omega\), the product space \(\sM(\Omega)^\ell\) is equipped with the standard product topology. When \(\sM(\Omega)\) carries its weak\(^*\) topology, the induced product topology on \(\sM(\Omega)^\ell\) will also be referred to as the weak\(^*\) topology. This convention is natural and consistent with the identification of duals of finite direct sums; see \cite[Proposition~5.5]{fabian01}. When each component is instead equipped with the total variation norm, the induced product topology will be referred to as the norm topology.

        The next result shows that, for compact \(K_1,K_2\subset\R\), silhouette operators act as weak\(^*\) homeomorphisms on total-variation-bounded subsets of \(\rankmeas_{\le k}(K_1,K_2)\) and, in the language of compressed sensing, can be viewed as ``restricted weak\(^*\) homeomorphisms.'' For a normed space \(X\), let \(\ball_X\) denote its closed unit ball.
        \begin{theorem}\label{thm:weakstar-homeo}
            Let \(K_1,K_2\subset\R\) be compact, let \(k\in\N\), let
            \(\sil\in\silset_{2k}\), and fix \(r>0\). Then the restriction of \(\sil\) to
            \[
                \rankmeas_{\le k}(K_1,K_2)
                \cap r\ball_{\sM(K_1\times K_2)}
            \]
            is a weak\(^*\)--weak\(^*\) homeomorphism onto its image.
        \end{theorem}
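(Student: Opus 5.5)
The plan is to use compactness. Let \(S \triangleq \rankmeas_{\le k}(K_1,K_2)\cap r\ball_{\sM(K_1\times K_2)}\). Since \(K_1\times K_2\) is compact, \(\sM(K_1\times K_2)\) is the dual of \(C(K_1\times K_2)\). By Banach--Alaoglu, \(r\ball\) is weak\(^*\) compact, and because \(C(K_1\times K_2)\) is separable the weak\(^*\) topology on \(r\ball\) is metrizable. The argument then has three steps: \(\sil\) restricted to \(S\) is continuous, \(\sil\) restricted to \(S\) is injective, and \(S\) is weak\(^*\) closed, hence compact. Given these, a continuous injection from a compact space into a Hausdorff space (the weak\(^*\) topology on \(\sM(\R)^{2k}\) is Hausdorff) is a homeomorphism onto its image.

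Continuity and injectivity are routine. For continuity, each \(\pi_v\) maps \(K_1\times K_2\) into a compact interval \(J_v\). For \(g\in C_b(\R)\), or \(g\in C(J_v)\), we have \(\int g\,d(\pi_v\#\mu)=\int g\circ\pi_v\,d\mu\) with \(g\circ\pi_v\in C(K_1\times K_2)\), so \(\mu\mapsto\pi_v\#\mu\) is weak\(^*\)--weak\(^*\) continuous. This holds whether the target topology is tested against \(C_0(\R)\) or against \(C(J_v)\), since the image measures are supported in the fixed compact \(J_v\). For injectivity, measures supported on compact \(K_j\) lie in \(\sMexp(\R)\), so Theorem~\ref{thm:main} applies directly. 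One detail needs checking here: a measure in \(S\) has a representation with factors \(\mu_{i,j}\in\sM(K_j)\), but these factors need not themselves be bounded in norm.

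The main obstacle is closedness of \(S\) in the weak\(^*\) topology, because the factor measures in the rank-\(k\) representations may have unbounded norms even when \(\|\mu\|\le r\). (Cancellation among terms is the issue.) Since the topology on \(r\ball\) is metrizable, it suffices to work with sequences. Take \(\mu^{(n)}\in S\) with \(\mu^{(n)}\towstar\mu\), and show \(\mu\) has rank at most \(k\).

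The plan is to characterize rank via the bilinear form \(B_\mu(f,g)=\int f(x)g(y)\,d\mu(x,y)\) on \(C(K_1)\times C(K_2)\). The claim is that \(\mu\) has rank at most \(k\) if and only if every \((k+1)\times(k+1)\) matrix \([B_\mu(f_p,g_q)]_{p,q}\) is singular. These determinants are continuous in \(\mu\) under weak\(^*\) convergence, since \(f_p\otimes g_q\in C(K_1\times K_2)\), so the condition passes to the limit. For the converse direction, if all such minors vanish, then the operator \(T_\mu:C(K_2)\to \sM(K_1)\), \(g\mapsto \int g(y)\,\mu(\cdot,dy)\), has range of dimension at most \(k\), tested against \(C(K_1)\). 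Choose a basis \(\nu_1,\dots,\nu_{k'}\) of the range, with \(k'\le k\), and write \(T_\mu g=\sum_i \ell_i(g)\nu_i\) with \(\ell_i\) bounded linear functionals on \(C(K_2)\), i.e.\ measures \(\lambda_i\). Then \(\mu\) and \(\sum_i \nu_i\times\lambda_i\) agree on all \(f\otimes g\). By Stone--Weierstrass, finite sums of such products are dense in \(C(K_1\times K_2)\), so \(\mu=\sum_i\nu_i\times\lambda_i\in\rankmeas_{\le k}\). This step likely parallels Proposition~\ref{prop:rank-characterization}. Weak\(^*\) lower semicontinuity of the norm keeps \(\mu\) in \(r\ball\).
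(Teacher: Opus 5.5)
Your proposal is correct, and its overall architecture matches the paper's proof. You get weak\(^*\) compactness of \(\rankmeas_{\le k}(K_1,K_2)\cap r\ball_{\sM(K_1\times K_2)}\) from Banach--Alaoglu plus weak\(^*\) closedness of the rank class. Continuity of each pushforward comes from \(g\mapsto g\circ\pi_v\), and injectivity comes from Theorem~\ref{thm:main}. Finally, a continuous injection from a compact space into a Hausdorff space is a homeomorphism onto its image.

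The difference lies in how you prove closedness. The paper (Theorem~\ref{thm:measure-tensor-w*closed}) identifies \(C(K_1)\injtimes C(K_2)\) with \(C(K_1\times K_2)\) and dualizes. It then invokes the abstract Theorem~\ref{thm:rank-closed}, which gives weak\(^*\) closedness of rank-\(\le k\) tensors in \(X^*\projtimes Y^*\) and assumes the approximation property of \(\sM(K_1)\).

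You instead work directly with measures, in four steps:
the minors \(\det\bigl[\int f_p(x)g_q(y)\,d\mu(x,y)\bigr]\) are weak\(^*\) continuous;
if they all vanish, \(T_\mu:C(K_2)\to\sM(K_1)\) has range of dimension at most \(k\), because \(C(K_1)\) separates \(\sM(K_1)\);
Riesz representation turns the coordinate functionals into measures \(\lambda_i\);
Stone--Weierstrass then gives \(\mu=\sum_i\nu_i\times\lambda_i\).
This is the same determinant and finite-rank-operator mechanism used inside the proof of Theorem~\ref{thm:rank-closed}. You strip away tensor norms, nuclear bilinear forms, and the approximation property. The paper uses that property only to make the canonical map \(J\) injective, which the measure-level conclusion does not actually need.

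Your route is shorter and self-contained. The paper's route yields a general Banach-space statement of independent interest, together with its corollary on weak closedness in \(X\projtimes Y\).

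Two small remarks. First, the metrizability and sequence reduction are unnecessary, since the vanishing-minor criterion is stable under nets. Second, the ``detail'' you flag about unbounded factors does not affect injectivity. Theorem~\ref{thm:main} is applied to one pair of measures at a time, and each factor is a finite measure supported in a compact set, hence lies in \(\sMexp(\R)\). Unbounded factors matter only for compactness, and your minor criterion handles that without any bound on the factors.
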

        The boundedness assumption cannot in general be omitted. Theorem~\ref{thm:boundedness-necessary} shows that, for any finite collection of projection directions, the associated map is not a weak\(^*\)--weak\(^*\) homeomorphic embedding of \(\rankmeas_{\le1}([0,1],[0,1])\), regardless of how many directions are included. This remains true even when the domain is restricted to measures with \(C^\infty\) densities.

        Theorem~\ref{thm:weakstar-homeo} has an important practical consequence for probability measures. Every probability measure has total variation norm one, and weak\(^*\) convergence of probability measures coincides with convergence in distribution. Consequently, let \(\mu\) and \((\hat{\mu}_i)_{i=1}^\infty\) be probability measures in \(\rankmeas_{\le k}(K_1,K_2)\). If each component of \(\sil(\hat{\mu}_i)\) converges in distribution to the corresponding component of \(\sil(\mu)\), then \(\hat{\mu}_i\) converges in distribution to \(\mu\).

        At the population level, injectivity also admits a familiar optimization formulation. If \(\dist\) is any discrepancy on \(\sM(\R)^{2k}\) that vanishes exactly when its arguments agree, then \(\mu\) is the unique rank-\(\le k\) solution of
        \[
            \min_{\nu\in\rankmeas_{\le k}(K_1,K_2)}
            \dist\bigl(\sil(\nu),\sil(\mu)\bigr).
        \]
        A natural estimation strategy is therefore to replace the unknown silhouette \(\sil(\mu)\) by the silhouette of the empirical measure and minimize the resulting loss over low-rank measures. The key computational advantages of the silhouette framework stem from its reduction of high-dimensional estimation problems to collections of one-dimensional projections, the central computational principle underlying sliced methods. Many one-dimensional statistical procedures, including Wasserstein distance computations, admit efficient \(\tO(n)\) algorithms. Moreover, computations associated with different projection directions can be performed independently, making the resulting procedures naturally parallelizable. Section~\ref{sec:application-algorithm} develops a practical estimation framework based on this strategy.

        The proof of Theorem~\ref{thm:weakstar-homeo} relies on a structural result for finite-rank tensor products in Banach spaces, established later as Theorem~\ref{thm:rank-closed}, which gives the weak\(^*\) closedness of rank-\(\le k\) measures needed here. This result extends related work in \cite{petitjean25} and may be of independent interest.

        Silhouette operators also satisfy a restricted isometry-type property on finite-dimensional subspaces. In particular, after rescaling, they preserve distances up to a uniform distortion factor on the intersection of any finite-dimensional subspace with the class of rank-\(\le k\) measures. This is captured in the following proposition. In what follows, \(\sM(\R)^{2k}\) may be equipped with any product norm induced by the total variation norm.
        \begin{proposition}\label{prop:rip-span}
            Let \(K_1,K_2 \subset \R\) be compact, let $k\in \N$, let $S$ be a finite-dimensional linear subspace of $\sM(K_1\times K_2)$, and let \(\sil \in \silset_{2k}\). Then there exist $\gamma >0 $ and $\delta \in [0,1)$ such that
            \[
                (1-\delta)\,\|\mu-\mu'\|
                \;\le\;
                \|\gamma \sil(\mu-\mu')\|
                \;\le\;
                (1+\delta)\,\|\mu-\mu'\|,
            \]
            for all $\mu,\mu' \in S\cap \rankmeas_{\le k}(K_1, K_2)$.
        \end{proposition}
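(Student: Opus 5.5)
The plan is to reduce the statement to a compactness argument in the finite-dimensional space \(S\). Write \(D\triangleq\{\mu-\mu' : \mu,\mu'\in S\cap\rankmeas_{\le k}(K_1,K_2)\}\subseteq S\). Since \(D\) is invariant under multiplication by nonnegative scalars, it suffices to find constants \(0<c\le C<\infty\) such that \(c\|x\|\le\|\sil(x)\|\le C\|x\|\) for all \(x\in D\). Given such constants, I would set \(\gamma\triangleq 2/(c+C)\) and \(\delta\triangleq (C-c)/(C+c)\in[0,1)\), which yields the two-sided inequality in the statement.

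The upper bound is immediate and needs no structure. Each pushforward \(\pi_v\#\) is a contraction in total variation, so \(\|\sil(x)\|\le C\|x\|\), where \(C\) depends only on the chosen product norm on \(\sM(\R)^{2k}\). For the lower bound I would argue by contradiction. Suppose \(x_n=\mu_n-\mu_n'\in D\) satisfy \(\|x_n\|=1\) and \(\|\sil(x_n)\|\to0\). The unit sphere of \(S\) is norm compact, so after passing to a subsequence \(x_n\to x\) with \(\|x\|=1\). By norm continuity of \(\sil\), we get \(\sil(x)=0\). It then remains to show that \(x\in D\). Once that is known, Theorem~\ref{thm:main} applies: compactly supported measures lie in \(\sMexp(\R)\), and \(x=\mu-\mu'\) with \(\sil(\mu)=\sil(\mu')\). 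This forces \(x=0\), a contradiction.

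To show \(x\in D\), I would normalize the pairs. Let \(t_n\triangleq\max(\|\mu_n\|,\|\mu_n'\|)\), which is at least \(1/2\), and pass to a further subsequence. In the first case \(t_n\) stays bounded. Then \((\mu_n,\mu_n')\) is bounded in \(S\times S\), so it has a convergent subsequence with limit \((\mu,\mu')\). Both limits lie in \(S\cap\rankmeas_{\le k}(K_1,K_2)\), because this set is norm closed in \(S\): by Theorem~\ref{thm:rank-closed} it is weak\(^*\) closed, and on the finite-dimensional space \(S\) the norm and weak\(^*\) topologies agree. Hence \(x=\mu-\mu'\in D\), as required.

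The main obstacle is the second case, \(t_n\to\infty\). Here \(\mu_n/t_n\) and \(\mu_n'/t_n\) converge to a common limit \(\nu\neq0\), and \(x\) behaves like a tangent vector to the rank-\(\le k\) cone at \(\nu\). Such tangent vectors have the form \(\sum_i(\alpha_i\times b_i+a_i\times\beta_i)\), which need not be a difference of two rank-\(\le k\) measures. My first attempt would use finite dimensionality more seriously. One can choose finite-dimensional \(A\subset\sM(K_1)\) and \(B\subset\sM(K_2)\) with \(S\subseteq A\otimes B\), which turns the problem into one about matrices. One would then check whether the argument proving Theorem~\ref{thm:main} in fact shows that \(\sil\) has trivial kernel on this tangent-type set. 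This seems plausible, since that argument presumably controls moments of the difference through a Vandermonde-type system in the powers \(a^m\). If that fails, the fallback is to show directly, using the algebraic structure of the moment equations, that such limits are excluded. The fallback reasoning is that limits of differences of rank-\(\le k\) measures can be approximated by differences with bounded \(t_n\) after re-parametrizing the decompositions of \(\mu_n\) and \(\mu_n'\). Concretely, one would subtract from both \(\mu_n\) and \(\mu_n'\) their large common part, which would reduce this case to the bounded one.
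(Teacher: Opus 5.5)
Your reduction to constants $0<c\le C$ is fine. So are the upper bound via contractivity of pushforwards, the choice $\gamma=2/(c+C)$, $\delta=(C-c)/(C+c)$, and the bounded case $\sup_n t_n<\infty$. The gap is the case $t_n\to\infty$, which you leave open. The idea of revisiting the proof of Theorem~\ref{thm:main} is not carried out. Subtracting ``the large common part'' from $\mu_n$ and $\mu_n'$ does not preserve membership in $\rankmeas_{\le k}(K_1,K_2)$, so it cannot reduce you to the bounded case.

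More fundamentally, placing the limit $x$ in $D$ cannot come from a closure property of $D$, because $D$ need not be closed. The tangent phenomenon you noticed really occurs. Take $K_1=K_2=[0,2]$ and let $S$ be the span of $\sum_{i,j=0}^{2}t^{i+j}\delta_i\times\delta_j$ over $t\in\R$ (the $3\times3$ Hankel matrices). Then the difference quotients of this rank-one curve at $t=0$ converge to $\delta_0\times\delta_1+\delta_1\times\delta_0\in S$. This limit is not a difference of two rank-one elements of $S$.

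The missing idea is that you never need $x\in D$; it suffices that $x\in\rankmeas_{\le 2k}(K_1,K_2)$. Each $x_n=\mu_n-\mu_n'$ is a sum of at most $2k$ product measures, so $x_n\in S\cap\rankmeas_{\le 2k}(K_1,K_2)$. This set is norm closed by Theorem~\ref{thm:measure-tensor-w*closed} applied with $2k$ in place of $k$. Hence $x\in\rankmeas_{\le 2k}(K_1,K_2)$, $\|x\|=1$, and $\sil(x)=0$.

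Now write $x=\sum_{i=1}^{2k}\mu_{i,1}\times\mu_{i,2}$ and split it as $\nu-\nu'$, where $\nu=\sum_{i=1}^{k}\mu_{i,1}\times\mu_{i,2}$ and $\nu'=-\sum_{i=k+1}^{2k}\mu_{i,1}\times\mu_{i,2}$. Both lie in $\rankmeas_{\le k}(K_1,K_2)$; they are typically not in $S$, which is harmless. Their factors are compactly supported, hence in $\sMexp(\R)$, and $\sil(\nu)=\sil(\nu')$. Theorem~\ref{thm:main} then gives $\nu=\nu'$, so $x=0$, a contradiction.

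This is exactly the paper's proof. It establishes $\|\sil(\mu)\|\ge\kappa\|\mu\|$ on the larger set $S\cap\rankmeas_{\le 2k}(K_1,K_2)$, which contains $D$ and is compact after normalization, so no case distinction on $t_n$ arises. Your ``tangent vectors'' $\sum_i(\alpha_i\times b_i+a_i\times\beta_i)$ are sums of $2k$ product measures, so they are handled automatically.
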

        An explicit parallel with matrix RIP can be drawn when the component marginals are restricted to finite-dimensional subspaces. Let \(S_j\subset\sM(K_j)\) have dimension \(d_j\), and let \(S\) be the span of the product measures \(\eta_1\times\eta_2\) with \(\eta_j\in S_j\). After choosing bases for \(S_1\) and \(S_2\), the space \(S\) can be identified with \(\R^{d_1\times d_2}\), with \(S\cap\rankmeas_{\le k}(K_1,K_2)\) corresponding to matrices of rank at most \(k\). In these coordinates, the inequality in Proposition~\ref{prop:rip-span} takes the same form as the classical RIP for low-rank matrices.

        \section{Higher-Dimensional Extensions of the Silhouette Framework} \label{sec:extending-2d}
        The previous section established that, in two dimensions, a low-rank measure is uniquely determined by its silhouette. The present section extends this framework to higher-dimensional mixture-of-products models of the form \eqref{eqn:basic-model-measures-d=general}, where \(w_i>0\), \(\sum_{i=1}^k w_i=1\), and each \(\mu_{i,j}\) is a probability measure. The section identifies conditions under which such measures can be recovered from collections of pairwise marginals of the form
        \[
            \sum_{i=1}^k w_i\, \mu_{i,j_1} \times \mu_{i,j_2}.
        \]
        The statements are formulated for general measurable spaces \(\Omega\), though the primary case of interest remains \(\Omega = \R\). Proofs of all results in this section can be found in Section~\ref{sec:proofs-d>2}. 

        To express component identifiability as an exact equality without imposing an arbitrary ordering on the components, finite mixtures of product measures are represented as probability measures on the space of \(d\)-tuples of component marginals. For a measurable space \((\Omega,\mathcal{F})\), let \(\sP(\Omega)\) denote the space of probability measures on \(\Omega\). The space
        \(
        \sP(\Omega)^d
        \)
        is equipped with the discrete \(\sigma\)-algebra, so that all singleton sets are measurable. An element \((\mu_1,\ldots,\mu_d)\in\sP(\Omega)^d\) corresponds to the collection of marginals defining a product component \(\prod_{j=1}^d\mu_j\). 

        For finite linear combinations of point masses on \(\sP(\Omega)^d\), define the linear map \(R\) by
        \[
            R\left(
                \sum_{i=1}^k
                a_i\delta_{(\mu_{i,1},\ldots,\mu_{i,d})}
            \right)
            \triangleq
            \sum_{i=1}^k
            a_i\prod_{j=1}^d\mu_{i,j}.
        \]
        For \(J\subseteq[d]\), write its elements in increasing order as
        \(j_1<\cdots<j_\ell\) and define the linear map \(R_J\) by
        \[
            R_J\left(
                \sum_{i=1}^k
                a_i\delta_{(\mu_{i,1},\ldots,\mu_{i,d})}
            \right)
            \triangleq
            \sum_{i=1}^k
            a_i\prod_{r=1}^{\ell}\mu_{i,j_r}.
        \]

        The identifiability results below use linear independence together with two additional notions for collections of probability measures: irreducibility and singularity. Let \(\mu_1,\ldots,\mu_k\) be probability measures on a common measurable space. A measure \(\mu_1\) is said to be \emph{irreducible} with respect to \(\mu_2,\ldots,\mu_k\) if the following holds: whenever \(\sum_{i=1}^k w_i \mu_i\) is a probability measure for some real coefficients \(w_i\), it must be the case that \(w_1 \ge 0\). Equivalently, \(\mu_1\) is irreducible with respect to the others if, for every \(w>0\), the signed measure \(\sum_{i=2}^k \mu_i - w \mu_1\) has a nonzero negative part. If each \(\mu_i\) is irreducible with respect to all the others, the collection is called \emph{jointly irreducible} \cite{scott13,blanchard14}. Finally, \(\mu_1\) will be called \emph{singular} with respect to \(\mu_2,\ldots,\mu_k\) if there exists a measurable set \(A\) such that \(\mu_1(A)=1\) and \(\mu_i(A)=0\) for every \(i\in\{2,\ldots,k\}\).

        Irreducibility is a measure-theoretic analogue of the ``support condition'' or ``anchor-word'' assumption in nonnegative matrix factorization \cite{donoho04,arora12b}. For example, \(\mu_1\) is irreducible if there exists a measurable set \(A\) such that \(\mu_1(A)>0\) while \(\mu_i(A)=0\) for every \(i\in\{2,\ldots,k\}\). On a finite discrete space, where measures can be identified with probability vectors, this condition is also necessary. Joint irreducibility is stronger than linear independence, while singularity is stronger than irreducibility.

        The results below give structural conditions under which the component marginals and weights in the mixture-of-products model \eqref{eqn:basic-model-measures-d=general} can be recovered from a collection of pairwise marginals. Such a collection can be encoded by a graph \(G=([d],E)\), where \(\{j_1,j_2\}\in E\) indicates that the corresponding pairwise marginal is included in the collection. Within the silhouette framework, each of these marginals can itself be identified from its silhouette. The weaker problem of recovering the mixture \(\mu\) itself from such pairwise marginals, without identifying its component marginals and weights, remains open.

        Outside of trivial cases, the marginal graph must at least be connected to guarantee recovery of \(\mu\); if the graph is disconnected, at least two groups of coordinates remain uncoupled and the full model cannot in general be identified. The complete graph yields the strongest constraints but requires estimating \(\Theta(d^2)\) pairwise marginals, whereas a connected graph may have as few as \(d-1\) edges. Thus, depending on the choice of marginal graph, the number of pairwise marginals that must be estimated ranges from linear to quadratic in \(d\), with a corresponding impact on computational cost in practical implementations.

        Theorem~\ref{thm:li-ident} shows that, when the true component marginals are linearly independent along each coordinate, identifying them along a single coordinate suffices to identify the full model from any connected graph of pairwise marginals. The linear independence assumption only needs to hold for the true model and does not need to be imposed as a constraint during estimation. Related propagation phenomena under linear independence assumptions also appear in finite-alphabet tensor models for recovering high-dimensional probability distributions from low-order marginals \cite{kargas18}.

        \begin{samepage}\begin{theorem}\label{thm:li-ident}
            Let $(\Omega,\sF)$ be a measurable space, let $k,d\in\N$ and $d\ge 2$. 
            For $i\in[k]$ and $j\in[d]$, let $\mu_{i,j},\mu'_{i,j}\in\sP(\Omega)$, and let 
            $w_i>0$, $w'_i\ge0$ satisfy $\sum_{i=1}^k w_i=\sum_{i=1}^k w'_i=1$.

            Assume that for every $j\in[d]$, the measures 
            $\mu_{1,j},\ldots,\mu_{k,j}$ are linearly independent.

            Let $G=([d],E)$ be a connected graph. Suppose that
            \[
                R_{e}\!\left(\sum_{i=1}^k w_i\,\delta_{(\mu_{i,1},\ldots,\mu_{i,d})}\right)
                \;=\;
                R_{e}\!\left(\sum_{i=1}^k w'_i\,\delta_{(\mu'_{i,1},\ldots,\mu'_{i,d})}\right)
            \]
            for every $e\in E$.

            Suppose further that for some $d'\in[d]$,
            \[
                \{\mu_{1,d'},\ldots,\mu_{k,d'}\}
                =
                \{\mu'_{1,d'},\ldots,\mu'_{k,d'}\}.
            \]
            Then
            \begin{equation}\label{eqn:li-ident-consequence}
                \sum_{i=1}^k w_i\,\delta_{(\mu_{i,1},\ldots,\mu_{i,d})}
                \;=\;
                \sum_{i=1}^k w'_i\,\delta_{(\mu'_{i,1},\ldots,\mu'_{i,d})}.
            \end{equation}
        \end{theorem}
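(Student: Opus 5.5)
The plan is to propagate identification outward from the coordinate \(d'\) along a spanning tree of \(G\), using linear independence of the true components at each already-identified coordinate to peel apart the pairwise marginal on the next edge.

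\textbf{Step 1: reduce to a common indexing at \(d'\).} Since \(\mu_{1,d'},\ldots,\mu_{k,d'}\) are linearly independent, they are pairwise distinct, so the set \(\{\mu_{1,d'},\ldots,\mu_{k,d'}\}\) has exactly \(k\) elements. The assumed set equality therefore forces the \(\mu'_{i,d'}\) to be distinct as well, and there is a permutation \(\sigma\) of \([k]\) with \(\mu'_{\sigma(i),d'}=\mu_{i,d'}\). Both the hypotheses and the conclusion \eqref{eqn:li-ident-consequence} are invariant under simultaneously relabeling the primed triples \((w'_i,\mu'_{i,1},\ldots,\mu'_{i,d})\), because a sum of weighted point masses does not depend on the order of its terms. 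I may therefore assume \(\sigma=\mathrm{id}\), i.e.\ \(\mu'_{i,d'}=\mu_{i,d'}\) for all \(i\).

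\textbf{Step 2: a separation lemma for linearly independent measures.} I need the following fact. If \(\nu_1,\ldots,\nu_k\in\sM(\Omega)\) are linearly independent, then there exist sets \(A_1,\ldots,A_k\in\sF\) such that the matrix \([\nu_i(A_l)]_{i,l}\) is invertible. To see this, let \(W=\spn\{\nu_1,\ldots,\nu_k\}\), which has dimension \(k\). The evaluation functionals \(\nu\mapsto\nu(A)\), \(A\in\sF\), separate the points of \(W\), since a signed measure vanishing on every set is zero. Hence these functionals span the \(k\)-dimensional dual \(W^*\), and some \(k\) of them, say those for \(A_1,\ldots,A_k\), form a basis of \(W^*\). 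That is exactly the invertibility of the matrix.

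\textbf{Step 3: the propagation step.} Suppose coordinate \(j\) is already identified, meaning \(\mu'_{i,j}=\mu_{i,j}\) for all \(i\), and let \(\{j,j'\}\in E\). Evaluate the equality of \(R_{\{j,j'\}}\)-marginals on rectangles \(A_l\times B\), with the \(A_l\) chosen by Step 2 for \(\mu_{1,j},\ldots,\mu_{k,j}\). The rectangle sets \(A_l\times B\) lie on the factor \(j\) or \(j'\) as appropriate to the increasing ordering in \(R_J\). This gives, for every \(l\) and every measurable \(B\),
\[
\sum_{i=1}^k \mu_{i,j}(A_l)\bigl(w_i\mu_{i,j'}(B)-w'_i\mu'_{i,j'}(B)\bigr)=0 .
\]
Invertibility of the matrix then yields \(w_i\mu_{i,j'}=w'_i\mu'_{i,j'}\) as measures, for each \(i\). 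Taking \(B=\Omega\) gives \(w_i=w'_i\), since both measures are probabilities. Because \(w_i>0\), it follows that \(\mu'_{i,j'}=\mu_{i,j'}\). Thus \(j'\) is identified with the same indexing. Linear independence of \(\mu_{1,j'},\ldots,\mu_{k,j'}\) is a hypothesis on the true model, so Step 2 applies again at \(j'\).

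\textbf{Step 4: induction over the tree.} Take a spanning tree of the connected graph \(G\) rooted at \(d'\) and run a breadth-first induction. Since \(d\ge2\), the root has at least one neighbor, so Step 3 is invoked at least once, which yields \(w'_i=w_i\) for all \(i\). Every coordinate is eventually identified, giving \((\mu'_{i,1},\ldots,\mu'_{i,d})=(\mu_{i,1},\ldots,\mu_{i,d})\) and \(w'_i=w_i\) for every \(i\), which is \eqref{eqn:li-ident-consequence}.

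The only step that needs genuine care is Step 2, the passage from abstract linear independence to a concrete invertible evaluation matrix. The duality argument above handles it cleanly. Note also that the \(w'_i\) are allowed to vanish, but this causes no problem: positivity of \(w_i\) is what forces \(\mu'_{i,j'}=\mu_{i,j'}\).
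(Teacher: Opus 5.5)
Your proof is correct and uses the same basic mechanism as the paper: propagate identification along a spanning tree, using linear independence of the true components at an already-identified coordinate to split the pairwise marginal on the next edge. Both the organization and the key lemma differ, however.

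The paper inducts on \(d\) by deleting a leaf other than \(d'\). To start at \(d=1\) it carries an auxiliary hypothesis that all one-dimensional marginals agree, and extracts the weights in the base case from linear independence. At each inductive step it re-matches the labels by noting that the true \(d\)-tuples are pairwise distinct with positive weights. You instead fix one labeling at \(d'\) and push it outward, with the weights falling out of the first edge via \(B=\Omega\). So you need neither the auxiliary hypothesis nor the distinctness argument.

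For the splitting step, the paper invokes Lemma~\ref{lem:measure-bidual}, proved via \(L^\infty(\Omega)^*=ba(\Omega)\), Hahn--Banach, and Goldstine's theorem, and then applies the operators \(T_{f_j}\) of \eqref{eqn:Tf-def}. Your Step~2 is a purely finite-dimensional duality argument, and it in fact recovers that lemma with simple functions: \(f_j=\sum_l (M^{-1})_{lj}\ind_{A_l}\) with \(M=[\nu_i(A_l)]_{i,l}\) satisfies \(\nu_i(f_j)=\ind(i=j)\).

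Your route is therefore more elementary. The paper's choice mainly reflects reuse of machinery it already needs for Proposition~\ref{prop:rank-characterization} and Proposition~\ref{prop:ji-ident}, where your lemma would serve equally well.
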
\end{samepage}

        Each summand \(w_i\,\delta_{(\mu_{i,1},\ldots,\mu_{i,d})}\) in \eqref{eqn:li-ident-consequence} corresponds to the weighted product component \(w_i\prod_{j=1}^d\mu_{i,j}\) of the associated mixture-of-products model. Thus, \eqref{eqn:li-ident-consequence} states not only that the resulting mixtures \(\mu\) and \(\mu'\) are equal (by applying \(R\) to both sides), but that their underlying components and weights agree up to a permutation of indices, i.e., the model is identifiable. The assumption that \(\mu_{1,j},\ldots,\mu_{k,j}\) are linearly independent for each \(j\in[d]\) is a standard and relatively mild structural condition \cite{allman09}; for example, collections of one-dimensional Gaussian measures with distinct parameter pairs are linearly independent \cite{yakowitz68}.

        Theorem~\ref{thm:li-ident} shows that once the components along one coordinate have been recovered, the remaining components can be identified by propagating through the marginal graph. It does not, however, show how to obtain that initial set of components from the observed pairwise marginals; they must already be known or have been recovered by some other means. For example, if the component marginals along that coordinate are Gaussian, this initial set could be estimated using standard univariate mixture-model methods. In a fully nonparametric setting, it is desirable for uniqueness to be intrinsic---namely, that any factorization consistent with an observed pairwise marginal must coincide with the true one, without imposing structural conditions on the candidate factorization. The next result shows that joint irreducibility of the true component families provides this property for a pair of coordinates.

        \begin{proposition}\label{prop:ji-ident}
            Let $(\Omega,\sF)$ be a measurable space and let $k\in\N$. 
            For $i\in[k]$ and $j\in[2]$, let $\mu_{i,j},\mu'_{i,j}\in\sP(\Omega)$, and let
            $w_i>0$, $w'_i\ge 0$ satisfy $\sum_{i=1}^k w_i=\sum_{i=1}^k w'_i=1$.

            Assume that the families
            \(\mu_{1,1},\ldots,\mu_{k,1}\) and
            \(\mu_{1,2},\ldots,\mu_{k,2}\)
            are jointly irreducible.

            If
            \[
                R\!\left(\sum_{i=1}^k w_i\,\delta_{(\mu_{i,1},\mu_{i,2})}\right)
                \;=\;
                R\!\left(\sum_{i=1}^k w'_i\,\delta_{(\mu'_{i,1},\mu'_{i,2})}\right),
            \]
            then
            \[
                \sum_{i=1}^k w_i\,\delta_{(\mu_{i,1},\mu_{i,2})}
                \;=\;
                \sum_{i=1}^k w'_i\,\delta_{(\mu'_{i,1},\mu'_{i,2})}.
            \]
        \end{proposition}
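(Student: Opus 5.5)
The plan is to reduce the statement to a statement about nonnegative $k\times k$ matrices. First I show that every candidate component $\mu'_{i,j}$ lies in the span of the true components $\mu_{1,j},\ldots,\mu_{k,j}$ with a nonnegative, row-stochastic coefficient matrix. Then I compare the two resulting representations of the joint measure
\[
P\triangleq\sum_{i=1}^k w_i\,\mu_{i,1}\times\mu_{i,2}=\sum_{i=1}^k w'_i\,\mu'_{i,1}\times\mu'_{i,2}.
\]

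\textbf{Step 1 (spans).} Joint irreducibility implies linear independence, as noted in the text. Because $\mu_{1,2},\ldots,\mu_{k,2}$ are linearly independent, there exist measurable sets $B_1,\ldots,B_k$ such that the matrix $[\mu_{i,2}(B_r)]_{i,r}$ is invertible. The sections $P(\cdot\times B_r)=\sum_i w_i\mu_{i,2}(B_r)\mu_{i,1}$ therefore span $\spn\{\mu_{i,1}\}$, which has dimension $k$ since all $w_i>0$.

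From the primed representation, the same sections lie in $\spn\{\mu'_{i,1}: w'_i>0\}$. A dimension count then forces three conclusions: every $w'_i>0$; the $\mu'_{i,1}$ are linearly independent; and $\spn\{\mu'_{i,1}\}=\spn\{\mu_{i,1}\}$. The symmetric argument gives the same conclusions for the second coordinate.

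\textbf{Step 2 (nonnegative coefficients).} Write $\mu'_{i,1}=\sum_l A_{il}\mu_{l,1}$ and $\mu'_{i,2}=\sum_m A'_{im}\mu_{m,2}$, where $A$ and $A'$ are invertible. Each $\mu'_{i,1}$ is a probability measure, so irreducibility of $\mu_{l,1}$ with respect to the other first-coordinate components gives $A_{il}\ge0$. Evaluating total mass gives $\sum_l A_{il}=1$. The same reasoning applies to $A'$, so both matrices are nonnegative and row-stochastic.

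\textbf{Step 3 (matching coefficients).} Substituting into $P$ gives
\[
P=\sum_{l,m}\Bigl(\sum_i w'_iA_{il}A'_{im}\Bigr)\mu_{l,1}\times\mu_{m,2}.
\]
The products $\mu_{l,1}\times\mu_{m,2}$ are linearly independent, because the factor families are. Comparing with the unprimed representation yields
\[
A^\top W' A'=W,
\]
where $W$ and $W'$ are the diagonal weight matrices.

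\textbf{Step 4 (combinatorial conclusion).} For $l\ne m$, the entry $\sum_i w'_iA_{il}A'_{im}$ equals $0$ and is a sum of nonnegative terms. Since every $w'_i>0$, this forces $A_{il}A'_{im}=0$ for all $i$. Each row of $A$ and each row of $A'$ is nonzero, so row $i$ of $A$ and row $i$ of $A'$ are supported on a single common index $\sigma(i)$. Row-stochasticity then gives $A_{i\sigma(i)}=A'_{i\sigma(i)}=1$. Invertibility of $A$ makes $\sigma$ a permutation.

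Hence $\mu'_{i,j}=\mu_{\sigma(i),j}$ for $j\in[2]$. The diagonal of $A^\top W' A'=W$ then reads $w'_i=w_{\sigma(i)}$, which is exactly the claimed equality of point-mass combinations.

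The main obstacle is Step 1: one must make sure the dimension argument genuinely rules out zero or degenerate primed weights, and that the span inclusion is justified rigorously. The section-by-$B_r$ device is what I would rely on for this. The later steps are finite-dimensional bookkeeping.
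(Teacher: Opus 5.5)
Your proposal is correct and follows essentially the same route as the paper's proof. You show that the primed components lie in the span of the true ones with every $w'_i>0$, and use joint irreducibility to get nonnegative row-stochastic coefficient matrices. You then use the vanishing off-diagonal entries of $A^\top W' A' = W$ to force one-hot rows and a permutation; the paper obtains exactly these entries by applying the functional $\nu\mapsto\int f_{i_1,1}(x)f_{i_2,2}(y)\,d\nu(x,y)$.

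The only real difference is in Step~1. You use measurable sets $B_r$ with an invertible evaluation matrix plus a dimension count, where the paper uses the biorthogonal bounded functions of Lemma~\ref{lem:measure-bidual} and Proposition~\ref{prop:rank-characterization}. Your version is slightly more elementary, since it avoids the Hahn--Banach and Goldstine arguments.
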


        The previous proposition identifies a setting in which the true mixture components are uniquely determined from a single pairwise marginal. In conjunction with Theorem~\ref{thm:li-ident}, identifiability on one pair of coordinates can then propagate through the connected marginal graph to recover the full model. 

        A limitation of Proposition~\ref{prop:ji-ident} is that joint irreducibility must hold across every component for the same pair of coordinates. The next theorem shows that, under a stronger singularity assumption, a single component can instead be identified when it is separated from all the others along a single pair of coordinates, and its identification can then be propagated through the remaining pairwise marginals.

        \begin{theorem}\label{thm:singular-ident}
            Let $(\Omega,\sF)$ be a measurable space and let $k,d\in\mathbb{N}$ with $d\ge 2$. For $i\in[k]$ and $j\in[d]$, let $\mu_{i,j},\mu'_{i,j}\in\sP(\Omega)$, and let $w_i>0$, $w'_i\ge 0$ satisfy $\sum_{i=1}^k w_i=\sum_{i=1}^k w'_i=1$.

            Assume that for every $j\in[d]$, the measures $\mu_{1,j},\ldots,\mu_{k,j}$ are linearly independent, and that for $j\in [2]$ the measure $\mu_{1,j}$ is singular with respect to $\mu_{2,j},\ldots,\mu_{k,j}$.

            Let $G=([d],E)$ be a graph such that $\{1,2\}\in E$ and every $j>2$ is adjacent to at least one of $1$ or $2$.

            If
            \[
                R_{e}\!\left(\sum_{i=1}^k w_i\,\delta_{(\mu_{i,1},\ldots,\mu_{i,d})}\right)
                =
                R_{e}\!\left(\sum_{i=1}^k w'_i\,\delta_{(\mu'_{i,1},\ldots,\mu'_{i,d})}\right)
            \]
            for every $e\in E$, then
            \[
                \sum_{i=1}^k
                w'_i\,
                \delta_{(\mu'_{i,1},\ldots,\mu'_{i,d})}
                \bigl(\{(\mu_{1,1},\ldots,\mu_{1,d})\}\bigr)
                = w_1.
            \]
        \end{theorem}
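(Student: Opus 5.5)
We prove the claim by localizing the pairwise marginal on coordinates \(\{1,2\}\) to the singular sets, and then counting ranks to show that exactly one primed component sits there. Throughout, indices \(i\) with \(w'_i=0\) contribute nothing to either side and can be ignored. Fix measurable sets \(A_1,A_2\subseteq\Omega\) with \(\mu_{1,j}(A_j)=1\) and \(\mu_{i,j}(A_j)=0\) for \(i\ge2\), \(j\in[2]\). Write \(\nu\) for the common \(\{1,2\}\)-marginal \(R_{\{1,2\}}(\cdot)\).

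\emph{Step 1 (localization).} Evaluate \(\nu\) on \(A_1\times A_2^c\). The unprimed side gives \(0\). The primed side gives \(\sum_i w'_i\,\mu'_{i,1}(A_1)\,\mu'_{i,2}(A_2^c)\), a sum of nonnegative terms. Hence every \(i\) with \(w'_i>0\) and \(\mu'_{i,1}(A_1)>0\) satisfies \(\mu'_{i,2}(A_2)=1\).

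Symmetrically, evaluating on \(A_1^c\times A_2\) shows that \(w'_i>0\) and \(\mu'_{i,2}(A_2)>0\) force \(\mu'_{i,1}(A_1)=1\). Let
\[
I=\{i: w'_i>0,\ \mu'_{i,1}(A_1)>0\}.
\]
Then for \(i\in I\) we have \(\mu'_{i,1}(A_1)=\mu'_{i,2}(A_2)=1\). For \(i\notin I\) with \(w'_i>0\) we have \(\mu'_{i,1}(A_1)=\mu'_{i,2}(A_2)=0\).

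Restricting \(\nu\) to \(A_1\times A_2\) therefore gives
\[
w_1\,\mu_{1,1}\times\mu_{1,2}=\sum_{i\in I}w'_i\,\mu'_{i,1}\times\mu'_{i,2}.
\]
Evaluating this at \(\Omega\times\Omega\) gives \(\sum_{i\in I} w'_i=w_1>0\), so \(I\neq\emptyset\).

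\emph{Step 2 (rank count, the key step).} A sum of nonnegative products equal to a single product need not consist of proportional terms, because \(\mu_{1,1}\) could be split across several primed components. We rule this out by counting slots. Restrict \(\nu\) to \(A_1^c\times\Omega\). The unprimed side is unchanged on components \(i\ge2\), since those \(\mu_{i,1}\) vanish on \(A_1\); it equals
\[
\sum_{i=2}^k w_i\,\mu_{i,1}\times\mu_{i,2}.
\]
Both families \(\{\mu_{i,1}\}_{i\ge2}\) and \(\{\mu_{i,2}\}_{i\ge2}\) are linearly independent, and \(w_i>0\). By Proposition~\ref{prop:rank-characterization}, this measure therefore has rank exactly \(k-1\).

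The primed side equals \(\sum_{i\notin I,\,w'_i>0}w'_i\,\mu'_{i,1}\times\mu'_{i,2}\), using that \(\mu'_{i,1}(A_1^c)=0\) for \(i\in I\). This is a sum of at most \(k-|I|\) product terms. Hence \(k-1\le k-|I|\), so \(|I|\le1\), and combined with Step 1, \(I=\{i_0\}\).

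Then \(w'_{i_0}\mu'_{i_0,1}\times\mu'_{i_0,2}=w_1\mu_{1,1}\times\mu_{1,2}\). Comparing total masses gives \(w'_{i_0}=w_1\). Taking the two marginals of this product of probability measures gives \(\mu'_{i_0,j}=\mu_{1,j}\) for \(j\in[2]\).

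\emph{Step 3 (propagation).} Take \(j>2\), and suppose without loss of generality that \(\{1,j\}\in E\); the case of adjacency to \(2\) uses \(A_2\) instead. Restrict the \(\{1,j\}\)-marginal to \(A_1\times\Omega\). The unprimed side gives \(w_1\mu_{1,1}\times\mu_{1,j}\). On the primed side, only \(i_0\) survives, since \(\mu'_{i,1}(A_1)=0\) for the other indices with positive weight. This gives \(w_1\mu_{1,1}\times\mu'_{i_0,j}\). Taking the second marginal yields \(\mu'_{i_0,j}=\mu_{1,j}\).

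\emph{Conclusion.} The tuple \((\mu'_{i_0,1},\ldots,\mu'_{i_0,d})\) equals \((\mu_{1,1},\ldots,\mu_{1,d})\) and carries weight \(w_1\). Any other index \(i\) with \(w'_i>0\) and the same tuple would satisfy \(\mu'_{i,1}(A_1)=1\), so it would lie in \(I=\{i_0\}\); hence there is no such index. Therefore the primed mixture assigns mass exactly \(w_1\) to \(\{(\mu_{1,1},\ldots,\mu_{1,d})\}\).
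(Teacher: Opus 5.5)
Your proof is correct, and it takes a more elementary route than the paper's. The paper first uses the rank-$k$ property of the $\{1,2\}$-marginal to show that every $w'_i>0$ and that the primed families are linearly independent. It then uses the biorthogonal functionals of Lemma~\ref{lem:measure-bidual}, as in the proof of Proposition~\ref{prop:ji-ident}, to write $\mu'_{j,1}=\sum_i\alpha_{j,i}\mu_{i,1}$ and $\mu'_{j,2}=\sum_i\beta_{j,i}\mu_{i,2}$. From sign arguments on these coefficients, together with the vanishing of the marginal on $A_1\times A_2^c$, it derives the dichotomy: either $\alpha_{j,1}=\beta_{j,1}=1$ with all other coefficients zero, or $\alpha_{j,1}=\beta_{j,1}=0$. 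Uniqueness of the index then follows by merging duplicate copies of $\mu_{1,1}\times\mu_{1,2}$ and contradicting rank $k$.

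You instead get the support dichotomy directly from nonnegativity on the two off-diagonal blocks $A_1\times A_2^c$ and $A_1^c\times A_2$. You obtain $I\neq\emptyset$ from total mass on $A_1\times A_2$, and you bound $|I|$ by a rank count on the complementary piece $A_1^c\times\Omega$, whose true side has rank exactly $k-1$. Only after that do you read off $\mu'_{i_0,j}=\mu_{1,j}$ from a single product identity.

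Your route avoids the duality machinery (Goldstine in $ba(\Omega)$) and the coefficient representation entirely. It handles zero primed weights without first proving they are positive. It also makes clear that linear independence is used only through $\mu_{2,j},\ldots,\mu_{k,j}$ for $j\in[2]$. The paper's route, in exchange, extracts more structure along the way: all $w'_i>0$, the primed families are linearly independent, and each primed marginal lies in the span of the true ones. It also reuses the framework already built for Proposition~\ref{prop:ji-ident}.
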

        The conclusion of Theorem~\ref{thm:singular-ident} implies that any representation of the form
        \(
        \sum_{i=1}^k w'_i \prod_{j=1}^d \mu'_{i,j}
        \)
        consistent with the observed pairwise marginals must assign total weight \(w_1\) to the component
        \(
        \prod_{j=1}^d \mu_{1,j}.
        \)
        Theorem~\ref{thm:singular-ident} is most relevant when a rich collection of pairwise marginals, such as the complete graph, is available and, for each component, there exist at least two coordinates along which the singularity condition holds. This condition need not be imposed on the candidate factorization; rather, the theorem shows that any such factorization consistent with the observed pairwise marginals must contain the identified component.

        The conditions above provide identifiability guarantees in specific regimes, but are unlikely to be exhaustive. In particular, identifiability may emerge from interactions among several structural properties of the components and the connectivity pattern of the marginal graph that are not captured by any single explicit condition, such as irreducibility or singularity.
        \section{Related Work}
        This work lies at the intersection of several areas. The two primary connections are to mixture-of-products models and restricted isometry-type conditions arising in compressed sensing. A final subsection discusses several additional connections, including projection-based methods, optimal transport, and inverse problems.

        \subsection{Mixture-of-Products Models}\label{ssec:related-mixture-products}
        Mixture-of-products models are statistical models of the form \eqref{eqn:basic-model-measures-d=general}. They admit a natural interpretation in which the \(d\) covariates become conditionally independent after conditioning on an unobserved discrete random variable with distribution \((w_1,\ldots,w_k)\). Such models are also naturally viewed as nonparametric mixture models. When the component measures are supported on finite sets, \eqref{eqn:basic-model-measures-d=general} reduces to a low-rank nonnegative tensor model, connecting this setting to the literatures on tensor factorization, latent variable models, and topic modeling \cite{allman09,anandkumar14,kargas18,tang26}. The discussion first focuses on models whose component marginals \(\mu_{i,j}\) have densities.

        Early work on mixture-of-products models in the continuous nonparametric setting includes \cite{hall03,hall05}, where consistent estimators for the mixture components were introduced. These works also observed that the product structure can yield improved statistical rates. Since then, a number of estimation methods for this model class have been proposed \cite{benaglia09,song13,song14,bonhomme16,kargas19,vandermeulen21,kwon21,amiridi22I,chhor25,lyu25}, many of which are based on matrix or tensor factorization techniques.

        Much of the literature treats \eqref{eqn:basic-model-measures-d=general} as a nonparametric mixture model, with a corresponding focus on identifiability. A large body of work studies identifiability of such models under structural conditions on the collections \(\mu_{1,j},\ldots,\mu_{k,j}\), typically imposing conditions based on linear independence or on stronger forms of separation \cite{allman09,tahmasebi18,kargas18,kargas19,lyu25}. At the same time, the product structure can be exploited to improve statistical rates. In \cite{vandermeulen21}, it was shown that there exists an estimator achieving rate $\tO(n^{-1/3})$ in $L^1$ for densities of this form, assuming the component marginals $\mu_{i,j}$ admit Lipschitz continuous densities. That work also considers related Tucker-style factorizations for multivariate densities together with associated sample complexity bounds. The \(\tO(n^{-1/3})\) rate matches, up to logarithmic factors, the minimax rate for one-dimensional Lipschitz continuous density estimation, whereas general \(d\)-dimensional Lipschitz continuous densities have minimax rate \(O(n^{-1/(2+d)})\); more generally, \(\beta\)-H\"older continuous densities have minimax rate \(O(n^{-\beta/(2\beta+d)})\) \cite{stone80,stone82,tsybakov08}. This indicates that the product structure can mitigate the curse of dimensionality. 

        More recently, \cite{chhor25} proposed a practical estimator in the two-dimensional setting based on a multiscale decomposition of a histogram estimator combined with nuclear-norm regularization via singular value decomposition. That estimator achieves rate $\tO(n^{-\beta/(2\beta+1)})$ for $\beta$-H\"older densities of the above form with $0<\beta\le1$, which, up to logarithmic factors, matches the minimax rate for one-dimensional density estimation. In \cite{lyu25}, mixture-of-products density estimation was studied in the general $d$-dimensional setting, where the same $\tO(n^{-\beta/(2\beta+1)})$ rate is achieved for densities of this form. Under linear independence assumptions similar to those used in Section~\ref{sec:extending-2d}, that work also establishes recovery guarantees for the mixture components and analyzes how the resulting sample complexity depends on incoherence properties of the component marginals. 

        Every a.e.-continuous density admits a countable mixture-of-products representation with smooth Lipschitz component marginals \cite{vandermeulen23}. The resulting non-negative Lipschitz spectrum quantifies density complexity through the decay of the mixture weights and the growth of the marginal Lipschitz constants. Because such a spectrum exists throughout this broad class, the resulting theory yields finite-sample bounds and, under suitable decay and growth conditions, dimension-independent convergence rates without requiring an exact finite mixture-of-products representation. This countable representation also places Theorem~\ref{thm:main} in a natural relationship with countable-projection versions of the Cramér--Wold theorem \cite{renyi52,gilbert55,cuesta-albertos07}. For each finite \(k\), Theorem~\ref{thm:main} shows that \(2k\) projections suffice to identify a rank-\(\le k\) measure. As \(k\) grows, both the number of mixture components and the number of required projections become countable, paralleling the fact that, under suitable regularity conditions, countably many projections suffice to identify a general probability distribution.

        A closely related setting arises in grouped-observation models of the form
        \begin{equation}
            \sum_{i=1}^k w_i\,\mu_i^{\times d},
            \label{eqn:grouped-model}
        \end{equation}
        which may be viewed as a symmetric specialization of \eqref{eqn:basic-model-measures-d=general}, where the component measures \(\mu_i\) may themselves be arbitrary probability measures on a general measurable space. Identifiability and estimation for such models have been studied in \cite{teicher63,anandkumar14,rabani14,vandermeulen19,ritchie21,wei22,vandermeulen24grouped} and are closely connected to topic models such as latent Dirichlet allocation (LDA) \cite{LDA}. The identifiability results for models of the form \eqref{eqn:grouped-model} are directly related to corresponding results for LDA in \cite{do25}.

        \subsection{Restricted Isometry and Low-Rank Matrix Recovery}
        Restricted isometry and related properties play a central role in compressed sensing and low-rank matrix recovery, where they provide geometric guarantees for stable reconstruction from linear measurements. For a vector $x \in \R^n$, let $\|x\|_0$ denote its sparsity, i.e., the number of nonzero entries of $x$. 

        The original formulation of the restricted isometry property (RIP) \cite{candes05} concerns sparse vectors. More specifically, a matrix $A \in \R^{m\times n}$ is said to satisfy the \emph{vector-RIP} of order $k$ if there exists $\delta \in [0,1)$ such that for all $x \in \R^n$ with $\|x\|_0 \le k$,
        \begin{equation}\label{eqn:rip-vector}
            (1-\delta)\|x\|_2^2 \;\le\; \|Ax\|_2^2 \;\le\; (1+\delta)\|x\|_2^2.
        \end{equation}

        Subsequent work extended this framework to low-rank matrix recovery. In this setting, a linear operator $T : \R^{m \times n} \to \R^\ell$ is said to satisfy the \emph{matrix-RIP} \cite{recht10} of order $k$ if there exists $\delta \in [0,1)$ such that for all $X \in \R^{m\times n}$ with rank at most $k$,
        \begin{equation}\label{eqn:rip-matrix}
            (1-\delta)\|X\|_F^2 \;\le\; \|T(X)\|_2^2 
            \;\le\; (1+\delta)\|X\|_F^2.
        \end{equation}
        Vector-RIP can hold with \(m\) much smaller than \(n\), and matrix-RIP with \(\ell\) much smaller than \(mn\). In both cases, the measurement map substantially reduces dimension while approximately preserving norms on the restricted class. The framework has also been extended to higher-order objects through notions such as tensor-RIP \cite{rauhut17}.

        Restricted strong convexity is a closely related notion, which appears in several forms in the literature. Following the operator formulation in \cite{negahban11}, a linear operator \(T:\R^{m\times n}\to\R^\ell\) is said to satisfy restricted strong convexity (RSC) on a given class of matrices if there exists \(\kappa>0\) such that, for every \(X\) in that class,
        \[
            \|T(X)\|_2^2 \;\ge\; \kappa\|X\|_F^2.
        \]
        The normalization in \cite{negahban11} is absorbed into \(\kappa\).
        Injectivity of \(T\) on rank-\(\le k\) matrices is equivalent to restricted strong convexity on rank-\(\le 2k\) matrices and, after rescaling \(T\), to matrix-RIP of order \(2k\); see Appendix~\ref{appx:rip-rsc-inj-equiv} for a proof.

        RIP and closely related conditions have found numerous applications and are central to the field of \emph{compressed sensing} \cite{donoho06,candes08}, which emerged as a major development in signal processing in the 2000s. A fundamental question in this area is the construction of operators satisfying these properties. A canonical approach to establishing vector-RIP \eqref{eqn:rip-vector} is to use random matrices with independent, appropriately scaled, zero-mean Gaussian entries, which satisfy RIP with high probability for sufficiently large \(m\), depending on \(n\), \(k\), and \(\delta\) \cite{candes05,baraniuk08}. To prove this guarantee, one shows, via concentration inequalities and covering numbers, that such a matrix is an approximate isometry on any fixed \(k\)-dimensional subspace with failure probability exponentially small in \(m\). Although there are combinatorially many coordinate subspaces corresponding to \(k\)-sparse vectors, a union bound combined with exponential concentration allows one to control all such subspaces simultaneously.

        Gaussian entries are not essential for vector-RIP: appropriately normalized random matrices with independent, mean-zero sub-Gaussian entries also satisfy RIP with high probability \cite{baraniuk08}. Sparse random constructions are also possible, generally at the cost of additional measurements or weaker isometry and recovery guarantees \cite{gilbert10}. Analogous probabilistic constructions extend to matrix-RIP \eqref{eqn:rip-matrix}, with measurement counts near the degrees of freedom of low-rank matrices \cite{recht10}.

        Obtaining comparably strong classical RIP guarantees from explicit deterministic constructions remains challenging. A related difficulty is that deciding whether a given matrix satisfies vector-RIP is NP-hard \cite{banderia13}. The silhouette operator considered here is explicit and deterministic, and its restricted-isometry-type property on finite-dimensional subspaces of measures is established through a fundamentally different approach, without randomization, concentration inequalities, or covering-number arguments.

        \subsection{Other Related Topics}

        Silhouette operators can be viewed as a kind of analogue of RIP operators for low-rank measures rather than low-rank matrices. The framework also connects to several other areas, including projection-based methods, optimal transport, and inverse problems.

        Graphical models provide another way to exploit lower-dimensional structure in density estimation. In a Markov random field, a graph encodes conditional independence relations among the observed coordinates. For tree- and forest-structured models, the joint density is determined by the univariate marginals and the pairwise marginals along the graph edges, leading to density estimators built from one- and two-dimensional marginal estimates \cite{liu11,gyorfi23}. More generally, Markov assumptions can reduce the effective dimension of nonparametric density estimation \cite{vandermeulen24mrf,vandermeulen25mrf,fan25,kwon26}. In the present work, the marginal graph instead specifies the pairwise marginals used to identify the latent mixture-of-products structure, rather than conditional independence relations among the observed coordinates.

        A central computational motivation for the silhouette framework is to reduce the cost of Wasserstein distance computations. By measuring the cost of moving probability mass between locations, Wasserstein distance incorporates the geometry of the underlying space into its notion of distance \cite{villani09}. In contrast, total variation, \(L^2\) distance, and KL divergence do not account for the distance between locations, so even a small spatial shift can produce a large discrepancy.
        However, for empirical measures in dimension \(d\ge2\), exact computation of Wasserstein distance involves a discrete optimal transport problem with \(O(n^2)\) transport variables for datasets of size \(n\), leading to quadratic memory requirements and typically superquadratic running time. In one dimension, by contrast, Wasserstein distance can be computed in \(O(n\log n)\) time via sorting.

        One approach to reducing this computational cost is the \emph{sliced Wasserstein distance} \cite{bonnotte13,bonneel15}. It is constructed by projecting probability measures onto one-dimensional subspaces, computing Wasserstein distances between the resulting projected probability measures, and aggregating these distances over a collection of projections. By the Cram\'er--Wold theorem, sufficiently rich collections of projections can distinguish probability measures; related topological and statistical properties of sliced probability metrics have been studied in \cite{nadjahi20,xi22,weed22}.

        Many works have developed sliced Wasserstein methods for statistical and machine learning tasks \cite{deshpande18,wang21twosample,nguyen21distributional,meunier22,nguyen23selfattention,bing24estimationinferencewassersteindistance}. For mixture modelling, a sliced-Wasserstein approach to fitting Gaussian mixture models was proposed in \cite{kolouri18}. It has been shown that a $d$-dimensional Gaussian mixture model with $m$ components can be identified from a finite number of projections, with $(2m-1)(d^2+d-2)/2+1$ projections sufficing \cite{fraiman25}. Related results establish identifiability of other classes, such as elliptical distributions, from $(d^2+d)/2$ projections \cite{fraiman23}.

        These finite-projection results are conceptually related to the present work because they likewise identify an underlying distribution from a projection-based representation.   When an elliptical distribution has a density, the density has the form \(f(x)\propto g\bigl((x-\mu)^\top\Sigma^{-1}(x-\mu)\bigr)\). When second moments exist, \(\mu\) is its mean, and \(\Sigma\) is taken to be its covariance matrix, with the corresponding scale absorbed into \(g\). Thus, although \(g\) is arbitrary, the finite-projection identification problem principally concerns recovering the finite-dimensional quantities \(\mu\) and \(\Sigma\). Once these are known, a single projection determines \(g\). The count in \cite{fraiman23} reflects this: \((d^2+d)/2\) is the dimension of the space of symmetric \(d\times d\) matrices, while \(\mu\) can be recovered from a linearly independent subcollection of \(d\) projections.

        In contrast, the nonparametric freedom in the present work resides in its rank-one factors, rather than in a single shared one-dimensional profile combined with finite-dimensional location and shape parameters. For the silhouette operator, \(2k\) projections suffice to identify a rank-\(\le k\) measure, with each projection yielding a finite signed measure. The number of projected measures, \(2k\), matches the number of component marginal measures \(\mu_{i,j}\), \(i\in[k]\), \(j\in[2]\), appearing in the underlying representation (cf.\ \eqref{eqn:basic-model-measures-d=2}). Thus, rather than recovering a finite collection of scalar parameters, each projection can be viewed as recovering a measure-valued degree of freedom.

        A further connection arises with Radon-type transforms \cite{helgason10}. Classically, Radon transforms associate to a function its integrals over affine subspaces, with corresponding extensions to measures. These transforms are closely related to the projection-based constructions discussed above. In particular, projecting a measure onto one-dimensional subspaces can be viewed as a Radon-type operation, a connection that has been observed in the sliced Wasserstein literature \cite{bonneel15}. Radon transforms play a central role in inverse problems, especially in computed tomography, where one seeks to reconstruct a high-dimensional object from a collection of projections \cite{kak01}. In such settings, it is typically infeasible to collect a complete set of projections, and recovery relies on structural assumptions or regularization. The results here suggest a possible connection to this setting, in that low-rank structure provides a mechanism for recovery from a finite set of projections.

        Silhouette projections of mixture-of-products models are also closely related to blind deconvolution and bilinear inverse problems \cite{campisi07,ahmed14}. For \(c\in\R\), let \(T_c:\R\to\R\) denote the scaling map \(T_c(x)=cx\). If \(a=[a_1,\ldots,a_d]^\top\in\R^d\) and \(\mu\) is of the form \eqref{eqn:basic-model-measures-d=general}, then
        \[
            \pi_a\#\mu
            =
            \sum_{i=1}^k w_i
            T_{a_1}\#\mu_{i,1}\ast \cdots \ast T_{a_d}\#\mu_{i,d}.
        \]
        Recovering the component marginals from projected measures therefore combines mixture identification with a deconvolution problem. Deconvolution is statistically difficult: optimal rates depend strongly on the smoothness of the error distribution. Even in the classical setting where the convolution kernel is known and Gaussian, optimal rates are logarithmic \cite{fan91}. This provides one motivation for sparse projection directions, especially pairwise silhouettes: restricting \(a\) to two nonzero coordinates limits each component to a two-factor convolution, rather than entangling all \(d\) coordinate marginals in every projected measure.

        \section{Algorithm and Applications}\label{sec:application-algorithm}
        The results of Sections~\ref{sec:main-theorems} and \ref{sec:extending-2d} show that, under suitable structural conditions, mixture-of-products models can be identified from a finite collection of one-dimensional linear pushforwards. This suggests a practical estimation strategy: fit a mixture-of-products model by matching its one-dimensional projected distributions to the empirical distributions obtained by projecting the observed data along the same directions.

        This section describes a concrete implementation of this approach, termed \emph{silhouette mixture estimation} (SME). To construct a candidate model, the data are partitioned into blocks intended to correspond to the individual mixture components \(\prod_{j=1}^d \mu_{i,j}\) in \eqref{eqn:basic-model-measures-d=general}. Within each block, an empirical marginal is formed for each coordinate, and the product of these marginals yields a naive Bayes-type empirical estimate of the corresponding component. These block-level product measures are weighted by their relative block sizes and summed to form the final estimator. 

        \subsection{Form of the Estimator} \label{ssec:form-of-the-estimator}
        The construction begins with the empirical measure associated with the observed data. Given samples \(X_1,\ldots,X_n\simiid\mu\), the empirical measure is
        \[
            \hat{\mu}_n
            \triangleq
            \frac{1}{n}\sum_{i=1}^n \delta_{X_i}.
        \]
        To motivate the form of SME, first suppose that \(\mu\) admits a naive Bayes factorization
        \begin{equation}\label{eqn:naive-bayes-example}
            \mu=\prod_{j=1}^d\mu_j,
        \end{equation}
        where each \(\mu_j\) is a probability measure on \(\R\). In this case, a natural factorizable analogue of \(\hat{\mu}_n\) is obtained by taking the product of its one-dimensional empirical marginals.
        Writing \(X_i=[X_{i,1},\ldots,X_{i,d}]^\top\), the resulting factorizable empirical measure is
        \begin{equation}\label{eqn:naive-bayes-empirical}
            \prod_{j=1}^d
            \left(
                \frac{1}{n}\sum_{i=1}^n \delta_{X_{i,j}}
            \right)
            =
            \frac{1}{n^d}
            \sum_{I \in [n]^d}
            \delta_{\left(X_{I_1,1},\ldots,X_{I_d,d}\right)}.
        \end{equation}
        The expansion on the right shows that this measure assigns equal weight to every coordinate-wise recombination of the observed samples.

        For a nonempty multiset \(\{x_1,\ldots,x_m\}\) in \(\R^d\), where
        \(
        x_i = [x_{i,1},\ldots,x_{i,d}]^\top,
        \)
        define the naive Bayes map $\nb$ by
        \[
            \nb(\{x_1,\ldots,x_m\})
            \triangleq
            \prod_{j=1}^d
            \left(
                \frac{1}{m}\sum_{i=1}^m \delta_{x_{i,j}}
            \right).
        \]
        Given a partition \(\sA\) of \([n]\) into \(k\) blocks, SME constructs a factorizable empirical measure from each block and mixes these measures according to their relative block sizes:
        \begin{equation}\label{eqn:basic-nb-estimator}
            \hat{\mu}_{\mathrm{SME}}
            \triangleq
            \sum_{A \in \sA} \frac{|A|}{n}\, \nb\!\left(\{X_i : i \in A\}\right).
        \end{equation}
        Given the training data, the estimator is therefore completely specified by \(\sA\). The subsequent subsections describe how this partition is selected.

        An important property of \(\hat{\mu}_{\mathrm{SME}}\) is that, by construction, it matches every coordinate marginal of the empirical distribution exactly, regardless of the partition. Writing \(\nu^{(j)}\) for the \(j\)-th coordinate marginal of a measure \(\nu\), one has, for each \(j\in[d]\),        
        \begin{align}
            \hat{\mu}_{\mathrm{SME}}^{(j)}
            &=\sum_{A \in \sA} \frac{|A|}{n}\, \nb\!\left(\{X_i : i \in A\}\right)^{(j)} \\
            &= \sum_{A \in \sA} \frac{|A|}{n} \left( \prod_{r=1}^d \left( \frac{1}{|A|} \sum_{\ell \in A} \delta_{X_{\ell,r}} \right) \right)^{(j)} \\
            &= \sum_{A \in \sA} \frac{|A|}{n} \left( \frac{1}{|A|} \sum_{\ell \in A} \delta_{X_{\ell,j}} \right) \\
            &= \frac{1}{n}\sum_{i=1}^n \delta_{X_{i,j}}
            = \hat{\mu}_n^{(j)}.
        \end{align}
        Thus, the coordinate marginals of the estimator are independent of the partition and coincide exactly with those of the empirical measure.

        \subsection{Objective Function}
        Let \(\nu\) denote a candidate estimator of the form \eqref{eqn:basic-nb-estimator}. Estimation proceeds by matching one-dimensional projections of pairwise marginals of \(\nu\) to the corresponding empirical projections derived from \(\hat{\mu}_n\). Motivated by the identifiability results of Section~\ref{sec:extending-2d}, the fitting objective aggregates projection discrepancies across selected pairs of coordinates. Let
        \(
        G = ([d],E)
        \)
        be a graph specifying which coordinate pairs are used for fitting. Write each edge \(e\in E\) as \(\{p,q\}\), with \(p<q\). Extending the marginal notation above, let \(\nu^{(p,q)}\) denote the two-dimensional marginal of \(\nu\) on coordinates \(p\) and \(q\). That is, if
        \(
        Z=[Z_1,\ldots,Z_d]^\top\sim\nu,
        \)
        then
        \(
        [Z_p,Z_q]^\top\sim\nu^{(p,q)}.
        \)

        As in Section~\ref{sec:main-theorems}, for $v \in \R^2$ define $\pi_v(x)=\langle v,x\rangle$. Let \(\dist\) denote a metric on probability measures on \(\R\). The Wasserstein metric is a natural choice because it captures the underlying geometry of the distributions and is efficiently computable in one dimension, although the framework is not restricted to this choice. In all experiments, \(\dist\) is the \(1\)-Wasserstein metric. For each edge \(\{p,q\}\in E\), let \(S_{(p,q)}\subset\R^2\) denote the finite set of projection directions used for that coordinate pair. The directions in \eqref{eqn:silhoulette-vector1}--\eqref{eqn:silhoulette-vector2} provide a theoretically justified choice. For $\{p,q\}\in E$ and $v\in S_{(p,q)}$, consider the discrepancy
        \begin{equation}\label{eqn:basic-distance}
            \dist\!\left(
                \pi_v \# \nu^{(p,q)},
                \pi_v \# \hat{\mu}_n^{(p,q)}
            \right).
        \end{equation}
        Writing \(v=[v_1,v_2]^\top\), the empirical term is computed from the projected samples
        \[
            v_1X_{i,p}+v_2X_{i,q},
            \qquad i\in[n],
        \]
        thereby reducing the comparison to a one-dimensional problem.

        SME minimizes an aggregate of terms of the form \eqref{eqn:basic-distance} over the selected graph edges and projection directions. In the experiments of Section~\ref{sec:experiments}, using substantially fewer than the theoretically suggested \(2k\) projection directions for each coordinate pair worked well in practice. The experiments on benchmark datasets also used non-dense but well-connected graphs. The terms were aggregated using their mean, although alternative aggregation schemes are possible.
        \subsection{Computation of Projected Marginals}
        This subsection describes how to compute the pushforward appearing in \eqref{eqn:basic-distance} for a candidate estimator of the form \eqref{eqn:basic-nb-estimator}. Fix a partition \(\sA\) of the data indices and a coordinate pair \((p,q)\). Consider
        \begin{equation}
            \pi_{[v_1,v_2]}\#
            \left(
                \sum_{A \in \sA}
                \frac{|A|}{n}
                \nb\!\left(\{X_i : i \in A\}\right)
            \right)^{(p,q)}.
        \end{equation}
        This is the pushforward obtained by first taking the \((p,q)\)-marginal of the SME estimator and then projecting linearly along \([v_1,v_2]^\top\). Pushforwards are linear operators on the space of finite signed measures, and taking the \((p,q)\)-marginal simply removes the remaining coordinates. Hence
        \begin{align}
            \pi_{[v_1,v_2]}\#
            \left(
                \sum_{A \in \sA}
                \frac{|A|}{n}
                \nb\!\left(\{X_i : i \in A\}\right)
            \right)^{(p,q)} 
    &=
    \pi_{[v_1,v_2]}\#
    \left(
        \sum_{A \in \sA}
        \frac{|A|}{n}
        \nb\!\left(\{[X_{i,p},X_{i,q}]^\top : i \in A\}\right)
    \right) \\
    &=
    \sum_{A \in \sA}
    \frac{|A|}{n}
    \pi_{[v_1,v_2]}\#
    \nb\!\left(\{[X_{i,p},X_{i,q}]^\top : i \in A\}\right).
        \end{align}
        Thus, the projected SME marginal can be computed blockwise. That is, for each \(A\in\sA\), compute
        \[
            \pi_{[v_1,v_2]}\#
            \nb\!\left(\{[X_{i,p},X_{i,q}]^\top : i \in A\}\right),
        \]
        and then mix these measures using the weights \(|A|/n\).

        Fix a block \(A\in\sA\). Then
        \begin{align}
            &\pi_{[v_1,v_2]}\#
            \nb\!\left(\{[X_{i,p},X_{i,q}]^\top:i\in A\}\right) \\
            &=
            \pi_{[v_1,v_2]}\#
            \left(
                \left(\frac{1}{|A|}\sum_{i\in A}\delta_{X_{i,p}}\right)
                \times
                \left(\frac{1}{|A|}\sum_{j\in A}\delta_{X_{j,q}}\right)
            \right) \\
            &=
            \left(\frac{1}{|A|}\sum_{i\in A}\delta_{v_1X_{i,p}}\right)
            *
            \left(\frac{1}{|A|}\sum_{j\in A}\delta_{v_2X_{j,q}}\right) \\
            &=
            \frac{1}{|A|^2}
            \sum_{i\in A}\sum_{j\in A}
            \delta_{v_1X_{i,p}+v_2X_{j,q}},
        \end{align}
        where \( * \) denotes convolution of measures.
        Substituting this expression back into the full mixture and summing over the blocks yields
        \begin{equation}\label{eqn:full-nb-mix-projcetion}
            \pi_{[v_1,v_2]}\#
            \left(
                \sum_{A \in \sA}
                \frac{|A|}{n}
                \nb\!\left(\{X_i : i \in A\}\right)
            \right)^{(p,q)}
            =
            \sum_{A \in \sA}
            \frac{1}{|A|\,n}
            \sum_{i \in A}
            \sum_{j \in A}
            \delta_{v_1 X_{i,p} + v_2 X_{j,q}}.
        \end{equation}
        For comparison, the projected empirical marginal of $\hat{\mu}_n$ is
        \begin{equation} \label{eqn:empirical-projection}
            \pi_{[v_1,v_2]}\#
            \hat{\mu}_n^{(p,q)}
            =
            \frac{1}{n}
            \sum_{i=1}^n
            \delta_{v_1 X_{i,p} + v_2 X_{i,q}}.
        \end{equation}
        For each \(\{p,q\}\in E\) and \(v=[v_1,v_2]^\top\in S_{(p,q)}\), the measures in \eqref{eqn:full-nb-mix-projcetion} and \eqref{eqn:empirical-projection} are compared using \(\dist\). These discrepancies are then aggregated to form the SME fitting objective.
        \subsection{Optimization}
        With the objective defined, estimation reduces to selecting a partition \(\sA\) that minimizes the aggregate discrepancy over graph edges and projection directions. When partitioning the data into \(k\) blocks, it is convenient to represent a partition by an assignment vector \(z\in[k]^n\), where \(z_i=r\) indicates that sample \(X_i\) is assigned to block \(r\). The induced partition is specified by
        \[
            A_r \triangleq \{i\in[n]:z_i=r\},
            \qquad r\in[k].
        \]
        An empty assignment class has zero mixture weight and can simply be omitted from \(\sA\). The optimization problem is therefore to minimize \(L(z)\) over assignment vectors \(z\in[k]^n\), where \(L(z)\) is obtained by substituting the corresponding SME estimator into \eqref{eqn:basic-distance} and averaging over the selected graph edges and projection directions. The unconstrained assignment space has cardinality \(k^n\), rendering exhaustive optimization infeasible except in trivial regimes. Nevertheless, the objective possesses structure that permits efficient evaluation and practical optimization strategies.

        First, the loss decomposes across graph edges and projection directions. All terms corresponding to distinct graph edges or projection directions may therefore be evaluated independently and aggregated afterward, making the computation highly parallelizable.

        For a fixed edge \(\{p,q\}\) and projection vector \(v\), evaluating the corresponding term in \(L(z)\) requires constructing the discrete measure \eqref{eqn:full-nb-mix-projcetion} and comparing it to the empirical projection \eqref{eqn:empirical-projection}. Each block \(A_r\) contributes a weighted collection of point masses of the form
        \[
            \frac{1}{|A_r|n}
            \sum_{i\in A_r}\sum_{j\in A_r}
            \delta_{v_1X_{i,p}+v_2X_{j,q}}.
        \]
        Because these weights vary across blocks, it is natural to represent each projected block as a list of support points and associated weights.

        When \(\dist\) is the Wasserstein metric, sorting the support points substantially reduces evaluation time. The empirical projection \eqref{eqn:empirical-projection} can be computed once and cached for every combination of graph edge and projection direction. Block-level contributions can also be cached: when a local update changes a single assignment \(z_i\), only the two affected blocks and their corresponding contributions to \eqref{eqn:full-nb-mix-projcetion} must be recomputed.

        The convolution within each block incurs \(O(|A_r|^2)\) cost in a naive implementation. This cost can be reduced by discretizing the values \(X_{i,j}\) during the optimization phase, for example by rounding to a fixed grid, thereby replacing the \(|A_r|\) sample values with a smaller set of weighted representatives. The projected support can be reduced further by choosing projection coefficients that encourage different coordinate combinations to yield the same projected value. For example, if the coordinate values are rounded to \(c\mathbb Z\), with \(c>0\), and the projection families in \eqref{eqn:silhoulette-vector1}--\eqref{eqn:silhoulette-vector2} use \(a=\tfrac12\) and \(b_1=b_2=1\), all scaled coordinate values lie on \((c/2^k)\mathbb Z\). Coincident projected point masses can be combined into a single point mass at their shared location by summing their weights. This aggregation can substantially reduce the effective support size of each component, particularly when the coordinate values contain many repetitions or are discretized onto a coarse grid. Once the partition has been determined, the original full-precision values may be reinstated for downstream use. In the experiments of Section~\ref{sec:experiments}, this approximation yielded substantial computational speedups without materially degrading performance.

        Additional computational savings may be obtained by using fewer projection directions and sparser graphs, thereby reducing the number of Wasserstein distances that must be evaluated. In the experiments of Section~\ref{sec:experiments}, substantially fewer than the theoretically suggested \(2k\) directions per coordinate pair and relatively sparse but well-connected graphs both worked well in practice.

        \subsection{Utilizing the SME Estimator}\label{sec:using-estimator}
        After fitting, SME produces a probability measure of the form
        \begin{equation}\label{eqn:sme-full-form}
            \hat{\mu}_{\mathrm{SME}}
            =
            \sum_{A \in \sA}
            \frac{|A|}{n}
            \prod_{j=1}^d
            \left(
                \frac{1}{|A|}
                \sum_{\ell \in A}
                \delta_{X_{\ell,j}}
            \right).
        \end{equation}
        When expanded, this representation contains
        \(
        \sum_{A\in\sA}|A|^d
        \)
        point-mass terms and may therefore have combinatorially large support. Furthermore, unless all blocks contain equal numbers of samples, the terms contributed by different blocks have different weights. Consequently, \(\hat{\mu}_{\mathrm{SME}}\) cannot simply be treated as a plug-in replacement for an ordinary empirical measure, and downstream procedures must account for its mixture-of-products structure. This subsection describes several practical ways to work with estimators of the form \eqref{eqn:sme-full-form}.

        \paragraph{Sampling}
        Despite the combinatorial expansion suggested by \eqref{eqn:sme-full-form}, sampling from \(\hat{\mu}_{\mathrm{SME}}\) is efficient due to its mixture-of-products structure. To generate a sample, first select a block \(A\in\sA\) with probability \(|A|/n\). Then, independently for each coordinate \(j\in[d]\), draw an index \(\ell_j\) uniformly from \(A\) and set
        \(
        Z_j=X_{\ell_j,j}.
        \)
        The resulting vector \(Z=[Z_1,\ldots,Z_d]^\top\) is distributed according to \(\hat{\mu}_{\mathrm{SME}}\). Sampling requires only a single draw from the mixture weights together with \(d\) independent draws from the selected block and does not require enumerating the \(|A|^d\) point-mass terms contributed by that block.

        \paragraph{Density Estimation}
        The representation \eqref{eqn:sme-full-form} naturally gives rise to a density estimator. For each block \(A\in\sA\) and coordinate \(j\in[d]\), use the one-dimensional sample
        \(
        (X_{\ell,j})_{\ell\in A}
        \)
        to construct a density estimate \(\hat{p}_{A,j}\) using any suitable one-dimensional method, such as a fitted Gaussian density, a kernel density estimator, or even a one-dimensional normalizing flow. Combining these estimates yields
        \begin{equation}\label{eqn:lr-kde}
            \hat{p}(x_1,\ldots,x_d)
            =
            \sum_{A\in\sA}
            \frac{|A|}{n}
            \prod_{j=1}^d
            \hat{p}_{A,j}(x_j).
        \end{equation}
        The estimator \eqref{eqn:lr-kde} has the mixture-of-products form \eqref{eqn:basic-model-measures-d=general}, with each block \(A\) corresponding to a component, \(|A|/n\) serving as its mixture weight, and \(\hat{p}_{A,j}\) estimating the density of the corresponding component marginal probability measure \(\mu_{i,j}\). The one-dimensional estimators can be fitted independently across blocks and coordinates, allowing straightforward parallelization.

        A further useful property of mixture-of-products densities is that they permit efficient conditional density evaluation and sampling.
        Consider a density
        \begin{equation}\label{eqn:mixture-of-products-density}
            p(x)=\sum_{i=1}^k w_i\prod_{j=1}^d p_{i,j}(x_j),
        \end{equation}
        where \(w_i>0\), \(\sum_{i=1}^k w_i=1\), and each \(p_{i,j}\) is a probability density function. Let \(X\sim p\). For \(S\subset[d]\), let \(X_S\) denote the tuple of coordinates indexed by \(S\), listed in increasing index order, and write \(S^c=[d]\setminus S\). For conditioning values \(y_S\) with positive marginal density,
        \begin{align*}
            p(x_{S^c}\mid X_S=y_S)
            &\propto
            \sum_{i=1}^k
            w_i
            \left(
                \prod_{j\in S}p_{i,j}(y_j)
            \right)
            \left(
                \prod_{j\in S^c}p_{i,j}(x_j)
            \right) \\
            &=
            \sum_{i=1}^k
            \tilde w_i(y_S)
            \prod_{j\in S^c}p_{i,j}(x_j),
        \end{align*}
        where \(\tilde w_i(y_S)\triangleq w_i\prod_{j\in S}p_{i,j}(y_j)\) is the unnormalized conditional weight of component \(i\).
        Therefore, the conditional density retains the mixture-of-products form
        \[
            p(x_{S^c}\mid X_S=y_S)
            =
            \sum_{i=1}^k
            \frac{\tilde w_i(y_S)}
            {\sum_{\ell=1}^k\tilde w_\ell(y_S)}
            \prod_{j\in S^c}p_{i,j}(x_j).
        \]
        Thus, conditioning changes only the mixture weights, leaving each component's marginals on the remaining coordinates unchanged. Notably, a single fitted density of the form \eqref{eqn:mixture-of-products-density} can be used for density evaluation and sampling, both unconditionally and conditionally, without fitting separate models or resorting to approximate inference. Conditioning on rectangular events is similarly tractable, since the required component probabilities can be computed from coordinate-wise CDFs.

        \paragraph{Distance-Based Methods}
        Many procedures in statistics, such as \(k\)-nearest-neighbor density estimation and anomaly detection, are based on order statistics of distances \cite{biau15}. For a target distribution \(\mu\) and a test point \(x\), such methods implicitly estimate the smallest radius \(r\) such that
        \begin{equation} \label{eqn:knn}
            \mathbb{P}_{X\sim\mu}\!\left(\|X-x\|\le r\right)
            \approx
            \frac{k}{n},
        \end{equation}
        that is, the ball centered at \(x\) contains approximately a fraction \(k/n\) of the total mass.

        In standard \(k\)-nearest-neighbor methods, such radii are estimated directly from the empirical measure \(\hat{\mu}_n\) through distances to the observed samples. For \(\hat{\mu}_{\mathrm{SME}}\), directly computing such radii, or even evaluating
        \(
        \mathbb{P}_{Z\sim\hat{\mu}_{\mathrm{SME}}}
        \!\left(\|Z-x\|\le r\right)
        \),
        may be costly because the estimator can have combinatorially many support points. However, under the \(\ell_\infty\) norm, the product structure yields a useful simplification. The event \(\|Z-x\|_\infty\le r\) is equivalent to the coordinate-wise constraints \(|Z_j-x_j|\le r\) for all \(j\in[d]\). Writing \(\ind(\cdot)\) for the indicator function, \eqref{eqn:sme-full-form} implies
        \begin{equation}
            \mathbb{P}_{Z\sim\hat{\mu}_{\mathrm{SME}}}
            \!\left(\|Z-x\|_\infty\le r\right)
            =
            \sum_{A\in\sA}
            \frac{|A|}{n}
            \prod_{j=1}^d
            \left(
                \frac{1}{|A|}
                \sum_{\ell\in A}
                \ind\left(|X_{\ell,j}-x_j|\le r\right)
            \right).
        \end{equation}
        Thus, evaluating \(\ell_\infty\)-ball probabilities reduces to computing, for each block, the one-dimensional empirical interval probabilities, multiplying them across coordinates, and taking the convex combination of the resulting products with weights \(|A|/n\). This avoids explicitly enumerating the \(|A|^d\) coordinate recombinations within each block. The radius \(r\) in \eqref{eqn:knn} can then be found efficiently by bisection.

        For other norms, such as \(\ell_2\), the ball event does not decouple across coordinates, but the corresponding probabilities can instead be approximated by sampling \(Z\sim\hat{\mu}_{\mathrm{SME}}\) and forming the empirical CDF of \(\|Z-x\|\). The Dvoretzky--Kiefer--Wolfowitz inequality provides uniform control of this approximation as a function of \(r\).

        \section{Experiments} \label{sec:experiments}

        This section presents evaluations of SME on synthetic and real-world datasets.  The synthetic experiment tests whether it recovers the ground-truth rank-one components of an identifiable mixture-of-products model, while the real-world experiments assess its usefulness for downstream density estimation. The SME fitting objective uses the \(1\)-Wasserstein distance in all experiments.

        \subsection{Mixture Recovery on a Synthetic Dataset}
        To assess whether SME can recover an identifiable mixture-of-products structure from projected marginals, consider the following synthetic example. For an interval \(I\subset\R\), let \(\unif(I)\) denote the uniform probability measure on \(I\), and set
        \begin{align*}
            \nu_1 &= \unif\!\left(\left[-\tfrac{5}{6},\tfrac{5}{6}\right]\right),\\
            \nu_2 &= \tfrac{1}{2}\,\unif\!\left(\left[-\tfrac{3}{2},-\tfrac{1}{2}\right]\right) + \tfrac{1}{2}\,\unif\!\left(\left[\tfrac{1}{2},\tfrac{3}{2}\right]\right),\\
            \nu_3 &= \tfrac{1}{2}\,\unif\!\left(\left[-\tfrac{13}{6},-\tfrac{7}{6}\right]\right) + \tfrac{1}{2}\,\unif\!\left(\left[\tfrac{7}{6},\tfrac{13}{6}\right]\right).
        \end{align*}
        Define the three mixture components by
        \[
            \mu_1=\nu_1\times\nu_1,\qquad
            \mu_2=\nu_2\times\nu_2,\qquad
            \mu_3=\nu_3\times\nu_3,
        \]
        and let
        \begin{equation}
            \label{eqn:synth-exp-mixture}
            \mu=\frac{1}{3}\sum_{i=1}^3\mu_i.
        \end{equation}
        The mixture components $\mu_1,\mu_2,\mu_3$ have overlapping supports, but the component marginals \(\nu_1,\nu_2,\nu_3\) satisfy the joint irreducibility assumption, so by Proposition~\ref{prop:ji-ident}, the representation of $\mu$ in \eqref{eqn:synth-exp-mixture} is identifiable. Each component is symmetric about both coordinate axes. The ``Ground truth'' panel of Figure~\ref{fig:synthetic} shows 900 samples, with 300 drawn from each component. Point color and marker indicate the true component membership. The components exhibit nontrivial overlap while retaining regions of support unique to each component.

        The remaining panels in Figure~\ref{fig:synthetic} show partitions obtained by initializing component labels at random and running the SME optimization until a local minimum was reached. The parameters \(a=-2\) and \(b_1=b_2=1\) were used in \eqref{eqn:silhoulette-vector1} and \eqref{eqn:silhoulette-vector2}, with \(k=1,2,3\) in the projection formulas for the two-, four-, and six-projection cases, respectively. The objective was the mean Wasserstein distance over the projections. The negative sign gives projection directions with both positive and negative slopes. In these synthetic experiments, projection directions were not normalized to unit length, so the larger vector norms associated with higher powers of \(a\) effectively gave those directions greater weight in the objective. This was intended to place greater emphasis on newly introduced higher-order projections as additional projection directions were added. The one-projection case used only the projection onto \([1,1]^\top\). As noted in Section~\ref{ssec:form-of-the-estimator}, the coordinate-axis marginals are matched exactly by construction, so these experiments implicitly incorporate the axis projections in addition to the explicitly listed projection directions. For visualization purposes, component labels are permuted to align with the ground-truth labeling.

        \begin{figure}[hbtp]
            \centering
            \includegraphics[width=0.32\linewidth]{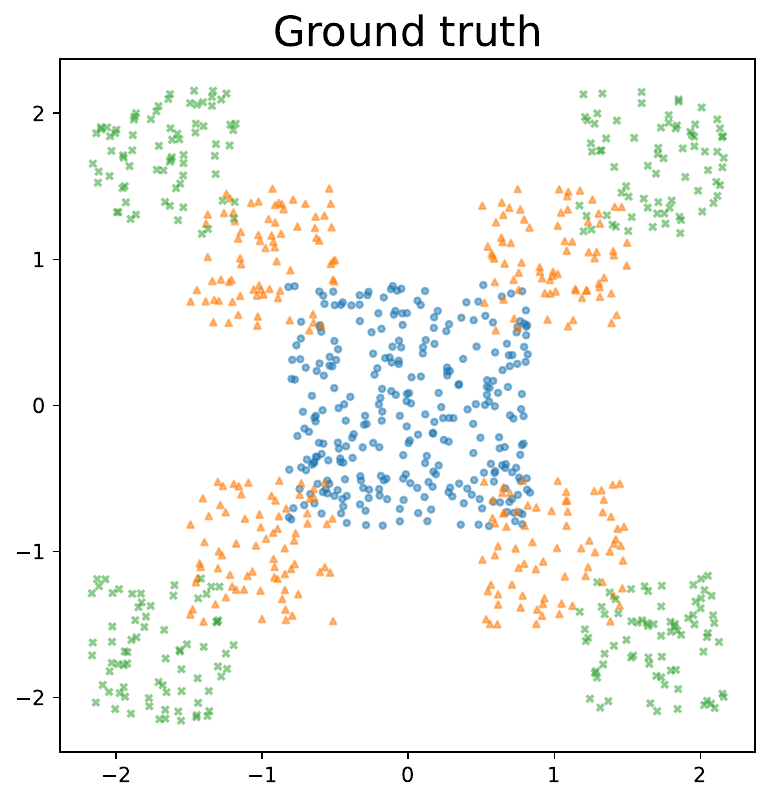}
            \includegraphics[width=0.32\linewidth]{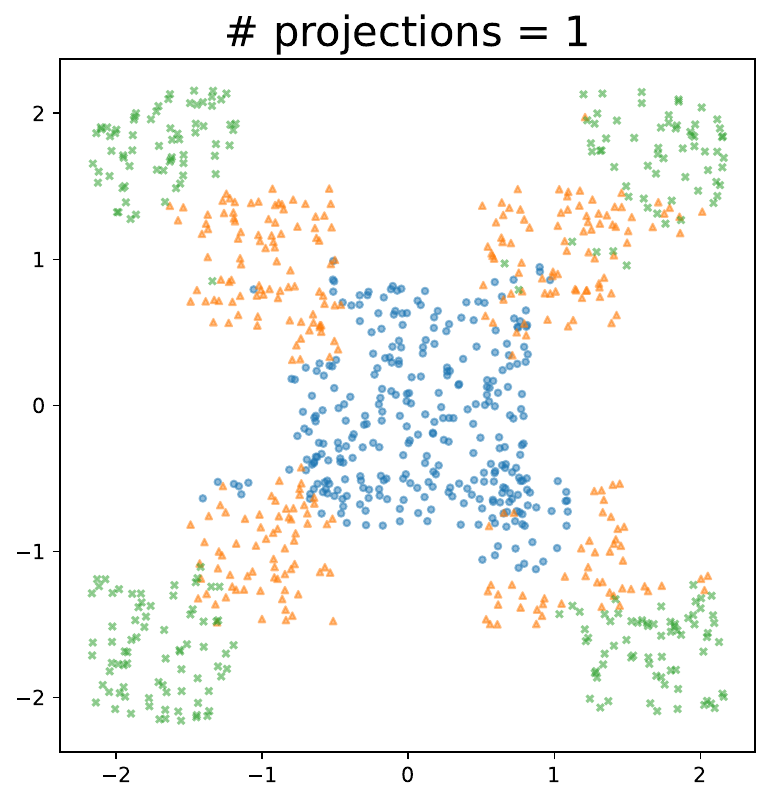}
            \includegraphics[width=0.32\linewidth]{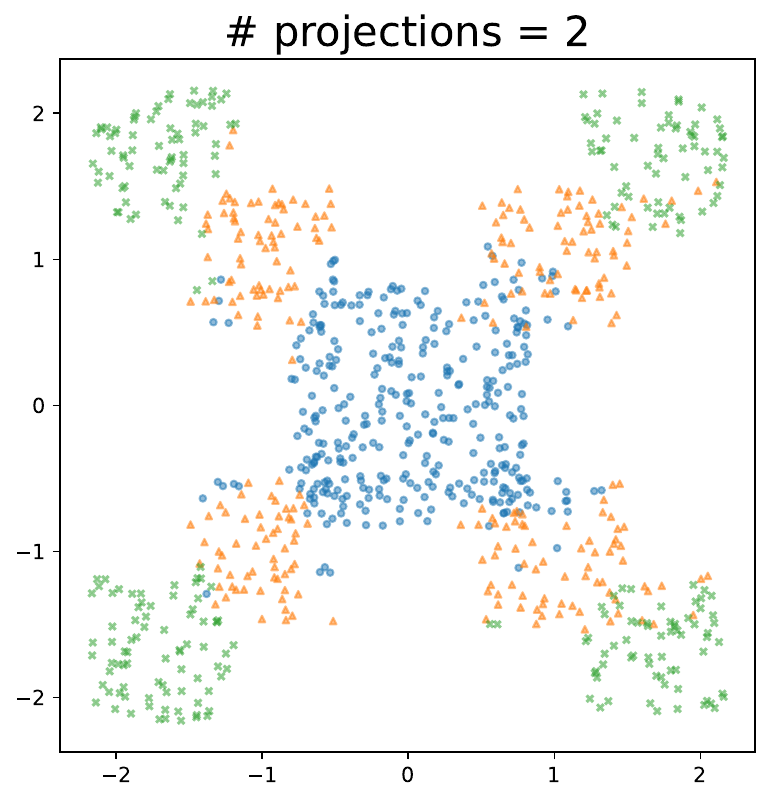}
            \vspace{0.5em}
            \includegraphics[width=0.32\linewidth]{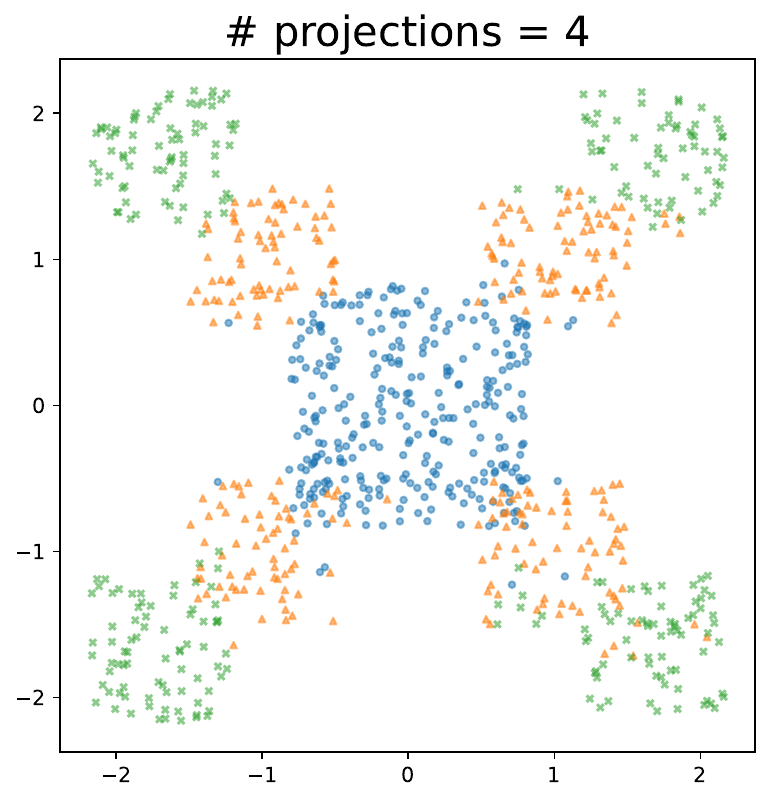}
            \includegraphics[width=0.32\linewidth]{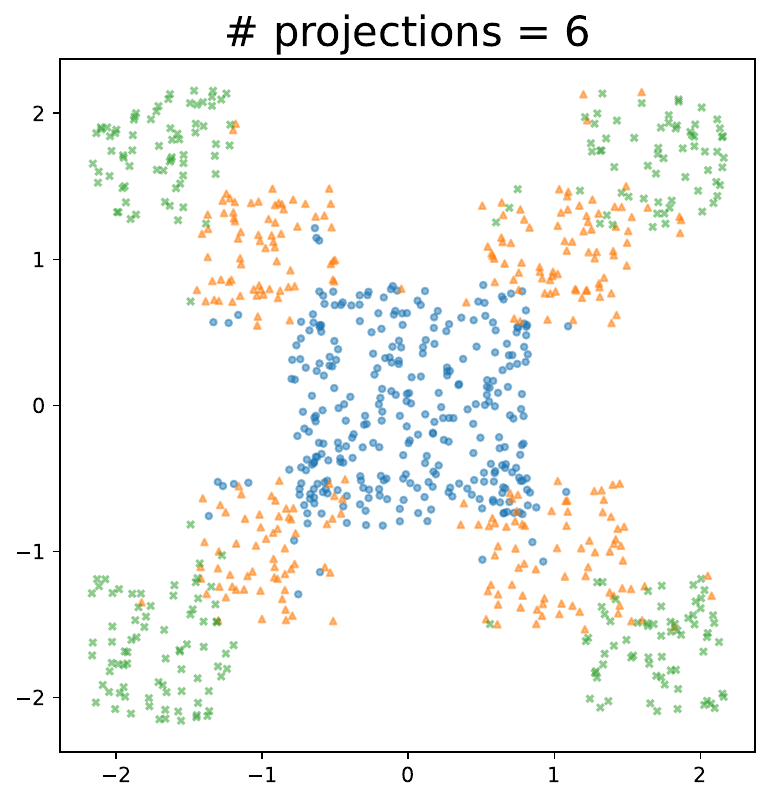}
            \caption{Ground-truth component assignments and partitions recovered by SME using \(1\), \(2\), \(4\), and \(6\) non-axis projections. Colors and markers indicate component labels. Every fit also matches the empirical coordinate marginals exactly by construction.}
            \label{fig:synthetic}
        \end{figure}

        As seen in Figure~\ref{fig:synthetic}, a single projection recovers the main shapes and locations of the underlying components, although some misassignments remain, particularly between the central component (blue circles) and the adjacent component (orange triangles). The recovered central and adjacent clusters extend into one another's regions, most visibly near the upper-left and lower-right corners of the central square. This produces directional leakage rather than errors confined to the local overlap region. Notably, the leakage occurs approximately orthogonally to the sole projection direction \([1,1]^\top\).

        Adding a second non-axis projection substantially reduces this directional leakage and improves agreement with the ground truth. Beyond two projections, the changes are more modest and do not show a uniform improvement: the four- and six-projection fits still retain some errors near overlap regions and a small number of isolated misassignments. This raises the question of whether the theoretical count of \(2k=6\) projections is necessary for this particular model or primarily reflects a worst-case requirement.

        \subsection{Density Estimation on Real-World Datasets}\label{sec:exp-real-world}

        This subsection evaluates SME and several competing methods on real-world density estimation tasks. For each split, performance is measured by the mean log-likelihood on the test set (equivalently, negative test cross-entropy). Experiments were conducted on the HEPMASS \cite{hepmass} and kin8nm \cite{kin} datasets. For HEPMASS, the preprocessed dataset from \cite{papamakarios17}, which excludes dimensions containing discrete components and standardizes each coordinate to have mean zero and unit variance, was used. For kin8nm, each coordinate was likewise standardized using the mean and variance of the full dataset before splitting. Each dataset was split into training, validation, and test sets, and all experiments were repeated across 20 random splits. Dataset dimensions and training set sizes are included in Table~\ref{tab:results} for reference. The validation split is used somewhat differently across methods depending on the corresponding training procedure, as described below in Section~\ref{sec:methods}. Additional experimental details are provided in Appendix~\ref{appx:experimental}.

        \subsubsection{Density Estimation Methods}\label{sec:methods}
        The experiments compare the SME-based density estimator described in Section~\ref{sec:using-estimator} with several standard density estimation baselines.

        \paragraph{Silhouette Mixture Estimation (SME)}
        SME used the nonparametric density estimation procedure described in Section~\ref{sec:using-estimator}. For each dataset, a random \(4\)-regular graph was constructed on the \(d\) coordinates, giving \(2d\) edges. For every edge, the same four unit-norm projection directions were used. These were obtained from \eqref{eqn:silhoulette-vector1} and \eqref{eqn:silhoulette-vector2} with \(a=-2\) and \(b_1=b_2=1\), followed by normalization. Four projections are sufficient according to the theory when \(k=2\), although the experiments also considered larger numbers of mixture components.

        The number of mixture components was selected from \(k\in\{2,3,4\}\) by maximizing validation likelihood. This range was chosen for computational convenience but did not appear to severely constrain model selection: the values selected by validation averaged \(3.1\) for HEPMASS and \(2.2\) for kin8nm. To initialize the component assignments, the training samples were assigned independently and uniformly at random to the \(k\) components. The algorithm then processed the samples repeatedly in a fixed order. For each sample, the objective was evaluated under every possible component assignment. After all assignments had been evaluated, the sample was assigned to the component yielding the lowest objective value. The procedure terminated when a complete pass through the training set produced no improvement, that is, when it reached a local minimum with respect to single-sample reassignments.

        After the partition was obtained, a one-dimensional Gaussian kernel density estimator was fitted to each component-coordinate pair, and the resulting estimators were combined using the procedure described in Section~\ref{sec:using-estimator}. For each component and coordinate, the bandwidth was selected by \(15\)-fold cross-validation over a logarithmic grid of \(80\) values ranging from \(10^{-3}\) to \(1\). The separate validation split was not used for bandwidth selection.
        \paragraph{Kernel Density Estimator (KDE)}
        A standard Gaussian KDE was used, with the bandwidth selected using the validation dataset over the same logarithmic grid used for SME.
        \paragraph{Naive Bayes KDE (NB-KDE)} This method fits a one-dimensional Gaussian kernel density estimator to each coordinate and uses the product of these estimators as the final density estimate. It is exactly equivalent to the SME density estimation procedure above with all samples assigned to a single component.
        \paragraph{Gaussian Mixture Model (GMM)}
        A Gaussian mixture model was fitted using the EM implementation in scikit-learn \cite{sklearn}, with default settings except for the number of components. The candidate models ranged from a single multivariate Gaussian density to a \(32\)-component Gaussian mixture, with the number of components selected using the validation log-likelihood.
        \paragraph{Axis-Aligned Gaussian Mixture Model (GMM-AA)}
        The axis-aligned GMM used exactly the same fitting and model-selection procedure as the GMM above, except that each component covariance matrix was constrained to be diagonal.
        \paragraph{Masked Autoregressive Flow (MAF) \cite{papamakarios17}}
        Masked autoregressive flows were implemented using the Zuko Python library \cite{zuko}. Architecture and training hyperparameters were chosen through preliminary experiments, and validation likelihood was used for early stopping. Complete implementation details are provided in Appendix~\ref{appx:experimental}.

        \subsubsection{Choice of Baselines}
        The competing methods were chosen to reflect different structural assumptions relative to SME. KDE and NB-KDE represent two extremes. If \(k=n\) and each sample is assigned to its own component, then the SME estimator reduces to the empirical distribution and the silhouette objective is zero. Smoothing this empirical distribution with Gaussian kernels yields KDE. At the opposite extreme, when \(k=1\), the model reduces to a single-component factorized estimator, corresponding to NB-KDE. GMM-AA provides a parametric analogue with axis-aligned covariance matrices, preserving the assumption that coordinates are independent within each mixture component. Together, these three methods compare SME with its two extreme cases and with a parametric model having the same within-component factorization. The full-covariance GMM, which includes a single Gaussian model as a special case, serves as a representative parametric clustering and density estimation baseline. Finally, MAF is included as a representative deep density estimator, although the relatively small training sets considered here are not particularly favorable to such models.

        \subsubsection{Results and Discussion}
        \begin{table}[b]
            \centering
            \begin{tabular}{l c c}
                \hline
                Dataset $(d,n)$ 
                & HEPMASS $(21,250)$ 
                & kin8nm $(9,1000)$ 
                \\
                \hline
                SME (proposed)
                & $\mathbf{-27.36 \pm 0.13}$  {\scriptsize $(3.1 \pm 0.8)$}
                & $\mathbf{-11.495 \pm 0.051}$  {\scriptsize $(2.2 \pm 0.4)$} 
                \\
                KDE    
                & $-30.71 \pm 0.09$ {\scriptsize $(2 \times 10^{-6})$}
                & $-12.386 \pm 0.016$ {\scriptsize $(2 \times 10^{-6})$}
                \\
                NB-KDE 
                & $-29.24 \pm 0.70$ {\scriptsize $(2 \times 10^{-6})$}
                & $-11.563 \pm 0.012$ {\scriptsize $(4 \times 10^{-5})$}
                \\
                GMM
                & $-28.19 \pm 0.25$ {\scriptsize $(2 \times 10^{-6})$}
                & $-12.176 \pm 0.059$ {\scriptsize $(2 \times 10^{-6})$}
                \\
                GMM-AA    
                & $-27.73 \pm 0.12$ {\scriptsize $(2 \times 10^{-6})$}
                &  $-12.293 \pm 0.037$ {\scriptsize $(2 \times 10^{-6})$}
                \\
                MAF    
                & $-28.57 \pm 0.55$ {\scriptsize $(2 \times 10^{-6})$}
                & $-12.216 \pm 0.107$ {\scriptsize $(2 \times 10^{-6})$} 
                \\
                \hline
            \end{tabular}
            \caption{
                Test log-likelihood (mean \(\pm\) standard deviation across \(20\) random splits). For SME, parentheses report the validation-selected number of components \(k\) (mean \(\pm\) standard deviation); for competing methods, parentheses report two-sided Wilcoxon signed-rank \(p\)-values from paired comparisons with SME. Bold indicates the best mean in each column.
            }
            \label{tab:results}
        \end{table}

        Table~\ref{tab:results} summarizes performance across the \(20\) matched random splits and reports \(p\)-values from two-sided Wilcoxon signed-rank tests comparing SME with each competing method. A small \(p\)-value provides evidence of a systematic difference in performance between the methods. For SME, the table also reports the mean and standard deviation across splits of the number of components \(k\) selected using the validation data.

        SME achieves the highest mean test log-likelihood for both datasets in the experimental settings considered here. These experiments focus on a sample-limited, moderate-dimensional regime in which flexible density estimation remains desirable but fully high-dimensional models can be difficult to estimate reliably. This differs substantially from the HEPMASS setting commonly used to evaluate normalizing flows: the evaluation in \cite{papamakarios17} uses \(315{,}123\) training observations, compared with \(250\) here, and demonstrates strong flow performance in that large-sample setting. The MAF results in Table~\ref{tab:results} should therefore be understood as describing a different regime, rather than as a general comparison with flows under conventional large-sample benchmark conditions.

        For HEPMASS, the strongest competing method is GMM-AA, while for kin8nm it is NB-KDE. This is consistent with the structural assumptions of these methods: GMM-AA combines mixture structure with within-component coordinate independence, while NB-KDE uses a single fully factorized nonparametric density. The full-covariance GMM must estimate substantially more parameters to capture dependence within each component, while multivariate KDE is well known to be particularly susceptible to the curse of dimensionality. SME occupies an intermediate position, retaining flexibility through its mixture structure while reducing estimation within each component to one-dimensional problems.

        Runtime details and further diagnostics, including the relationship between the silhouette objective and downstream density estimation performance, are provided in Appendix~\ref{appx:experimental}.

        \section{Sharpness of the Main Theorems}\label{sec:sharpness}
        This section examines the sharpness of the identifiability and homeomorphism results in Section~\ref{sec:main-theorems}. The first two results concern the number and geometry of the projection directions required for identifiability. Theorem~\ref{thm:2k-necessary} shows that the projection count \(2k\) in Theorem~\ref{thm:main} and Corollary~\ref{cor:main-multiview} is optimal, while Proposition~\ref{prop:2k-geometry-necessary} shows that, even with \(2k\) projections, the directions cannot be chosen arbitrarily. The remaining results show that the tail assumption in Theorem~\ref{thm:main} and the boundedness restriction in Theorem~\ref{thm:weakstar-homeo} cannot in general be omitted. Proofs of all results in this section are given in Section~\ref{sec:proofs-sharpness}. 
        \begin{samepage}
            \begin{theorem} \label{thm:2k-necessary}
                Let $k \in \N$ and let $v_1,\ldots,v_{2k-1} \in \R^2$. There exist probability measures \(\mu_{i,j},\mu'_{i,j}\in\sP(\R)\) for \(i\in[k]\) and \(j\in[2]\), each admitting a smooth, compactly supported density, and weights \(w_i,w'_i\geq0\) for \(i\in[k]\), satisfying                \[
                    \sum_{i=1}^k w_i = \sum_{i=1}^k w'_i = 1,
                \]
                such that the probability measures
                \[
                    \mu = \sum_{i=1}^k w_i \mu_{i,1} \times \mu_{i,2}, \qquad \mu' = \sum_{i=1}^k w'_i \mu'_{i,1} \times \mu'_{i,2}
                \]
                satisfy
                \[
                    \pi_{v_\ell}\#\mu = \pi_{v_\ell}\#\mu' \qquad \text{for all } \ell\in [2k-1],
                \]
                while $\mu \neq \mu'$.
            \end{theorem}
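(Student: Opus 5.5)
The plan is to build a nonzero signed measure \(\nu\) on \(\R^2\) that lies in the common nullspace of the \(2k-1\) projections, and that can be written as \(\nu=\mu-\mu'\) with \(\mu,\mu'\) nonnegative and each a sum of at most \(k\) products of smooth, compactly supported densities. I will use the Fourier characterization \(\widehat{\pi_v\#\nu}(t)=\hat\nu(tv)\). So \(\pi_{v}\#\nu=0\) holds exactly when \(\hat\nu\) vanishes on the line \(\R v\).

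\textbf{Step 1: annihilating each direction.} For each \(\ell\), pick \(u_\ell\ne 0\) with \(u_\ell\perp v_\ell\) (any \(u_\ell\) works if \(v_\ell=0\)), and a scale \(s_\ell\ne0\). The difference measure \(\delta_0-\delta_{s_\ell u_\ell}\) has Fourier transform \(1-e^{-is_\ell\langle\xi,u_\ell\rangle}\), which vanishes on \(\R v_\ell\). Fix a smooth, nonnegative, compactly supported probability density \(g\) on \(\R\). Set
\[
\nu=\Bigl(\mathop{\ast}_{\ell=1}^{2k-1}(\delta_0-\delta_{s_\ell u_\ell})\Bigr)\ast(g\times g).
\]
Then \(\hat\nu\) vanishes on every line \(\R v_\ell\), so all \(2k-1\) projections of \(\nu\) are zero. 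Also \(\nu\ne0\): \(\hat g\) is nonzero near the origin, and the finite product of difference factors vanishes only on finitely many lines, so \(\hat\nu\) is nonzero somewhere near the origin. Finally, \(\hat\nu(0)=0\), so \(\nu(\R^2)=0\).

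\textbf{Step 2: expand and group by \(x\)-coordinate.} Expanding the convolution gives
\[
\nu=\sum_{S\subseteq[2k-1]}(-1)^{|S|}\,\tau_{p_S}(g\times g),\qquad p_S=\sum_{\ell\in S}s_\ell u_\ell,
\]
where \(\tau_p\) denotes translation by \(p\). Each term is a translated product \(g(\cdot-x_S)\times g(\cdot-y_S)\), where \(p_S=(x_S,y_S)\). All terms sharing a common \(x_S\) and a common sign merge into a single nonnegative product \(g(\cdot-x)\times\bigl(\sum g(\cdot-y_S)\bigr)\).

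So I choose the scales to collapse the \(x\)-coordinates. Whenever \(u_\ell\) is not vertical, scale it so that its \(x\)-component equals \(1\). If no \(u_\ell\) is vertical, then \(x_S=|S|\) and the sign of the term is \((-1)^{x_S}\).

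\begin{itemize}
\item The positive terms sit at \(x\in\{0,2,\ldots,2k-2\}\), which is \(k\) values.
\item The negative terms sit at \(x\in\{1,3,\ldots,2k-1\}\), which is also \(k\) values.
\end{itemize}

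Hence \(\nu=\mu-\mu'\), with \(\mu\) and \(\mu'\) each a sum of \(k\) products of nonnegative smooth, compactly supported measures. If instead no \(u_\ell\) is horizontal, the same argument works with the roles of the coordinates swapped, normalizing the \(y\)-components to \(1\).

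\textbf{Step 3: normalize.} Since \(\nu(\R^2)=0\), we have \(\mu(\R^2)=\mu'(\R^2)=c>0\). Dividing by \(c\) and writing each merged product as a weight times a product of probability densities gives the stated form. Equal projections follow from \(\pi_{v_\ell}\#\nu=0\), and \(\mu\ne\mu'\) follows from \(\nu\ne0\).

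\textbf{Main obstacle: both a horizontal and a vertical \(u_\ell\) occur.} This happens when the \(v_\ell\) include both coordinate axes. Then neither coordinate can be normalized for all directions. A vertical factor shifts only \(y\) while flipping the sign, so the same \(x\)-value receives both signs, and the naive count becomes roughly \(2k-1\) columns, each carrying both signs.

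My first attempt is as follows. Let \(m\le 2k-2\) be the number of non-vertical \(u_\ell\), so at most \(2k-2\) directions shift \(x\). Since a vertical factor \(\delta_0-\delta_{(0,s)}\) equals \(\delta_0\times(\delta_0-\delta_s)\), it acts only on the \(y\)-factor. I would absorb one vertical factor together with one horizontal factor into a single \(2\times2\) block, \(\delta_{(0,0)}-\delta_{(s,0)}-\delta_{(0,t)}+\delta_{(s,t)}\). I would then choose the remaining scales so that the \(x\)-support still splits by parity into \(k\) columns per sign. This may require letting some directions shift \(x\) by \(2\) rather than \(1\).

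Failing that, I would handle the axis directions directly. Matching a horizontal \(v\) means matching the \(x\)-marginal, and matching a vertical \(v\) means matching the \(y\)-marginal. I would then build \(\nu\) from a product measure of the form \(\alpha\times\beta\) with \(\int\alpha=\int\beta=0\), which kills both axis projections automatically. The remaining \(2k-3\) non-axis directions would then be handled by the Step 2 construction with one fewer pair of columns per sign.
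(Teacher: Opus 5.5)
Your treatment of direction sets that do not contain both coordinate axes is correct. It is essentially the paper's argument, namely the first case of Proposition~\ref{prop:balanced-projection-nullspace} with \(m=2k-1\): convolve the difference measures \(\delta_0-\delta_{x_\ell}\) with \(x_\ell\perp v_\ell\), normalize the \(x\)-shifts to \(1\), and split the columns by parity. The only difference is that you smooth by \(g\times g\) up front rather than after normalizing. Two small corrections:
\begin{itemize}
\item The product of the difference factors vanishes on countably many lines, not finitely many. Your nonvanishing argument still survives, because near \(0\) each factor vanishes only on its line through \(0\).
\item When \(v_\ell=0\), pick \(u_\ell\) off both axes, so as not to create the bad case artificially.
\end{itemize}

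The genuine gap is the remaining case, where both a horizontal and a vertical direction occur. You leave this case open, and both of your sketched fixes fail for a structural reason. A vertical factor \(\delta_0-\delta_{(0,t)}=\delta_0\times(\delta_0-\delta_t)\), or in your second idea a factor \(\alpha\times\beta\) with \(\beta(\R)=0\), convolves the \(y\)-factor of every column with a zero-mass measure. So every nonzero column of \(\nu\) (the terms sharing a given \(x\)-shift) has a signed \(y\)-factor, whatever \(x\)-shifts you choose, whether \(1\), \(2\), or anything else. A columnwise sign split then puts one rank-one term per column into each of \(\mu\) and \(\mu'\). Already for \(k=2\), \(v_1=[1,0]^\top\), \(v_2=[0,1]^\top\) and generic \(v_3\), there are three signed columns, hence three terms on each side instead of two. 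In particular, \(\alpha\times\beta\) does not cost ``one pair of columns per sign'': it destroys the sign purity of all columns.

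The missing idea is a decomposition that is not columnwise. In the paper's second case, at least one of the \(m=2k-1\) shifts is \([0,1]^\top\) and the others have first coordinate \(1\). Hence \(\eta=\sum_{j=0}^{m-1}\delta_j\times\rho_j\) with signed \(\rho_j\). Put
\begin{itemize}
\item \(\zeta_0=0\) and \(\zeta_{j+1}=\zeta_j+(-1)^{j+1}\rho_j\),
\item \(\lambda=\sum_{r=0}^m|\zeta_r|\),
\item \(\gamma_r=\lambda+\zeta_r\ge0\).
\end{itemize}
Then \(\rho_j=(-1)^j(\gamma_j-\gamma_{j+1})\). Re-indexing gives \(\eta=\sum_{r=0}^m(-1)^r\alpha_r\times\gamma_r\), with \(\alpha_0=\delta_0\), \(\alpha_r=\delta_{r-1}+\delta_r\) for \(1\le r\le m-1\), and \(\alpha_m=\delta_{m-1}\). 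Even and odd \(r\) give \(\lceil(m+1)/2\rceil=k\) and \(\lfloor(m+1)/2\rfloor=k\) nonnegative product terms. Their \(x\)-factors overlap between \(\mu\) and \(\mu'\), which is exactly what a columnwise split cannot achieve. Normalization and smoothing then go through as in your Steps~2--3. Without this step or an equivalent one, your argument does not prove the theorem for direction sets containing both coordinate axes.
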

        \end{samepage}
        Thus, no collection of \(2k-1\) projection directions can identify all rank-\(\le k\) probability measures. Notably, the lower bound holds even for probability measures whose component marginals have smooth, compactly supported densities, so it is not an artifact of signed measures, irregularity, or heavy tails.

Although the \(2k\) projections match the number of component marginals in the mixture representation, this correspondence does not itself explain the lower bound. A vector-RIP map cannot represent all \(k\)-sparse vectors using fewer than \(k\) scalar measurements. In contrast, an element of \(\sM([0,1])^{2k}\) can be encoded as a single element of \(\sM([0,1])\) by contracting and shifting its entries onto disjoint subintervals through affine pushforwards, then summing them. These operations are linear in the measures, and the disjoint supports make their norms add, giving an isometric encoding. In fact, composing the identifying silhouette of Theorem~\ref{thm:main} with suitable affine rescalings and this concatenation gives an injective linear representation
\[
    \rankmeas_{\le k}([0,1],[0,1])
    \longrightarrow
    \sM([0,1]).
\]
Thus, the lower bound reflects the restriction to projection measurements, rather than a dimensional obstruction arising from the number of component marginals.

        The number of directions is not the only consideration. The following proposition shows that \(2k\) directions can still fail to provide identifiability when arranged in certain symmetric configurations.
        \begin{samepage}\begin{proposition}\label{prop:2k-geometry-necessary}
            Let \(k\in\N\) and let \(a_1,\ldots,a_k>0\) be distinct.
            For \(r\in[k]\), define
            \[
                v_r^+=[a_r,1]^\top,
                \qquad
                v_r^-=[-a_r,1]^\top.
            \]
            Then there exist probability measures
            \(\mu_{i,j},\mu'_{i,j}\in\sP(\R)\) for
            \(i\in[\lfloor k/2\rfloor+1]\) and \(j\in[2]\),
            each admitting a smooth, compactly supported density, and weights
            \(w_i,w_i'\geq0\) for \(i\in[\lfloor k/2\rfloor+1]\), satisfying
            \[
                \sum_{i=1}^{\lfloor k/2\rfloor+1} w_i
                =
                \sum_{i=1}^{\lfloor k/2\rfloor+1} w_i'
                =
                1,
            \]
            such that the probability measures
            \[
                \mu
                =
                \sum_{i=1}^{\lfloor k/2\rfloor+1}
                w_i\,\mu_{i,1}\times\mu_{i,2},
                \qquad
                \mu'
                =
                \sum_{i=1}^{\lfloor k/2\rfloor+1}
                w_i'\,\mu_{i,1}'\times\mu_{i,2}'
            \]
            satisfy
            \[
                \pi_{v_r^+}\#\mu
                =
                \pi_{v_r^+}\#\mu',
                \qquad
                \pi_{v_r^-}\#\mu
                =
                \pi_{v_r^-}\#\mu'
                \qquad
                \text{for all }r\in[k],
            \]
            while \(\mu\neq\mu'\).
        \end{proposition}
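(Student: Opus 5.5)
The plan is to work in Fourier space, reduce the problem to building a function that vanishes on finitely many lines and has low "rank", and handle positivity with a symmetric-convolution transfer trick. I have not completed the construction for general \(k\); the last step below is the main obstacle.

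\textbf{Fourier reduction.} For finite signed measures, \(\pi_v\#\mu=\pi_v\#\mu'\) holds iff \(\hat\mu(tv)=\hat\mu'(tv)\) for all \(t\in\R\). So the goal is two nonnegative sums of \(m\triangleq\lfloor k/2\rfloor+1\) product measures whose Fourier transforms agree on the \(2k\) lines \(\R[\pm a_r,1]^\top\). Suppose every factor is symmetric about the origin. Then each factor's Fourier transform is an even function of its variable, i.e.\ a function of \(x=s^2\) or \(y=u^2\). In these coordinates the \(2k\) lines collapse to the \(k\) rays \(x=b_ry\) with \(b_r=a_r^2\). This symmetric pairing of directions is exactly what the configuration in Proposition~\ref{prop:2k-geometry-necessary} allows us to exploit, and I expect it to roughly halve the number of components needed.

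\textbf{Basic gadget (\(k=1\)).} Let \(\kappa\) be a symmetric probability measure with a smooth compactly supported density, and write \(T_c(x)=cx\). For any probability measures \(\alpha,\beta\) with smooth compactly supported densities,
\[
\pi_{[\pm a,1]}\#\bigl((T_{1/a}\#\kappa*\alpha)\times\beta\bigr)
=T_{\pm1}\#\kappa*T_{\pm a}\#\alpha*\beta
=\pi_{[\pm a,1]}\#\bigl(\alpha\times(\kappa*\beta)\bigr),
\]
using \(T_{-1}\#\kappa=\kappa\). This equality is what moves a symmetric convolution factor from one coordinate to the other while preserving both projections. Choosing \(\alpha,\beta,\kappa\) so that the two products differ (e.g.\ \(\alpha\) not symmetric with the right scaling) gives \(\mu\neq\mu'\) with one component each. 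All factors remain probability measures with smooth, compactly supported densities, so positivity is automatic.

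\textbf{General \(k\).} I would pair the directions \(\{1,2\},\{3,4\},\ldots\), which gives \(\lfloor k/2\rfloor\) pairs plus possibly one leftover. In the \((x,y)\) picture, a term \(F(x)G(y)\) satisfies the constraint for both members of a pair \(x=b_ry\), \(x=b_{r'}y\) exactly when the ratio data is invariant under \(x\mapsto (b_r/b_{r'})x\). This invariance is realized by convolution multipliers of the form \(\hat\kappa(s/a_r)\) versus \(\hat\kappa(s/a_{r'})\).

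The plan is to build \(\mu\) and \(\mu'\) as iterated convolution-transfers. Start from one base product \(\alpha\times\beta\). For each pair of directions, introduce one symmetric \(\kappa_r\) and one extra component that absorbs the discrepancy created on the other lines. The leftover direction (when \(k\) is odd) is handled by the \(k=1\) gadget applied inside the base component. Matching the weights is done by choosing equal total masses on both sides. Smooth compact support is inherited because every factor is a convolution of smooth compactly supported probability measures. Finally, \(\mu\neq\mu'\) is checked by evaluating the Fourier transforms off the \(2k\) lines, e.g.\ along the axis \(u=0\), where the transferred factor \(\hat\kappa\) appears on one side but not the other.

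\textbf{Main obstacle.} The hard step is the combinatorial bookkeeping in the general case: showing that \(\lfloor k/2\rfloor\) extra components per side suffice to cancel all cross-discrepancies across the \(k\) line pairs simultaneously. I expect to organize this as an induction on the number of direction pairs. Adding a new pair multiplies existing factors by the new \(\hat\kappa\)-multipliers and adds one component to each side, and the identity on the earlier lines should be preserved because the new multipliers are symmetric.

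If this induction does not close, the fallback is to perturb around \(\rho\times\rho\) for a bump \(\rho\). One adds small polynomial-multiplier terms \(p(D)\varphi\), with \(\varphi\) supported inside the region where \(\rho\) is bounded below. Their combined Fourier transform would be arranged to vanish on the lines \(x=b_ry\), and nonnegativity would follow for small perturbation size. However, the rank count forced by polynomial multiples of \(\prod_r(x-b_ry)\) seems too large here, so I would try the convolution-transfer induction first.
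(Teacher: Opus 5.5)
Your $k=1$ gadget is correct (the two products differ because their first marginals already differ), and it is the paper's building block. Taking $\kappa=\tfrac12(\delta_{-a}+\delta_a)$, $\alpha=\delta_1$, $\beta=\delta_0$, the difference $(T_{1/a}\#\kappa\ast\alpha)\times\beta-\alpha\times(\kappa\ast\beta)$ is exactly $\tfrac12\Gamma_r$ in the paper's notation.

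The gap is the general-$k$ step. You acknowledge it is unfinished, and the induction you sketch has no mechanism that controls rank. Convolving a rank-$m$ difference $\mu-\mu'$ with a rank-two gadget difference produces $2m$ product terms. Nothing in your scheme (pairing $a_1,a_2$, then $a_3,a_4$, and ``absorbing discrepancies'' with one new component per pair) stops the rank from multiplying at each step.

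Two further points are misattributed. First, earlier projection identities survive convolution with any measure, because $\pi_v\#$ commutes with convolution (Lemma~\ref{lem:pushforward-convolution}). Symmetry of the new multipliers plays no role there; it is needed only so that a single gadget annihilates both $v_r^+$ and $v_r^-$. Second, the count $\lfloor k/2\rfloor+1$ does not come from pairing distinct $a_r$.

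The missing idea is to convolve all $k$ gadgets at once, with one coordinate factor shared by all of them, so that rank grows additively. Fix a single symmetric $\kappa$ and set $\nu_r=(T_{1/a_r}\#\kappa)\times\delta_0-\delta_0\times\kappa$. Your computation gives $\pi_{v_r^\pm}\#\nu_r=0$, so $\eta=\nu_1\ast\cdots\ast\nu_k$ is annihilated by all $2k$ projections. Because $\kappa$ is common to every gadget,
\[
\eta=\sum_{j=0}^k(-1)^{k-j}E_j\times\kappa^{\ast(k-j)},\qquad E_j=\sum_{J\subseteq[k],\,|J|=j}\ \mathop{\ast}\limits_{r\in J}T_{1/a_r}\#\kappa,
\]
with the empty convolution equal to $\delta_0$. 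This is an alternating sum of only $k+1$ nonnegative product measures. The remaining steps are:
\begin{itemize}
\item Splitting by the parity of $k-j$ gives $\eta=\mu-\mu'$ with $\lfloor k/2\rfloor+1$ and $\lceil k/2\rceil$ terms.
\item The masses agree since $\eta(\R^2)=0$.
\item $\eta\neq0$ because $\fourier[\eta](s,0)=\prod_{r=1}^k\bigl(\fourier[\kappa](s/a_r)-1\bigr)\neq0$ for small $s\neq0$.
\item Normalizing, padding, and convolving with a smooth product kernel (Lemmas~\ref{lem:product-convolution} and \ref{lem:pushforward-convolution}) removes the $\delta_0$ factors.
\end{itemize}
This is the paper's proof, carried out there with point masses: $\Gamma_r=A\times\delta_0-2\delta_1\times S_{a_r}$. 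The paper puts the common factor $A=\delta_0+\delta_2$ in the first coordinate and all $r$-dependence in the second.
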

    \end{samepage}
    Together with Theorem~\ref{thm:2k-necessary}, this shows that \(2k\) is the optimal projection count and that the geometry of the directions is a substantive part of the construction in Theorem~\ref{thm:main}. In particular, Proposition~\ref{prop:2k-geometry-necessary} identifies reflection symmetry among the projection directions as a potential obstruction to identifiability.

    The preceding results concern the number and geometry of the projection directions. The following lemma shows that the tail assumption in Theorem~\ref{thm:main} cannot be substantially weakened.

    \begin{lemma}\label{lem:proj-null}
        Let \(\ell\in\N\) and let \(v_1,\ldots,v_\ell\in\R^2\). There exist nonzero measures \(\mu,\nu\in\sM(\R)\), each admitting a density belonging to the Schwartz class, such that
        \[
            \pi_{v_i}\#(\mu\times\nu)=0
            \qquad \text{for all } i\in[\ell].
        \]
    \end{lemma}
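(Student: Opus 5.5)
Work on the Fourier side. For $v=[v_1,v_2]^\top$, the pushforward $\pi_v\#(\mu\times\nu)$ is the convolution $T_{v_1}\#\mu \ast T_{v_2}\#\nu$, so its Fourier transform is $t\mapsto \hat\mu(v_1t)\,\hat\nu(v_2t)$. Here $\hat\mu(\xi)=\int e^{-i\xi x}\,d\mu(x)$. A finite measure is zero exactly when its Fourier transform vanishes identically. The lemma therefore reduces to the following task: find nonzero Schwartz densities $f,g$ such that, for every $i\in[\ell]$, the function $\hat f(v_{i,1}t)\,\hat g(v_{i,2}t)$ is identically zero in $t$. Since the Fourier transform is a bijection of the Schwartz class, we may choose $\hat f,\hat g$ directly as Schwartz functions and then invert.

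First dispose of degenerate directions. If $v_i=0$, then $\pi_{v_i}\#(\mu\times\nu)=\mu(\R)\nu(\R)\,\delta_0$, so it suffices to have $\hat f(0)=0$. If $v_{i,1}=0$ and $v_{i,2}\ne0$, the product is $\hat f(0)\hat g(v_{i,2}t)$, which again vanishes once $\hat f(0)=0$. Symmetrically, if $v_{i,2}=0$ and $v_{i,1}\neq 0$, the product vanishes as soon as $\hat g(0)=0$. So we will require $\hat f(0)=\hat g(0)=0$; equivalently, $\mu(\R)=\nu(\R)=0$.

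For directions with both coordinates nonzero, let $c_i=v_{i,2}/v_{i,1}\neq0$. After the substitution $s=v_{i,1}t$, the condition becomes $\hat f(s)\hat g(c_is)=0$ for all $s$; that is, $\supp \hat f$ and $c_i^{-1}\supp\hat g$ must have disjoint interiors, and in fact we will make the supports disjoint. Now fix a compact interval $I=[\alpha,\beta]$ with $0<\alpha<\beta$, and let $\hat f$ be a nonzero $C_c^\infty$ function supported in $I\cup(-I)$. To keep $f$ real-valued, impose $\hat f(-\xi)=\overline{\hat f(\xi)}$; for example, take $\hat f$ real and even. Set $C=\{|c_i|\}$, a finite set of positive numbers. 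Choose $\hat g$ to be a nonzero real even $C_c^\infty$ function supported in $J\cup(-J)$, where $J=[\gamma,\delta]\subset(0,\infty)$ is short enough that $c J$ is disjoint from $I$ for every $c\in C$. This is possible because we need $|c|\gamma>\beta$ or $|c|\delta<\alpha$ for each $c$. A simple way is to take $J$ far out: choose $\gamma>\beta/\min C$. Then $|c_i|\,|\xi|>\beta$ for all $\xi\in\supp\hat g$, so $\hat g(c_i s)\ne0$ forces $|s|>\beta/|c_i|\cdot$ wait—I must keep the variable straight. The condition is $\hat f(s)\hat g(c_is)=0$, and $\hat g(c_is)\neq0$ forces $|c_i s|\ge\gamma$, i.e.\ $|s|\ge\gamma/|c_i|$. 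So the right choice is $\gamma>\beta\max C$, which gives $|s|>\beta$ and hence $\hat f(s)=0$. Both supports avoid $0$, so $\hat f(0)=\hat g(0)=0$ holds automatically.

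Finally, let $f,g$ be the inverse Fourier transforms. They are real-valued, because the transforms are real and even, and they are Schwartz, because their transforms lie in $C_c^\infty$. They are also integrable, so $\mu=f\,dx$ and $\nu=g\,dx$ are nonzero finite signed measures. By the Fourier computation above, together with uniqueness of Fourier transforms of finite measures, $\pi_{v_i}\#(\mu\times\nu)=0$ for every $i\in[\ell]$.

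The only real obstacle is bookkeeping. One must check the convolution/scaling identity for $\pi_v\#(\mu\times\nu)$, including negative scalings, where $\widehat{T_c\#\mu}(\xi)=\hat\mu(c\xi)$ still holds. The main content is the choice of $\gamma$ relative to the finitely many slopes, and finiteness of $\ell$ is exactly what makes that choice possible.
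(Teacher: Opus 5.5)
Your proposal is correct and takes essentially the same approach as the paper. In both, $\hat\mu$ and $\hat\nu$ are real, even, compactly supported smooth functions vanishing at $0$ (which handles the zero and axis-parallel directions through zero total mass), with $\supp\hat\nu$ pushed far enough out that $\hat\mu(\xi)\hat\nu(s_i\xi)\equiv 0$ for the finitely many slopes; the paper's version uses $\hat\mu(\xi)=\xi^2\phi(\xi)$ and a bump translated to $\pm(c+2)$. The differences are cosmetic: you place $\supp\hat f$ in an annulus rather than near $0$, and your final condition $\gamma>\beta\max C$ is the right one, though you should delete the abandoned ``$cJ$ disjoint from $I$'' remark and note that any $\gamma>0$ works when no direction has both coordinates nonzero.
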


    Functions in the Schwartz class are smooth and decay, together with all their derivatives, faster than any polynomial \cite[p.~237]{folland99}. Decomposing the factors into positive and negative parts and normalizing preserves this tail decay and yields the following corollary for mixture-of-products models.
    \begin{corollary}\label{cor:proj-null}
        Let $\ell\in\N$ and let $v_1,\ldots,v_\ell\in\R^2$. There exist probability measures $\mu_{i,j},\mu'_{i,j}\in\sP(\R)$ for $i,j\in[2]$, each admitting a density that decays faster than any polynomial, and weights
        $w_1,w_2,w'_1,w'_2>0$ satisfying
        \[
            w_1+w_2=w'_1+w'_2=1,
        \]
        such that the mixture-of-products models
        \[
            \mu=\sum_{i=1}^2 w_i\,\mu_{i,1}\times\mu_{i,2},
            \qquad
            \mu'=\sum_{i=1}^2 w'_i\,\mu'_{i,1}\times\mu'_{i,2}
        \]
        satisfy
        \[
            \pi_{v_i}\#\mu=\pi_{v_i}\#\mu'
            \qquad \text{for all } i\in[\ell],
        \]
        while $\mu\neq\mu'$.
    \end{corollary}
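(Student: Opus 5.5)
The plan is to start from Lemma~\ref{lem:proj-null}. It supplies nonzero \(\mu_0,\nu_0\in\sM(\R)\) with Schwartz densities \(f,g\) such that \(\pi_{v_i}\#(\mu_0\times\nu_0)=0\) for all \(i\in[\ell]\). Write the Jordan decompositions \(\mu_0=\mu_0^+-\mu_0^-\) and \(\nu_0=\nu_0^+-\nu_0^-\). Their densities are \(f^\pm=\max(\pm f,0)\) and \(g^\pm=\max(\pm g,0)\), which are bounded by \(|f|\) and \(|g|\) and therefore decay faster than any polynomial. (Smoothness is lost, but the corollary only asks for rapid decay.)

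Expanding the product gives
\[
\mu_0\times\nu_0=\bigl(\mu_0^+\times\nu_0^+ + \mu_0^-\times\nu_0^-\bigr)-\bigl(\mu_0^+\times\nu_0^- + \mu_0^-\times\nu_0^+\bigr).
\]
Set \(\mu_{\mathrm{a}}=\mu_0^+\times\nu_0^+ + \mu_0^-\times\nu_0^-\) and \(\mu_{\mathrm{b}}=\mu_0^+\times\nu_0^- + \mu_0^-\times\nu_0^+\). Both are nonnegative, and pushforwards are linear, so \(\pi_{v_i}\#\mu_{\mathrm{a}}=\pi_{v_i}\#\mu_{\mathrm{b}}\) for all \(i\). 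Since \(\mu_0\times\nu_0\neq0\), we have \(\mu_{\mathrm{a}}\neq\mu_{\mathrm{b}}\).

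Total masses agree because the axis-independent total mass is preserved by any pushforward, and \(\ell\ge1\). Alternatively, compute \(\mu_{\mathrm{a}}(\R^2)-\mu_{\mathrm{b}}(\R^2)=\mu_0(\R)\nu_0(\R)=\pi_{v_1}\#(\mu_0\times\nu_0)(\R)=0\). Call this common mass \(c\). It is positive, since otherwise both measures would vanish.

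Now normalize. Let \(p^\pm=\mu_0^\pm(\R)\) and \(q^\pm=\nu_0^\pm(\R)\), and suppose first that all four are positive. Take \(\mu_{1,1}=\mu_0^+/p^+\), \(\mu_{1,2}=\nu_0^+/q^+\), \(\mu_{2,1}=\mu_0^-/p^-\), \(\mu_{2,2}=\nu_0^-/q^-\), with weights \(w_1=p^+q^+/c\) and \(w_2=p^-q^-/c\). Define \(\mu'\) analogously from \(\mu_{\mathrm{b}}\) with weights \(p^+q^-/c\) and \(p^-q^+/c\). Dividing by \(c\) gives probability measures \(\mu=\mu_{\mathrm{a}}/c\) and \(\mu'=\mu_{\mathrm{b}}/c\) with equal projections and \(\mu\neq\mu'\).

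The main obstacle is the strict positivity of all four parts, which is needed for \(w_i,w'_i>0\) and for the factors to be probability measures. I would argue as follows. If, say, \(\mu_0^-=0\), then \(\mu_0\ge0\) is nonzero, so \(\mu_0(\R)>0\). From \(\mu_0(\R)\nu_0(\R)=0\) we get \(\nu_0(\R)=0\). The projection along \(v_1=[s,t]^\top\) then involves convolution of nonnegative \(T_s\#\mu_0\) with \(T_t\#\nu_0\).

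Here I would use Fourier transforms: \(\hat\mu_0(s\xi)\hat\nu_0(t\xi)=0\) for all \(\xi\). Both transforms are entire-like on \(\R\)? Not quite; they are only smooth. So the cleaner route is to ensure positivity by modifying the lemma's output. Fourier transforms of nonzero Schwartz functions are nonzero on an open set, which gives some freedom.

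The simplest fix I would actually use is to perturb by mixing. Replace \(\mu_0\) by \(\mu_0+\epsilon\,\tau_h\mu_0\) for suitable shifts, or directly observe that the construction in Lemma~\ref{lem:proj-null}, via Fourier-side vanishing, yields densities with zero integral (the transform must vanish at the origin because each \(v_i\) line passes through it in frequency space). Hence \(\mu_0(\R)=0\) or \(\nu_0(\R)=0\), and I would verify that the proof gives both. A nonzero signed measure with zero total mass has both Jordan parts nonzero. If only one factor is forced to have zero mass, I would replace the other by its derivative-type variant \(\nu_0-\tau_h\nu_0\) (a translate difference). This keeps the Schwartz property, and I would check that it preserves the nullspace property using the Fourier characterization from the lemma's proof, which multiplies \(\hat\nu_0\) by \(1-e^{-ih\xi}\).
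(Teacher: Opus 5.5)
Your proposal is correct and follows the paper's proof. You Jordan-decompose the two Schwartz factors from Lemma~\ref{lem:proj-null}, expand their product into two nonnegative rank-$\le 2$ measures with equal projections, and normalize. This relies on the fact, built into the lemma's construction through $\widehat{\mu}(0)=\widehat{\nu}(0)=0$, that both factors have zero total mass, so all four Jordan parts are nonzero and every weight equals $1/2$. Your detours through perturbation and analyticity are unnecessary, but your translate-difference fallback $\nu_0-\tau_h\nu_0$ would also work if you had only the lemma's statement.
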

    Thus, for \(k\ge2\), finite-projection identifiability can fail even when the component densities decay faster than any polynomial and have moments of all orders, whereas Theorem~\ref{thm:main} establishes identifiability under the stronger assumption that their Laplace transforms are finite on a neighborhood of the origin.

    The preceding results concern identifiability. The next result shows that the boundedness restriction in Theorem~\ref{thm:weakstar-homeo} cannot be omitted, regardless of the finite collection of projection directions used.

    \begin{theorem}\label{thm:boundedness-necessary}
        For any finite collection \(v_1,\ldots,v_\ell\in\R^2\),
        the map
        \[
            \mu\longmapsto
            \bigl(\pi_{v_1}\#\mu,\ldots,\pi_{v_\ell}\#\mu\bigr)
        \]
        is not a weak\(^*\)--weak\(^*\) homeomorphic embedding of
        \(\rankmeas_{\le1}([0,1],[0,1])\).
    \end{theorem}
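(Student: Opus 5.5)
The plan is to build a sequence of rank-one signed measures \(\mu_n\in\rankmeas_{\le1}([0,1],[0,1])\) whose silhouettes converge to \(0\) while \(\mu_n\) itself does not converge weak\(^*\) to \(0\). Since \(0\) lies in the domain and the map sends \(0\) to \(0\), this contradicts continuity of the inverse on the image. The point of the construction is that the total variation of \(\mu_n\) must blow up. Indeed, weak\(^*\)-convergent sequences in \(\sM([0,1]^2)=C([0,1]^2)^*\) are norm bounded by the uniform boundedness principle, so an unbounded sequence cannot converge weak\(^*\) to anything. By contrast, bounded high-frequency oscillations would tend to \(0\) by Riemann--Lebesgue, which is consistent with Theorem~\ref{thm:weakstar-homeo}.

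\textbf{Construction.} Fix smooth bumps \(\varphi,\psi\) supported in \((0,1)\) with \(\|\varphi\|_{L^1},\|\psi\|_{L^1}>0\). Choose \(\lambda>0\) such that \(\lambda\neq|v_{i,1}/v_{i,2}|\) for every direction \(v_i=[v_{i,1},v_{i,2}]^\top\) with both coordinates nonzero. Set
\[
\mu_n = n\,\bigl(\varphi(x)\cos(2\pi n x)\,dx\bigr)\times\bigl(\psi(y)\cos(2\pi \lambda n y)\,dy\bigr).
\]
Each factor has \(L^1\) norm bounded below for large \(n\), since \(|\cos|\) averages to \(2/\pi\) against a fixed continuous weight. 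Hence \(\|\mu_n\|\gtrsim n\to\infty\), so \(\mu_n\) does not converge weak\(^*\).

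\textbf{Silhouettes tend to zero in norm.} Fix a direction \(v=[v_1,v_2]^\top\); the zero direction is trivial.

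If \(v_2=0\), then \(\pi_v\#\mu_n\) is the scaled first factor multiplied by the total mass \(n^{0}\int\psi(y)\cos(2\pi\lambda n y)\,dy\) of the second factor. This mass is \(O(n^{-N})\) for every \(N\) because \(\psi\) is smooth with compact support, so the total variation is \(O(n^{1-N})\to0\). The case \(v_1=0\) is symmetric.

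If both \(v_1,v_2\neq0\), the projection has the compactly supported density \(f_n=n\,(T_{v_1}\#F_n)\ast(T_{v_2}\#G_n)\), where \(F_n\) and \(G_n\) are the two factors. Its Fourier transform is
\[
\hat f_n(\xi)=n\,\hat F_n(v_1\xi)\,\hat G_n(v_2\xi).
\]
Here \(\hat F_n\) is a sum of two translates of \(\hat\varphi\) centered at \(\pm n\), and \(\hat G_n\) is a sum of two translates of \(\hat\psi\) centered at \(\pm\lambda n\), up to the \(2\pi\) convention. The product is non-negligible only where \(|v_1\xi|\approx n\) and \(|v_2\xi|\approx\lambda n\) simultaneously, which would force \(|v_1/v_2|\approx1/\lambda\). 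Choosing \(\lambda\) to also avoid the reciprocals of these ratios, i.e.\ \(\lambda\notin\{|v_{i,1}/v_{i,2}|^{\pm1}\}\), ensures that for every \(\xi\) at least one factor is evaluated at distance \(\gtrsim n+|\xi|\) from its center.

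Schwartz decay of \(\hat\varphi\) and \(\hat\psi\) then gives \(|\hat f_n(\xi)|\le C_N\, n\,(n+|\xi|)^{-N}\), so \(\|\hat f_n\|_{L^2}\to0\). Since \(f_n\) is supported in a fixed compact interval, Cauchy--Schwarz and Plancherel give \(\|f_n\|_{L^1}\le C\|f_n\|_{L^2}\to0\). Consequently every component of the silhouette converges to \(0\) in norm, and in particular weak\(^*\).

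\textbf{Main obstacle.} The delicate step is the frequency-separation estimate. It must be uniform in \(\xi\) and must beat the growing prefactor \(n\). I would carry it out by splitting \(\xi\) according to which of \(|v_1\xi|-n\) or \(|v_2\xi|-\lambda n\) is of order \(\max(n,|\xi|)\); the non-resonance choice of \(\lambda\) guarantees that at least one of them always is. Smoothness of \(\varphi,\psi\) supplies arbitrary polynomial decay, and taking \(N=3\) suffices for the \(L^2\) bound.
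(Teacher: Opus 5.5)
Your proof is correct, and it follows the same skeleton as the paper's: an unbounded sequence of rank-one measures on $[0,1]^2$ whose projections tend to $0$ in total variation, with the uniform boundedness principle ruling out weak$^*$ convergence in the domain. What differs is the mechanism that kills the projections.

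The paper works in physical space. It takes $z_\delta=\tfrac14\,\alpha\times\beta_\delta$, with $\alpha$ a fixed smooth signed measure of mass zero and $\beta_\delta$ a mass-zero dipole supported in $[0,\delta]$. Then $\|z_\delta\|=1$ and the axis projections vanish exactly. For $a_ib_i\neq0$, the projected density is an average of differences of shifts (by at most $|b_i|\delta$) of the rescaled Lipschitz density of $\alpha$, so its $L^1$ norm is $O(\delta)$. The sequence $x_n=n\,z_{1/n^2}$ then finishes. You work in frequency space instead: you oscillate both factors, use a non-resonant $\lambda$ to separate the spectra of the two convolved factors, and convert $L^2$ smallness to $L^1$ smallness via compact support and Plancherel.

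The paper's route is more elementary, since a Lipschitz estimate replaces the Fourier analysis. Its sequence also does not depend on the directions, so a single sequence makes every one-dimensional projection tend to zero; yours must tune $\lambda$ to the given collection. Your first-stated condition $\lambda\neq|v_{i,1}/v_{i,2}|$ targets the wrong ratio, because resonance occurs at $\lambda=|v_{i,2}/v_{i,1}|$. Your amended choice excluding both ratios does handle it.

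The tuning can also be dropped within your framework. Keep the first factor fixed as a smooth compactly supported density of total mass zero, and oscillate only the second. The spectral separation is then automatic, which mirrors the paper's choice of degenerating only one factor.
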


    The obstruction persists even under strong regularity conditions. The proof constructs a sequence of rank-one signed measures with \(C^\infty\) densities supported on \([0,1]^2\), whose total variation norms diverge while every prescribed projection converges to zero in total variation. Thus, the failure is not caused by singular or irregular measures: compact support and smoothness do not compensate for the absence of a uniform total variation bound.

    \section{Directions for Future Research}\label{sec:future}
    Theorems~\ref{thm:main} and \ref{thm:weakstar-homeo} establish that compactly supported low-rank probability measures on \(\R^2\) are identified by their silhouettes and that this representation preserves the weak\(^*\) topology. Section~\ref{sec:extending-2d} extends this framework to higher dimensions through collections of pairwise silhouettes, identifying settings in which the underlying measure can be recovered. Sections~\ref{sec:application-algorithm} and \ref{sec:experiments} develop these ideas algorithmically and demonstrate practical performance gains over standard nonparametric baselines. These results suggest several directions for further investigation.

    One broad direction is to better understand the geometry induced by the silhouette representation beyond the injectivity and topological results established here. In finite-dimensional low-rank problems, injectivity is closely related to restricted-isometry-type bounds. This motivates the search for analogous quantitative guarantees for the silhouette operator. For measures, such guarantees are likely to depend substantially on the class of measures, the projection directions, and the metrics used on the domain and range. In particular, it would be useful to identify settings in which stability estimates hold and to determine whether additional projections improve them even after identifiability has been achieved.

    A further direction is to use the mixture-of-products assumption to recast statistical problems in silhouette space, rather than using silhouettes only to fit a density estimator. Under the assumptions of the identifiability results, a prescribed finite collection of projected measures determines the full multivariate law. Thus, for example, a two-sample test could compare just those one-dimensional projections, with equality of the projected laws equivalent to equality of the original distributions at the population level. The broader question is which other statistical tasks can be usefully recast in terms of this finite collection of one-dimensional measures, and what guarantees or efficiencies result.

    \section{Proof of Injectivity of the Silhouette Operator on Low-Rank Measures} \label{sec:proofs-injectivity}
    This section proves the central result of the paper, Theorem~\ref{thm:main}, along with its immediate corollary, Corollary~\ref{cor:main-multiview}. The proof requires a substantial amount of technical setup and preparatory results before the main proof can be presented.
    \subsection{Signed Measure Preliminaries and Rank}\label{sec:signed-measure-prelim}
    For a measurable space $(\Omega,\sF)$ let $\sM(\Omega)$ denote the Banach space of finite signed measures on $\Omega$ equipped with the total variation norm
    \[
        \|\mu\| \triangleq \mu^+(\Omega) + \mu^-(\Omega),
    \]
    where $\mu^+$ and $\mu^-$ are the components of the Jordan decomposition $\mu = \mu^+ - \mu^-$. The subset $\sP(\Omega) \subset \sM(\Omega)$ denotes probability measures.
    Recall that, for a measurable function $f:\Omega \to \R$, the integral of a function with respect to a signed measure is defined by  
    \[
        \int f\,d\mu \;\triangleq\; \int f\,d\mu^+ \;-\; \int f\,d\mu^- ,
    \]
    as in \cite{folland99}.
    Thus the integral on the left-hand side is finite if and only if the two integrals on the right-hand side are finite. Consequently, whenever the integral on the left-hand side is finite, the above decomposition is valid.

    The following proposition shows that rank admits a characterization analogous to the familiar matrix case: a decomposition is minimal precisely when the component measures are linearly independent in each coordinate.

    \begin{proposition}\label{prop:rank-characterization}
        Let $(\Omega_1,\mathcal{F}_1)$ and $(\Omega_2,\mathcal{F}_2)$ be measurable spaces and let $k\in\mathbb{N}$. Suppose
        \[
            \mu = \sum_{i=1}^{k} \mu_{i,1}\times\mu_{i,2}
            \in \mathcal{M}(\Omega_1\times\Omega_2),
        \] where $\mu_{i,j}\in\mathcal{M}(\Omega_j)$ for $i\in[k]$ and $j\in[2]$. Then \(\mu\) has rank \(k\) if and only if \(\mu_{1,1},\ldots,\mu_{k,1}\) are linearly independent and \(\mu_{1,2},\ldots,\mu_{k,2}\) are linearly independent.
    \end{proposition}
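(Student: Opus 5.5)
The approach mirrors the matrix case: encode $\mu$ as a linear operator and identify rank with the dimension of its range. Define the linear map $T_\mu:\sM(\Omega_2)^*\supseteq B_b(\Omega_2)\to\sM(\Omega_1)$ by
\[
T_\mu(g)(A)\triangleq\int \ind_A(x)g(y)\,d\mu(x,y),
\]
where $g$ ranges over bounded measurable functions on $\Omega_2$. For $\mu=\sum_{i=1}^k\mu_{i,1}\times\mu_{i,2}$, Fubini gives
\[
T_\mu(g)=\sum_i\Bigl(\int g\,d\mu_{i,2}\Bigr)\mu_{i,1},
\]
so $\operatorname{range}T_\mu\subseteq\spn\{\mu_{i,1}\}$. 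Hence $\dim\operatorname{range}T_\mu\le k$ for every $k$-term representation, and therefore $\dim\operatorname{range}T_\mu\le\operatorname{rank}\mu$.

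\textbf{Sufficiency.} Suppose both families are linearly independent. The key step is to show $\dim\operatorname{range}T_\mu=k$. Since $\mu_{1,2},\ldots,\mu_{k,2}$ are linearly independent, the map $g\mapsto(\int g\,d\mu_{i,2})_{i=1}^k$ from bounded measurable functions into $\R^k$ is surjective. Otherwise some nonzero $c\in\R^k$ would annihilate the image, giving $\int g\,d(\sum_i c_i\mu_{i,2})=0$ for all indicators $g$, i.e.\ $\sum_i c_i\mu_{i,2}=0$. Then the range of $T_\mu$ is all of $\spn\{\mu_{i,1}\}$, which has dimension $k$ by independence of the first family. So any representation of $\mu$ needs at least $k$ terms, and the rank is exactly $k$.

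\textbf{Necessity.} Argue by contrapositive. If, say, $\mu_{k,1}=\sum_{i<k}c_i\mu_{i,1}$, then by bilinearity of the product,
\[
\mu=\sum_{i<k}\mu_{i,1}\times(\mu_{i,2}+c_i\mu_{k,2}),
\]
so $\mu$ admits a $(k-1)$-term representation and its rank is below $k$. The symmetric argument handles dependence in the second coordinate. The only point needing care is the edge case where some $\mu_{i,j}=0$: this is itself a dependence, and it is covered by the same reduction (drop that term). The case $k=1$ is trivial, with the convention that rank $0$ means $\mu=0$.

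The main obstacle, which is mild, is justifying the surjectivity step. Bounded measurable test functions, and indicators in particular, separate finite signed measures. This is immediate because signed measures agreeing on all measurable sets are equal. The remaining bilinearity and Fubini manipulations are routine, since all the measures involved are finite.
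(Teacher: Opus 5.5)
Your proof is correct, and its skeleton matches the paper's. The necessity direction is the same bilinear substitution that eliminates a dependent term. For sufficiency, both arguments slice $\mu$ with a bounded test function (your $T_\mu(g)$ is the paper's operator $T_f$ from \eqref{eqn:Tf-def} with the coordinates swapped) and compare the dimension of the resulting span with the number of terms in an arbitrary representation.

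The genuine difference is in how the test functions are produced. The paper obtains a biorthogonal system from Lemma~\ref{lem:measure-bidual}, whose proof goes through $L^\infty(\Omega)^*=ba(\Omega)$, Hahn--Banach, and Goldstine's theorem. Your annihilator argument instead gets surjectivity onto $\R^k$, and hence such a system, by finite-dimensional linear algebra alone: a nonzero $c$ orthogonal to the image of $g\mapsto(\int g\,d\mu_{i,2})_{i}$ would force $\sum_i c_i\mu_{i,2}(B)=0$ for every measurable $B$. This is the same device the paper itself uses for the map $R$ in the proof of Theorem~\ref{thm:rank-closed}.

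Your formulation through the representation-independent bound $\dim\operatorname{range}T_\mu\le\operatorname{rank}\mu$ also turns the paper's proof by contradiction into a direct lower bound. Both routes then conclude the same way.
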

    Before proving Proposition~\ref{prop:rank-characterization}, some additional measure-theoretic machinery is required. For a measurable space $(\Omega,\sF)$ denote by $L^\infty(\Omega)$ the Banach space of bounded measurable functions $f:\Omega\to\R$, equipped with the norm $\|f\| = \sup_{x \in \Omega}|f(x)|$. Given measurable spaces $(\Omega_1,\sF_1)$ and $(\Omega_2,\sF_2)$, and a bounded measurable function $f\in L^\infty(\Omega_1)$, define the bounded linear operator
    \[
        T_f:\sM(\Omega_1\times\Omega_2)\to\sM(\Omega_2)
    \]
    by
    \begin{equation}\label{eqn:Tf-def}
        (T_f\mu)(A)
        \;\triangleq\;
        \int_{\Omega_1\times\Omega_2} f(x)\, \ind_A(y)\,d\mu(x,y),
        \qquad A\in\sF_2.
    \end{equation}
    In particular, if $\mu\in\sM(\Omega_1)$ and $\mu'\in\sM(\Omega_2)$, then
    \[
        T_f(\mu\times\mu') = \mu(f)\,\mu'.
    \]

    The following lemma provides a convenient way to apply linear algebraic arguments to finite collections of measures.

    \begin{lemma}\label{lem:measure-bidual}
        Let $(\Omega,\sF)$ be a measurable space and let $\mu_1,\ldots,\mu_k\in\sM(\Omega)$ be linearly independent. Then there exist bounded measurable functions $f_1,\ldots,f_k\in L^\infty(\Omega)$ such that
        \[
            \mu_i(f_j)=\ind(i=j)
            \qquad\text{for all } i,j\in[k].
        \]
    \end{lemma}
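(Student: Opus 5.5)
The plan is to reduce the claim to a finite-dimensional linear-algebra fact by evaluating the measures against indicator functions of measurable sets. Consider the linear map
\[
\Phi:\mathbb{R}^k\to \mathbb{R}^{\mathcal F},\qquad \Phi(c)(A)=\sum_{i=1}^k c_i\,\mu_i(A),\quad A\in\mathcal F .
\]
Linear independence of $\mu_1,\ldots,\mu_k$ says exactly that $\Phi$ is injective: if $\sum_i c_i\mu_i(A)=0$ for every $A$, then $\sum_i c_i\mu_i=0$, so $c=0$.

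First, extract $k$ sets that already witness this independence. The row space of the $k\times\mathcal F$ "matrix" $(\mu_i(A))$ has full column rank $k$. Hence there are sets $A_1,\ldots,A_k\in\mathcal F$ such that the $k\times k$ matrix $M=(\mu_i(A_j))_{i,j}$ is invertible.

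To justify this, note that the span of the vectors $(\mu_1(A),\ldots,\mu_k(A))^\top\in\mathbb{R}^k$, as $A$ ranges over $\mathcal F$, is a subspace $W\subseteq\mathbb{R}^k$. If $W$ were proper, there would be a nonzero $c\perp W$, giving $\sum_i c_i\mu_i(A)=0$ for all $A$ and contradicting independence. So $W=\mathbb{R}^k$, and we may pick $k$ sets whose vectors form a basis; these are the columns of $M$.

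Next, invert $M$. Let $B=M^{-1}$ and define
\[
f_j=\sum_{l=1}^k B_{lj}\,\ind_{A_l}.
\]
Each $f_j$ is a finite linear combination of indicators of measurable sets, so it is bounded and measurable, i.e.\ $f_j\in L^\infty(\Omega)$. Moreover each $f_j$ is integrable against any finite signed measure, since $\mu(\ind_A)=\mu(A)$ is finite. Then
\[
\mu_i(f_j)=\sum_l \mu_i(A_l)\,B_{lj}=(MB)_{ij}=\ind(i=j),
\]
which is the required biorthogonality.

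There is no serious obstacle. The only point needing care is the spanning argument, namely that independence of measures, which is tested on all sets, yields a finite witnessing family. This follows from the dual-space argument above, and the rest is routine.
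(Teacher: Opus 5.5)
Your proof is correct, and it takes a more elementary route than the paper.

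The paper argues through Banach-space duality. It identifies $L^\infty(\Omega)^*=ba(\Omega)$ and uses Hahn--Banach in $ba(\Omega)^*$ to get exact dual functionals $\mu_i^*$ with $\mu_i^*(\mu_j)=\ind(i=j)$. It then invokes Goldstine's theorem to approximate these bidual elements, up to $\varepsilon$, by functions $\tilde f_j\in L^\infty(\Omega)$. Finally it corrects the nearly diagonal matrix $[\mu_i(\tilde f_j)]$ by inverting it.

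You skip all of this. Measures are already separated by their values on sets, so the vectors $(\mu_1(A),\ldots,\mu_k(A))^\top$, $A\in\mathcal{F}$, span $\R^k$. A single $k\times k$ inversion then gives exact biorthogonality, with no approximation step.

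Your route has three advantages:
\begin{itemize}
\item It is self-contained: it needs no identification of the dual of $L^\infty$, no Hahn--Banach, and no Goldstine.
\item It proves a slightly stronger statement: the $f_j$ can be taken to be simple functions, all linear combinations of the same $k$ indicators $\ind_{A_1},\ldots,\ind_{A_k}$.
\item It loses no generality, since it applies verbatim to linearly independent elements of $ba(\Omega)$, which is the setting the paper's argument is framed in.
\end{itemize}

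The fact underlying both proofs is that finitely many linearly independent linear functionals on any vector space admit a biorthogonal system. Your spanning argument proves exactly this for the functionals $f\mapsto\mu_i(f)$ restricted to indicators. The paper's functional-analytic detour is therefore not needed for this lemma; it mainly keeps the language consistent with the Banach-space arguments used later in the paper.
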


    Before proving Lemma~\ref{lem:measure-bidual}, recall that for a measurable space $(\Omega,\sF)$, \(ba(\Omega)\) denotes the Banach space of finitely additive signed measures on \((\Omega,\sF)\), equipped with the total variation norm. Such measures are not required to be countably additive. The usual space $\sM(\Omega)$ of finite signed (countably additive) measures is a norm-closed subspace of $ba(\Omega)$. The space $ba(\Omega)$ is used here only to justify the duality arguments appearing in the proof. The argument also relies on Goldstine's theorem \cite[Theorem~3.27]{fabian01}, which states that the closed unit ball of a Banach space is weak$^*$ dense in the closed unit ball of its bidual.

    \begin{proof}[Proof of Lemma~\ref{lem:measure-bidual}]
        It is well known that
        \[
            L^\infty(\Omega)^* = ba(\Omega),
        \]
        see \cite[Theorem~14.4]{aliprantis06}.
        Since $\mu_1,\ldots,\mu_k$ are linearly independent elements of $ba(\Omega)$, the Hahn--Banach theorem yields linear functionals $\mu_1^*,\ldots,\mu_k^*\in ba(\Omega)^*$ such that
        \[
            \mu_i^*(\mu_j)=\ind(i=j).
        \]
        By Goldstine's theorem, for every $\varepsilon>0$ there exist functions $\tilde f_1,\ldots,\tilde f_k\in L^\infty(\Omega)$ such that
        \[
            \bigl|\mu_i(\tilde f_j)-\mu_i^*(\mu_j)\bigr|\le \varepsilon
            \qquad\text{for all } i,j\in[k].
        \]
        Choosing $\varepsilon$ sufficiently small ensures that the matrix $\bigl[\mu_i(\tilde f_j)\bigr]_{i,j\in[k]}$ is invertible. Standard linear algebra then allows one to form suitable linear combinations of the functions $\tilde f_j$ to obtain functions $f_1,\ldots,f_k$ satisfying $\mu_i(f_j)=\ind(i=j)$.
    \end{proof}

    \begin{proof}[Proof of Proposition \ref{prop:rank-characterization}]
        Suppose that one of the families is linearly dependent. Without loss of generality assume that $\mu_{1,1},\ldots,\mu_{k,1}$ are linearly dependent. Then one of the measures, say $\mu_{p,1}$, can be written as a linear combination of the others. Substituting this relation into
        \[
            \mu=\sum_{i=1}^k \mu_{i,1}\times\mu_{i,2}
        \]
        and using bilinearity of the product measure eliminates one term, yielding a representation with at most $k-1$ terms. The same argument applies if $\mu_{1,2},\ldots,\mu_{k,2}$ are linearly dependent.

        Conversely, suppose that both families $\mu_{1,1},\ldots,\mu_{k,1}$ and $\mu_{1,2},\ldots,\mu_{k,2}$ are linearly independent. Assume for contradiction that
        \[
            \mu = \sum_{i=1}^{k} \mu_{i,1}\times\mu_{i,2}
            = \sum_{j=1}^{m} \nu_{j,1}\times\nu_{j,2}
        \]
        for some $m<k$ with $\nu_{j,\ell}\in\mathcal{M}(\Omega_\ell)$. By Lemma~\ref{lem:measure-bidual} applied to the family $\mu_{1,1},\ldots,\mu_{k,1}$, there exist bounded measurable functions $f_{1,1},\ldots,f_{k,1}\in L^\infty(\Omega_1)$ such that
        \[
            \mu_{i,1}(f_{j,1})=\ind(i=j)
            \qquad\text{for all } i,j\in[k].
        \]
        Applying the operator $T_{f_{p,1}}$ (defined in~\eqref{eqn:Tf-def}) to both representations of $\mu$ yields, for each $p\in[k]$,
        \[
            \mu_{p,2}
            = T_{f_{p,1}}\mu
            = \sum_{j=1}^{m} \nu_{j,1}(f_{p,1})\,\nu_{j,2}.
        \]
        Thus each $\mu_{p,2}$ lies in the linear span of $\nu_{1,2},\ldots,\nu_{m,2}$, which has dimension at most $m$. Since $m<k$, this contradicts the linear independence of $\mu_{1,2},\ldots,\mu_{k,2}$.
    \end{proof}

    \subsection{Laplace Transform Preliminaries} \label{sec:laplace-prelim}
    The proof of Theorem~\ref{thm:main} relies heavily on structural properties of Laplace transforms of measures. The required facts are collected and reviewed in this section. For a finite signed measure $\mu \in \sM(\R^d)$, the \emph{(two-sided) Laplace transform} is defined as
    \begin{equation}\label{eqn:def-laplace}
        \cL(\mu)(t) \;\triangleq\; \int_{\R^d} \exp(\langle x,t\rangle)\, d\mu(x),
        \qquad t \in \R^d,
    \end{equation}
    whenever the integral is finite.

    When $\mu$ is a probability measure, $\cL(\mu)$ coincides with the classical \emph{moment generating function} from probability theory. Throughout this work, Laplace transform terminology and notation are used exclusively; results stated in the literature in terms of moment generating functions are interpreted accordingly.

    Following standard usage in probability theory, the Laplace transform $\cL(\mu)$ is said to \emph{exist} if it is finite on some open neighborhood of the origin in $\R^d$.
    By definition of signed measure integration, this requires that the Laplace transforms of both $\mu^+$ and $\mu^-$ also exist and are finite on that set.

    The Laplace transform acts linearly on measures. In particular, if for some $t\in\R$ the quantities $\cL(\mu)(t)$ and $\cL(\mu')(t)$ both exist and are finite, then for all $c,c'\in\R$,
    \[
        \cL(c\mu + c'\mu')(t)
        =
        c\,\cL(\mu)(t) + c'\,\cL(\mu')(t).
    \]

    The following proposition shows that, on $\R$, equality of Laplace transforms on any set having an accumulation point at $0$ implies equality of the underlying measures.
    \begin{proposition}\label{prop:accumulation}
        Let \(\mu,\mu'\in\sMexp(\R)\).
        Then $\mu = \mu'$ if and only if there exists a set $A \subset \R$ with an accumulation point at $0$ such that
        \[
            \cL(\mu)(t) = \cL(\mu')(t) \quad \text{for all } t \in A.
        \]
    \end{proposition}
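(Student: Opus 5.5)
The forward direction is trivial: if \(\mu=\mu'\), take \(A\) to be any neighborhood of \(0\) on which both Laplace transforms are finite. For the converse, I would pass to the difference \(\nu\triangleq\mu-\mu'\in\sM(\R)\). Choose \(\delta>0\) so that \(\cL(\mu^\pm)\) and \(\cL(\mu'^\pm)\) are all finite on \((-\delta,\delta)\). Since \(\nu^+\le\mu^++\mu'^-\) and \(\nu^-\le\mu^-+\mu'^+\), we get \(|\nu|\le|\mu|+|\mu'|\), so
\[
\int e^{\delta'|x|}\,d|\nu|(x)<\infty
\qquad\text{for every }\delta'<\delta .
\]
By linearity, \(\cL(\nu)=\cL(\mu)-\cL(\mu')\) on \((-\delta,\delta)\), and it vanishes on \(A\cap(-\delta,\delta)\). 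That set still accumulates at \(0\). It therefore suffices to show that a finite signed measure \(\nu\) with exponential moments near \(0\) whose Laplace transform vanishes on a set accumulating at \(0\) must be zero.

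The key step is analyticity. The function \(F(z)=\int e^{zx}\,d\nu(x)\) is holomorphic on the strip \(\{|\operatorname{Re} z|<\delta\}\). This follows from differentiating under the integral sign, dominated by \(|x|e^{\delta'|x|}\) for \(|\operatorname{Re} z|\le\delta'<\delta\), or alternatively from Morera plus Fubini. Since \(F\) vanishes on a set with an accumulation point \(0\) inside this connected domain, the identity theorem gives \(F\equiv0\) on the strip.

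Restricting to the imaginary axis, \(F(is)=\int e^{isx}\,d\nu(x)=0\) for all \(s\in\R\). So the Fourier–Stieltjes transform of \(\nu\) vanishes identically. By uniqueness of Fourier transforms of finite signed Borel measures on \(\R\) (e.g.\ \cite{folland99}), \(\nu=0\), and hence \(\mu=\mu'\).

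The only delicate point is justifying holomorphy of \(F\) with \(|\nu|\) in place of positive measures. This is handled by the domination bound above, since \(|e^{zx}|=e^{x\operatorname{Re} z}\le e^{\delta'|x|}\). The remaining steps are standard.
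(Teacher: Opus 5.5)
Your proof is correct. It shares the paper's core mechanism, analyticity of the Laplace transform near \(0\) combined with the identity theorem, but it is organized differently and finishes with a different uniqueness result.

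The paper proceeds in three steps:
\begin{enumerate}
\item It applies Corollary~\ref{cor:mgf-analytic} to \(\mu\) and \(\mu'\) separately. That corollary is obtained from a cited analyticity result for probability measures by splitting into Jordan parts and normalizing.
\item It uses the identity theorem only to upgrade agreement on \(A\) to agreement on a real neighborhood of \(0\).
\item It invokes Corollary~\ref{cor:mgf-equal}, which reduces to the textbook moment-generating-function uniqueness theorem for probability measures via a case analysis on the Jordan parts of \(\mu-\mu'\).
\end{enumerate}

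You instead work with the single measure \(\nu=\mu-\mu'\) from the start. You prove holomorphy of \(\int e^{zx}\,d\nu(x)\) on a strip directly from the bound \(|\nu|\le|\mu|+|\mu'|\); integrability of your dominating function \(|x|e^{\delta'|x|}\) follows by comparison with \(e^{\delta''|x|}\) for some \(\delta'<\delta''<\delta\). You then continue the vanishing all the way to the imaginary axis and conclude with uniqueness of the Fourier--Stieltjes transform.

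In effect you inline the standard proof of moment-generating-function uniqueness. This makes the argument more self-contained and removes the normalization and sign-case analysis needed to transfer probability-measure theorems to signed measures. The only external fact you need is Fourier injectivity on finite signed measures, which the paper already uses in Lemma~\ref{lem:nonzero-convolution}.

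The paper's modular route has its own payoff: it produces Corollaries~\ref{cor:mgf-equal} and~\ref{cor:mgf-analytic} as standalone tools, the former stated on \(\R^d\). The analyticity statement is also what underlies the claim that \(t\mapsto\det L(t)\) is analytic near \(0\) in Proposition~\ref{prop:full-rank}.
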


    The proof of Proposition~\ref{prop:accumulation} relies on the analyticity of Laplace transforms. Two supporting results are stated first.
    \begin{theorem}[{\cite[Theorem~11.8]{dasgupta2010}}]\label{thm:mgf-equal}
        Let $\mu,\mu'$ be probability measures on $\R^d$ such that at least one of their Laplace transforms exists in a neighborhood of the origin. Then $\mu=\mu'$ if and only if $\cL(\mu)$ and $\cL(\mu')$ agree on some neighborhood of $0$.
    \end{theorem}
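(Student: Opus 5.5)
The plan is to reduce to dimension one by the Cramér--Wold device, and there to use analytic continuation of the Laplace transform into a complex strip, which turns agreement of Laplace transforms into agreement of characteristic functions. The ``only if'' direction is trivial, so only the ``if'' direction needs proof.

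First I would dispose of the asymmetry in the hypothesis. Suppose \(\cL(\mu)\) is finite on a neighborhood \(U\) of the origin and \(\cL(\mu')=\cL(\mu)\) on some neighborhood \(U'\). Then \(\cL(\mu')\) is also finite on \(U\cap U'\). Shrinking, both transforms are finite and agree on a ball \(\{\|t\|<\delta\}\).

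Next I would fix a unit vector \(u\in\R^d\) and consider the one-dimensional pushforwards \(\pi_u\#\mu\) and \(\pi_u\#\mu'\). By the change-of-variables formula,
\[
\cL(\pi_u\#\mu)(s)=\cL(\mu)(su)
\qquad\text{for } |s|<\delta,
\]
and likewise for \(\mu'\). So the two pushforwards have finite, equal Laplace transforms on \((-\delta,\delta)\). If the one-dimensional case is established, then \(\pi_u\#\mu=\pi_u\#\mu'\) for every \(u\). The Cramér--Wold theorem (equivalently, equality of characteristic functions \(t\mapsto\int e^{i\langle t,x\rangle}\,d\mu\) along every ray) then gives \(\mu=\mu'\).

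For the one-dimensional case, let \(\nu\) be a probability measure on \(\R\) with \(\int e^{sx}\,d\nu<\infty\) for \(|s|<\delta\). Define
\[
F_\nu(z)=\int e^{zx}\,d\nu(x),\qquad z\in S\triangleq\{z\in\C:|\operatorname{Re} z|<\delta\}.
\]
The integral converges absolutely because \(|e^{zx}|=e^{\operatorname{Re}(z)x}\le e^{sx}+e^{-sx}\) for any \(s\) with \(|\operatorname{Re} z|\le s<\delta\). I would show \(F_\nu\) is holomorphic on \(S\) by one of two standard routes:
\begin{itemize}
\item \emph{Morera plus Fubini:} continuity follows from dominated convergence, and contour integrals over triangles vanish after exchanging the order of integration.
\item \emph{Differentiation under the integral:} on compact substrips \(|\operatorname{Re} z|\le s<s'<\delta\), use the domination \(|x|e^{s|x|}\le C_{s,s'}e^{s'|x|}\).
\end{itemize}
This holomorphy step is the only genuinely technical point, and I expect it to be the main (though mild) obstacle. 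The key is obtaining a uniform integrable dominating function on compact subsets of the strip.

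Given holomorphy, \(F_{\pi_u\#\mu}\) and \(F_{\pi_u\#\mu'}\) are holomorphic on the connected domain \(S\) and agree on the real segment \((-\delta,\delta)\), which has accumulation points in \(S\). By the identity theorem they agree on all of \(S\), and in particular on the imaginary axis \(z=it\). There they are exactly the characteristic functions of the two pushforwards. Lévy's uniqueness theorem for characteristic functions then yields \(\pi_u\#\mu=\pi_u\#\mu'\), which completes the argument.
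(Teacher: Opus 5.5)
Your proof is correct. Shrinking to a common ball on which both transforms are finite, using $\cL(\pi_u\#\mu)(s)=\cL(\mu)(su)$, proving holomorphy of $z\mapsto\int e^{zx}\,d\nu(x)$ on the strip, applying the identity theorem, and then invoking L\'evy uniqueness followed by Cram\'er--Wold all go through as you describe.

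The paper gives no proof of this statement; it imports it as a black box from DasGupta, so there is no internal argument to match. The closest comparison is the paper's chain Proposition~\ref{prop:mgf-analytic}, Corollary~\ref{cor:mgf-analytic}, Proposition~\ref{prop:accumulation}. That chain uses the same two ingredients you use: analyticity on a strip (cited from Kallenberg, which is exactly the step you prove by hand) and the identity theorem. However, the paper uses them only to upgrade agreement on a set accumulating at $0$ to agreement on a real neighborhood. The conclusion $\mu=\mu'$ is then handed back to this cited theorem via Corollary~\ref{cor:mgf-equal}.

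Your route continues the analytic continuation up to the imaginary axis, which turns equal Laplace transforms into equal characteristic functions. This makes the argument self-contained apart from the standard uniqueness theorem for characteristic functions. It would also let one obtain Proposition~\ref{prop:accumulation} directly, without passing through Corollary~\ref{cor:mgf-equal}.
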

    This can be extended to the following corollary.
    \begin{corollary}\label{cor:mgf-equal}
        Let \(\mu,\mu'\in\sM(\R^d)\), and suppose that at least one of them lies in \(\sMexp(\R^d)\).
        Then \(\mu=\mu'\) if and only if \(\cL(\mu)\) and \(\cL(\mu')\) agree on some open neighborhood of \(0\).
    \end{corollary}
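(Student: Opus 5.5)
The plan is to reduce the signed case to Theorem~\ref{thm:mgf-equal} through the Jordan decomposition, after rescaling by total masses. The direction ``\(\mu=\mu'\) implies the transforms agree'' is immediate once we know the transforms are finite on a common neighborhood. Say \(\mu\in\sMexp(\R^d)\), so \(\cL(\mu)\) is finite on an open neighborhood \(U\) of \(0\). Wherever we assume agreement, we shrink \(U\) so that \(\cL(\mu')\) is also finite there. Agreement of the (finite) transforms on \(U\) then forces \(\cL(\mu')\) to be finite on \(U\) as well. By the definition of signed integrals recalled above, \(\cL(\mu^\pm)\) and \(\cL(\mu'^\pm)\) are then all finite on \(U\).

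For the converse, suppose \(\cL(\mu)=\cL(\mu')\) on \(U\). Since all four transforms are finite there, linearity gives
\[
    \cL(\mu^+ + \mu'^-)(t) = \cL(\mu'^+ + \mu^-)(t), \qquad t\in U.
\]
Set \(\alpha=\mu^++\mu'^-\) and \(\beta=\mu'^++\mu^-\); both are nonnegative finite measures. Evaluating at \(t=0\) gives \(\alpha(\R^d)=\beta(\R^d)=:c\).

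If \(c=0\), then \(\alpha=\beta=0\), so \(\mu=\mu'=0\). Otherwise, \(\alpha/c\) and \(\beta/c\) are probability measures whose Laplace transforms exist and agree on \(U\). Theorem~\ref{thm:mgf-equal} then gives \(\alpha=\beta\), and rearranging yields \(\mu=\mu'\). This rearrangement is legitimate because all measures involved are finite.

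The only delicate point is ensuring that the transforms of the Jordan parts are finite on a common neighborhood, so that linearity applies. This comes from the hypothesis together with the definition of the signed integral: the phrase ``agree on a neighborhood'' is read as ``both are finite there and equal.'' I do not expect any real obstacle.
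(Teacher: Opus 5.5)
Your proof is correct and follows essentially the same route as the paper. Both arguments split the signed difference into two nonnegative measures of equal mass whose Laplace transforms agree on the neighborhood, normalize them, and invoke Theorem~\ref{thm:mgf-equal}. The only difference is that you use the non-minimal split \(\mu^++\mu'^-\) versus \(\mu'^++\mu^-\) (the same device the paper uses for Corollary~\ref{cor:infinite-projections-signed}) instead of the Jordan decomposition of \(\mu-\mu'\). This replaces the paper's case split on \(c_\pm=0\) with your single trivial case \(c=0\).
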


    \begin{proof}[Proof of Corollary \ref{cor:mgf-equal}]
        Suppose that \(\cL(\mu)\) and \(\cL(\mu')\) agree on some open neighborhood of \(0\). Without loss of generality, assume that \(\mu\in\sMexp(\R^d)\). Let \(A\subset\R^d\) be a nonempty open set containing \(0\) such that \(\cL(\mu)\) is finite on \(A\) and
        \[
            \cL(\mu)(t)=\cL(\mu')(t)\qquad\text{for all }t\in A.
        \]
        Then $\cL(\mu')$ is also finite on $A$ (it equals $\cL(\mu)$ there). Set $\nu= \mu-\mu'$. Then
        \[
            \cL(\nu)(t)=\cL(\mu)(t)-\cL(\mu')(t)=0\qquad\text{for all }t\in A.
        \]

        Write the Jordan decomposition $\nu=\nu^+-\nu^-$ with $\nu^\pm$ finite nonnegative measures, and let $c_\pm= \nu^\pm(\R^d)$.

        Consider first the case \(c_+=0\) or \(c_-=0\).
        Then \(\nu\) is either nonnegative or nonpositive.
        For concreteness assume $\nu$ is nonnegative, the nonpositive case is analogous.
        Since $e^{\langle x,t\rangle}>0$ for all $x,t$, it then follows that the Laplace transform $\cL(\nu)$ is nonnegative on $A$.
        As $\cL(\nu)\equiv 0$ on $A$, it follows that
        \[
            \nu(\R^d)=\cL(\nu)(0)=0.
        \]
        A nonnegative measure with total mass zero is identically zero,
        so $\nu=0$.
        Hence $\nu=0$ and the result follows.

        Now consider the case \(c_+>0\) and \(c_->0\). Define the probability measures
        $P=\nu^+/c_+$ and $Q=\nu^-/c_-$.
        By the definition of integration with respect to a signed measure, the finiteness of \(\cL(\nu)\) on \(A\) implies that \(\cL(\nu^+)\) and \(\cL(\nu^-)\) are finite on \(A\). Hence \(\cL(P)\) and \(\cL(Q)\) are finite on \(A\), and by linearity,
        \[
            0=\cL(\nu)=c_+\cL(P)-c_-\cL(Q)\quad\text{on }A.
        \]
        Thus, for all \(t\in A\),
        \[
            c_+\cL(P)(t)=c_-\cL(Q)(t).
        \]
        By the definition of Laplace transform, evaluating at $t=0$ gives $c_+=c_->0$, hence
        \[
            \cL(P)(t) = \cL(Q)(t)\qquad \text{for all }t\in A.
        \]
        By Theorem~\ref{thm:mgf-equal}, this implies $P=Q$.  
        Thus $\nu^+=\nu^-$ and so $\nu=0$, i.e.\ $\mu=\mu'$.

        Conversely, suppose that \(\mu=\mu'\).
        Then \(\cL(\mu)=\cL(\mu')\) on some open neighborhood of \(0\).
    \end{proof}

    A second key ingredient in the proof of Proposition~\ref{prop:accumulation} is the analyticity of Laplace transforms. The following standard result, stated for probability measures, will be used.
    \begin{proposition}[{\cite[Exercise~8, Chapter 5]{fomp}}]\label{prop:mgf-analytic}
        For a probability measure $\mu$ on $\R$, if $\cL(\mu)$ is finite on some nonempty open interval $I\subseteq \R$ then it has an analytic extension on the strip $\left\{z \in \C: \Re(z) \in I\right\}$.
    \end{proposition}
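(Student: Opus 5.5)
The plan is to define the extension directly by the same integral,
\[
F(z)\triangleq\int_{\R} e^{zx}\,d\mu(x),\qquad z\in\C,\ \Re(z)\in I,
\]
and to show that $F$ is well defined and holomorphic on the strip $\{z\in\C:\Re(z)\in I\}$. Since $F$ agrees with $\cL(\mu)$ on $I\subset\R$, this gives the desired analytic extension.

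\emph{Well-definedness.} For $z=s+iy$ we have $|e^{zx}|=e^{sx}$. Hence the integrand is absolutely integrable whenever $s\in I$, because $\cL(\mu)(s)<\infty$ and $\mu$ is nonnegative. A convexity remark will be used throughout. For $a\le s\le b$,
\[
e^{sx}\le \max(e^{ax},e^{bx})\le e^{ax}+e^{bx}.
\]
Consequently, finiteness of $\cL(\mu)$ at two points of $I$ controls every exponential with exponent between them.

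\emph{Holomorphy.} I would show that $F$ is continuous on the open strip and then apply Morera's theorem.
\begin{itemize}
\item Fix a compact rectangle $R$ in the strip, with real parts in $[a,b]\subset I$. For $z\in R$, the integrand is dominated by $e^{ax}+e^{bx}$, which is $\mu$-integrable.
\item Continuity of $F$ on $R$ then follows from dominated convergence.
\item For any closed triangle $\Delta\subset R$, Fubini's theorem is justified by the same integrable bound. It gives
\[
\oint_{\partial\Delta}F(z)\,dz=\int_{\R}\oint_{\partial\Delta}e^{zx}\,dz\,d\mu(x)=0,
\]
since $z\mapsto e^{zx}$ is entire.
\end{itemize}
Morera's theorem then yields holomorphy on the interior of $R$. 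Since every point of the open strip lies in the interior of such a rectangle (as $I$ is open), $F$ is holomorphic on the whole strip.

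As an alternative to Morera, one can differentiate under the integral sign. Choose $a'<a$ and $b'>b$ in $I$. The bound
\[
|x|e^{sx}\le C_\varepsilon\bigl(e^{(s-\varepsilon)x}+e^{(s+\varepsilon)x}\bigr)
\]
then dominates the difference quotients by $C\bigl(e^{a'x}+e^{b'x}\bigr)$.

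The only step needing care is producing an integrable dominating function that is uniform on compact sub-strips. The Morera route avoids the $|x|$ factor entirely, which is why I would use it. The openness of $I$ is what guarantees that each point of the strip lies in a compact sub-rectangle whose real parts stay inside $I$.
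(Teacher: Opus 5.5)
Your proof is correct and complete. The bound $|e^{zx}|\le e^{ax}+e^{bx}$, valid uniformly on sub-strips with $\Re z\in[a,b]\subset I$, gives continuity by dominated convergence and justifies Fubini on triangle boundaries, so Morera's theorem applies. Openness of $I$ then covers the whole strip. Your differentiation-under-the-integral alternative is also correctly dominated.

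The paper gives no proof of this statement; it only cites Kallenberg's exercise. Your argument is the standard self-contained justification for that citation, so there is no competing route to compare against.

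Your direct argument has one side benefit. It uses nothing about $\mu$ beyond $\int (e^{ax}+e^{bx})\,d|\mu|<\infty$. Running it with $|\mu|$ in place of $\mu$ proves Corollary~\ref{cor:mgf-analytic} for signed measures immediately. Here finiteness of $\cL(\mu^{\pm})$ on $I$ is built into the paper's convention for signed integrals. This avoids the case analysis and normalization to probability measures that the paper uses for that corollary.
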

    A corresponding analyticity statement holds for finite signed measures.
    \begin{corollary}\label{cor:mgf-analytic}
        For a finite signed measure $\mu$ on $\R$, if $\cL(\mu)$ is finite on some nonempty open interval $I\subseteq \R$ then it has an analytic extension on the strip
        $\left\{z \in \C: \Re(z) \in I\right\}$.
    \end{corollary}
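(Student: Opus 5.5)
The proof reduces the signed case to Proposition~\ref{prop:mgf-analytic} through the Jordan decomposition and linearity of the Laplace transform. Let \(\mu=\mu^+-\mu^-\), and suppose \(\cL(\mu)\) is finite on a nonempty open interval \(I\). By the definition of integration against a signed measure, \(\cL(\mu)(t)\) is finite exactly when both \(\cL(\mu^+)(t)\) and \(\cL(\mu^-)(t)\) are finite. Hence both transforms are finite on \(I\), and \(\cL(\mu)=\cL(\mu^+)-\cL(\mu^-)\) on \(I\).

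We then treat each part separately. Write \(c_\pm=\mu^\pm(\R)\). If \(c_+=0\), then \(\mu^+=0\) and \(\cL(\mu^+)\equiv 0\), which extends to the zero function on the whole strip; the case \(c_-=0\) is the same. If \(c_\pm>0\), set \(P_\pm=\mu^\pm/c_\pm\). These are probability measures whose Laplace transforms \(\cL(P_\pm)=\cL(\mu^\pm)/c_\pm\) are finite on \(I\). Proposition~\ref{prop:mgf-analytic} then gives analytic extensions \(F_\pm\) of \(\cL(P_\pm)\) on the strip \(\{z\in\C:\Re(z)\in I\}\).

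Finally, set \(F=c_+F_+-c_-F_-\). This is a linear combination of functions analytic on the same strip, so it is analytic there. On \(I\) it agrees with \(c_+\cL(P_+)-c_-\cL(P_-)=\cL(\mu^+)-\cL(\mu^-)=\cL(\mu)\), so it is the required extension.

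No step is a real obstacle. The one point needing care is that finiteness of \(\cL(\mu)\) must imply finiteness of \(\cL(\mu^\pm)\) separately. This follows from the integration convention stated in Section~\ref{sec:signed-measure-prelim}, under which \(\int f\,d\mu\) is defined only when both Jordan parts are integrable, and from the fact that the integrand \(e^{tx}\) is positive.
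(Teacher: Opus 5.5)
Your proposal is correct and follows essentially the same route as the paper: Jordan decomposition, separate finiteness of \(\cL(\mu^\pm)\) on \(I\) from the signed-integration convention, normalization to probability measures, Proposition~\ref{prop:mgf-analytic}, and recombination by linearity. The only cosmetic difference is that you dispose of a vanishing Jordan part by taking the zero extension, whereas the paper writes \(\mu=\mp c_\mp(\mu^\mp/c_\mp)\) in that case.
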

    \begin{proof}[Proof of Corollary \ref{cor:mgf-analytic}]
        If $\mu=0$, then $\cL(\mu)=0$ and the claim is trivial.  

        Let $\mu=\mu^+-\mu^-$ be the Jordan decomposition, and write $c_\pm = \mu^\pm(\R)$.
        If $c_+=0$, then $\mu=-\mu^-$ and
        \[
            \cL(\mu) = -\cL(\mu^-)= -c_-\,\cL(\mu^-/c_-),
        \]
        where $\mu^-/c_-$ is a probability measure. Since $\cL(\mu)$ is finite on $I$, so is $\cL(\mu^-/c_-)$, and Proposition~\ref{prop:mgf-analytic} implies that $\cL(\mu^-/c_-)$ admits an analytic extension to the strip $\{z\in\C:\Re z\in I\}$. The same then holds for $\cL(\mu)$. The case $c_-=0$ is analogous.

        Assume now that $c_+>0$ and $c_->0$, and write
        \[
            \mu = c_+\,(\mu^+/c_+) - c_-\,(\mu^-/c_-),
        \]
        where $\mu^+/c_+$ and $\mu^-/c_-$ are probability measures.
        For $t\in I$, finiteness of $\cL(\mu)(t)$ implies that both $\cL(\mu^+)(t)$ and $\cL(\mu^-)(t)$ are finite, since by the definition of integration with respect to a signed measure the positive and negative parts must be finite individually. Hence $\cL(\mu^+/c_+)(t)$ and $\cL(\mu^-/c_-)(t)$ are also finite.

        Applying Proposition~\ref{prop:mgf-analytic} to the probability measures $\mu^+/c_+$ and $\mu^-/c_-$, it follows that their Laplace transforms admit analytic extensions to the strip $\{z\in\C:\Re z\in I\}$. By linearity of the Laplace transform,
        \[
            \cL(\mu)(z)
            =
            c_+\,\cL(\mu^+/c_+)(z) - c_-\,\cL(\mu^-/c_-)(z)
        \]
        on this strip, and hence $\cL(\mu)$ itself extends analytically to $\{z\in\C:\Re z\in I\}$.
    \end{proof}

    It is now possible to prove Proposition~\ref{prop:accumulation}.
    \begin{proof}[Proof of Proposition \ref{prop:accumulation}]
        Suppose that \(\mu=\mu'\).
        Then \(\cL(\mu)=\cL(\mu')\), and hence they agree on any set having an accumulation point at \(0\).

        Conversely, suppose that there exists a set \(A\subset\R\) having an accumulation point at \(0\) such that
        \[
            \cL(\mu)(t)=\cL(\mu')(t)
            \qquad\text{for all } t\in A.
        \]

        By Corollary~\ref{cor:mgf-analytic}, both \(\cL(\mu)\) and \(\cL(\mu')\) admit analytic extensions to a complex strip containing \(0\).
        Since they agree on a set having an accumulation point at \(0\), the Identity Theorem implies that they agree on some open neighborhood of \(0\).
        By Corollary~\ref{cor:mgf-equal}, it follows that \(\mu=\mu'\).
    \end{proof}
    The preceding section establishes the basic properties of the Laplace transform needed to prove Theorem~\ref{thm:main}. The remaining ingredient is the fact that, for sufficiently regular classes of signed measures, the full measure can be recovered when a countably infinite collection of linear pushforward measures agree. This may be viewed as an analogue of the Cramér--Wold theorem, which states that probability measures on $\R^d$ are determined by their one-dimensional linear pushforwards.

    The required regularity assumption is the \emph{Carleman condition}. A $d$-dimensional probability measure $\mu$ is said to satisfy the Carleman condition if its moments
    \[
        m_n = \int \|x\|_2^n\,d\mu(x)
    \]
    satisfy
    \[
        \sum_{n\in\mathbb N} m_n^{-1/n} = \infty.
    \]
    The Carleman condition guarantees that $\mu$ is uniquely determined by its moments. Moreover, any probability measure whose Laplace transform exists satisfies the Carleman condition \cite[bottom of p.~203]{cuesta-albertos07}, so this assumption is automatically satisfied in the present setting.

    An early version of the theorem used below was established in \cite{renyi52} for compactly supported distributions, extended to densities satisfying the Carleman condition in \cite{gilbert55}, and generalized to $d$-dimensional measures in \cite{cuesta-albertos07}. Let $S^1$ denote the unit circle in $\R^2$.
    \begin{proposition}[\cite{gilbert55}]\label{prop:infinite-projections}
        Let $\mu$ and $\mu'$ be probability measures on $\R^2$, with $\mu$ satisfying the Carleman condition (in particular, this holds whenever $\mu$ has an existing Laplace transform).  
        Then \(\mu=\mu'\) if and only if, for \(X\sim\mu\) and \(X'\sim\mu'\),
        \[
            \law(\langle X,c\rangle)
            =
            \law(\langle X',c\rangle),
        \]
        for countably infinitely many \(c\in S^1\).
    \end{proposition}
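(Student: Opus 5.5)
The direction ``\(\mu=\mu'\) implies equality of all projected laws'' is immediate. For the converse, I would reduce to equality of all mixed moments and then invoke moment determinacy. Let \(c_1,c_2,\ldots\in S^1\) be the countably many directions with \(\law(\langle X,c_r\rangle)=\law(\langle X',c_r\rangle)\). Since \(c\) and \(-c\) span the same line, infinitely many distinct points of \(S^1\) give infinitely many pairwise non-parallel directions. So I may pick two linearly independent directions among them and, more generally, any finite number of pairwise non-parallel ones.

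\textbf{Step 1 (moments of \(X'\) are finite).} Write \(X=[X_1,X_2]^\top\). Using two linearly independent directions \(c_1,c_2\), each coordinate \(X'_j\) is a fixed linear combination of \(\langle X',c_1\rangle\) and \(\langle X',c_2\rangle\). These have the same laws as \(\langle X,c_1\rangle\) and \(\langle X,c_2\rangle\), whose absolute moments of every order are finite: they are bounded by \(m_n\), which is finite under the Carleman hypothesis. By Minkowski's inequality, \(X'\) then has finite moments of every order.

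\textbf{Step 2 (equality of mixed moments).} For each \(n\), the map \(c\mapsto\mathbb E\langle X,c\rangle^n=\sum_{a+b=n}\binom{n}{a}\mathbb E[X_1^aX_2^b]c_1^ac_2^b\) is a binary form of degree \(n\), and likewise for \(X'\). The difference of the two forms vanishes at every \(c_r\), hence on infinitely many distinct lines through the origin. A nonzero binary form of degree \(n\) vanishes on at most \(n\) lines, so the difference is identically zero. Therefore
\[
    \mathbb E[X_1^aX_2^b]=\mathbb E[X_1'^aX_2'^b]
    \qquad\text{for all } a,b\ge 0 .
\]

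\textbf{Step 3 (determinacy, the main obstacle).} It remains to show that a probability measure on \(\R^2\) satisfying the Carleman condition is determined by its mixed moments. For general Carleman \(\mu\), I would cite the multivariate determinacy theorem: \(\mathbb E|X_j|^{n}\le m_n\), so each marginal satisfies Carleman's condition. By Petersen's theorem, a measure on \(\R^d\) whose one-dimensional marginals are determinate is itself determinate, so \(\mu'=\mu\).

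In the case the paper actually needs, namely \(\cL(\mu)\) existing, I would avoid this citation and argue directly via Corollary~\ref{cor:mgf-equal}. Finiteness of \(\mathbb E e^{s|\langle X,c_r\rangle|}\) for small \(s\) and \(r=1,2\) transfers to \(X'\) through the equal laws. Because \(\|x\|\) is bounded by a constant multiple of \(|\langle x,c_1\rangle|+|\langle x,c_2\rangle|\), Cauchy--Schwarz then gives \(\mathbb E e^{\varepsilon\|X'\|}<\infty\) for some \(\varepsilon>0\), so \(\cL(\mu')\) exists as well. Both Laplace transforms are then given near \(0\) by absolutely convergent power series \(\sum_{a,b}\mathbb E[X_1^aX_2^b]\,t_1^at_2^b/(a!\,b!)\), with dominated convergence justifying the termwise expansion. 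By Step 2 their coefficients coincide, so \(\cL(\mu)=\cL(\mu')\) on a neighborhood of \(0\). Corollary~\ref{cor:mgf-equal} then gives \(\mu=\mu'\).
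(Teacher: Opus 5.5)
Your proof is correct. The paper gives no proof of its own here; it only cites Gilbert (alongside R\'enyi and Cuesta-Albertos--Fraiman--Ransford). Your Steps~1--2 are the standard mechanism of that literature: you transfer finite moments to $X'$ through two independent directions, then observe that the difference of the degree-$n$ moment forms is a binary form vanishing on infinitely many lines.

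You depart from that literature only at the determinacy step. Step~2 actually gives more than equality of mixed moments: the forms $c\mapsto\mathbb{E}\langle X,c\rangle^n$ and $c\mapsto\mathbb{E}\langle X',c\rangle^n$ coincide for \emph{every} $c\in\R^2$. Since $\mathbb{E}|\langle X,c\rangle|^n\le\|c\|^n m_n$, every projection of $\mu$ satisfies the one-dimensional Carleman condition and is therefore moment-determinate. Hence $\law(\langle X,c\rangle)=\law(\langle X',c\rangle)$ for all $c$, and the classical Cram\'er--Wold theorem finishes. This is the usual way the cited sources close the argument. It needs only the one-dimensional Carleman theorem, whereas Petersen's multivariate determinacy theorem is a heavier (though legitimate) black box.

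In both versions, you must pass from $\sum_n m_n^{-1/n}=\infty$ to the even-moment form $\sum_n m_{2n}^{-1/(2n)}=\infty$ used by the one-dimensional theorem. This follows from monotonicity of $m_n^{1/n}$ (Lyapunov's inequality), a routine step you left implicit.

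Your Laplace-transform variant is self-contained relative to the paper, relying only on Corollary~\ref{cor:mgf-equal}. It also covers every use the paper makes of the proposition, since in Corollary~\ref{cor:infinite-projections-signed} the normalized measures always lie in $\sMexp(\R^2)$.

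One cosmetic point: in Step~2 you reuse $c_1,c_2$ for the coordinates of $c$, after already using them to name two of the chosen directions.
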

    The next corollary extends the previous result from probability measures to finite signed measures.
    \begin{corollary}[Signed–measure version of \cite{gilbert55}]\label{cor:infinite-projections-signed}
        Let $\mu,\mu' \in \sMexp(\R^2)$. Then $\mu=\mu'$ if and only if
        \[
            \pi_c\#\mu \;=\; \pi_c\#\mu',
        \]
        for countably infinitely many \(c\in S^1\).
    \end{corollary}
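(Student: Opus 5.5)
The plan is to reduce the signed statement to Proposition~\ref{prop:infinite-projections} with the Jordan-decomposition trick already used in the proof of Corollary~\ref{cor:mgf-equal}. The forward direction is trivial. For the converse, let \(C\subset S^1\) be the countably infinite set of directions on which the projections agree, and set \(\nu=\mu-\mu'\). Since \(\sMexp(\R^2)\) is closed under differences (finiteness of the Laplace transform of a signed measure means both Jordan parts have finite transforms near \(0\)), we have \(\nu\in\sMexp(\R^2)\). By linearity of pushforward, \(\pi_c\#\nu=0\) for all \(c\in C\). Write the Jordan decomposition \(\nu=\nu^+-\nu^-\) with \(c_\pm=\nu^\pm(\R^2)\).

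First, \(c_+=c_-\). Fix any \(c\in C\); pushforwards preserve total mass, so
\[
0=\pi_c\#\nu(\R)=\nu^+(\R^2)-\nu^-(\R^2)=c_+-c_-.
\]
If \(c_+=c_-=0\), then \(\nu=0\) and we are done. Otherwise define the probability measures \(P=\nu^+/c_+\) and \(Q=\nu^-/c_-\). Linearity gives \(\pi_c\#P=\pi_c\#Q\) for every \(c\in C\); equivalently, \(\law(\langle X,c\rangle)=\law(\langle X',c\rangle)\) for \(X\sim P\) and \(X'\sim Q\).

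It remains to check the hypothesis of Proposition~\ref{prop:infinite-projections}, namely that \(P\) has an existing Laplace transform. Because \(\nu\in\sMexp(\R^2)\), the integral \(\int e^{\langle x,t\rangle}\,d\nu\) is finite on an open neighborhood of \(0\). By the paper's definition of integration against a signed measure, this forces \(\cL(\nu^+)\) to be finite there as well. Hence \(\cL(P)\) exists, so \(P\) satisfies the Carleman condition. Proposition~\ref{prop:infinite-projections} then yields \(P=Q\), so \(\nu^+=\nu^-\). Since the Jordan parts are mutually singular, this forces both to vanish, so \(\nu=0\) and \(\mu=\mu'\).

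The only step needing care is verifying that the difference \(\nu\) and its positive part lie in \(\sMexp\). This follows by intersecting the two neighborhoods of \(0\) on which \(\cL(\mu)\) and \(\cL(\mu')\) are finite, and then applying the signed-integral convention to \(\nu^+\). Everything else is linearity and mass preservation.
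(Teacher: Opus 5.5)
Your proof is correct and is essentially the paper's argument: both reduce to Proposition~\ref{prop:infinite-projections} by producing two nonnegative measures in \(\sMexp(\R^2)\) of equal mass with matching projections, normalizing them, and concluding that they coincide. The only difference is which positive measures you use. You Jordan-decompose \(\nu=\mu-\mu'\), which avoids the paper's separate case where \(\mu\) or \(\mu'\) vanishes; the paper instead uses \(\alpha=\mu^+ + \mu'^-\) and \(\beta=\mu^- + \mu'^+\), which lie in \(\sMexp(\R^2)\) immediately. Your route has one cost: to see that \(\nu^\pm\) have finite Laplace transforms near \(0\), you need the standard bound \(|\nu|\le|\mu|+|\mu'|\). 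The signed-integral convention you cite does not give this on its own, since applying it to \(\nu\) already presupposes that \(\nu^\pm\) are exponentially integrable.
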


    \begin{proof}[Proof of Corollary~\ref{cor:infinite-projections-signed}]
        Suppose that \(\mu=\mu'\).
        Then \(\pi_c\#\mu=\pi_c\#\mu'\) for all \(c\in S^1\), and in particular for every \(c\) in any countably infinite subset of \(S^1\).

        Conversely, suppose that there exists a countably infinite set \(A\subseteq S^1\) such that
        \[
            \pi_c\#\mu=\pi_c\#\mu'
        \]
        for all \(c\in A\).
        Write the Jordan decompositions
        \[
            \mu=\mu^+-\mu^-,
            \qquad
            \mu'=\mu'^+-\mu'^-.
        \]
        Note that the nonnegative measures
        \(\mu^+,\mu^-,\mu'^+,\mu'^-\)
        all belong to \(\sMexp(\R^2)\).
        By linearity of the pushforward operator, for every \(c\in A\),
        \begin{equation}\label{eqn:cor-inf-proj-signed}
            \pi_c\#\mu^+ - \pi_c\#\mu^-
            \;=\;
            \pi_c\#\mu'^+ - \pi_c\#\mu'^-.
        \end{equation}
        Consider first the case where one of $\mu$ or $\mu'$ is the zero measure.
        Without loss of generality, assume that \(\mu'=0\). Then
        \[
            \pi_c\#\mu^+ = \pi_c\#\mu^-
        \]
        for all \(c\in A\).

        Assume for contradiction that \(\mu^+\neq0\). Since \(\mu^+\) is a nonnegative measure and
        \[
            \mu^+(\R^2)
            =
            (\pi_c\#\mu^+)(\R)
            =
            (\pi_c\#\mu^-)(\R)
            =
            \mu^-(\R^2)
        \]
        for every \(c\in A\), it follows that \(\mu^-(\R^2)=\mu^+(\R^2)>0\), and hence \(\mu^-\neq0\).

        Define
        \[
            P=\frac{\mu^+}{\mu^+(\R^2)},
            \qquad
            Q=\frac{\mu^-}{\mu^-(\R^2)}.
        \]
        Then \(P\) and \(Q\) are probability measures satisfying
        \[
            \pi_c\#P=\pi_c\#Q
        \]
        for all \(c\in A\).
        Since \(\mu\in\sMexp(\R^2)\), the measures \(\mu^+\) and \(\mu^-\) also belong to \(\sMexp(\R^2)\). Consequently, \(P\) and \(Q\) satisfy the Carleman condition. Proposition~\ref{prop:infinite-projections} therefore implies that \(P=Q\), and hence \(\mu^+=\mu^-\). Thus \(\mu=0\), a contradiction. This establishes the result whenever one of \(\mu\) or \(\mu'\) is the zero measure.

        Returning to \eqref{eqn:cor-inf-proj-signed}, assume that both \(\mu\) and \(\mu'\) are nonzero. For every \(c\in A\),
        \[
            \pi_c\#(\mu^+ + \mu'^-)
            =
            \pi_c\#(\mu^- + \mu'^+).
        \]
        Define the finite nonnegative measures
        \[
            \alpha=\mu^+ + \mu'^-,
            \qquad
            \beta=\mu^- + \mu'^+,
        \]
        which belong to \(\sMexp(\R^2)\).
        Then
        \[
            \pi_c\#\alpha
            =
            \pi_c\#\beta
        \]
        for all \(c\in A\).
        It therefore follows that
        \[
            \alpha(\R^2)
            =
            (\pi_c\#\alpha)(\R)
            =
            (\pi_c\#\beta)(\R)
            =
            \beta(\R^2)
        \]
        for every \(c\in A\). Since both \(\mu\) and \(\mu'\) are nonzero, the measures \(\alpha\) and \(\beta\) are also nonzero. Define
        \[
            \bar\alpha
            =
            \frac{\alpha}{\alpha(\R^2)},
            \qquad
            \bar\beta
            =
            \frac{\beta}{\beta(\R^2)}.
        \]
        Then \(\bar\alpha\) and \(\bar\beta\) are probability measures satisfying
        \[
            \pi_c\#\bar\alpha
            =
            \pi_c\#\bar\beta
        \]
        for all \(c\in A\).

        Since \(\bar\alpha,\bar\beta\in\sMexp(\R^2)\), both satisfy the Carleman condition. Proposition~\ref{prop:infinite-projections} therefore implies that
        \[
            \bar\alpha=\bar\beta.
        \]
        Since \(\alpha(\R^2)=\beta(\R^2)\), it follows that \(\alpha=\beta\).

        Finally,
        \[
            \mu-\mu'
            =
            (\mu^+-\mu^-)-(\mu'^+-\mu'^-)
            =
            (\mu^+ + \mu'^-) - (\mu^- + \mu'^+)
            =
            \alpha-\beta
            =
            0,
        \]
        and hence \(\mu=\mu'\).
    \end{proof}

    The proof of Theorem~\ref{thm:main} will additionally require a measure-theoretic analogue of the familiar identity from probability theory that the moment-generating function of a linear combination of independent random variables factorizes. Establishing this identity for finite signed measures requires a substitution formula for pushforward measures. The following standard result is first recalled for finite nonnegative measures.
    \begin{lemma}[{\cite[Lemma~1.22]{fomp}}]\label{lem:substitution}
        Consider a finite nonnegative measure space $(\Omega,\sF,\mu)$, a measurable space $(S,\sS)$, and measurable mappings
        $f:\Omega \to S$ and $g: S \to \R$. Then
        \[
            \int g(f(x))\,d\mu(x)
            \;=\;
            \int g(y)\, d(f\#\mu)(y),
        \]
        whenever either side exists (possibly as an extended real value); that is, if one side exists, then so does the other, and the two are equal.
    \end{lemma}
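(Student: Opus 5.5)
The plan is the standard measure-theoretic induction (the ``indicators, simple functions, monotone limits, positive and negative parts'' scheme). First take \(g=\ind_B\) for \(B\in\sS\). Then \(g\circ f=\ind_{f^{-1}(B)}\), so
\[
\int \ind_B(f(x))\,d\mu(x)=\mu\bigl(f^{-1}(B)\bigr)=(f\#\mu)(B)=\int \ind_B\,d(f\#\mu),
\]
which is exactly the definition of the pushforward. By linearity of both integrals in \(g\), the identity extends immediately to nonnegative simple functions \(g=\sum_{r} c_r\ind_{B_r}\) with \(c_r\ge0\) and \(B_r\in\sS\).

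Next, let \(g:S\to[0,\infty]\) be measurable. I would choose nonnegative simple functions \(g_n\uparrow g\) pointwise, for example the usual dyadic truncations. Then \(g_n\circ f\uparrow g\circ f\) pointwise on \(\Omega\), and each \(g_n\circ f\) is a nonnegative simple function on \(\Omega\), measurable because \(f\) is. Applying the monotone convergence theorem on both sides, with respect to \(\mu\) and to \(f\#\mu\), passes the simple-function identity to the limit. This gives
\[
\int g\circ f\,d\mu=\int g\,d(f\#\mu)
\]
as an equality in \([0,\infty]\). Note that \(f\#\mu\) is itself a finite nonnegative measure, since \((f\#\mu)(S)=\mu(\Omega)\), so monotone convergence applies on \(S\) as well.

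Finally, for general measurable \(g:S\to\R\), write \(g=g^+-g^-\) and observe that \((g\circ f)^\pm=g^\pm\circ f\). The nonnegative case applied to \(g^+\) and \(g^-\) separately gives
\[
\int (g\circ f)^\pm\,d\mu=\int g^\pm\,d(f\#\mu)\in[0,\infty].
\]
An integral ``exists'' (possibly as an extended real value) exactly when at least one of the two integrals of the positive and negative parts is finite. Since these pairs of integrals coincide, one side exists if and only if the other does, and their differences agree. This yields the claim, including the extended-real case.

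There is no real obstacle here. The only point needing care is the bookkeeping of the ``whenever either side exists'' clause. That is handled by the observation that both the positive and the negative parts transform correctly, so the condition \(\min(\int g^+,\int g^-)<\infty\) is the same on both sides. Everything else is the routine monotone-class extension.
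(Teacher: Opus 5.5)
Your argument is correct and is the standard proof: the paper does not prove this lemma itself but quotes it from Kallenberg, whose proof runs the same way (indicators, then simple functions, then monotone convergence, then positive and negative parts). Your identity \(\int (g\circ f)^\pm\,d\mu=\int g^\pm\,d(f\#\mu)\) in \([0,\infty]\) settles the ``either side exists'' clause under either reading of existence (at least one part finite, or both finite); note also that finiteness of \(\mu\) is never used, so the identity holds for arbitrary nonnegative measures.
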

    The following lemma extends Lemma~\ref{lem:substitution} to finite signed measures.
    \begin{lemma}\label{lem:substitution-signed}
        Consider a finite signed measure space $(\Omega,\sF,\mu)$, a measurable space $(S,\sS)$, and measurable mappings
        $f:\Omega \to S$ and $g:S\to\R$. Then
        \[
            \int g(f(x))\,d\mu(x)
            \;=\;
            \int g(y)\, d(f\#\mu)(y),
        \]
        whenever the left-hand side is finite.
    \end{lemma}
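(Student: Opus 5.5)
We reduce the statement to Lemma~\ref{lem:substitution} via the Jordan decomposition. Write $\mu=\mu^+-\mu^-$. Assume $\int g(f(x))\,d\mu(x)$ is finite. By the definition of integration with respect to a signed measure recalled in Section~\ref{sec:signed-measure-prelim}, this means $\int g\circ f\,d\mu^+$ and $\int g\circ f\,d\mu^-$ are both finite, and the left-hand side is their difference. Lemma~\ref{lem:substitution} applies to each of the finite nonnegative measures $\mu^\pm$. It gives
\[
\int g\circ f\,d\mu^\pm=\int g\,d(f\#\mu^\pm),
\]
with both sides finite.

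The remaining issue, which is the only real obstacle, is that the pushforward $f\#\mu=f\#\mu^+-f\#\mu^-$ need not equal the Jordan decomposition of $f\#\mu$. Because the mass of $\mu^+$ and $\mu^-$ may land on the same points, $(f\#\mu)^\pm$ can be strictly smaller than $f\#\mu^\pm$. So we must show that $\int g\,d(f\#\mu)$, which is defined through $(f\#\mu)^\pm$, is finite and equals $\int g\,d(f\#\mu^+)-\int g\,d(f\#\mu^-)$.

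To handle this, set $\nu=f\#\mu$, $\alpha=f\#\mu^+$ and $\beta=f\#\mu^-$. All three are finite measures on $S$ with $\nu=\alpha-\beta$. The Jordan decomposition is minimal, so $\nu^+\le\alpha$ and $\nu^-\le\beta$. Concretely, $\nu^+(B)=\nu(B\cap P)\le\alpha(B\cap P)\le\alpha(B)$ for a Hahn positive set $P$, and similarly for $\nu^-$. Now $|g|$ is integrable against $\alpha$ and $\beta$; this uses Lemma~\ref{lem:substitution} applied to $|g|$, together with finiteness of $\int |g\circ f|\,d\mu^\pm$, which is implied by finiteness of the signed integral. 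Therefore $|g|$ is integrable against $\nu^\pm$, and $\int g\,d\nu$ is finite.

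Finally, $\alpha-\nu^+=\beta-\nu^-=:\eta$ is a nonnegative measure. Since $g$ is integrable against all of these measures, linearity of the Lebesgue integral for nonnegative measures gives
\[
\int g\,d\alpha-\int g\,d\beta=\Bigl(\int g\,d\nu^++\int g\,d\eta\Bigr)-\Bigl(\int g\,d\nu^-+\int g\,d\eta\Bigr)=\int g\,d\nu.
\]
Combining this with the first paragraph yields the claimed identity.
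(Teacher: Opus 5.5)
Your proof is correct and follows the same route as the paper: Jordan-decompose $\mu$, apply Lemma~\ref{lem:substitution} to $\mu^+$ and $\mu^-$ separately, and subtract using $f\#\mu=f\#\mu^+-f\#\mu^-$. Your extra step uses minimality of the Jordan decomposition, $(f\#\mu)^\pm\le f\#\mu^\pm$, to show that $\int g\,d(f\#\mu)$ is finite and equals $\int g\,d(f\#\mu^+)-\int g\,d(f\#\mu^-)$; this justifies exactly the point the paper's closing ``subtracting the two identities'' passes over, since $f\#\mu^\pm$ need not be the Jordan parts of $f\#\mu$.
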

    \begin{proof}[Proof of Lemma~\ref{lem:substitution-signed}]
        Write the Jordan decomposition $\mu=\mu^+-\mu^-$, where $\mu^\pm$ are finite nonnegative measures.
        Since pushforward is linear on finite signed measures, it follows that
        \[
            f\#\mu \;=\; f\#\mu^+ \;-\; f\#\mu^-.
        \]

        By the definition of integration with respect to a signed measure,
        $\int g(f(x))\,d\mu^+(x)$ and $\int g(f(x))\,d\mu^-(x)$ are both finite. Applying Lemma~\ref{lem:substitution} to the nonnegative measures $\mu^\pm$,
        \[
            \int g(f(x))\,d\mu^\pm(x)
            \;=\;
            \int g(y)\,d(f\#\mu^\pm)(y).
        \]
        Subtracting the two identities yields
        \[
            \int g(f(x))\,d\mu(x)
            =
            \int g(y)\,d(f\#\mu)(y).
        \]
    \end{proof}
    The primary application of Lemma~\ref{lem:substitution-signed} in this work is to linear projections. For \(\nu\in\sM(\R^d)\), \(c\in\R^d\), and \(\pi_c(x)=\langle c,x\rangle\),
    \[
        \int_{\R} g(y)\,d(\pi_c\#\nu)(y)
        =
        \int_{\R^d} g(\langle c,x\rangle)\,d\nu(x),
    \]
    whenever the right-hand side is finite. The pushforward operator is also linear on finite signed measures:
    \[
        f\#(a\mu+b\nu)
        =
        a\,f\#\mu+b\,f\#\nu.
    \]

    The next lemma combines these observations with the product structure of low-rank measures to obtain a factorization formula for Laplace transforms.
    \begin{lemma}[Laplace transform of a factorizable measure]
        \label{lem:laplace-product}
        Let $\mu,\nu\in\sMexp(\R)$ and let $a,b\in\R$.
        For any $t$ such that $\cL(\mu)(at)$ and $\cL(\nu)(bt)$ are finite,
        \[
            \cL\!\bigl(\pi_{[a,b]}\#(\mu\times\nu)\bigr)(t)
            =
            \cL(\mu)(at)\,\cL(\nu)(bt).
        \]
    \end{lemma}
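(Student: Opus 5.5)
The plan is to reduce the identity to a Fubini computation for nonnegative measures and then pass to the signed case through Jordan decompositions, as was done in Lemma~\ref{lem:substitution-signed} and Corollary~\ref{cor:mgf-analytic}.

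First, apply Lemma~\ref{lem:substitution-signed} with $f=\pi_{[a,b]}$ and $g(y)=e^{ty}$:
\[
    \cL\!\bigl(\pi_{[a,b]}\#(\mu\times\nu)\bigr)(t)
    =\int_{\R^2} e^{t(ax+by)}\,d(\mu\times\nu)(x,y),
\]
which requires that the right-hand side be finite. Then use $e^{t(ax+by)}=e^{atx}e^{bty}$ together with Fubini--Tonelli to factor the integral as $\int e^{atx}\,d\mu(x)\int e^{bty}\,d\nu(y)=\cL(\mu)(at)\,\cL(\nu)(bt)$.

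The main obstacle is justifying integrability and Fubini for signed measures. To handle it, write $\mu=\mu^+-\mu^-$ and $\nu=\nu^+-\nu^-$. Bilinearity of the product gives
\[
    \mu\times\nu=(\mu^+\times\nu^+ + \mu^-\times\nu^-)-(\mu^+\times\nu^- + \mu^-\times\nu^+),
\]
with both bracketed terms nonnegative. Since $\cL(\mu)(at)$ and $\cL(\nu)(bt)$ are finite, the definition of signed integration implies that each $\int e^{atx}\,d\mu^\pm$ and $\int e^{bty}\,d\nu^\pm$ is finite. Tonelli, applied to the nonnegative integrand and the nonnegative product measures $\mu^\pm\times\nu^\pm$, then shows that each $\int e^{t(ax+by)}\,d(\mu^{\pm}\times\nu^{\pm})$ is finite and equals the product of the corresponding one-dimensional integrals.

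Next, observe that $|\mu\times\nu|\le |\mu|\times|\nu|$, so the positive and negative parts of $\mu\times\nu$ are dominated by finite sums of these four product measures. Hence the integral of $e^{t(ax+by)}$ against $\mu\times\nu$ is finite. Alternatively, one may define the integral through any decomposition of $\mu\times\nu$ as a difference of finite nonnegative measures with finite integrals, since this gives the same value as the Jordan decomposition.

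Finally, expand by linearity:
\[
    \int e^{t(ax+by)}\,d(\mu\times\nu)
    =\sum_{\epsilon,\delta\in\{+,-\}}\epsilon\delta\,\cL(\mu^\epsilon)(at)\,\cL(\nu^\delta)(bt)
    =\cL(\mu)(at)\,\cL(\nu)(bt).
\]
This finiteness also supplies the hypothesis needed for Lemma~\ref{lem:substitution-signed} in the first step, which closes the argument.
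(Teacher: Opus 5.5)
Your proof is correct and follows essentially the same route as the paper: you Jordan-decompose both factors, factor each of the four nonnegative product integrals by Tonelli, recombine, and invoke Lemma~\ref{lem:substitution-signed}. The only minor difference is in the recombination step. The paper observes that the four-term split is itself the Jordan decomposition of $\mu\times\nu$, since its two parts are carried on disjoint products of Hahn sets. You instead justify recombining via $|\mu\times\nu|\le|\mu|\times|\nu|$ together with independence of the integral from the chosen decomposition, and both justifications are valid.
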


    \begin{proof}[Proof of Lemma \ref{lem:laplace-product}]
        Fix \(t\in\R\) such that \(\cL(\mu)(at)\) and \(\cL(\nu)(bt)\) are finite. Write the Jordan decompositions
        \[
            \mu=\mu^+-\mu^-,
            \qquad
            \nu=\nu^+-\nu^-.
        \]
        Let \(f(x)=e^{atx}\) and \(g(y)=e^{bty}\). Then
        \begin{align}
            \cL(\mu)(at)\,\cL(\nu)(bt)
    &=
    \left(\int f\,d\mu^+ - \int f\,d\mu^-\right)
    \left(\int g\,d\nu^+ - \int g\,d\nu^-\right)\\
    &=
    \left(\int f\,d\mu^+\right)\left(\int g\,d\nu^+\right)
    +
    \left(\int f\,d\mu^-\right)\left(\int g\,d\nu^-\right)\\
    &\quad
    -
    \left(\int f\,d\mu^+\right)\left(\int g\,d\nu^-\right)
    -
    \left(\int f\,d\mu^-\right)\left(\int g\,d\nu^+\right),
    \label{eqn:laplace-distributed}
        \end{align}
        where all terms are finite.
        Hence, by the product-measure factorization identity
        \cite[Chapter~2, Exercise~51(b)]{folland99}, \eqref{eqn:laplace-distributed} becomes
        \begin{align*}
            \cL(\mu)(at)\,\cL(\nu)(bt)
    &=
    \int_{\R^2} f(x)g(y)\,d(\mu^+\times\nu^+)(x,y)
    +
    \int_{\R^2} f(x)g(y)\,d(\mu^-\times\nu^-)(x,y)\\
    &\quad
    -
    \int_{\R^2} f(x)g(y)\,d(\mu^+\times\nu^-)(x,y)
    -
    \int_{\R^2} f(x)g(y)\,d(\mu^-\times\nu^+)(x,y).
        \end{align*}
        By the definition of integration with respect to a signed measure and the Jordan decomposition
        \[
            \mu\times\nu
            =
            \bigl(\mu^+\times\nu^+ + \mu^-\times\nu^-\bigr)
            -
            \bigl(\mu^+\times\nu^- + \mu^-\times\nu^+\bigr),
        \]
        it follows that
        \[
            \cL(\mu)(at)\,\cL(\nu)(bt)
            =
            \int_{\R^2} f(x)g(y)\,d(\mu\times\nu)(x,y).
        \]
        Substituting \(f(x)=e^{atx}\) and \(g(y)=e^{bty}\), this becomes
        \[
            \cL(\mu)(at)\,\cL(\nu)(bt)
            =
            \int_{\R^2} e^{t(ax+by)}\,d(\mu\times\nu)(x,y).
        \]
        The equality above shows that the right-hand side is finite. Therefore Lemma~\ref{lem:substitution-signed}, applied to the projection \(\pi_{[a,b]}\), yields
        \[
            \int_{\R^2} e^{t(ax+by)}\,d(\mu\times\nu)(x,y)
            =
            \cL\!\bigl(\pi_{[a,b]}\#(\mu\times\nu)\bigr)(t).
        \]
        Hence
        \[
            \cL\!\bigl(\pi_{[a,b]}\#(\mu\times\nu)\bigr)(t)
            =
            \cL(\mu)(at)\,\cL(\nu)(bt).
        \]
    \end{proof}

    The following corollary makes explicit the structural property that will be central to the proof of Theorem~\ref{thm:main}.
    \begin{corollary}[Linearity for finite mixtures]
        \label{cor:laplace-mixture}
        Let
        \[
            \lambda=\sum_{i=1}^r w_i\,(\mu_i\times\nu_i),
            \qquad w_i\in\R,
        \]
        where each \(\mu_i,\nu_i\in\sMexp(\R)\). Then
        \(\lambda\in\sMexp(\R^2)\). Moreover, for every \(a,b\in\R\), there exists
        \(\alpha>0\) such that
        \[
            \cL\!\bigl(\pi_{[a,b]}\#\lambda\bigr)(t)
            =
            \sum_{i=1}^r
            w_i\,\cL(\mu_i)(at)\,\cL(\nu_i)(bt)
        \]
        for all \(t\in(-\alpha,\alpha)\).
    \end{corollary}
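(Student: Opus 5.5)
The plan has three steps: show that \(\lambda\) lies in \(\sMexp(\R^2)\), pick a single interval of \(t\) on which every term is finite, and then apply Lemma~\ref{lem:laplace-product} term by term and sum using linearity.

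\emph{Step 1: \(\lambda\in\sMexp(\R^2)\).} Since each \(\mu_i,\nu_i\in\sMexp(\R)\), there is \(\epsilon_i>0\) such that \(\cL(\mu_i)\) and \(\cL(\nu_i)\) are finite on \((-\epsilon_i,\epsilon_i)\). Put \(\epsilon=\min_i\epsilon_i>0\). For the Jordan parts, \(e^{\langle x,t\rangle}\le e^{t_1x_1}e^{t_2x_2}\) with equality, so Tonelli gives
\[
\int e^{t_1x+t_2y}\,d(\mu_i^\pm\times\nu_i^\pm)=\cL(\mu_i^\pm)(t_1)\,\cL(\nu_i^\pm)(t_2)<\infty
\qquad\text{for } t\in(-\epsilon,\epsilon)^2 .
\]
The variation \(|\lambda|\) is dominated by
\[
\sum_i |w_i|\bigl(\mu_i^+ +\mu_i^-\bigr)\times\bigl(\nu_i^+ +\nu_i^-\bigr).
\]
Hence \(\int e^{\langle x,t\rangle}\,d|\lambda|<\infty\) on the open neighborhood \((-\epsilon,\epsilon)^2\) of the origin, which gives \(\lambda\in\sMexp(\R^2)\). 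The only point needing care is the domination \(|\lambda|\le\sum|w_i|\,|\mu_i|\times|\nu_i|\). This follows from the triangle inequality for total variation together with \(|\mu\times\nu|=|\mu|\times|\nu|\); the latter can be read off from the Jordan decomposition of the product displayed in the proof of Lemma~\ref{lem:laplace-product}, or, more simply, from just the inequality \(|\mu\times\nu|\le|\mu|\times|\nu|\).

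\emph{Step 2: a common interval.} Fix \(a,b\in\R\). Choose \(\alpha>0\) with \(\alpha\max(|a|,|b|)<\epsilon\), taking any \(\alpha>0\) if \(a=b=0\). Then for \(t\in(-\alpha,\alpha)\), all of \(\cL(\mu_i)(at)\) and \(\cL(\nu_i)(bt)\) are finite.

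\emph{Step 3: apply Lemma~\ref{lem:laplace-product} and sum.} By Lemma~\ref{lem:laplace-product}, for each \(i\) and each \(t\in(-\alpha,\alpha)\),
\[
\cL\bigl(\pi_{[a,b]}\#(\mu_i\times\nu_i)\bigr)(t)=\cL(\mu_i)(at)\,\cL(\nu_i)(bt).
\]
Pushforward is linear, so
\[
\pi_{[a,b]}\#\lambda=\sum_i w_i\,\pi_{[a,b]}\#(\mu_i\times\nu_i).
\]
Each summand has a finite Laplace transform at \(t\), so linearity of the Laplace transform (stated above for finite values) gives the claimed identity after summing the finitely many terms.

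I expect no real obstacle. The only delicate point is the finiteness bookkeeping in Step 1, namely ensuring finiteness for the variation measure \(|\lambda|\) and not just for \(\lambda\) itself.
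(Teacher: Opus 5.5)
Your proposal is correct and follows the intended argument: the paper gives no separate proof of this corollary, treating it as an immediate consequence of Lemma~\ref{lem:laplace-product} applied termwise together with linearity of pushforward and of the Laplace transform, which is exactly your Step~3 on the common interval from your Step~2. Your Step~1, which dominates $|\lambda|$ by $\sum_i |w_i|\,|\mu_i|\times|\nu_i|$ and applies Tonelli, supplies the verification of $\lambda\in\sMexp(\R^2)$ that the paper leaves implicit but relies on later in the proof of Theorem~\ref{thm:main}.
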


    \subsection{A Rank Criterion for Laplace-Transform Matrices and Proofs of Theorem~\ref{thm:main} and Corollary~\ref{cor:main-multiview}}
    The next proposition provides a key structural ingredient in the proof of Theorem~\ref{thm:main}. It shows that linear independence of a finite collection of measures can be characterized through a full-rank condition on a structured matrix of Laplace-transform evaluations.

    \begin{proposition}\label{prop:full-rank}
        Let $k \in \mathbb{N}$ and let $\mu_1,\dots,\mu_k \in \sMexp(\R)$.
        For each $i=1,\dots,k$, write $\cL_i(t)= \cL(\mu_i)(t)$.

        Then the measures $\mu_1,\ldots,\mu_k$ are linearly independent if and only if for all nonzero $a,b \in \R$ with $|a|\neq 1$, there exists $\alpha>0$ such that, for all $t \in (-\alpha,0)\cup(0,\alpha)$, the matrix    
        \begin{equation}
            L(t) \triangleq
            \begin{bmatrix}
                \cL_1(a^0b t) & \cL_1(a^1b t) & \cdots & \cL_1(a^{k-1}b t)\\
                \cL_2(a^0b t) & \cL_2(a^1b t) & \cdots & \cL_2(a^{k-1}b t)\\
                \vdots       & \vdots       & \ddots & \vdots\\
                \cL_k(a^0b t) & \cL_k(a^1b t) & \cdots & \cL_k(a^{k-1}b t)
            \end{bmatrix}
            \label{eqn:full-rank-matrix}
        \end{equation}
        has full rank.
    \end{proposition}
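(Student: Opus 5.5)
The plan is to prove the two directions separately. The ``if'' direction is immediate. The ``only if'' direction reduces, via analyticity, to showing that \(\det L(t)\) is not identically zero near \(0\), and this is settled by finding one nonvanishing Taylor coefficient.

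\emph{Dependence implies rank deficiency.} Suppose \(\sum_i c_i\mu_i=0\) with \(c\neq0\). By linearity of the Laplace transform, \(\sum_i c_i\cL_i(s)=0\) wherever all \(\cL_i(s)\) are finite, and in particular for \(|s|\) small. Hence \(c^\top L(t)=0\) for every small \(t\), whatever the admissible \(a,b\). So \(L(t)\) is never of full rank, and the condition in the proposition fails.

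\emph{Independence implies full rank.} Fix nonzero \(a,b\) with \(|a|\neq1\). Each \(\cL_i\) is finite on some interval \((-\beta,\beta)\), so by Corollary~\ref{cor:mgf-analytic} it is analytic near \(0\), with power series
\[
\cL_i(s)=\sum_{n\ge0} m_{i,n}\,\frac{s^n}{n!},
\qquad
m_{i,n}=\int x^n\,d\mu_i(x).
\]
Choose \(\alpha_0>0\) so that \(\max_{0\le j<k}|a^jb|\,\alpha_0<\beta\). Then \(D(t)=\det L(t)\) is real-analytic on \((-\alpha_0,\alpha_0)\). If \(D\not\equiv0\), its zeros are isolated, so there is \(\alpha>0\) with \(D(t)\neq0\) for \(0<|t|<\alpha\), which is the claim. (Note that \(D(0)=0\), since all columns of \(L(0)\) coincide; this is why the statement excludes \(t=0\).) It therefore remains to exhibit a nonzero Taylor coefficient of \(D\).

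\emph{Expanding the determinant.} Let \(M_n=(m_{1,n},\dots,m_{k,n})^\top\in\R^k\). Column \(j\) of \(L(t)\) equals \(\sum_n M_n (a^jbt)^n/n!\). By multilinearity, the coefficient of \(t^N\) in \(D\) is
\[
b^N\sum_{\substack{n_0,\dots,n_{k-1}\ge0\\ \sum_j n_j=N}}
\frac{\det[M_{n_0},\dots,M_{n_{k-1}}]}{\prod_j n_j!}\prod_j a^{j n_j}.
\]
Terms with repeated indices vanish. Group the remaining terms by the underlying set \(S=\{s_1<\dots<s_k\}\) of indices. By antisymmetry, the contribution of \(S\) is
\[
\frac{b^N\det[M_{s_1},\dots,M_{s_k}]}{\prod_l s_l!}\;\det\bigl[(a^{s_l})^j\bigr]_{j,l},
\]
and the last factor is a Vandermonde determinant \(\prod_{l<l'}(a^{s_{l'}}-a^{s_l})\). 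This is nonzero because \(a\neq0\) and \(|a|\neq1\) force the values \(|a^{s}|\), and hence the \(a^{s}\), to be distinct for distinct \(s\).

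\emph{Choosing \(N\).} Two facts are needed.
\begin{enumerate}
\item The vectors \(\{M_n\}_{n\ge0}\) span \(\R^k\). Otherwise some \(c\neq0\) satisfies \(c^\top M_n=0\) for all \(n\). Then \(\sum_i c_i\cL_i\) has vanishing power series, so it vanishes on a neighborhood of \(0\), and Corollary~\ref{cor:mgf-equal} gives \(\sum_i c_i\mu_i=0\), contradicting independence.
\item Among index sets \(S\) with \(\{M_s\}_{s\in S}\) a basis, there is a unique one minimizing \(\sum_{s\in S}s\). This is the greedy basis: take \(s_1\) to be the least \(n\) with \(M_n\neq0\), then each subsequent \(s_l\) the least \(n\) with \(M_n\) outside the span of the previously chosen vectors. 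Uniqueness follows from the matroid greedy argument, since the weights \(n\) are distinct.
\end{enumerate}
Take \(N=\sum_l s_l\) for the greedy basis. The coefficient of \(t^N\) then consists of a single term with nonzero determinant and nonzero Vandermonde factor, so \(D\not\equiv0\).

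The main point needing care is the uniqueness of the minimum-weight basis, which is what rules out cancellation among different sets \(S\) contributing to the same power \(t^N\). If needed, I would spell out the standard exchange argument: any other basis with the same total weight could be exchanged toward the greedy one, which strictly lowers the weight.
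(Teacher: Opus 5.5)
Your proof is correct, and it takes a genuinely different route from the paper.

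\textbf{The paper's route.} The paper argues by induction on $k$ and by contradiction. If $\det L$ failed to be nonzero on a punctured neighborhood, the identity theorem would make it vanish on an interval. A relation $\gamma$ expressing the last row of $L(t^\ast)$ through the first $k$ rows is then fixed at a single point. The shift structure (the first $k$ columns of $L(at)$ are the last $k$ columns of $L(t)$), together with the inductively full-rank $k\times k$ minors, propagates the same $\gamma$ to every point $a^jt^\ast$. For $|a|<1$ these points accumulate at $0$, so Proposition~\ref{prop:accumulation} gives $\mu_{k+1}=\sum_i\gamma_i\mu_i$. The cases $|a|>1$ and $b\neq1$ are then recovered by column reversal and rescaling.

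\textbf{Your route.} You compute the Taylor coefficients of $\det L$ directly. The factorization $(a^jb)^{s}=b^{s}(a^{s})^j$ turns each block of terms into an ordinary Vandermonde determinant in $a^{s_1},\dots,a^{s_k}$. The minimum-weight basis then excludes cancellation.

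\textbf{What each buys.} Your version needs no induction and no case distinctions on $a$ or $b$. It also makes transparent why geometric spacing of the evaluation points is what matters. It gives more information as well: the exact order of vanishing $N=\sum_l g_l$ of $\det L$ at $0$, since no basis has smaller weight, together with an explicit leading coefficient. That is the kind of quantitative information relevant to the stability questions in Section~\ref{sec:future}. The paper's argument, in turn, is coefficient-free and reuses the same shift mechanism that drives the proof of Theorem~\ref{thm:main}.

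\textbf{The step you flagged.} A cleaner justification than the exchange argument is pointwise domination. The greedy index $g_l$ is the least $n$ with $\dim\spn\{M_0,\dots,M_n\}\ge l$. For any basis $\{s_1<\dots<s_k\}$, the vectors $M_{s_1},\dots,M_{s_l}$ are $l$ independent vectors in $\spn\{M_0,\dots,M_{s_l}\}$, so $s_l\ge g_l$ for every $l$. Hence $\sum_l s_l\ge\sum_l g_l$, with equality only for the greedy basis.
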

    The matrix in \eqref{eqn:full-rank-matrix} has a Vandermonde-type structure in the scaling parameter $a$, with entries given by evaluations of Laplace transforms at geometrically spaced points.

    For a matrix \(M\) and index sets \(I,J\subset\mathbb N\), let \(M_{I\times J}\) denote the submatrix of \(M\) obtained by restricting to the rows indexed by \(I\) and the columns indexed by \(J\).

    \begin{proof}[Proof of Proposition \ref{prop:full-rank}]
        Suppose that \(\mu_1,\ldots,\mu_k\) are linearly dependent. Since the Laplace transform is linear, the corresponding functions \(\cL_1,\ldots,\cL_k\) are also linearly dependent. It follows that, for every \(t\), the rows of \(L(t)\) satisfy the same linear relation and hence \(L(t)\) does not have full rank anywhere all entries exist.

        Conversely, suppose that \(\mu_1,\ldots,\mu_k\) are linearly independent. The proof proceeds by induction on \(k\).

        \smallskip
        \noindent\textbf{Base case (\(k=1\)).}
        If no such $\alpha$ exists then there exists a sequence $t_i\to 0$ such that $L(t_i) = 0$ and $\mu_1 \equiv 0$ due to Proposition \ref{prop:accumulation}, contradicting linear independence. Note that the preceding argument shows more generally that if
        \(\mu\not\equiv0\) and its Laplace transform exists, then there exists \(\beta>0\) such that \( \cL(\mu)(t)\neq0 \) for all \(t\in(-\beta,\beta)\setminus\{0\}\).

        \smallskip
        \noindent\textbf{Induction step.}
        Assume that the proposition holds for some \(k\ge 1\). The result is established for \(k+1\). First consider the case \(|a|<1\) and \(b=1\); the cases \(|a|>1\) and \(b\neq1\) are handled afterward.

        Assume for the sake of contradiction that no such $\alpha$ exists. Then there is a sequence $t_i\neq 0$ with $t_i\to 0$ such that $L(t_i)$ is rank-deficient for all $i$.
        Since $t\mapsto \det L(t)$ is analytic near $0$, it follows that $\det L\equiv 0$ on some open interval $I$ containing 0, by the Identity Theorem. Shrinking \(I\) if necessary, assume that \(I\) is symmetric about zero.

        By the inductive hypothesis there exists $\alpha' > 0$ such that the minor $L(t)_{\{1,\ldots,k\}\times\{1,\ldots,k\}}$ (the upper-left $k\times k$ block of $L(t)$) has full rank for all $t \in (-\alpha', \alpha') \setminus \{0\}$.

        Fix $t^\ast\in \big((-\alpha',\alpha')\setminus\{0\}\big)\cap I$.
        Then $L(t^\ast)$ is rank-deficient while $L(t^\ast)_{\{1,\ldots,k\}\times \{1,\ldots,k\}}$ has full rank.
        Hence there exists a unique $\gamma\in\R^k$ with
        \begin{equation}\label{eqn:row-combo-1k}
            L(t^\ast)_{\{k+1\}\times\{1,\ldots,k\}}=\sum_{i=1}^k \gamma_i\,L(t^\ast)_{\{i\}\times \{1,\ldots,k\}}.
        \end{equation}
        Because $L(t^\ast)$ is rank-deficient while $L(t^\ast)_{\{1,\ldots,k\}\times \{1,\ldots,k+1\}}$ has full rank, the $(k\!+\!1)$-st row lies in the span of the first $k$ rows and hence
        \[
            L(t^\ast)_{\{k+1\}\times \{1,\ldots,k+1\}}=\sum_{i=1}^k \gamma_i\,L(t^\ast)_{\{i\}\times \{1,\ldots,k+1\}}.
        \]
        Within the present contradiction argument, it will be shown by induction on \(j\) that
        \begin{equation}\label{eqn:induction-target}
            L(a^j t^\ast)_{\{k+1\}\times \{1,\ldots,k+1\}}=\sum_{i=1}^k \gamma_i\,L(a^j t^\ast)_{\{i\}\times \{1,\ldots,k+1\}},
        \end{equation}
        for all \(j\in\mathbb N\cup\{0\}\).

        It has already been shown that \eqref{eqn:induction-target} holds for \(j=0\), with \(\gamma\) unique. Assume that \eqref{eqn:induction-target} holds for some \(j\in\mathbb N\cup\{0\}\). It will now be shown that \eqref{eqn:induction-target} also holds for \(j+1\).

        From the definition of $L$,
        \[
            L(a^{j}t^\ast)_{\{1,\ldots,k\}\times\{2,\ldots,k+1\}}
            =
            L(a^{j+1}t^\ast)_{\{1,\ldots,k\}\times\{1,\ldots,k\}}.
        \]
        Since \(|a|<1\),
        \[
            a^{j+1}t^\ast \in \bigl((-\alpha',\alpha')\setminus\{0\}\bigr)\cap I.
        \]
        By the induction hypothesis on \(k\), both \(k\times k\) matrix minors appearing in the equality above have full rank. By the inductive hypothesis on \(j\), \(\gamma\) satisfies the following equation and, by the established full-rank property, is its unique solution,            \[
            L(a^jt^\ast)_{\{k+1\}\times\{2,3,\dots,k+1\}}=\sum_{i=1}^k \gamma_i\,L(a^j t^\ast)_{\{i\}\times\{2,3,\dots,k+1\}}.
        \]
        Now observe that, similarly to before,
        \[
            L(a^{j+1} t^\ast)_{\{1,\ldots,k+1\}\times\{1,\ldots,k\}}=L(a^jt^\ast)_{\{1,\ldots,k+1\}\times\{2,3,\dots,k+1\}},
        \]
        hence the same $\gamma$ uniquely satisfies
        \begin{equation}\label{eqn:induction-unique-gamma}
            L(a^{j+1} t^\ast)_{\{k+1\}\times\{1,\ldots,k\}}=\sum_{i=1}^k \gamma_i\,L(a^{j+1} t^\ast)_{\{i\}\times \{1,\ldots,k\}}.
        \end{equation}
        Because
        \(
        a^{j+1}t^\ast \in \bigl((-\alpha',\alpha')\setminus\{0\}\bigr)\cap I,
        \)
        the matrix \(L(a^{j+1}t^\ast)\) is rank deficient. Moreover, the first \(k\) rows are linearly independent, since
        \[
            L(a^{j+1}t^\ast)_{\{1,\ldots,k\}\times\{1,\ldots,k\}}
        \]
        has full rank. Therefore the \((k+1)\)-st row of \(L(a^{j+1}t^\ast)\) lies in the span of the first \(k\) rows. By the uniqueness established in \eqref{eqn:induction-unique-gamma}, the coefficients in this linear relation must equal \(\gamma\). It follows that \eqref{eqn:induction-unique-gamma} remains valid when the \((k+1)\)-st column is included:
        \begin{equation}
            L(a^{j+1} t^\ast)_{\{k+1\}\times\{1,\ldots,k+1\}}
            =
            \sum_{i=1}^k \gamma_i\,L(a^{j+1} t^\ast)_{\{i\}\times \{1,\ldots,k+1\}}.
        \end{equation}
        This completes the induction on \(j\). Thus \eqref{eqn:induction-target} holds for all $j$ with $\gamma$ unique. Looking at the entries of the matrix function $L$, from \eqref{eqn:induction-target} it follows that $\cL_{k+1}(a^jt^*) =\sum_{i=1}^k \gamma_i \cL_i(a^jt^*)$ for all $j \in \N$. 

        Since \(0<|a|<1\) and \(t^\ast\neq 0\), it follows that \(a^j t^\ast\to 0\). Proposition~\ref{prop:accumulation} therefore implies that \[ \mu_{k+1}=\sum_{i=1}^k \gamma_i\,\mu_i, \] contradicting the linear independence of \(\{\mu_1,\dots,\mu_{k+1}\}\). This completes the induction step and establishes the result for the case \(|a|<1\), \(b=1\).

        For the remaining cases, let \(L_a(t)\) denote the matrix in \eqref{eqn:full-rank-matrix} with \(b=1\) and parameter \(a\). Then \(L_a(t)\) is obtained from \(L_{a^{-1}}(a^{k-1}t)\) by reversing the order of the columns:
        \[
            L_a(t) \text{ is the column-reversal of } L_{a^{-1}}(a^{k-1}t).
        \]

        Now assume $|a|>1$ with $b=1$, so that $|a^{-1}|<1$. Since $|a^{-1}|<1$, the $|a|<1$ case yields $\alpha_0>0$ such that $L_{a^{-1}}(t)$ has full rank for all $t\in(-\alpha_0,\alpha_0)\setminus\{0\}$.
        Define
        \[
            \alpha \triangleq \alpha_0\,|a|^{-(k-1)}.
        \]
        Then for every $t\in(-\alpha,\alpha)\setminus\{0\}$ one has $a^{k-1}t\in(-\alpha_0,\alpha_0)\setminus\{0\}$, and hence $L_{a^{-1}}(a^{k-1}t)$ has full rank. Since $L_a(t)$ is obtained from $L_{a^{-1}}(a^{k-1}t)$ by a horizontal column reversal, which preserves rank, it follows that $L_a(t)$ has full rank for all $t\in(-\alpha,\alpha)\setminus\{0\}$, thus taking care of the $|a|>1$ case. 

        Now let $L_{a,b}(t)$ denote the matrix in \eqref{eqn:full-rank-matrix} with both parameters $a$ and $b$ free. For all admissible $a,b$,
        \[
            L_{a,b}(t)=L_{a,1}(bt).
        \]
        From the case $b=1$, there exists $\alpha_0>0$ such that $L_{a,1}(s)$ has full rank for all $s\in(-\alpha_0,\alpha_0)\setminus\{0\}$. Define $\alpha\triangleq \alpha_0/|b|$. Then every $t\in(-\alpha,\alpha)\setminus\{0\}$ satisfies $bt\in(-\alpha_0,\alpha_0)\setminus\{0\}$. Hence $L_{a,b}(t)=L_{a,1}(bt)$ has full rank.
    \end{proof}

    The proof of Theorem~\ref{thm:main} now follows.

    \begin{proof}[Proof of Theorem \ref{thm:main}]
        The proof proceeds by induction, establishing
        \begin{equation}\label{eqn:main-projections-target}
            \pi_{[b_1,b_2 a^m]^\top}\#\mu 
            \;=\;
            \pi_{[b_1,b_2 a^m]^\top}\#\mu',
        \end{equation}
        for all \(m\in\mathbb N\). Corollary~\ref{cor:infinite-projections-signed} then implies \(\mu=\mu'\), since \(\mu,\mu'\in\sMexp(\R^2)\) by Corollary~\ref{cor:laplace-mixture}.

        \smallskip
        \noindent\textbf{Base cases.} By the hypotheses, \eqref{eqn:main-projections-target} holds for $0 \le m \le k-1$.  

        \smallskip
        \noindent\textbf{Induction step.}  For $t,x,y \in \R$, define 
        \[
            \cL_{[x,y]}(t) \;=\; \cL\bigl(\pi_{[x,y]}\#\mu\bigr)(t) \text{ and } 
            \cL'_{[x,y]}(t) \;=\; \cL\bigl(\pi_{[x,y]}\#\mu'\bigr)(t).
        \]
        Assume that \eqref{eqn:main-projections-target} holds for all $0 \le m' \le m$ with $m \ge k-1$. 
        From Corollary \ref{cor:laplace-mixture} and the definition of $\sMexp(\R)$, there exists $\alpha > 0$ such that every entry in the $(k+1)\times(k+1)$ matrix $L(t)$ is finite for all $t$ with $|t| < \alpha$,
        \begin{equation}\label{eqn:big-L}
            L(t) =
            \begin{bmatrix}
                \cL_{[b_1a^0,b_2a^0]}(t) & \cL_{[b_1 a^0,b_2 a^1]}(t) & \cdots & \cL_{[b_1 a^0, b_2 a^{k-1}]}(t) & \boxed{\cL_{[b_1 a^0,b_2 a^{m+1}]}(t)} \\
                \cL_{[b_1 a^1,b_2]}(t) & \cL_{[b_1 a^1,b_2 a^1]}(t) &  \cdots & \cL_{[b_1 a^1,b_2 a^{k-1}]}(t) & \cL_{[b_1a^1,b_2 a^{m+1}]}(t) \\
                \vdots & \vdots & \ddots & \vdots &  \vdots \\
                \cL_{[b_1 a^{k},b_2 a^0]}(t) & \cL_{[b_1 a^k,b_2 a^1]}(t) &  \cdots & \cL_{[b_1 a^k,b_2a^{k-1}]}(t) & \cL_{[b_1 a^{k},b_2a^{m+1}]}(t),
            \end{bmatrix}
        \end{equation}
        with $L'(t)$ defined analogously for $\mu'$. 
        By the theorem hypothesis together with the inductive hypothesis, all entries of \(L(t)\) and \(L'(t)\) coincide for \(|t|< \alpha\), except possibly the boxed entry.
        Note that some hypotheses may need to be used in rescaled form: for suitable integers $q,r,s$,  
        \[
            [b_1 a^q,b_2 a^r]^\top = a^s[b_1 a^{q-s},b_2 a^{r-s}]^\top,
        \]
        so the vectors $[b_1 a^q,b_2 a^r]^\top$ and $[b_1 a^{q-s},b_2 a^{r-s}]^\top$ are collinear and rescaling gives
        \[
            \cL_{[b_1 a^q,b_2 a^r]}(t) = \cL'_{[b_1 a^q,b_2 a^r]}(t)
            \;\;\Longleftrightarrow\;\;
            \cL_{[b_1 a^{q-s},b_2 a^{r-s}]}(a^s t)
            = \cL'_{[b_1 a^{q-s},b_2 a^{r-s}]}(a^s t).
        \]
        It will now be shown that the boxed entry of \(L(t)\) agrees with the corresponding entry of \(L'(t)\) for all \(|t|<\alpha\). This establishes \eqref{eqn:main-projections-target}.

        The proof first treats the case in which \(\mu\) has rank exactly \(k\), meaning that the measures \(\mu_{1,1},\ldots,\mu_{k,1}\) are linearly independent, as are \(\mu_{1,2},\ldots,\mu_{k,2}\) (Proposition \ref{prop:rank-characterization}). The lower-rank case is addressed afterward.

        The argument proceeds in two steps: first, it is shown that there exists \(\alpha'>0\) such that \(L(t)\) and \(L'(t)\) both have rank exactly \(k\) for all \(t\in(-\alpha',\alpha')\setminus\{0\}\); second, it is shown that, for such \(t\), the boxed entry is uniquely determined by the remaining entries together with the requirement that the matrix have rank \(k\). Consequently, the boxed entries of \(L(t)\) and \(L'(t)\) must coincide.

        Letting $\cL_{i,j} = \cL(\mu_{i,j})$, define the vector-valued functions
        \[
            U_i(t) \;=\;
            \begin{bmatrix}
                \cL_{1,i}(t) &
                \cL_{2,i}(t) &
                \cdots &
                \cL_{k,i}(t)
            \end{bmatrix}^{\top},
        \]
        so that Corollary~\ref{cor:laplace-mixture} yields
        \[
            \cL_{[a_1,a_2]}(t) = U_1(a_1 t)^{\top} U_2(a_2 t).
        \]
        Define $U'_i(t)$ analogously for $\mu'$.

        Observe that, for $|t|<\alpha$,
        \[
            L(t) =
            \begin{bmatrix}
                U_1(b_1a^0t)^\top\\
                U_1(b_1a^1t)^\top\\
                \vdots\\
                U_1(b_1a^{k}t)^\top
            \end{bmatrix}
            \begin{bmatrix}
                U_2(b_2 a^0 t) & U_2(b_2 a^1t) & \cdots & U_2(b_2 a^{k-1}t) & U_2(b_2 a^{m+1}t)
            \end{bmatrix}.
        \]
        Since each $U_i:\R\to\R^k$, the matrix $L(t)$ has rank at most $k$ for all $|t|<\alpha$.
        The same conclusion holds for $L'(t)$ by the corresponding argument applied to $\mu'$.

        It will now be shown that there exists $\alpha'>0$ such that, for all $t \in (-\alpha',\alpha')\setminus\{0\}$, the $k\times k$ submatrix
        \begin{equation}\label{eqn:show-full-rank}
            L_{\{2,3,4,\ldots,k+1\}\times [k]}(t) =
            \begin{bmatrix}
                U_1(b_1a^1t)^\top\\
                \vdots\\
                U_1(b_1 a^{k}t)^\top
            \end{bmatrix}
            \begin{bmatrix}
                U_2(b_2 a^0 t) & U_2(b_2 a^1t) & \cdots & U_2(b_2 a^{k-1}t)
            \end{bmatrix}
        \end{equation}
        has full rank $k$.
        This submatrix does not involve the boxed entry and therefore agrees with the corresponding submatrix of $L'(t)$.

        To establish this, first observe that the transpose of the right-hand factor in \eqref{eqn:show-full-rank} is given by
        \begin{align}\notag
            \left[
                \begin{matrix}
                    U_2(b_2a^0 t)^\top\\
                    U_2(b_2a^1 t)^\top\\
                    \vdots\\
                    U_2(b_2a^{k-1} t)^\top
                \end{matrix}
            \right] 
    &= 
    \left[
        \begin{matrix}
            \cL_{1,2}(b_2 a^0 t) & \cdots &  \cL_{k,2}(b_2 a^0 t)\\
            \cL_{1,2}(b_2 a^1 t) & \cdots & \cL_{k,2}(b_2 a^1 t)\\
            \vdots &\ddots&\vdots\\
            \cL_{1,2}(b_2 a^{k-1} t) & \cdots & \cL_{k,2}(b_2 a^{k-1} t)
        \end{matrix}
    \right].
    \label{eqn:show-full-rank-multiplicand}
        \end{align}
        By Proposition~\ref{prop:full-rank}, this matrix has full rank on some punctured open interval about $0$.

        An identical argument applies to the left-hand factor in \eqref{eqn:show-full-rank}, upon replacing $t$ by $a t$. Consequently, there exists $\alpha'>0$ such that both factors exist and have full rank for all $t \in (-\alpha',\alpha')\setminus\{0\}$. Hence the product matrix \eqref{eqn:show-full-rank} has full rank $k$ throughout this punctured neighborhood. Hence, for each fixed $t$ there exists a unique solution, $\gamma(t) \in \R^k$, to
        \begin{equation}\label{eqn:one-solution}
            L_{\{2,3,4,\ldots,k+1\}\times [k]}(t)\,\gamma(t)
            = L_{\{2,3,4,\ldots,k+1\}\times \{k+1\}}(t),
        \end{equation}
        that is, solving for the rightmost column of $L(t)$ with the first row removed. The same conclusion holds for $L'(t)$, with the identical $\gamma(t)$, since only entries where $L(t)$ and $L'(t)$ coincide are involved. Thus $\gamma(t)$ is the unique solution to  
        \begin{equation*}
            L'_{\{2,3,4,\ldots,k+1\}\times [k]}(t)\,\gamma(t)
            = L'_{\{2,3,4,\ldots,k+1\}\times \{k+1\}}(t).
        \end{equation*}   
        Since \(L(t)\) has rank at most \(k\), its rightmost column must lie in the span of its first \(k\) columns. The coefficients expressing this column are uniquely determined by the restricted system \eqref{eqn:one-solution}, and hence are \(\gamma(t)\). Therefore
        \[
            L_{[k+1]\times [k]}(t)\,\gamma(t)
            =
            L_{[k+1]\times \{k+1\}}(t).
        \]

        The same argument applied to \(L'(t)\) gives
        \[
            L'_{[k+1]\times [k]}(t)\gamma(t)
            =
            L'_{[k+1]\times\{k+1\}}(t).
        \]
        Since \(L(t)\) and \(L'(t)\) agree on all entries appearing in the left-hand sides, the right-hand sides agree as well. In particular, the boxed entries coincide.

        Thus the theorem holds when \(\mu\) has rank exactly \(k\), where \(k\) is the parameter in the theorem statement. It remains to consider the case $\operatorname{rank}(\mu)=\ell<k$.
        Write
        \[
            \mu=\sum_{i=1}^{\ell}\mu_{i,1}\times\mu_{i,2},
        \]
        with the component families linearly independent.
        It is first shown that $\operatorname{rank}(\mu')<k$ as well.
        Indeed, if $\operatorname{rank}(\mu')=k$, then the already-proved case of the theorem
        applies with $\mu'$ in place of $\mu$, yielding $\mu'=\mu$.
        This contradicts $\operatorname{rank}(\mu)=\ell<k$.
        Hence $\operatorname{rank}(\mu')=\ell'$ for some $\ell'<k$, and one may write
        \[
            \mu'=\sum_{i=1}^{\ell'}\mu'_{i,1}\times\mu'_{i,2},
        \]
        again with linearly independent component families.

        Let $k^\ast \triangleq \max\{\ell,\ell'\}$. Observe that the $2k$ pushforward equalities in \eqref{eqn:silhoulette-vector1}–\eqref{eqn:silhoulette-vector2} contain, as a subset, the $2k^\ast$ equalities obtained by restricting $m$ to $\{1,\ldots,k^\ast\}$ and $\{0,\ldots,k^\ast-1\}$. Thus the hypotheses of the theorem are satisfied with $k$ replaced by $k^\ast$. Applying the already-established theorem with parameter \(k^\ast\) yields \(\mu=\mu'\).
    \end{proof}

    The proof of Corollary~\ref{cor:main-multiview} relies on the fact that, for finite \emph{nonnegative} measures, failure of finiteness of the Laplace transform is preserved under convolution with any nonzero positive measure. This property is formalized in the following lemma.
    \begin{lemma}\label{lem:nonexp-convolution}
        Let $\mu\in\sM^+(\R)\setminus\sMexp^+(\R)$.  
        Then for every nonzero $\nu\in\sM^+(\R)$,
        \[
            \mu * \nu \in \sM^+(\R)\setminus\sMexp^+(\R).
        \]
    \end{lemma}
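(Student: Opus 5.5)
We argue by contraposition: assuming \(\mu*\nu\in\sMexp^+(\R)\), we show \(\mu\in\sMexp^+(\R)\). The key tool is the Laplace-transform factorization, which for nonnegative measures needs no finiteness hypothesis. For nonnegative measures the Laplace transform is always defined as an element of \([0,\infty]\), since the integrand \(e^{tx}\) is positive. By Tonelli's theorem applied to \(\mu\times\nu\) and the map \((x,y)\mapsto x+y\) (Lemma~\ref{lem:substitution}, in its extended-real form for nonnegative measures), we get
\[
    \cL(\mu*\nu)(t)=\int\!\!\int e^{t(x+y)}\,d\mu(x)\,d\nu(y)=\cL(\mu)(t)\,\cL(\nu)(t)\in[0,\infty]
\]
for every \(t\in\R\). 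This is the unrestricted analogue of Lemma~\ref{lem:laplace-product} with \(a=b=1\). It holds without finiteness assumptions because every term is nonnegative, with the convention \(0\cdot\infty=0\).

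Next, observe that \(\cL(\nu)(t)>0\) for every \(t\), because \(\nu\) is nonzero and nonnegative and \(e^{ty}>0\). Now suppose \(\cL(\mu*\nu)\) is finite on some open interval \((-\alpha,\alpha)\). Then for each such \(t\) the product \(\cL(\mu)(t)\cL(\nu)(t)\) is finite. Since \(\cL(\nu)(t)\) is strictly positive (possibly \(+\infty\)), \(\cL(\mu)(t)\) must be finite: if it were \(+\infty\), the product of \(+\infty\) with a positive quantity would be \(+\infty\). The same reasoning shows \(\cL(\nu)(t)<\infty\) whenever \(\mu\neq0\), though we do not need this.

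Hence \(\cL(\mu)\) is finite on \((-\alpha,\alpha)\), so \(\mu\in\sMexp^+(\R)\), which contradicts the hypothesis. Also, \(\mu*\nu\) is clearly a finite nonnegative measure, so it lies in \(\sM^+(\R)\setminus\sMexp^+(\R)\).

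The only real care needed is justifying the factorization in the extended-real setting. We handle it with Tonelli rather than Lemma~\ref{lem:laplace-product}, since the latter assumes finiteness. We then identify \(\mu*\nu\) as the pushforward of \(\mu\times\nu\) under addition, i.e.\ \(\pi_{[1,1]}\#(\mu\times\nu)\). This step is routine, and there is no substantial obstacle.
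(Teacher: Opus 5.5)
Your proof is correct and follows essentially the same route as the paper. Both arguments establish $\cL(\mu*\nu)(t)=\cL(\mu)(t)\,\cL(\nu)(t)$ in $[0,\infty]$ via Tonelli, then use $\cL(\nu)>0$ to transfer infiniteness of $\cL(\mu)$ near $0$ to $\cL(\mu*\nu)$. The differences are cosmetic: the paper justifies the convolution step by truncating to $e^{tz}\wedge n$ and applying monotone convergence, where you invoke the extended-real form of Lemma~\ref{lem:substitution} directly, and it argues along a sequence $t_i\to0$ rather than by contraposition.
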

    \begin{proof}[Proof of Lemma \ref{lem:nonexp-convolution}]
        Since $\mu,\nu\in\sM^+(\R)$ are finite positive measures, their convolution
        $\mu*\nu$ is again a finite positive measure so $\mu*\nu\in\sM^+(\R)$.

        Fix $t\in \R$ and define the truncated functions
        \[
            f_n(z)\triangleq e^{t z}\wedge n,\qquad n\in\N.
        \]
        Then each $f_n$ is bounded, Borel measurable, and nonnegative. By
        \cite[Proposition~8.48(b)]{folland99},
        \begin{equation}\label{eqn:conv-trunc}
            \int f_n\,d(\mu*\nu)=\int_{\R}\int_{\R} f_n(x+y)\,d\mu(x)\,d\nu(y).
        \end{equation}
        Since $f_n\uparrow e^{tz}$ pointwise as $n\to\infty$, the monotone convergence theorem gives
        \begin{equation}\label{eqn:mct-left}
            \cL(\mu\ast \nu)(t) = \int e^{t z}\,d(\mu*\nu)(z)=\lim_{n\to\infty}\int f_n\,d(\mu*\nu).
        \end{equation}
        Moreover, $f_n(x+y)\uparrow e^{t(x+y)}$ pointwise on $\R^2$, so applying monotone convergence to the product measure $\mu\times\nu$ yields
        \begin{equation}\label{eqn:mct-right}
            \lim_{n\to\infty}\int_{\R}\int_{\R} f_n(x+y)\,d\mu(x)\,d\nu(y)
            =\int_{\R}\int_{\R} e^{t(x+y)}\,d\mu(x)\,d\nu(y).
        \end{equation}
        Combining \eqref{eqn:conv-trunc}, \eqref{eqn:mct-left}, and \eqref{eqn:mct-right} gives
        \[
            \cL(\mu\ast \nu)(t)
            =\int e^{t z}\,d(\mu*\nu)(z)
            =\int_{\R}\int_{\R} e^{t(x+y)}\,d\mu(x)\,d\nu(y)
            =\int_{\R}\int_{\R} e^{tx}e^{ty}\,d\mu(x)\,d\nu(y).
        \]
        Since \((x,y)\mapsto e^{tx}e^{ty}\) is nonnegative, Tonelli's Theorem yields
        \[
            \cL(\mu *\nu)(t)
            =
            \int_{\R}\int_{\R} e^{tx}e^{ty}\,d\mu(x)\,d\nu(y)
            =
            \left(\int_{\R} e^{tx}\,d\mu(x)\right)
            \left(\int_{\R} e^{ty}\,d\nu(y)\right)
            =
            \cL(\mu)(t)\,\cL(\nu)(t).
        \]

        Since \(\mu\notin\sMexp^+(\R)\), there exists a sequence
        \((t_i)_{i=1}^{\infty}\) with \(t_i\to0\) and
        \(\cL(\mu)(t_i)=\infty\) for all \(i\). Since \(\nu\) is a
        nonzero nonnegative measure, \(\cL(\nu)(t)\in(0,\infty]\)
        for every \(t\in\R\). Thus
        \[
            \cL(\mu*\nu)(t_i)
            =
            \cL(\mu)(t_i)\,\cL(\nu)(t_i)
            =
            \infty
        \]
        for all \(i\). Therefore \(\cL(\mu*\nu)\) is not finite
        on any neighborhood of \(0\), and
        \(\mu*\nu\notin\sMexp^+(\R)\).
    \end{proof}
    \begin{proof}[Proof of Corollary~\ref{cor:main-multiview}]
        By hypothesis, $\mu$ admits a representation \eqref{eqn:main-target} with $\mu_{i,j}\in\sMexp^+(\R)$ for all $i\in[k]$ and $j\in[2]$. To apply Theorem~\ref{thm:main}, it suffices to show that for every $i$ with $\mu'_{i,1}\times\mu'_{i,2}\neq 0$, one has $\mu'_{i,1},\mu'_{i,2}\in\sMexp^+(\R)$.

        Proceeding by contradiction, suppose there exists \(i_0\in[k]\) such that \(\mu'_{i_0,1}\times\mu'_{i_0,2}\neq0\) and \(\mu'_{i_0,1}\notin\sMexp^+(\R)\). Relabeling if necessary, assume \(i_0=1\). (The case \(\mu'_{i_0,2}\notin\sMexp^+(\R)\) is analogous.) Since \(\mu'_{1,1}\times\mu'_{1,2}\neq0\), both factors are nonzero.

        Let $c=[c_1,c_2]^\top$ be any direction appearing in \eqref{eqn:silhoulette-vector1}--\eqref{eqn:silhoulette-vector2}, so $c_1\neq 0$ and $c_2\neq 0$. By positivity of the measures and linearity of the Laplace transform,
        \begin{align*}
            \cL(\pi_c\#\mu')(t)
                &= \sum_{i=1}^k \cL\!\bigl(\pi_c\#(\mu'_{i,1}\times\mu'_{i,2})\bigr)(t) \\
                &\ge \cL\!\bigl(\pi_c\#(\mu'_{1,1}\times\mu'_{1,2})\bigr)(t).
        \end{align*}
        It is therefore enough to show that
        \[
            \pi_c\#(\mu'_{1,1}\times\mu'_{1,2})
            \notin\sMexp^+(\R).
        \]
        Let \(S_r(x)=rx\). The pushforward of a product measure under
        \(\pi_c(x,y)=c_1x+c_2y\) is
        \[
            \pi_c\#(\mu'_{1,1}\times\mu'_{1,2})
            =
            (S_{c_1}\#\mu'_{1,1})*(S_{c_2}\#\mu'_{1,2}).
        \]
        By Lemma~\ref{lem:substitution},
        \[
            \cL(S_{c_1}\#\mu'_{1,1})(t)
            =
            \cL(\mu'_{1,1})(c_1t),
        \]
        so \(S_{c_1}\#\mu'_{1,1}\notin\sMexp^+(\R)\), since \(c_1\neq0\). Also \(S_{c_2}\#\mu'_{1,2}\) is a nonzero finite positive measure. Lemma~\ref{lem:nonexp-convolution} therefore gives
        \[
            \pi_c\#(\mu'_{1,1}\times\mu'_{1,2})
            \notin\sMexp^+(\R).
        \]
        Hence \(\pi_c\#\mu'\notin\sMexp^+(\R)\). However, \(\pi_c\#\mu=\pi_c\#\mu'\) by hypothesis, while \(\pi_c\#\mu\in\sMexp^+(\R)\), since each component marginal of \(\mu\) lies in \(\sMexp^+(\R)\). This is a contradiction. Hence \(\mu'_{i,j}\in\sMexp^+(\R)\) whenever \(\mu'_{i,1}\times\mu'_{i,2}\neq0\). Any remaining zero terms may be replaced by \(0\times0\), so Theorem~\ref{thm:main} applies and yields \(\mu=\mu'\).
    \end{proof}
    \section{Proofs of Topological and Geometric Results}\label{sec:proofs-core}
    This section establishes the topological and geometric properties of the silhouette operator. The arguments are conceptually straightforward but rely on the weak\(^*\) closedness of classes of low-rank measures. In finite-dimensional matrix spaces, weak\(^*\) closedness is equivalent to ordinary closedness, and rank-bounded classes are well known to be closed. Proving weak\(^*\) closedness for low-rank measures on \(\mathbb{R}^2\), however, is nontrivial and requires tools from Banach space theory, in particular tensor products and the approximation property. All Banach spaces considered in this section are real. For a Banach space \(X\) and its dual \(X^*\), angle brackets denote the canonical dual pairing, with either order permitted:
    \[
        \langle x,x^*\rangle
        =
        \langle x^*,x\rangle
        =
        x^*(x),
        \qquad x\in X,\quad x^*\in X^*.
    \]
    \subsection{Topological Properties of Low-Rank Measure Classes} 
    The topological and geometric properties of the silhouette operator rely on compactness arguments. Establishing these properties requires showing that the class of low-rank measures is weak$^*$ closed. Throughout this subsection, for compact \(K_1,K_2\subset\R\), \(\sM(K_1\times K_2)\) is equipped with the weak\(^*\) topology arising from the canonical identification
    \[
        \sM(K_1\times K_2)\cong C(K_1\times K_2)^*
    \]
    given by the Riesz--Markov--Kakutani representation theorem, where \(C(K_1\times K_2)\) denotes the Banach space of real-valued continuous functions on \(K_1\times K_2\) equipped with the supremum norm. The main result of this subsection is the following.
    \begin{theorem}\label{thm:measure-tensor-w*closed}
        Let $K_1,K_2 \subset \R$ both be compact and let $k \in \N$. Then $\rankmeas_{\le k}(K_1,K_2)$ is weak$^*$ closed.
    \end{theorem}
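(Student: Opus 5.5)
The plan is to avoid tensor-product machinery and argue directly with a bilinear-form (``operator'') description of rank, showing that rank \(\le k\) is cut out by weak\(^*\)-continuous polynomial conditions. For \(\mu\in\sM(K_1\times K_2)\), define the bilinear form
\[
    B_\mu(f,g)\triangleq\int_{K_1\times K_2} f(x)g(y)\,d\mu(x,y),\qquad f\in C(K_1),\ g\in C(K_2).
\]
The key claim is the equivalence: \(\mu\in\rankmeas_{\le k}(K_1,K_2)\) if and only if, for all \(f_1,\ldots,f_{k+1}\in C(K_1)\) and \(g_1,\ldots,g_{k+1}\in C(K_2)\),
\[
    \det\bigl[B_\mu(f_p,g_q)\bigr]_{p,q\in[k+1]}=0 .
\]
Granting this, closedness is immediate. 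Each map \(\mu\mapsto B_\mu(f_p,g_q)\) is evaluation against \(f_p\otimes g_q\in C(K_1\times K_2)\), so it is weak\(^*\) continuous. The determinant is a polynomial in these evaluations, hence also weak\(^*\) continuous. Therefore \(\rankmeas_{\le k}(K_1,K_2)\) is an intersection of zero sets of weak\(^*\) continuous functions and is weak\(^*\) closed. This uses nets, so no boundedness, metrizability, or Krein--\v{S}mulian argument is needed.

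The forward direction of the equivalence is the easy one. If \(\mu=\sum_{i=1}^k\mu_{i,1}\times\mu_{i,2}\), then
\[
    \bigl[B_\mu(f_p,g_q)\bigr]_{p,q}=F G^\top,
\]
where \(F_{p,i}=\mu_{i,1}(f_p)\) and \(G_{q,i}=\mu_{i,2}(g_q)\). Both \(F\) and \(G\) are \((k+1)\times k\) matrices, so the product has rank at most \(k\) and every such determinant vanishes.

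For the converse, consider the linear map \(\Phi:C(K_1)\to C(K_2)^*\) given by \(\Phi(f)=B_\mu(f,\cdot)\), and identify \(C(K_2)^*\) with \(\sM(K_2)\) via Riesz. If the range of \(\Phi\) had dimension at least \(k+1\), I would choose \(f_1,\ldots,f_{k+1}\) with \(\Phi(f_1),\ldots,\Phi(f_{k+1})\) linearly independent. Then, using the finite-dimensional duality argument of Lemma~\ref{lem:measure-bidual} (here simpler, since \(C(K_2)\) separates points of \(\sM(K_2)\)), I would pick \(g_1,\ldots,g_{k+1}\in C(K_2)\) with \(\Phi(f_p)(g_q)=\ind(p=q)\). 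That produces a nonzero determinant, a contradiction. So the range is spanned by some \(\nu_1,\ldots,\nu_r\in\sM(K_2)\) with \(r\le k\). Choosing \(g_1,\ldots,g_r\) dual to the \(\nu_i\) gives
\[
    \Phi(f)=\sum_{i=1}^r \varphi_i(f)\,\nu_i,\qquad \varphi_i(f)=B_\mu(f,g_i)=\int f(x)g_i(y)\,d\mu(x,y).
\]
Each \(\varphi_i\) is a bounded functional on \(C(K_1)\), with \(|\varphi_i(f)|\le\|g_i\|_\infty\|\mu\|\,\|f\|_\infty\). Hence \(\varphi_i\) is given by a measure \(\eta_i\in\sM(K_1)\).

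Set \(\lambda=\sum_{i=1}^r\eta_i\times\nu_i\in\rankmeas_{\le k}(K_1,K_2)\). Then \(\lambda\) and \(\mu\) agree on every \(f\otimes g\), so they agree on the linear span of such products. By Stone--Weierstrass this span is sup-norm dense in \(C(K_1\times K_2)\). Hence \(\lambda=\mu\) as elements of \(C(K_1\times K_2)^*\), and therefore as measures by uniqueness in Riesz--Markov--Kakutani.

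The step I expect to need the most care is this converse direction. The point to check is that the factors \(\varphi_i\) obtained from \(\mu\) are genuinely countably additive measures rather than mere linear functionals. The explicit formula \(\varphi_i(f)=\int f\otimes g_i\,d\mu\) settles this. The remaining detail is to confirm that duality choices with continuous test functions suffice in place of the bounded measurable ones used in Lemma~\ref{lem:measure-bidual}.
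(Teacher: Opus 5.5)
Your proof is correct, and it runs on the same mechanism as the paper's. Your operator $\Phi:C(K_1)\to C(K_2)^*$ is the paper's $T_H$, with $H$ the image of $\mu$ in $(C(K_1)\injtimes C(K_2))^*$. Your $(k+1)\times(k+1)$ determinants are the paper's weak$^*$-continuous functionals $D_S$. In both arguments, rank $\le k$ is characterized by $\dim\Phi(C(K_1))\le k$.

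The difference is packaging. The paper first proves a general Banach-space result, Theorem~\ref{thm:rank-closed}: tensors of rank $\le k$ in $X^*\projtimes Y^*$ form a weak$^*$ closed set in $(X\injtimes Y)^*$. It then transfers this to measures through the isometry $C(K_1)\injtimes C(K_2)\cong C(K_1\times K_2)$ and Riesz representation. You instead work directly in $\sM(K_1\times K_2)$. Stone--Weierstrass density of $\spn\{f\otimes g\}$ replaces that isometry, and Riesz turns the functionals $\varphi_i$ into measures $\eta_i$.

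Your route is self-contained. It also shows that the approximation property is not needed for this statement. The paper invokes that property only to get injectivity of $J$, and here injectivity of $\mu\mapsto B_\mu$ already does that job. The paper's route, in exchange, gives the more general tensor statement and Corollary~\ref{cor:rank-closed} as by-products.

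The point you flagged is harmless. Since $\Phi(f_1),\ldots,\Phi(f_{k+1})$ are linearly independent as functionals on $C(K_2)$, the map $g\mapsto(\Phi(f_p)(g))_{p\in[k+1]}$ from $C(K_2)$ to $\R^{k+1}$ is surjective. Otherwise a nonzero annihilator of its range would give a linear relation among them. This is exactly the step the paper uses. The same argument, combined with uniqueness in the Riesz representation, produces your dual functions $g_1,\ldots,g_r$ for $\nu_1,\ldots,\nu_r$. No Goldstine or $ba(\Omega)$ argument is required.
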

    Because every weak$^*$ open set is weakly open and norm open, weak$^*$ closed sets are necessarily weakly closed and norm closed. Consequently, \(\rankmeas_{\le k}(K_1,K_2)\) is also weakly and norm closed.

    The proofs in this subsection make use of tensor products of Banach spaces. A comprehensive treatment may be found in \cite{ryan02}; only the material required for the present arguments is summarized here. Let \(X\) and \(Y\) be Banach spaces. The algebraic tensor product, denoted \(X\otimes Y\), consists of finite sums of elementary tensors,
    \begin{equation}\label{eqn:basic-tensor}
        \sum_{i=1}^k \lambda_i x_i\otimes y_i,
    \end{equation}
    where \(x_i\in X\), \(y_i\in Y\), and \(\lambda_i\) are scalars in the underlying field. The algebraic tensor product \(X\otimes Y\) is a vector space, with addition and scalar multiplication defined termwise on representations of the form \eqref{eqn:basic-tensor}. The tensor product is bilinear, meaning that for \(x,x_1,x_2\in X\), \(y,y_1,y_2\in Y\), and \(\lambda\in\R\),
    \[
        (x_1+x_2)\otimes y
        =
        x_1\otimes y+x_2\otimes y,
    \]
    \[
        x\otimes(y_1+y_2)
        =
        x\otimes y_1+x\otimes y_2,
    \]
    and
    \[
        \lambda(x\otimes y)
        =
        (\lambda x)\otimes y
        =
        x\otimes(\lambda y).
    \]
    In particular,
    \[
        0\otimes y=x\otimes0=0.
    \]

    Tensor products of Banach spaces admit many natural norms. Two of the most important are the \emph{projective} and \emph{injective} tensor product norms. The projective tensor product of \(X\) and \(Y\), denoted \(X\otimes_\pi Y\), is the algebraic tensor product equipped with the norm \(\|\cdot\|_\pi\) defined by
    \begin{equation*}
        \left\|\sum_{i=1}^k x_i\otimes y_i\right\|_\pi
        =
        \inf\left\{
            \sum_{i=1}^{k'} \|x'_i\|\|y'_i\|
            :
            k' \in \N,\,
            \sum_{i=1}^{k'} x'_i\otimes y'_i
            =
            \sum_{i=1}^k x_i\otimes y_i
        \right\}.
    \end{equation*}
    The projective tensor norm may be viewed as an infinite-dimensional analogue of the matrix nuclear norm. The completion of \(X\otimes_\pi Y\) is denoted \(X\projtimes Y\). A useful structural property of the completed projective tensor product is that every element \(u\in X\projtimes Y\) admits a representation
    \[
        u=\sum_{i=1}^{\infty} x_i\otimes y_i,
        \qquad
        \sum_{i=1}^{\infty}\|x_i\|\,\|y_i\|<\infty,
    \]
    see \cite[Proposition~2.8]{ryan02}.

    For a normed space \(X\), let \(\ball_X\) denote its closed unit ball; when subscripts become cumbersome, \(\ball(X)\) will be used instead. The injective tensor product, denoted \(X\otimes_\varepsilon Y\), is the algebraic tensor product equipped with the norm \(\|\cdot\|_\varepsilon\) defined by
    \begin{equation*}
        \left\|\sum_{i=1}^k x_i\otimes y_i\right\|_\varepsilon
        =
        \sup\left\{
            \left| \sum_{i=1}^{k} \phi(x_i)\psi(y_i)\right|
            :
            \phi \in \ball_{X^*},\,
            \psi \in \ball_{Y^*}
        \right\}.
    \end{equation*}
    For the injective tensor product, no optimization over representations of the tensor is required. The completion of \(X\otimes_\varepsilon Y\) is denoted \(X\injtimes Y\).

    For Banach spaces \(X\) and \(Y\), there is a canonical linear mapping
    \[
        J: X^*\projtimes Y^* \;\longrightarrow\; (X\injtimes Y)^*
    \]
    characterized on elementary tensors by
    \[
        \left\langle
            J(x^*\otimes y^*),x\otimes y
        \right\rangle
        \triangleq
        x^*(x)y^*(y),
        \qquad
        x\in X,\quad y\in Y,\quad
        x^*\in X^*,\quad y^*\in Y^*,
    \]
    and extended by linearity and continuity.

    One final functional-analytic concept is required before turning to the proof of Theorem~\ref{thm:measure-tensor-w*closed}. A Banach space \(X\) is said to have the \emph{approximation property} if, for every norm-compact subset \(K\subset X\), the identity operator on \(K\) can be uniformly approximated by finite-rank operators; that is, for every \(\epsilon>0\), there exists a finite-rank bounded linear operator \(F:X\to X\) such that
    \[
        \sup_{x\in K}\|F(x)-x\|\le \epsilon.
    \]
    The approximation property is a deep structural property of Banach spaces. It holds for many classical spaces, while the space of bounded linear operators on an infinite-dimensional Hilbert space provides a well-known counterexample \cite{szankowsk81}.

    A tensor \(u\in X\projtimes Y\) is said to have rank at most \(k\) if there exist \(x_1,\ldots,x_k\in X\) and \(y_1,\ldots,y_k\in Y\) such that
    \[
        u=\sum_{i=1}^k x_i\otimes y_i.
    \]
    The following theorem generalizes Proposition~1.2 of \cite{petitjean25}, which establishes weak closedness of the set of rank-one tensors.
    \begin{theorem}\label{thm:rank-closed}
        Let \(X\) and \(Y\) be Banach spaces, let \(k\in\N\), and suppose that at least one of \(X^*\) or \(Y^*\) satisfies the approximation property. Then, under the canonical mapping
        \[
            X^*\projtimes Y^*
            \longrightarrow
            (X\injtimes Y)^*,
        \]
        the set of tensors of rank at most \(k\) in \(X^*\projtimes Y^*\) is weak\(^*\) closed.
    \end{theorem}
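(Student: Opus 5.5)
The plan is to avoid compactness or boundedness arguments, which are unavailable because the set of rank-\(\le k\) tensors is not convex and weak\(^*\) convergent nets need not be bounded. Instead I will characterize the image of the rank-\(\le k\) tensors in \((X\injtimes Y)^*\) by conditions that are visibly weak\(^*\) closed. Every \(\varphi\in(X\injtimes Y)^*\) defines a bounded bilinear form \(B_\varphi(x,y)\triangleq\langle\varphi,x\otimes y\rangle\) on \(X\times Y\). It also defines the associated bounded operator \(T_\varphi:X\to Y^*\), given by \(T_\varphi(x)=B_\varphi(x,\cdot)\). Since \(\operatorname{span}\{x\otimes y\}\) is dense in \(X\injtimes Y\), the functional \(\varphi\) is determined by \(B_\varphi\).

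\textbf{Step 1 (the image is a determinantal set).} Let \(\mathcal{D}_k\) be the set of \(\varphi\) such that
\[
    \det\bigl[\langle\varphi,x_p\otimes y_q\rangle\bigr]_{p,q=0}^{k}=0
    \qquad\text{for all } x_0,\ldots,x_k\in X,\; y_0,\ldots,y_k\in Y.
\]
Each map \(\varphi\mapsto\langle\varphi,x_p\otimes y_q\rangle\) is weak\(^*\) continuous, and the determinant is a polynomial in these entries. Hence \(\mathcal{D}_k\) is an intersection of weak\(^*\) closed sets, and so it is weak\(^*\) closed. It therefore suffices to show that \(J\) maps the rank-\(\le k\) tensors onto \(\mathcal{D}_k\).

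\textbf{Step 2 (rank-\(\le k\) tensors land in \(\mathcal{D}_k\)).} Let \(u=\sum_{i=1}^k x_i^*\otimes y_i^*\). Then \(\langle J u,x_p\otimes y_q\rangle=\sum_i x_i^*(x_p)y_i^*(y_q)\). The \((k+1)\times(k+1)\) matrix of these values factors as a \((k+1)\times k\) matrix times a \(k\times(k+1)\) matrix, so its determinant vanishes.

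\textbf{Step 3 (\(\mathcal{D}_k\) is contained in the image of rank-\(\le k\) tensors).} This is the main step. Let \(\varphi\in\mathcal{D}_k\).
\begin{itemize}
\item First I show \(\dim T_\varphi(X)\le k\). Suppose \(T_\varphi x_0,\ldots,T_\varphi x_k\) were linearly independent in \(Y^*\). Finitely many linearly independent functionals admit biorthogonal vectors \(y_0,\ldots,y_k\in Y\), which is elementary finite-dimensional duality (compare Lemma~\ref{lem:measure-bidual}). With these vectors, \(\bigl[B_\varphi(x_p,y_q)\bigr]\) is the identity matrix. This contradicts membership in \(\mathcal{D}_k\).
\item Pick a basis \(y_1^*,\ldots,y_m^*\) of \(T_\varphi(X)\), with \(m\le k\), and biorthogonal \(y_1,\ldots,y_m\in Y\). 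Then
\[
    T_\varphi x=\sum_{i=1}^m f_i(x)\,y_i^*,
    \qquad f_i(x)\triangleq B_\varphi(x,y_i).
\]
Each \(f_i\) is bounded because \(B_\varphi\) is, so \(f_i\in X^*\).
\item Therefore \(\langle\varphi,x\otimes y\rangle=\langle J(\sum_{i=1}^m f_i\otimes y_i^*),x\otimes y\rangle\) for all \(x\in X\) and \(y\in Y\). Both sides are continuous functionals on \(X\injtimes Y\), and the span of elementary tensors is dense. Hence \(\varphi=J(u)\) with \(u=\sum_{i=1}^m f_i\otimes y_i^*\), which has rank at most \(k\).
\end{itemize}

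\textbf{Role of the approximation property.} The hypothesis on \(X^*\) or \(Y^*\) enters only to make the statement meaningful as a statement about tensors in \(X^*\projtimes Y^*\). Under it, \(J\) is injective (Grothendieck; see \cite{ryan02}). Consequently a tensor of \(X^*\projtimes Y^*\) has rank at most \(k\) exactly when its image does. The set in question is then identified with \(\mathcal{D}_k\), and its weak\(^*\) closedness is exactly Step 1. Steps 2 and 3 do not need the approximation property.

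The main obstacle is Step 3, specifically showing that the coefficient functionals \(f_i\) are bounded. This is handled by writing them as evaluations of the bounded form \(B_\varphi\) at the fixed biorthogonal vectors \(y_i\).
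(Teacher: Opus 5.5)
Your proposal is correct and follows essentially the same route as the paper. Both arguments identify the image of the rank-\(\le k\) tensors with the common zero set of the weak\(^*\)-continuous determinants \(\det\bigl[\langle H,x_p\otimes y_q\rangle\bigr]\), passing through finite-dimensionality of the range of \(T_H:X\to Y^*\) and biorthogonal vectors, and both use the approximation property only to make \(J\) injective. The one cosmetic difference is that you write the coefficient functionals as \(B_\varphi(\cdot,y_i)\) using biorthogonal vectors, whereas the paper writes them as \(\lambda_i\circ T_H\) with coordinate functionals \(\lambda_i\) on the finite-dimensional range.
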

    The proof of Theorem~\ref{thm:rank-closed} draws heavily on the methods developed in \cite{petitjean25}.

    \begin{proof}[Proof of Theorem \ref{thm:rank-closed}]
        Let
        \[
            Q_{\le k}
            =
            \left\{
                u\in X^*\projtimes Y^*
                :
                u \text{ has rank at most } k
            \right\},
        \]
        and let
        \begin{equation}
            \label{eqn:canonical-map-J}
            J:X^*\projtimes Y^* \longrightarrow (X\injtimes Y)^*
        \end{equation}
        denote the canonical mapping. By \cite[Corollary~4.8(b)]{ryan02}, the canonical mapping
        \[
            \Theta:X^*\projtimes Y^*
            \longrightarrow
            \mathcal{B}_N(X\times Y),
            \qquad
            \Theta(u)(x,y)
            =
            \langle J(u),x\otimes y\rangle,
        \]
        is an isometric isomorphism under the approximation property assumption, where \(\mathcal{B}_N(X\times Y)\) denotes the space of nuclear bilinear forms on \(X\times Y\), equipped with the nuclear norm \cite[Section~2.6, pp.~39--40]{ryan02}. To show that \(J\) is injective, let \(u\in X^*\projtimes Y^*\) satisfy \(J(u)=0\). Then, for every \(x\in X\) and \(y\in Y\),
        \[
            \Theta(u)(x,y)
            =
            \langle J(u),x\otimes y\rangle
            =
            0.
        \]
        Thus \(\Theta(u)\) is the zero bilinear form. Since \(\Theta\) is an isometric isomorphism, it is injective, and therefore \(u=0\). Hence \(\ker J=\{0\}\), so the linear map \(J\) is injective.

        For \(H\in(X\injtimes Y)^*\), define the bounded linear operator
        \[
            T_H:X\longrightarrow Y^*,
            \qquad
            T_H(x)(y)
            \triangleq
            \langle H,x\otimes y\rangle.
        \]
        The operator \(T_H\) is clearly linear. To see that it is bounded, observe that
        \[
            |T_H(x)(y)|
            \le
            \|H\|\,\|x\otimes y\|_\varepsilon
            =
            \|H\|\,\|x\|\,\|y\|.
        \]
        It will first be shown that
        \begin{equation}\label{eqn:operator-tensor-rank}
            H\in J(Q_{\le k})
            \quad\Longleftrightarrow\quad
            \dim T_H(X)\le k.
        \end{equation}
        If
        \[
            H
            =
            J\left(\sum_{i=1}^r x_i^*\otimes y_i^*\right),
            \qquad r\le k,
        \]
        then
        \[
            T_H(x)
            =
            \sum_{i=1}^r x_i^*(x)y_i^*,
        \]
        and hence \(T_H(X)\subseteq\spn\{y_1^*,\ldots,y_r^*\}\).
        Thus \(\dim T_H(X)\le k\).

        Conversely, suppose that \(\dim T_H(X)=r\le k\). If \(r=0\), then \(T_H=0\), and hence
        \[
            \langle H,x\otimes y\rangle
            =
            T_H(x)(y)
            =
            0
        \]
        for every \(x\in X\) and \(y\in Y\). By linearity, \(H\) therefore vanishes on \(X\otimes Y\), the span of the elementary tensors. Since \(X\otimes Y\) is dense in \(X\injtimes Y\) and \(H\) is continuous, it follows that \(H=0\). Thus
        \[
            H=J(0)\in J(Q_{\le k}),
        \]
        which proves the implication when \(r=0\).

        Assume now that \(r>0\). Let
        \[
            F\triangleq T_H(X),
        \]
        and choose a basis \(y_1^*,\ldots,y_r^*\) for \(F\). Let
        \(\lambda_1,\ldots,\lambda_r\in F^*\) be the corresponding coordinate
        functionals, characterized by
        \[
            \lambda_i(y_i^*)=1,
            \qquad
            \lambda_i(y_j^*)=0
            \quad\text{when }i\ne j.
        \]
        Equivalently, the identity operator on \(F\) satisfies
        \[
            I_F(z)
            =
            \sum_{i=1}^r\lambda_i(z)y_i^*,
            \qquad z\in F.
        \]
        Since \(F\) is finite dimensional, each \(\lambda_i\) is bounded. Define
        \[
            x_i^*
            \triangleq
            \lambda_i\circ T_H
            \in X^*,
            \qquad i\in[r].
        \]
        For every \(x\in X\), the preceding representation of \(I_F\) gives
        \[
            T_H(x)
            =
            \sum_{i=1}^r
            \lambda_i\bigl(T_H(x)\bigr)y_i^*
            =
            \sum_{i=1}^r x_i^*(x)y_i^*.
        \]
        Consequently, for every \(x\in X\) and \(y\in Y\),
        \begin{align*}
            \left\langle
                J\left(\sum_{i=1}^r x_i^*\otimes y_i^*\right),
                x\otimes y
            \right\rangle
    &=
    \sum_{i=1}^r x_i^*(x)y_i^*(y)\\
    &=
    T_H(x)(y)\\
    &=
    \langle H,x\otimes y\rangle.
        \end{align*}
        Since \(X\otimes Y\) is dense in \(X\injtimes Y\), it follows that
        \[
            H
            =
            J\left(\sum_{i=1}^r x_i^*\otimes y_i^*\right)
            \in J(Q_{\le k}),
        \]
        proving \eqref{eqn:operator-tensor-rank}.

        For
        \[
            S
            =
            \bigl((x_1,\ldots,x_{k+1}),
            (y_1,\ldots,y_{k+1})\bigr)
            \in X^{k+1}\times Y^{k+1},
        \]
        define
        \[
            D_S:(X\injtimes Y)^*\longrightarrow\R
        \]
        by
        \[
            D_S(H)
            \triangleq
            \det\!\left[
                \langle H,x_i\otimes y_j\rangle
            \right]_{i\in[k+1],\,j\in[k+1]}.
        \]
        Each \(D_S\) is weak\(^*\)-continuous, since each mapping
        \[
            H\longmapsto\langle H,x_i\otimes y_j\rangle
        \]
        is weak\(^*\)-continuous and the determinant is a polynomial in these finitely many values. It will now be shown that \(H\in J(Q_{\le k})\) if and only if
        \(D_S(H)=0\) for every \(S\in X^{k+1}\times Y^{k+1}\).

        Suppose that \(\dim T_H(X)\le k\). By \eqref{eqn:operator-tensor-rank}, there exist \(r\le k\), \(x_1^*,\ldots,x_r^*\in X^*\), and \(y_1^*,\ldots,y_r^*\in Y^*\) such that
        \[
            H
            =
            J\left(
                \sum_{\ell=1}^r x_\ell^*\otimes y_\ell^*
            \right).
        \]
        Consequently, for every \(S\in X^{k+1}\times Y^{k+1}\),
        \begin{align*}
            \left[
                \langle H,x_i\otimes y_j\rangle
            \right]_{i,j\in[k+1]}
                &=
                \left[
                    \sum_{\ell=1}^r
                    x_\ell^*(x_i)y_\ell^*(y_j)
                \right]_{i,j\in[k+1]}\\
                &=
                \sum_{\ell=1}^r
                \begin{bmatrix}
                    x_\ell^*(x_1)\\
                    x_\ell^*(x_2)\\
                    \vdots\\
                    x_\ell^*(x_{k+1})
                \end{bmatrix}
                \begin{bmatrix}
                    y_\ell^*(y_1) &
                    y_\ell^*(y_2) &
                    \cdots &
                    y_\ell^*(y_{k+1})
                \end{bmatrix}.
        \end{align*}
        Each summand has rank at most one, so the matrix has rank at most
        \(r\le k\). Its determinant is therefore zero, and hence \(D_S(H)=0\)
        for every \(S\).

        Conversely, suppose that \(\dim T_H(X)>k\). There then exist
        \(x_1,\ldots,x_{k+1}\in X\) such that
        \[
            T_H(x_1),\ldots,T_H(x_{k+1})
        \]
        are linearly independent elements of \(Y^*\). The linear map
        \[
            R:Y\longrightarrow\R^{k+1},
            \qquad
            R(y)
            =
            \bigl(T_H(x_1)(y),\ldots,T_H(x_{k+1})(y)\bigr),
        \]
        is surjective. Otherwise, a nonzero linear functional annihilating
        its range would give a nontrivial linear relation among
        \(T_H(x_1),\ldots,T_H(x_{k+1})\).
        Let \(e_1,\ldots,e_{k+1}\) denote the standard basis of
        \(\R^{k+1}\). Since \(R\) is surjective, for every \(j\in[k+1]\) there
        exists \(y_j\in Y\) such that
        \[
            R(y_j)=e_j.
        \]
        It follows that
        \[
            \left[
                \langle H,x_i\otimes y_j\rangle
            \right]_{i,j\in[k+1]}
            =
            \left[
                T_H(x_i)(y_j)
            \right]_{i,j\in[k+1]}
            =
            I_{k+1}.
        \]
        The determinant of this matrix is therefore nonzero.

        Combining this characterization with
        \eqref{eqn:operator-tensor-rank} gives
        \[
            J(Q_{\le k})
            =
            \bigcap_{S\in X^{k+1}\times Y^{k+1}}
            D_S^{-1}(\{0\}).
        \]
        The right-hand side is an intersection of weak\(^*\) closed subsets
        of \((X\injtimes Y)^*\). Hence \(J(Q_{\le k})\) is weak\(^*\) closed.
    \end{proof}

    In the dual-space setting of Theorem~\ref{thm:rank-closed}, weak\(^*\) closedness also implies weak and norm closedness. The preceding argument also yields the following weak closedness result.
    \begin{corollary}\label{cor:rank-closed}
        Let \(X\) and \(Y\) be Banach spaces, at least one of which has the approximation property, and let \(k\in\N\). Then the set of tensors of rank at most \(k\) in \(X\projtimes Y\) is weakly closed.
    \end{corollary}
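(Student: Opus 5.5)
We need to prove Corollary~\ref{cor:rank-closed}: the set of rank-$\le k$ tensors in $X\projtimes Y$ is weakly closed when $X$ or $Y$ has the approximation property.

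The plan mirrors the proof of Theorem~\ref{thm:rank-closed}, replacing the weak$^*$-continuous functionals $H\mapsto\langle H,x_i\otimes y_j\rangle$ with weakly continuous functionals on $X\projtimes Y$. For $u\in X\projtimes Y$, $x^*\in X^*$ and $y^*\in Y^*$, the bounded bilinear form $(x,y)\mapsto x^*(x)y^*(y)$ induces a bounded functional on $X\projtimes Y$, written $u\mapsto\langle u,x^*\otimes y^*\rangle$. This functional is norm-continuous and linear, hence weakly continuous.

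For $S=((x_1^*,\ldots,x_{k+1}^*),(y_1^*,\ldots,y_{k+1}^*))\in (X^*)^{k+1}\times(Y^*)^{k+1}$, set
\[
D_S(u)\triangleq\det\bigl[\langle u,x_i^*\otimes y_j^*\rangle\bigr]_{i,j\in[k+1]}.
\]
Each $D_S$ is a polynomial in finitely many weakly continuous functionals, so it is weakly continuous. The claim is then that the rank-$\le k$ set equals $\bigcap_S D_S^{-1}(\{0\})$, which is weakly closed.

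The easy inclusion goes as follows. If $u=\sum_{\ell\le r}x_\ell\otimes y_\ell$ with $r\le k$, the matrix above factors as a sum of $r$ rank-one matrices, so its determinant vanishes.

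For the converse, associate to $u$ the operator $T_u:X^*\to Y$ defined by $T_u(x^*)\triangleq\sum_i x^*(x_i)y_i$, using a representation $u=\sum_{i=1}^\infty x_i\otimes y_i$ with $\sum\|x_i\|\|y_i\|<\infty$ (\cite[Proposition~2.8]{ryan02}). Then $y^*(T_u x^*)=\langle u,x^*\otimes y^*\rangle$, so $T_u$ is well defined independent of the representation.

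If every $D_S$ vanishes, then $\dim T_u(X^*)\le k$, by the same surjectivity argument as before with the roles of the spaces swapped. Suppose instead that $T_u(x_1^*),\ldots,T_u(x_{k+1}^*)\in Y$ were linearly independent. Hahn--Banach gives $y_j^*\in Y^*$ biorthogonal to them, which produces the identity matrix and $D_S(u)=1$.

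It remains to conclude from $\dim T_u(X^*)\le k$ that $u$ has rank at most $k$, and this is where the approximation property is needed. I expect this step to be the main obstacle. Choose a basis $z_1,\ldots,z_r$ of the finite-dimensional space $T_u(X^*)\subset Y$ with coordinate functionals $\lambda_i$. Each $\lambda_i\circ T_u$ is weak$^*$-continuous on $X^*$, because $x^*\mapsto y^*(T_ux^*)=\sum_i x^*(x_i)y^*(y_i)$ is weak$^*$-continuous for every $y^*$, and on a finite-dimensional range the coordinates are finite combinations of such evaluations. Hence $\lambda_i\circ T_u$ is given by some $w_i\in X$. Setting $v=\sum_{i\le r}w_i\otimes z_i$ gives $\langle u-v,x^*\otimes y^*\rangle=0$ for all $x^*,y^*$.

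Under the approximation property on $X$ or $Y$, the canonical map from $X\projtimes Y$ into bilinear forms on $X^*\times Y^*$ (equivalently into $(X^*\injtimes Y^*)^*$ or $\mathcal{B}(X^*,Y^*)$) is injective. This is the standard characterization of the approximation property \cite[Proposition~4.6]{ryan02}, the same fact used via \cite[Corollary~4.8]{ryan02} to make $J$ injective in the proof of Theorem~\ref{thm:rank-closed}. Injectivity gives $u=v$, so $u$ has rank at most $r\le k$. This completes the characterization, and hence the weak closedness follows.
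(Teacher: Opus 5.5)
Your proposal is correct and follows essentially the route the paper intends; the paper only remarks that the argument for Theorem~\ref{thm:rank-closed} carries over. You use determinant functionals built from the weakly continuous maps \(u\mapsto\langle u,x^*\otimes y^*\rangle\), a finite-rank characterization via \(T_u\), and the approximation property for injectivity of \(X\projtimes Y\) into bilinear forms on \(X^*\times Y^*\). Your extra weak\(^*\)-continuity step, which makes the coordinate functionals \(\lambda_i\circ T_u\) come from elements of \(X\) rather than \(X^{**}\), is exactly the adaptation the predual setting requires, and you handle it correctly.
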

    It is now possible to prove Theorem \ref{thm:measure-tensor-w*closed}.
    \begin{proof}[Proof of Theorem~\ref{thm:measure-tensor-w*closed}]
        Set
        \[
            X\triangleq C(K_1),
            \qquad
            Y\triangleq C(K_2).
        \]
        Since \(K_1\) and \(K_2\) are compact metric spaces, the
        Riesz--Markov--Kakutani representation theorem gives the canonical
        linear isometric isomorphisms
        \[
            X^*\cong\sM(K_1),
            \qquad
            Y^*\cong\sM(K_2),
            \qquad
            \sM(K_1\times K_2)
            \cong
            C(K_1\times K_2)^*.
        \]

        Let
        \[
            \Gamma:
            X\injtimes Y
            \longrightarrow
            C(K_1\times K_2)
        \]
        denote the canonical linear isometric isomorphism determined by
        \[
            \Gamma(f\otimes g)(x,y)
            =
            f(x)g(y)
        \]
        \cite[p.~50]{ryan02}. Under the preceding identification of
        \(\sM(K_1\times K_2)\) with \(C(K_1\times K_2)^*\), define
        \[
            \Phi
            \triangleq
            \Gamma^*:
            \sM(K_1\times K_2)
            \longrightarrow
            (X\injtimes Y)^*.
        \]
        Since \(\Gamma\) is a linear isometric isomorphism, so is its dual
        operator \(\Phi\) \cite[Exercise~2.41(ii)]{fabian01}. By the
        definition of the dual operator,
        \begin{equation}\label{eqn:measure-tensor-dual-pairing}
            \langle\Phi(\mu),h\rangle
            =
            \langle\mu,\Gamma(h)\rangle
        \end{equation}
        for every \(\mu\in\sM(K_1\times K_2)\) and
        \(h\in X\injtimes Y\).

        Let
        \[
            J:
            X^*\projtimes Y^*
            \longrightarrow
            (X\injtimes Y)^*
        \]
        denote the canonical linear mapping. For
        \(\nu_1\in\sM(K_1)\), \(\nu_2\in\sM(K_2)\),
        \(f\in C(K_1)\), and \(g\in C(K_2)\),
        \begin{align*}
            \left\langle
                \Phi(\nu_1\times\nu_2),
                f\otimes g
            \right\rangle
                    &=
                    \left\langle
                        \nu_1\times\nu_2,
                        \Gamma(f\otimes g)
                    \right\rangle\\
                    &=
                    \int_{K_1\times K_2}
                    f(x)g(y)\,d(\nu_1\times\nu_2)(x,y)\\
                    &=
                    \left(\int_{K_1}f\,d\nu_1\right)
                    \left(\int_{K_2}g\,d\nu_2\right)\\
                    &=
                    \left\langle
                        J(\nu_1\otimes\nu_2),
                        f\otimes g
                    \right\rangle.
        \end{align*}
        Thus, for each fixed pair \(\nu_1,\nu_2\), the functionals \(\Phi(\nu_1\times\nu_2)\) and \(J(\nu_1\otimes\nu_2)\) agree on every elementary tensor \(f\otimes g\). By linearity, they therefore agree on \(X\otimes Y\). Since \(X\otimes Y\) is dense in \(X\injtimes Y\), their continuity implies that
        \begin{equation}\label{eqn:product-measure-tensor}
            \Phi(\nu_1\times\nu_2)
            =
            J(\nu_1\otimes\nu_2)
            \qquad
            \text{for all }\nu_1,\nu_2.
        \end{equation}

        Let \(Q_{\le k}\) denote the set of tensors of rank at most \(k\)
        in \(X^*\projtimes Y^*\). Equation
        \eqref{eqn:product-measure-tensor} and the linearity of \(\Phi\)
        and \(J\) show that
        \begin{equation}\label{eqn:measure-rank-tensor-rank}
            \Phi\bigl(\rankmeas_{\le k}(K_1,K_2)\bigr)
            =
            J(Q_{\le k}).
        \end{equation}
        Indeed, a representation
        \[
            \mu
            =
            \sum_{i=1}^r
            \nu_{i,1}\times\nu_{i,2},
            \qquad r\le k,
        \]
        is carried by \(\Phi\) to
        \[
            J\left(
                \sum_{i=1}^r
                \nu_{i,1}\otimes\nu_{i,2}
            \right),
        \]
        and every element of \(J(Q_{\le k})\) arises in this way.

        Since \(X^*=C(K_1)^*\) has the approximation property
        \cite[p.~80]{ryan02}, Theorem~\ref{thm:rank-closed} implies that
        \(J(Q_{\le k})\) is weak\(^*\) closed in
        \((X\injtimes Y)^*\).

        Let \((\mu_\alpha)_{\alpha\in A}\) be a net in
        \(\rankmeas_{\le k}(K_1,K_2)\) converging weak\(^*\) in
        \(\sM(K_1\times K_2)\) to some \(\mu\). 
        By \eqref{eqn:measure-rank-tensor-rank},
        \[
            \Phi(\mu_\alpha)\in J(Q_{\le k})
            \qquad
            \text{for every }\alpha.
        \]
        Moreover, for every \(h\in X\injtimes Y\),
        \[
            \langle\Phi(\mu_\alpha),h\rangle
            =
            \langle\mu_\alpha,\Gamma(h)\rangle
            \longrightarrow
            \langle\mu,\Gamma(h)\rangle
            =
            \langle\Phi(\mu),h\rangle.
        \]
        Hence \( \Phi(\mu_\alpha) \longrightarrow \Phi(\mu)\) weak\(^*\) in
        \((X\injtimes Y)^*\). Since \(J(Q_{\le k})\) is weak\(^*\)
        closed,
        \[
            \Phi(\mu)\in J(Q_{\le k}).
        \]
        Equation~\eqref{eqn:measure-rank-tensor-rank} and the injectivity
        of \(\Phi\) therefore imply that
        \[
            \mu\in\rankmeas_{\le k}(K_1,K_2).
        \]
        Thus \(\rankmeas_{\le k}(K_1,K_2)\) is weak\(^*\) closed.
    \end{proof}
    \subsection{Proofs of Theorem~\ref{thm:weakstar-homeo} and Proposition~\ref{prop:rip-span}}

    The proof of Theorem~\ref{thm:weakstar-homeo} uses the following two standard compactness results.
    \begin{lemma}[{\cite[Theorem~5.8]{kelley55}}]\label{thm:compact-homeo}
        Let \((\sX,\tau_{\sX})\) be a compact topological space and \((\sY,\tau_{\sY})\) a Hausdorff topological space.  
        If \(f : \sX \to \sY\) is continuous and injective, then \(f^{-1} : f(\sX) \to \sX\) is continuous, and hence \(f : \sX \to f(\sX)\) is a homeomorphism.
    \end{lemma}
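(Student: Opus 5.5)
The plan is to show directly that \(f\) is a closed map from \(\sX\) onto \(f(\sX)\), where \(f(\sX)\) carries the subspace topology inherited from \(\tau_{\sY}\). Continuity of \(f^{-1}:f(\sX)\to\sX\) is equivalent to the statement that for every closed set \(C\subseteq\sX\), the preimage \((f^{-1})^{-1}(C)=f(C)\) is closed in \(f(\sX)\). So everything reduces to checking that \(f\) sends closed sets to relatively closed sets.

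First, fix a closed set \(C\subseteq\sX\). A closed subset of a compact space is compact: given an open cover of \(C\), adjoin the open set \(\sX\setminus C\), extract a finite subcover of \(\sX\), and discard \(\sX\setminus C\). Second, \(f(C)\) is compact in \(\sY\), since continuous images of compact sets are compact (pull back an open cover through \(f\), take a finite subcover, and push forward). Third, \(f(C)\) is closed in \(\sY\), because compact subsets of a Hausdorff space are closed. To see this, fix \(y\notin f(C)\). For each \(z\in f(C)\), choose disjoint open neighborhoods \(U_z\ni z\) and \(V_z\ni y\). Finitely many \(U_{z_1},\ldots,U_{z_n}\) cover \(f(C)\), and then \(\bigcap_{i=1}^n V_{z_i}\) is an open neighborhood of \(y\) that misses \(f(C)\).

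Consequently \(f(C)=f(C)\cap f(\sX)\) is closed in the subspace \(f(\sX)\). Injectivity guarantees that \(f:\sX\to f(\sX)\) is a bijection, so \(f^{-1}\) is well defined and \((f^{-1})^{-1}(C)=f(C)\). Hence \(f^{-1}\) is continuous, and \(f\) is a continuous bijection onto \(f(\sX)\) with continuous inverse, that is, a homeomorphism.

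There is no substantive obstacle here; the only point requiring care is the Hausdorff step, which is exactly where that assumption on \((\sY,\tau_{\sY})\) enters. In the application to Theorem~\ref{thm:weakstar-homeo}, the compact domain will presumably be \(\rankmeas_{\le k}(K_1,K_2)\cap r\ball_{\sM(K_1\times K_2)}\). I expect its weak\(^*\) compactness to follow from Banach--Alaoglu together with Theorem~\ref{thm:measure-tensor-w*closed}, while the Hausdorff property of the weak\(^*\) product topology on \(\sM(\R)^{2k}\) is standard.
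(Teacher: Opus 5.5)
Your proof is correct. It is the standard closed-map argument: closed subsets of a compact space are compact, continuous images of compact sets are compact, and compact subsets of a Hausdorff space are closed, so \(f\) sends closed sets to relatively closed subsets of \(f(\sX)\). The paper gives no proof of its own and simply cites Kelley's Theorem~5.8, whose proof is essentially this argument; your remark on the application also matches how the paper obtains compactness in Theorem~\ref{thm:weakstar-homeo}, via Banach--Alaoglu and Theorem~\ref{thm:measure-tensor-w*closed}.
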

    \begin{theorem}[Banach--Alaoglu {\cite[Theorem~3.21]{fabian01}}]\label{thm:banach-alaoglu}
        If \(X\) is a Banach space, then the closed unit ball of \(X^*\) is compact in the weak\(^*\) topology.
    \end{theorem}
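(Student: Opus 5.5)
This is the Banach--Alaoglu theorem, a standard cited result. I would prove it with the classical Tychonoff embedding argument. For each \(x\in X\), set \(I_x\triangleq[-\|x\|,\|x\|]\subset\R\), and form the product space \(P\triangleq\prod_{x\in X}I_x\) with the product topology. Define
\[
    \Psi:\ball_{X^*}\longrightarrow P,
    \qquad
    \Psi(\phi)\triangleq\bigl(\phi(x)\bigr)_{x\in X}.
\]
This map is well defined because \(|\phi(x)|\le\|\phi\|\,\|x\|\le\|x\|\) for every \(\phi\in\ball_{X^*}\).

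First I would check that \(\Psi\) is a homeomorphism onto its image when \(\ball_{X^*}\) carries the weak\(^*\) topology.
\begin{itemize}
    \item \(\Psi\) is injective, since a functional is determined by its values.
    \item The weak\(^*\) topology is, by definition, the coarsest topology making every evaluation \(\phi\mapsto\phi(x)\) continuous.
    \item The product topology is the coarsest topology making every coordinate projection continuous.
    \item The evaluations on \(\ball_{X^*}\) are exactly the coordinate projections composed with \(\Psi\).
\end{itemize}
So the subbasic open sets correspond exactly under \(\Psi\), and \(\Psi\) is a homeomorphism onto \(\Psi(\ball_{X^*})\).

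Second, \(P\) is compact by Tychonoff's theorem, since each \(I_x\) is a compact interval. It then suffices to show that \(\Psi(\ball_{X^*})\) is closed in \(P\), because a closed subset of a compact space is compact.

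To prove closedness, I would identify the image explicitly as the set of \(p=(p_x)_{x\in X}\in P\) satisfying
\[
    p_{\lambda x+\lambda' x'}=\lambda p_x+\lambda' p_{x'}
    \qquad\text{for all } x,x'\in X,\ \lambda,\lambda'\in\R.
\]
Any such \(p\) defines a linear functional \(x\mapsto p_x\). It satisfies \(|p_x|\le\|x\|\) because \(p_x\in I_x\), so it is bounded with norm at most one, and hence lies in the image. Conversely, every element of the image satisfies these identities. Each identity involves only three coordinates, and the map \(p\mapsto p_{\lambda x+\lambda'x'}-\lambda p_x-\lambda' p_{x'}\) is continuous on \(P\). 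Each condition therefore cuts out a closed set, and the image, as an intersection of closed sets, is closed.

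Combining these steps, \(\ball_{X^*}\) is homeomorphic to a compact set, so it is weak\(^*\) compact. The main obstacle is Tychonoff's theorem for arbitrary products, which depends on the axiom of choice; I would take it as given. The only other step needing care is the identification of the two topologies, and this follows directly from their definitions as initial topologies.
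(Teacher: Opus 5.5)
Your proof is correct. It is the standard Tychonoff-embedding argument, and you handle both delicate points properly: the weak$^*$ topology on $\ball_{X^*}$ is identified with the subspace product topology through the initial-topology characterization, and the image is shown to be closed as the set of linear elements of $P$. The paper does not prove this result itself but cites \cite[Theorem~3.21]{fabian01}, where it is established by essentially this same argument.
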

    The proof of Theorem~\ref{thm:weakstar-homeo} also relies on the continuity of the silhouette operator.
    \begin{lemma}\label{lem:silhouette-continuous}
        Let \(K_1,K_2 \subset \R\) be compact, let \(k\in\N\), and let \(\sil\in\silset_{2k}\). Consider the restriction
        \[
            \sil : \sM(K_1\times K_2) \to \sM(\R)^{2k}.
        \]
        Then \(\sil\) is norm-to-norm, weak-to-weak, and weak\(^*\)-to-weak\(^*\) continuous.
    \end{lemma}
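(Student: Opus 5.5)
Since \(\sil\) is a finite tuple of linear maps \(\mu\mapsto\pi_{v}\#\mu\), and continuity into a product space (norm or weak\(^*\) product topology) reduces to continuity of each coordinate, it suffices to fix one direction \(v\in\R^2\) and show that the linear map \(P_v:\sM(K_1\times K_2)\to\sM(\R)\), \(P_v\mu=\pi_v\#\mu\), is continuous in each of the three senses. The image \(\pi_v(K_1\times K_2)\) is a compact interval \(K_v\subset\R\), so \(P_v\) may also be regarded as a map into \(\sM(K_v)\cong C(K_v)^*\). The weak\(^*\) topology on \(\sM(\R)\) will be understood as the one induced by \(C_0(\R)\) (the predual); because every \(P_v\mu\) is supported in \(K_v\), testing against \(C_0(\R)\) or against \(C(K_v)\) amounts to the same thing, as any \(g\in C(K_v)\) extends to some \(\tilde g\in C_0(\R)\).

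\textbf{Norm continuity.} Using the Jordan decomposition and linearity of pushforward, \(|\pi_v\#\mu|\le \pi_v\#|\mu|\), which gives \(\|P_v\mu\|\le\|\mu\|\). Thus \(P_v\) is bounded with norm at most \(1\), so it is norm-to-norm continuous.

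\textbf{Weak\(^*\) continuity.} For \(g\in C_0(\R)\), Lemma~\ref{lem:substitution-signed} gives
\[
\int g\,d(P_v\mu)=\int_{K_1\times K_2} g\circ\pi_v\,d\mu .
\]
The function \(g\circ\pi_v\) restricted to \(K_1\times K_2\) is continuous and therefore lies in \(C(K_1\times K_2)\). Consequently \(\mu\mapsto\langle P_v\mu,g\rangle\) is weak\(^*\) continuous on \(\sM(K_1\times K_2)\cong C(K_1\times K_2)^*\). Since this holds for every test function \(g\), \(P_v\) is weak\(^*\)-to-weak\(^*\) continuous. Put differently, \(P_v\) is the adjoint of the bounded operator \(C_0(\R)\to C(K_1\times K_2)\), \(g\mapsto g\circ\pi_v|_{K_1\times K_2}\).

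\textbf{Weak continuity.} This follows from a standard fact: a norm-bounded linear operator between normed spaces is weak-to-weak continuous. For each \(\phi\in\sM(\R)^*\), the composition \(\phi\circ P_v\) belongs to \(\sM(K_1\times K_2)^*\).

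None of these steps is a serious obstacle. The one point needing care is the bookkeeping of the weak\(^*\) topology on \(\sM(\R)\) (whether it comes from \(C_0(\R)\) or \(C_b(\R)\)), and this is harmless here because all images are supported in the fixed compact set \(K_v\). Finally, assembling the coordinates \(v\) in \eqref{eqn:silhoulette-vector1}--\eqref{eqn:silhoulette-vector2} gives continuity of \(\sil\) in the product topologies.
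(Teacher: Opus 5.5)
Your proposal is correct and follows essentially the same route as the paper: reduce to a single pushforward coordinate, use the bound \(\|\pi_v\#\mu\|\le\|\mu\|\) to obtain norm and weak continuity, and use Lemma~\ref{lem:substitution-signed} together with \(g\circ\pi_v\in C(K_1\times K_2)\) for \(g\in C_0(\R)\) to obtain weak\(^*\) continuity. One harmless slip is that \(\pi_v(K_1\times K_2)\) is compact but need not be an interval, since \(K_1\) and \(K_2\) need not be intervals; the argument only uses compactness, and that aside is not needed for the proof.
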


    \begin{proof}[Proof of Lemma \ref{lem:silhouette-continuous}]
        It suffices to prove the statement for a single coordinate of $\sil$, since $\sil\in\silset_{2k}$ is a finite tuple of linear pushforwards and $\sM(\R)^{2k}$ is equipped with the corresponding product topology. Thus let
        \[
            \pi\# : \sM(K_1\times K_2) \to \sM(\R)
        \]
        be one of the pushforward maps appearing in $\sil$, where \(\pi:K_1\times K_2\to \R\) is the restriction of the corresponding linear map on \(\R^2\).

        The pushforward map \(\pi\#\) is linear, and for every
        \(\mu\in\sM(K_1\times K_2)\),
        \[
            \|\pi\#\mu\|_{\mathrm{TV}}
            \le
            \|\mu\|_{\mathrm{TV}}.
        \]
        Therefore \(\pi\#\) is a bounded linear operator between Banach spaces, and is hence norm-to-norm and weak-to-weak continuous.

        Let \((\mu_\alpha)_{\alpha\in A}\) be a net in \(\sM(K_1\times K_2)\) that converges weak\(^*\) to \(\mu\), and let \(f \in C_0(\R)\), where \(C_0(\R)\) denotes the space of continuous functions on \(\R\) that vanish at infinity. To prove weak\(^*\)-to-weak\(^*\) continuity, it is enough to show that
        \[
            \langle \pi\#\mu_\alpha, f\rangle
            \longrightarrow
            \langle \pi\#\mu, f\rangle.
        \]

        For each \(\alpha\), Lemma~\ref{lem:substitution-signed} gives
        \[
            \langle \pi\#\mu_\alpha,f\rangle
            =
            \langle\mu_\alpha,f\circ\pi\rangle.
        \]
        Since \(f\circ\pi\in C(K_1\times K_2)\) and
        \(\mu_\alpha\to\mu\) weak\(^*\),
        \[
            \langle \pi\#\mu_\alpha,f\rangle
            =
            \langle\mu_\alpha,f\circ\pi\rangle
            \longrightarrow
            \langle\mu,f\circ\pi\rangle
            =
            \langle\pi\#\mu,f\rangle,
        \]
        where the final equality again follows from
        Lemma~\ref{lem:substitution-signed}. Hence
        \(\pi\#\mu_\alpha\) converges weak\(^*\) to \(\pi\#\mu\), so
        \(\pi\#\) is weak\(^*\)-to-weak\(^*\) continuous.
    \end{proof}

    Theorem~\ref{thm:weakstar-homeo} now follows by combining the compactness results above with the continuity of the silhouette operator and its injectivity on \(\rankmeas_{\le k}(K_1,K_2)\).
    \begin{proof}[Proof of Theorem~\ref{thm:weakstar-homeo}]
        Let
        \[
            A
            =
            \rankmeas_{\le k}(K_1,K_2)
            \,\cap\,
            r\ball_{\sM(K_1\times K_2)}.
        \]
        By Theorem~\ref{thm:measure-tensor-w*closed}, \(\rankmeas_{\le k}(K_1,K_2)\) is weak\(^*\) closed in \(\sM(K_1\times K_2)\). Moreover, by Theorem~\ref{thm:banach-alaoglu}, \(r\ball_{\sM(K_1\times K_2)}\) is weak\(^*\) compact. Hence \(A\) is weak\(^*\) compact.
        By Theorem~\ref{thm:main}, the silhouette operator \(\sil\) is injective on \(\rankmeas_{\le k}(K_1,K_2)\), and hence on \(A\). By Lemma~\ref{lem:silhouette-continuous}, its restriction to \(A\) is weak\(^*\)-to-weak\(^*\) continuous. Since \(\sM(\R)^{2k}\) is Hausdorff in the weak\(^*\) topology, its subspace \(\sil(A)\) is also Hausdorff. Lemma~\ref{thm:compact-homeo} therefore applies to the continuous injection
        \[
            \sil|_A:A\to \sil(A),
        \]
        and shows that this restriction is a weak$^*$-weak$^*$ homeomorphism.
    \end{proof}

    \begin{proof}[Proof of Proposition~\ref{prop:rip-span}]
        If
        \[
            S\cap\rankmeas_{\le k}(K_1,K_2)=\{0\},
        \]
        then the conclusion is immediate, for example by taking
        \(\gamma=1\) and \(\delta=0\). Assume henceforth that this
        intersection contains a nonzero measure.
        It will first be shown that there exists \(\kappa>0\) such that
        \[
            \|\sil(\mu)\| \;\ge\; \kappa\,\|\mu\|
            \qquad
            \text{for all }\mu\in S\cap \rankmeas_{\le 2k}(K_1,K_2).
        \]
        Assume for the sake of contradiction that no such \(\kappa\) exists. Then there exists a sequence \((\mu_i)_{i=1}^\infty\) of nonzero elements of \(S\cap \rankmeas_{\le 2k}(K_1,K_2)\) such that
        \[
            \frac{\|\sil(\mu_i)\|}{\|\mu_i\|}
            \;\longrightarrow\;0.
        \]
        By normalization, assume that \(\|\mu_i\|=1\) for all \(i\). Let
        \(\sphere\) denote the unit sphere in \(\sM(K_1\times K_2)\). Then
        \((\mu_i)_{i=1}^\infty\) lies in
        \[
            S\cap \rankmeas_{\le 2k}(K_1,K_2)\cap\sphere.
        \]
        Since \(S\) is finite dimensional, \(S\cap\sphere\) is norm compact. Since \(\rankmeas_{\le 2k}(K_1,K_2)\) is norm closed by Theorem~\ref{thm:measure-tensor-w*closed}, the set \(S\cap \rankmeas_{\le 2k}(K_1,K_2)\cap\sphere\) is norm compact. Hence, after passing to a subsequence, \(\mu_i\to\mu\) in norm for some nonzero \(\mu\in \rankmeas_{\le 2k}(K_1,K_2)\). By continuity of \(\sil\), \(\sil(\mu)=0\).

        Because \(\mu\in \rankmeas_{\le 2k}(K_1,K_2)\), \(\mu=\nu-\nu'\) for some distinct \(\nu,\nu'\in \rankmeas_{\le k}(K_1,K_2)\). Then \(\sil(\nu)=\sil(\nu')\), contradicting the injectivity of \(\sil\) on \(\rankmeas_{\le k}(K_1,K_2)\) given by Theorem~\ref{thm:main}. Therefore there exists \(\kappa>0\) such that
        \[
            \|\sil(\mu)\| \;\ge\; \kappa\,\|\mu\|
            \qquad
            \text{for all }\mu\in S\cap \rankmeas_{\le 2k}(K_1,K_2).
        \]

        For \(\mu,\mu'\in S\cap\rankmeas_{\le k}(K_1,K_2)\), the difference \(\mu-\mu'\) lies in \(S\cap\rankmeas_{\le 2k}(K_1,K_2)\).
        Let \(M=\|\sil|_S\|\) denote the operator norm of the restriction of \(\sil\) to \(S\).
        Since \(S\) is finite dimensional, \(M<\infty\), and hence
        \[
            \kappa\|\mu-\mu'\|
            \le
            \|\sil(\mu-\mu')\|
            \le
            M\|\mu-\mu'\|,
        \]
        and $0<\kappa\le M$.
        Define
        \[
            \gamma=\frac{2}{M+ \kappa},
            \qquad
            \delta=\frac{M-\kappa}{M+\kappa}.
        \]
        Hence
        \(\delta\in[0,1)\), and            

        \begin{align*}
            (1-\delta)\|\mu-\mu'\|
                    &=
                    \frac{2\kappa}{M+\kappa}\|\mu-\mu'\| \\
                    &\le
                    \frac{2}{M+\kappa}\|\sil(\mu-\mu')\| \\
                    &=
                    \|\gamma\sil(\mu-\mu')\| \\
                    &\le
                    \frac{2M}{M+\kappa}\|\mu-\mu'\| \\
                    &=
                    (1+\delta)\|\mu-\mu'\|.
        \end{align*}
    \end{proof}
    \section{Proofs for Higher-Dimensional Extensions of the Silhouette Framework} \label{sec:proofs-d>2}
    \begin{proof}[Proof of Theorem \ref{thm:li-ident}]
        The proof is by induction on \(d\), the number of coordinates in the product measures. To include \(d=1\) as the base case, the induction will prove the conclusion under the additional explicit hypothesis that all one-dimensional marginals agree:
        \begin{equation}\label{eqn:li-ident}
            R_{\{\ell\}}\!\left(
                \sum_{i=1}^k
                w_i\,\delta_{(\mu_{i,1},\ldots,\mu_{i,d})}
            \right)
            =
            R_{\{\ell\}}\!\left(
                \sum_{i=1}^k
                w'_i\,\delta_{(\mu'_{i,1},\ldots,\mu'_{i,d})}
            \right)
            \qquad
            \text{for every }\ell\in[d].
        \end{equation}
        For \(d\ge2\), this additional hypothesis follows from the assumptions of the theorem. Indeed, because \(G\) is connected, every \(\ell\in[d]\) belongs to some edge \(e\in E\). Taking the \(\ell\)-th marginal of the assumed equality corresponding to \(e\) gives \eqref{eqn:li-ident}. It therefore suffices to prove this auxiliary statement by induction, beginning with \(d=1\).

        For \(d=1\), the hypothesis
        \[
            \{\mu_{1,1},\ldots,\mu_{k,1}\}=\{\mu'_{1,1},\ldots,\mu'_{k,1}\}
        \]
        together with \eqref{eqn:li-ident} gives
        \begin{align*}
            R_{\{1\}}\!\left(\sum_{i=1}^k w_i\,\delta_{(\mu_{i,1})}\right)
    &=R_{\{1\}}\!\left(\sum_{i=1}^k w'_i\,\delta_{(\mu'_{i,1})}\right)\\
            \Rightarrow\qquad \sum_{i=1}^k w_i\,\mu_{i,1}&=\sum_{i=1}^k w'_i\,\mu'_{i,1}.
        \end{align*}
        Relabeling the primed components if necessary, assume that \(\mu_{i,1}=\mu'_{i,1}\) for all \(i\). Since \(\mu_{1,1},\ldots,\mu_{k,1}\) are linearly independent, the preceding equality implies \(w_i=w'_i\) for all \(i\). This proves the base case.

        Assume that the result has been proved for all numbers of coordinates up to \(d\). Consider the case of \(d+1\) coordinates.
        Without loss of generality, assume that \(G\) is a spanning tree. It is enough to prove the result under this weaker assumption, since equality of the marginals on all edges of \(G\) implies equality on the edges of any spanning tree of \(G\).
        For notational ease, relabel the coordinates so that \(d+1\) is a leaf distinct from \(d'\), and \(d\) is its unique neighbor. This relabeling is possible because a spanning tree has at least two leaves, so even if the original coordinate \(d'\) is a leaf, another leaf can be chosen as \(d+1\). Since removing a leaf from a tree leaves a connected graph on the remaining vertices, the inductive hypothesis applies to the coordinates \([d]\).
        Hence
        \[
            \sum_{i=1}^k w_i\,\delta_{(\mu_{i,1},\ldots,\mu_{i,d})}
            =
            \sum_{i=1}^k w'_i\,\delta_{(\mu'_{i,1},\ldots,\mu'_{i,d})}.
        \]
        The \(d\)-tuples on the left are pairwise distinct because \(\mu_{1,d},\ldots,\mu_{k,d}\) are linearly independent, and each has positive weight. Since the right-hand side contains \(k\) terms with nonnegative weights, the preceding equality implies that, after relabeling the primed components,
        \[
            (\mu_{i,1},\ldots,\mu_{i,d})
            =
            (\mu'_{i,1},\ldots,\mu'_{i,d})
            \qquad\text{and}\qquad
            w_i=w'_i
        \]
        for every \(i\in[k]\).
        Since the pairwise marginals agree on the edge \(\{d,d+1\}\),
        \[
            \sum_{i=1}^k w_i\,\mu_{i,d}\times\mu_{i,d+1}
            =
            \sum_{i=1}^k w'_i\,\mu'_{i,d}\times\mu'_{i,d+1}.
        \]
        Using the relabeling above, this becomes
        \[
            \sum_{i=1}^k w_i\,\mu_{i,d}\times\mu_{i,d+1}
            =
            \sum_{i=1}^k w_i\,\mu_{i,d}\times\mu'_{i,d+1}.
        \]
        By Lemma~\ref{lem:measure-bidual}, choose bounded measurable functions
        \(f_1,\ldots,f_k\) such that
        \[
            \mu_{i,d}(f_j)=\ind(i=j),
            \qquad i,j\in[k].
        \]
        For each \(j\in[k]\), applying the operator \(T_{f_j}\), defined in
        \eqref{eqn:Tf-def}, to both sides gives
        \[
            w_j\mu_{j,d+1}=w_j\mu'_{j,d+1}.
        \]
        Since \(w_j>0\), it follows that \(\mu_{j,d+1}=\mu'_{j,d+1}\) for every
        \(j\in[k]\). Hence, after the same relabeling,
        \[
            (\mu_{i,1},\ldots,\mu_{i,d+1})
            =
            (\mu'_{i,1},\ldots,\mu'_{i,d+1})
            \qquad\text{and}\qquad
            w_i=w'_i
        \]
        for all \(i\), completing the induction.
    \end{proof}

    \begin{proof}[Proof of Proposition~\ref{prop:ji-ident}]
        The hypothesis gives
        \begin{align}
            R\!\left(\sum_{i=1}^k w_i\,\delta_{(\mu_{i,1},\mu_{i,2})}\right)
    &=
    R\!\left(\sum_{i=1}^k w'_i\,\delta_{(\mu'_{i,1},\mu'_{i,2})}\right),
        \end{align}
        that is,
        \begin{equation}\label{eqn:ji-equality}
            \sum_{i=1}^k w_i\,\mu_{i,1}\times\mu_{i,2}
            =
            \sum_{i=1}^k w'_i\,\mu'_{i,1}\times\mu'_{i,2}.
        \end{equation}

        Since joint irreducibility implies linear independence, the families \(\mu_{1,1},\ldots,\mu_{k,1}\) and \(\mu_{1,2},\ldots,\mu_{k,2}\) are linearly independent. Thus the left-hand side of \eqref{eqn:ji-equality} has rank \(k\) (Proposition~\ref{prop:rank-characterization}). Hence the right-hand side also has rank \(k\), which implies that \(w'_i>0\) for all \(i\), and that the families \(\mu'_{1,1},\ldots,\mu'_{k,1}\) and \(\mu'_{1,2},\ldots,\mu'_{k,2}\) are linearly independent.

        By Lemma~\ref{lem:measure-bidual}, there exist bounded measurable functions \(f_{j,\ell},f'_{j,\ell}\in L^\infty(\Omega)\), \(j\in[k]\), \(\ell\in[2]\), such that
        \[
            \int f_{j,\ell}\,d\mu_{i,\ell} = \ind(i=j),
            \qquad
            \int f'_{j,\ell}\,d\mu'_{i,\ell} = \ind(i=j),
        \]
        for all \(i,j\in[k]\) and \(\ell\in[2]\).

        Applying the operator \(T_{f'_{j,1}}\), defined in
        \eqref{eqn:Tf-def}, to both sides of
        \eqref{eqn:ji-equality} yields
        \[
            \sum_{i=1}^k
            w_i\,\left\langle f'_{j,1},\mu_{i,1}\right\rangle\,\mu_{i,2}
            =
            \sum_{i=1}^k
            w'_i\,\left\langle f'_{j,1},\mu'_{i,1}\right\rangle\,\mu'_{i,2}
            =
            w'_j\,\mu'_{j,2}.
        \]
        Hence, for each \(j\in[k]\),
        \[
            \mu'_{j,2}
            =
            \sum_{i=1}^k \beta_{j,i}\,\mu_{i,2},
            \qquad
            \beta_{j,i}
            \triangleq
            \frac{w_i}{w'_j}\,
            \left\langle f'_{j,1},\mu_{i,1}\right\rangle.
        \]
        The same argument applied to the second coordinate gives
        \[
            \mu'_{j,1}
            =
            \sum_{i=1}^k \alpha_{j,i}\,\mu_{i,1},
            \qquad
            \alpha_{j,i}
            \triangleq
            \frac{w_i}{w'_j}\,
            \left\langle f'_{j,2},\mu_{i,2}\right\rangle.
        \]
        Since \(\mu'_{j,1}\) and \(\mu'_{j,2}\) are probability measures and \(\mu_{1,1},\ldots,\mu_{k,1}\) and \(\mu_{1,2},\ldots,\mu_{k,2}\) are jointly irreducible, all coefficients \(\alpha_{j,i}\) and \(\beta_{j,i}\) are nonnegative.

        Suppose, toward a contradiction, that for some fixed $j$ there exist distinct indices
        $i_1 \neq i_2$ such that
        \[
            \alpha_{j,i_1} > 0
            \qquad\text{and}\qquad
            \beta_{j,i_2} > 0.
        \]
        Consider the bounded linear functional on $\sM(\Omega\times\Omega)$ defined by
        \[
            \nu \;\mapsto\; \bigl\langle T_{f_{i_1,1}}(\nu),\, f_{i_2,2}\bigr\rangle
            = \int f_{i_1,1}(x)\,f_{i_2,2}(y)\,d\nu(x,y).
        \]
        Applying this functional to both sides of \eqref{eqn:ji-equality} yields
        \begin{equation}\label{eqn:ji-contradiction}
            \sum_{i=1}^k w_i
            \left\langle f_{i_1,1},\mu_{i,1}\right\rangle
            \left\langle f_{i_2,2},\mu_{i,2}\right\rangle
            =
            \sum_{i=1}^k w'_i
            \left\langle f_{i_1,1},\mu'_{i,1}\right\rangle
            \left\langle f_{i_2,2},\mu'_{i,2}\right\rangle .
        \end{equation}

        By construction of the biorthogonal system, the left-hand side of \eqref{eqn:ji-contradiction} is zero, since \(i_1\neq i_2\). On the other hand, consider the summand with index \(i=j\) on the right-hand
        side of \eqref{eqn:ji-contradiction}. Using the representations
        \(
        \mu'_{j,1}=\sum_i \alpha_{j,i}\mu_{i,1}
        \)
        and
        \(
        \mu'_{j,2}=\sum_i \beta_{j,i}\mu_{i,2}
        \), this summand is
        \[
            w'_j\,
            \left\langle f_{i_1,1},\mu'_{j,1}\right\rangle
            \left\langle f_{i_2,2},\mu'_{j,2}\right\rangle
            =
            w'_j\,\alpha_{j,i_1}\,\beta_{j,i_2}
            > 0.
        \]
        More generally, for every \(i\in[k]\), the \(i\)-th summand on the
        right-hand side of \eqref{eqn:ji-contradiction} is
        \[
            w'_i\,\alpha_{i,i_1}\,\beta_{i,i_2}\ge0.
        \]
        Thus the right-hand side is strictly positive, while the left-hand
        side is zero, a contradiction.

        Taking total masses in the representations of
        \(\mu'_{j,1}\) and \(\mu'_{j,2}\) gives
        \[
            \sum_{i=1}^k\alpha_{j,i}
            =
            \sum_{i=1}^k\beta_{j,i}
            =
            1.
        \]

        It follows that, for each \(j\in[k]\), there exists an index
        \(\sigma(j)\in[k]\) such that
        \[
            \alpha_{j,\sigma(j)}
            =
            \beta_{j,\sigma(j)}
            =
            1,
        \]
        and all other coefficients in the \(j\)-th rows of
        \((\alpha_{j,i})\) and \((\beta_{j,i})\) vanish.
        Hence
        \[
            \mu'_{j,1} = \mu_{\sigma(j),1},
            \qquad
            \mu'_{j,2} = \mu_{\sigma(j),2}.
        \]

        Since the left-hand side of \eqref{eqn:ji-equality} has rank \(k\), the map \(\sigma\) must be surjective, and hence is a permutation of \([k]\). Since the rank-one terms \(\mu_{i,1}\times\mu_{i,2}\), \(i\in[k]\), are linearly independent, it follows that

        \[
            w'_j=w_{\sigma(j)}
            \qquad\text{for all }j\in[k].
        \]
        Since the terms on the two sides of \eqref{eqn:ji-equality} differ only by the
        reordering induced by \(\sigma\), it follows that
        \[
            \sum_{i=1}^k w_i\,\delta_{(\mu_{i,1},\mu_{i,2})}
            =
            \sum_{i=1}^k w'_i\,\delta_{(\mu'_{i,1},\mu'_{i,2})}.
        \]
    \end{proof}

    \begin{proof}[Proof of Theorem \ref{thm:singular-ident}]
        By the theorem hypothesis for the edge \(\{1,2\}\in E\),
        \begin{align*}
            R_{\{1,2\}}\!\left(\sum_{i=1}^k w_i\,\delta_{(\mu_{i,1},\ldots,\mu_{i,d})}\right)
        &=
        R_{\{1,2\}}\!\left(\sum_{i=1}^k w'_i\,\delta_{(\mu'_{i,1},\ldots,\mu'_{i,d})}\right)\\
        \Rightarrow\qquad
        \sum_{i=1}^k w_i\,\mu_{i,1}\times \mu_{i,2}
        &=
        \sum_{i=1}^k w'_i\,\mu'_{i,1}\times \mu'_{i,2}.
        \numberthis\label{eqn:singular-ident-d=2}
        \end{align*}

        Since \(w_i>0\) for every \(i\), and since the families
        \(\mu_{1,1},\ldots,\mu_{k,1}\) and
        \(\mu_{1,2},\ldots,\mu_{k,2}\) are linearly independent,
        Proposition~\ref{prop:rank-characterization} implies that the
        left-hand side of \eqref{eqn:singular-ident-d=2} has rank \(k\).
        Hence the right-hand side also has rank \(k\). In particular,
        \(w'_i>0\) for every \(i\), since otherwise it would be a sum of
        at most \(k-1\) rank-one measures. Proposition~\ref{prop:rank-characterization}
        then implies that the families
        \(\mu'_{1,1},\ldots,\mu'_{k,1}\) and
        \(\mu'_{1,2},\ldots,\mu'_{k,2}\) are linearly independent.

        Applying the same argument to the equality associated with each
        edge \(\{a,b\}\in E\) shows that the primed families corresponding
        to both endpoints \(a\) and \(b\) are linearly independent. Since
        every vertex of \(G\) belongs to at least one edge, it follows that
        \(\mu'_{1,j},\ldots,\mu'_{k,j}\) are linearly independent for every
        \(j\in[d]\).

        Applying the same argument used in the proof of
        Proposition~\ref{prop:ji-ident}, for each \(j\in[k]\) there exist
        real coefficients \(\alpha_{j,i}\) and \(\beta_{j,i}\) such that
        \[
            \mu'_{j,1} = \sum_{i=1}^k \alpha_{j,i}\,\mu_{i,1},
            \qquad
            \mu'_{j,2} = \sum_{i=1}^k \beta_{j,i}\,\mu_{i,2}.
        \]
        These coefficients need not all be nonnegative, since joint
        irreducibility is not assumed. Nevertheless, because
        \(\mu'_{j,1}\) and \(\mu'_{j,2}\) are probability measures, the
        singularity assumptions imply that
        \[
            \alpha_{j,1}\ge0,
            \qquad
            \beta_{j,1}\ge0,
            \qquad j\in[k].
        \]
        A negative coefficient on the singular component would
        force the resulting measure to take a negative value.

        By the singularity assumption, there exist measurable sets
        \(A_1,A_2\subset\Omega\) such that
        \[
            \mu_{1,1}(A_1)=1,\quad \mu_{i,1}(A_1)=0,
            \qquad
            \mu_{1,2}(A_2)=1,\quad \mu_{i,2}(A_2)=0,
        \]
        for all \(i=2,\ldots,k\). Thus
        \[
            \mu_{1,1}(A_1^c)=0,\quad \mu_{i,1}(A_1^c)=1,
            \qquad
            \mu_{1,2}(A_2^c)=0,\quad \mu_{i,2}(A_2^c)=1,
        \]
        for all \(i=2,\ldots,k\).

        Fix \(\ell\in[2]\), and suppose that
        \[
            \nu
            \;\triangleq\;
            \sum_{i=1}^k \gamma_i\,\mu_{i,\ell}
        \]
        is a probability measure.

        Let
        \[
            \nu^\perp \triangleq \sum_{i=2}^k \gamma_i\,\mu_{i,\ell},
        \]
        so \(\nu^\perp\) denotes the part of \(\nu\) that is singular with respect to
        \(\mu_{1,\ell}\).
        Then
        \[
            \nu
            =
            \gamma_1\,\mu_{1,\ell}
            +
            \nu^\perp.
        \]

        Now let \(B\subset A_\ell^c\) be measurable. Then
        \[
            \nu(B)
            =
            \gamma_1\,\mu_{1,\ell}(B)
            +
            \sum_{i=2}^k \gamma_i\,\mu_{i,\ell}(B)
            =
            \sum_{i=2}^k \gamma_i\,\mu_{i,\ell}(B),
        \]
        since \(\mu_{1,\ell}(B)=0\). Since \(\nu\) is a probability measure,
        \(\nu(B)\ge 0\) for all such \(B\). Since \(\nu^\perp\) is carried by
        \(A_\ell^c\), it follows that \(\nu^\perp\) is a nonnegative measure.

        If there exists an index \(i_0\ge2\) with \(\gamma_{i_0}\neq0\), then \(\nu^\perp\neq0\) by linear independence of \(\mu_{2,\ell},\ldots,\mu_{k,\ell}\). Since \(\nu^\perp\) is nonnegative, it follows that \(\nu^\perp(\Omega)>0\). Therefore
        \begin{equation} \label{eqn:singular-ident-positive}
            \nu(A_\ell^c)
            =
            \sum_{i=1}^k \gamma_i\,\mu_{i,\ell}(A_\ell^c)
            =
            \sum_{i=2}^k \gamma_i
            >0.
        \end{equation}
        Thus, if \(\gamma_i\neq0\) for any \(i\ge2\), then \(\nu\) assigns strictly positive mass to \(A_\ell^c\).

        With this established, it will now be shown that no primed component can mix the singular component in one coordinate with the remaining components in the other. To begin note that, for every \(i\in[k]\),
        \[
            (\mu_{i,1}\times\mu_{i,2})(A_1\times A_2^c)=0.
        \]
        For \(i=1\), this follows from \(\mu_{1,2}(A_2^c)=0\); for \(i\ge2\), it follows from \(\mu_{i,1}(A_1)=0\). Therefore, evaluating \eqref{eqn:singular-ident-d=2} on \(A_1\times A_2^c\) gives
        \begin{equation}\label{eqn:singular-zero}
            \sum_{i=1}^k w'_i
            (\mu'_{i,1}\times\mu'_{i,2})(A_1\times A_2^c)
            =
            \sum_{i=1}^k w_i
            (\mu_{i,1}\times\mu_{i,2})(A_1\times A_2^c)
            =
            0.
        \end{equation}

        Fix \(j\in[k]\), and suppose toward a contradiction that
        \[
            \alpha_{j,1} > 0
            \quad\text{and}\quad
            \beta_{j,i_0} \neq 0
            \ \text{for some } i_0\ge 2.
        \]
        Then
        \begin{align*}
            \sum_{i=1}^k w'_i (\mu'_{i,1}\times\mu'_{i,2})(A_1\times A_2^c)
                &\ge w'_j (\mu'_{j,1}\times\mu'_{j,2})(A_1\times A_2^c)\\
                &= w'_j \left(\sum_{i=1}^k \alpha_{j,i}\mu_{i,1}(A_1)\right) \left(\sum_{i=1}^k \beta_{j,i}\mu_{i,2}(A_2^c)\right)\\
                &= w'_j\,\alpha_{j,1}\sum_{i=2}^k \beta_{j,i}\mu_{i,2}(A_2^c)\\
                &= w'_j\,\alpha_{j,1}\sum_{i=2}^k \beta_{j,i}.
        \end{align*}
        By \eqref{eqn:singular-ident-positive}, applied with \(\ell=2\) and \(\gamma_i=\beta_{j,i}\), the assumption \(\beta_{j,i_0}\neq0\) implies \(\sum_{i=2}^k\beta_{j,i}>0\). 
        Since \(w'_j>0\) and \(\alpha_{j,1}>0\), the last line above is
        strictly positive, contradicting \eqref{eqn:singular-zero}. Consequently,
        \[
            \alpha_{j,1}>0
            \quad\Longrightarrow\quad
            \beta_{j,i}=0
            \quad\text{for every }i\ge2.
        \]
        An identical argument with the two coordinates interchanged gives
        \[
            \beta_{j,1}>0
            \quad\Longrightarrow\quad
            \alpha_{j,i}=0
            \quad\text{for every }i\ge2.
        \]
        Taking total masses in the representations of
        \(\mu'_{j,1}\) and \(\mu'_{j,2}\) gives
        \[
            \sum_{i=1}^k\alpha_{j,i}
            =
            \sum_{i=1}^k\beta_{j,i}
            =
            1.
        \]
        Therefore, if \(\alpha_{j,1}>0\), the first implication gives
        \(\beta_{j,1}=1\), and the second then gives
        \(\alpha_{j,1}=1\). If \(\alpha_{j,1}=0\), then
        \(\beta_{j,1}=0\), since otherwise the second implication would
        force every \(\alpha_{j,i}\) to vanish. Thus, for each \(j\in[k]\),
        either
        \[
            \alpha_{j,1}=\beta_{j,1}=1
        \]
        and all remaining coefficients in the \(j\)-th rows vanish, or
        \[
            \alpha_{j,1}=\beta_{j,1}=0.
        \]

        Let
        \[
            I \triangleq \{j\in[k]: \alpha_{j,1}=\beta_{j,1}=1\}.
        \]
        By the dichotomy above, if \(j\notin I\), then
        \(\alpha_{j,1}=\beta_{j,1}=0\). The set \(I\) cannot be empty: otherwise all
        terms on the right-hand side of \eqref{eqn:singular-ident-d=2} would have first
        and second marginals lying in the spans of
        \(\mu_{2,1},\ldots,\mu_{k,1}\) and \(\mu_{2,2},\ldots,\mu_{k,2}\), respectively,
        so the right-hand side would have rank at most \(k-1\), contradicting the
        rank-\(k\) property.

        The set \(I\) also cannot contain more than one index. If \(|I|\ge2\), then the
        corresponding summands all have product component
        \(\mu_{1,1}\times\mu_{1,2}\) and can be combined into a single rank-one term,
        leaving a representation with at most \(k-1\) rank-one terms. This again
        contradicts the rank-\(k\) property. Hence \(I=\{r\}\) for some \(r\in[k]\).

        For this index \(r\),
        \[
            \mu'_{r,1}=\mu_{1,1},
            \qquad
            \mu'_{r,2}=\mu_{1,2}.
        \]
        Evaluating \eqref{eqn:singular-ident-d=2} on \(A_1\times A_2\) gives
        \(w'_r=w_1\). Relabeling the primed components, assume \(r=1\). Thus
        \[
            \mu'_{1,1}=\mu_{1,1},
            \qquad
            \mu'_{1,2}=\mu_{1,2},
            \qquad
            w'_1=w_1.
        \]

        To complete the proof, it remains to show that
        \(\mu'_{1,j}=\mu_{1,j}\) for all \(j>2\). Fix \(j>2\). By hypothesis, either
        \(\{1,j\}\in E\) or \(\{2,j\}\in E\). The two cases are identical, so assume
        \(\{1,j\}\in E\).

        The equality of the \(\{1,j\}\)-marginals gives
        \[
            \sum_{i=1}^k w_i\,\mu_{i,1}\times\mu_{i,j}
            =
            \sum_{i=1}^k w'_i\,\mu'_{i,1}\times\mu'_{i,j}.
        \]
        Let \(B\subset\Omega\) be measurable. Evaluating both sides on \(A_1\times B\)
        gives
        \[
            w_1\,\mu_{1,j}(B)
            =
            w'_1\,\mu'_{1,j}(B),
        \]
        since \(\mu_{i,1}(A_1)=0\) for \(i\ge2\), and since \(I=\{1\}\), as established above, implies \(\mu'_{i,1}(A_1)=0\) for \(i\ge2\). Since \(w'_1=w_1>0\), it follows that \(\mu_{1,j}(B)=\mu'_{1,j}(B)\). As \(B\) was arbitrary, \(\mu_{1,j}=\mu'_{1,j}\). The case \(\{2,j\}\in E\) is identical, using \(A_2\) instead of \(A_1\).         Since \(I=\{1\}\), no other primed component agrees with
        \((\mu_{1,1},\ldots,\mu_{1,d})\) in its first two coordinates.
        Therefore
        \[
            \sum_{i=1}^k
            w'_i\,
            \delta_{(\mu'_{i,1},\ldots,\mu'_{i,d})}
            \bigl(\{(\mu_{1,1},\ldots,\mu_{1,d})\}\bigr)
            =
            w'_1
            =
            w_1,
        \]
        completing the proof.
    \end{proof}
    
    \section{Proofs of the Sharpness Results}\label{sec:proofs-sharpness}
    The proofs of Theorem~\ref{thm:2k-necessary} and Proposition~\ref{prop:2k-geometry-necessary} use the following elementary facts about convolution. The first states that pushforward under a linear map commutes with convolution.

    \begin{lemma}\label{lem:pushforward-convolution}
        Let \(\mu_1,\mu_2\in\sM(\R^2)\), and let
        \(T:\R^2\to\R\) be linear. Then
        \begin{equation}\label{eqn:pushforward-convolution}
            T\#(\mu_1*\mu_2)
            =
            (T\#\mu_1)*(T\#\mu_2).
        \end{equation}
    \end{lemma}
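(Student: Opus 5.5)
To prove Lemma~\ref{lem:pushforward-convolution}, I would test both sides against indicator functions of Borel sets and reduce everything to the definition of convolution as a pushforward of a product measure. Recall that for \(\mu_1,\mu_2\in\sM(\R^2)\) the convolution is \(\mu_1*\mu_2=S\#(\mu_1\times\mu_2)\), where \(S:\R^2\times\R^2\to\R^2\) is the addition map \(S(x,y)=x+y\). Likewise, on \(\R\) we have \(\nu_1*\nu_2=s\#(\nu_1\times\nu_2)\) with \(s(u,v)=u+v\).

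The argument has three steps.

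\emph{Step 1 (functoriality).} Pushforward is functorial: \(g\#(f\#\mu)=(g\circ f)\#\mu\). This follows immediately from \((g\circ f)^{-1}(A)=f^{-1}(g^{-1}(A))\) and holds for signed measures without change. Hence
\[
T\#(\mu_1*\mu_2)=(T\circ S)\#(\mu_1\times\mu_2).
\]

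\emph{Step 2 (use linearity of \(T\)).} Since \(T\) is linear, \(T\circ S=s\circ(T\times T)\), where \((T\times T)(x,y)=(Tx,Ty)\). Functoriality then gives
\[
T\#(\mu_1*\mu_2)=s\#\bigl((T\times T)\#(\mu_1\times\mu_2)\bigr).
\]

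\emph{Step 3 (product measures).} It remains to check that \((T\times T)\#(\mu_1\times\mu_2)=(T\#\mu_1)\times(T\#\mu_2)\). On measurable rectangles \(A\times B\) both sides equal \(\mu_1(T^{-1}A)\,\mu_2(T^{-1}B)\). Two finite signed measures on \(\R\times\R\) that agree on rectangles agree everywhere, by the \(\pi\)--\(\lambda\) theorem. To apply it, one can either invoke it directly for finite signed measures (their difference vanishes on a \(\pi\)-system generating the Borel \(\sigma\)-algebra, and the class where it vanishes is a \(\lambda\)-system), or decompose \(\mu_i=\mu_i^+-\mu_i^-\) and expand bilinearly into four nonnegative cases. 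Combining the three steps yields \eqref{eqn:pushforward-convolution}.

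The only step needing care is Step 3 for signed measures. This is where I would expand via the Jordan decomposition and bilinearity of both the product and the pushforward, so that the standard nonnegative uniqueness argument applies term by term. Everything else is routine.
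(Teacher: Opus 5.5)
Your proof is correct, and it executes the argument differently from the paper. The paper tests both sides against an arbitrary bounded measurable \(f\) and runs a chain of integral identities. It uses the convolution integration formula \cite[Proposition~8.48]{folland99} to write \(\int f\,d(\mu_1*\mu_2)\) as an iterated integral of \(f(x_1+x_2)\), applies linearity of \(T\) inside the integrand, and then applies Lemma~\ref{lem:substitution-signed} one variable at a time to pass to \(T\#\mu_1\) and \(T\#\mu_2\) before reassembling the convolution.

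You instead work entirely at the level of sets. Convolution is \(S\#(\mu_1\times\mu_2)\), pushforwards compose, \(T\circ S=s\circ(T\times T)\), and \((T\times T)\#(\mu_1\times\mu_2)=(T\#\mu_1)\times(T\#\mu_2)\) by uniqueness on rectangles. The crux, \(T(x_1+x_2)=Tx_1+Tx_2\), is the same in both proofs, and your Step~3 plays exactly the role of the paper's two substitution steps.

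Your route needs no integration against signed measures: no Fubini-type iterated integrals and no substitution formula. It needs only two things. The first is the \(\pi\)--\(\lambda\) uniqueness theorem, which, as you note, applies directly to finite signed measures because the rectangles include the whole space. The second is the routine measurability of \(S\), \(s\), and \(T\times T\) with respect to the relevant product \(\sigma\)-algebras, which coincide with the Borel \(\sigma\)-algebras here. The paper's route is a short computation that reuses tools it has already set up and cited elsewhere (the signed substitution lemma and Folland's convolution formula). Its uniqueness-on-rectangles content is absorbed into those cited results rather than argued separately.
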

    \begin{proof}[Proof of Lemma \ref{lem:pushforward-convolution}]
        Let \(f:\R\to\R\) be bounded and measurable. Then
        \begin{align*}
            \int_{\R} f\,d\bigl(T\#(\mu_1*\mu_2)\bigr)
                &=
                \int_{\R^2}
                f\bigl(T(x)\bigr)\,d(\mu_1*\mu_2)(x)\\
                &=
                \int_{\R^2}\int_{\R^2}
                f\bigl(T(x_1+x_2)\bigr)\,
                d\mu_1(x_1)\,d\mu_2(x_2)\\
                &=
                \int_{\R^2}\int_{\R^2}
                f\bigl(T(x_1)+T(x_2)\bigr)\,
                d\mu_1(x_1)\,d\mu_2(x_2)\\
                &=
                \int_{\R^2}
                \left(
                    \int_{\R}
                    f\bigl(y_1+T(x_2)\bigr)\,
                    d(T\#\mu_1)(y_1)
                \right)
                d\mu_2(x_2)\\
                &=
                \int_{\R}
                \left(
                    \int_{\R}
                    f(y_1+y_2)\,
                    d(T\#\mu_1)(y_1)
                \right)
                d(T\#\mu_2)(y_2)\\
                &=
                \int_{\R}
                f\,d\bigl((T\#\mu_1)*(T\#\mu_2)\bigr).
        \end{align*}
        The first, fourth, and fifth equalities use
        Lemma~\ref{lem:substitution-signed}; the second and final equalities
        use the convolution integration formula
        \cite[Proposition~8.48]{folland99}; and the third uses the linearity of
        \(T\). Hence
        \[
            T\#(\mu_1*\mu_2)
            =
            (T\#\mu_1)*(T\#\mu_2).
        \]
    \end{proof}
    The second fact ensures that the atomic convolutions constructed later in this section are nonzero.
    \begin{lemma}\label{lem:nonzero-convolution}
        Let \(d,m\in\N\), and for each \(i\in[m]\), let
        \(x_i\in\R^d\setminus\{0\}\) and \(c_i\in\R\). Then
        \[
            \bigl(\delta_0+c_1\delta_{x_1}\bigr)
            *\cdots*
            \bigl(\delta_0+c_m\delta_{x_m}\bigr)
            \neq0.
        \]
    \end{lemma}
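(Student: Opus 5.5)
The plan is to use the Fourier transform, which turns convolution into pointwise multiplication. Every factor \(\delta_0+c_i\delta_{x_i}\) is a finite signed measure on \(\R^d\), so the convolution \(\nu\) of the \(m\) factors is a finite signed measure (a finite combination of point masses). Its Fourier transform is
\[
\hat\nu(\xi)=\prod_{i=1}^m\bigl(1+c_i e^{-i\langle x_i,\xi\rangle}\bigr),\qquad \xi\in\R^d .
\]
Since a finite signed measure is determined by its Fourier transform, it suffices to exhibit a single \(\xi\) with \(\hat\nu(\xi)\neq0\).

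The key step is to locate the zeros of each factor. The \(i\)-th factor vanishes only when \(c_i e^{-i\langle x_i,\xi\rangle}=-1\). This requires \(|c_i|=1\), and then it requires \(\langle x_i,\xi\rangle\) to lie in a specific coset of \(2\pi\mathbb Z\).
\begin{itemize}
\item If \(c_i=1\), the zeros lie on the hyperplanes \(\langle x_i,\xi\rangle\in\pi+2\pi\mathbb Z\).
\item If \(c_i=-1\), the zeros lie on the hyperplanes \(\langle x_i,\xi\rangle\in 2\pi\mathbb Z\).
\item If \(|c_i|\neq1\), the factor has no zeros at all.
\end{itemize}
Because \(x_i\neq0\), each zero set is a countable union of affine hyperplanes, and hence a Lebesgue-null set. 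A finite union of such sets is still null, so some \(\xi\) avoids all of them. At that \(\xi\) we get \(\hat\nu(\xi)\neq0\), and therefore \(\nu\neq0\).

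There is an alternative argument that avoids Fourier analysis. Choose a linear functional \(\ell\) with \(\ell(x_i)\neq0\) for all \(i\); this is possible since the finitely many kernels are proper hyperplanes. Flip the sign of \(\ell\) on each term as needed so that every \(x_i\) can be treated as having \(\ell(x_i)>0\): multiply the expansion suitably, or simply note that the atom at \(0\) gets contributions only from subsets whose \(\ell\)-values cancel. The Fourier route is cleaner, so that is the one I would write up.

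There is no real obstacle. The only points needing care are the uniqueness theorem for Fourier transforms of finite signed measures (which is standard, e.g. via Jordan decomposition and the probability case) and the observation that countably many hyperplanes form a null set.
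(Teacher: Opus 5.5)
Your proof is correct and is essentially the paper's argument. The paper also takes Fourier transforms, notes that each factor \(1+c_i e^{-i\langle\xi,x_i\rangle}\) vanishes only on an empty set or a countable union of affine hyperplanes (because \(x_i\neq0\)), concludes that the product is nonzero almost everywhere, and invokes injectivity of the Fourier transform on finite signed measures; your aside about a Fourier-free route is only loosely sketched, but since you write up the Fourier argument this does not affect the proof.
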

    The proof of Lemma \ref{lem:nonzero-convolution} uses the Fourier transform. For a finite signed measure \(\mu\in\sM(\R^d)\), write
    \[
        \fourier[\mu](\xi)
        \triangleq
        \int_{\R^d}e^{-i\langle\xi,x\rangle}\,d\mu(x),
        \qquad \xi\in\R^d,
    \]
    for its Fourier transform.

    \begin{proof}[Proof of Lemma \ref{lem:nonzero-convolution}]
        Set
        \[
            \eta
            =
            \bigl(\delta_0+c_1\delta_{x_1}\bigr)
            *\cdots*
            \bigl(\delta_0+c_m\delta_{x_m}\bigr).
        \]
        For each \(i\in[m]\),
        \[
            \fourier[\delta_0+c_i\delta_{x_i}](\xi)
            =
            1+c_i e^{-i\langle\xi,x_i\rangle}.
        \]
        Since \(x_i\neq0\), the zero set of this function is either empty or a countable union of affine hyperplanes in \(\R^d\), and therefore has Lebesgue measure zero. By the convolution theorem,
        \[
            \fourier[\eta](\xi)
            =
            \prod_{i=1}^m
            \left(
                1+c_i e^{-i\langle\xi,x_i\rangle}
            \right),
        \]
        which is nonzero almost everywhere. The injectivity of the Fourier transform on finite signed measures therefore implies that \(\eta\neq0\).
    \end{proof}

    The third fact describes how convolution by a product measure acts on a sum of product measures.

    \begin{lemma}\label{lem:product-convolution}
        Let \(k\in\N\), and let
        \(\mu_{i,j},\Phi_j\in\sM(\R)\) for \(i\in[k]\) and \(j\in[2]\). Then
        \[
            \left(
                \sum_{i=1}^k
                \mu_{i,1}\times\mu_{i,2}
            \right)
            *
            (\Phi_1\times\Phi_2)
            =
            \sum_{i=1}^k
            (\mu_{i,1}*\Phi_1)
            \times
            (\mu_{i,2}*\Phi_2).
        \]
    \end{lemma}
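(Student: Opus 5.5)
The plan is to use bilinearity of convolution and reduce to a single product term. Convolution of finite signed measures on \(\R^2\) is bilinear. It therefore suffices to prove, for one pair \(\mu_1,\mu_2\in\sM(\R)\), the identity
\[
    (\mu_1\times\mu_2)*(\Phi_1\times\Phi_2)=(\mu_1*\Phi_1)\times(\mu_2*\Phi_2).
\]
Summing over \(i\in[k]\) then gives the full statement.

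To prove the single-term identity, I will compare integrals against bounded measurable test functions \(f:\R^2\to\R\). Since finite signed measures are determined by such integrals, indicators of Borel sets suffice. By the convolution integration formula \cite[Proposition~8.48]{folland99}, the left-hand side integrates \(f\) to
\[
    \int_{\R^2}\int_{\R^2} f(x+y)\,d(\mu_1\times\mu_2)(x)\,d(\Phi_1\times\Phi_2)(y).
\]
This is an integral of the bounded function \((x_1,x_2,y_1,y_2)\mapsto f(x_1+y_1,x_2+y_2)\) against the four-fold product \(\mu_1\times\mu_2\times\Phi_1\times\Phi_2\).

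I will then regroup the variables as \((x_1,y_1)\) and \((x_2,y_2)\), and reorder the iterated integrals. Applying the convolution formula in each coordinate separately identifies the resulting expression with
\[
    \int f\,d\bigl((\mu_1*\Phi_1)\times(\mu_2*\Phi_2)\bigr).
\]
Carrying the computation out on the Jordan parts (or on total variations) handles the signed case.

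The only real obstacle is justifying Fubini for signed measures. The integrand is bounded and all measures are finite, so it is integrable with respect to the product of the total variation measures. Fubini--Tonelli applied to each of the finitely many Jordan-part products then permits the reordering, and the signed identity follows by linearity. An alternative route would compare Fourier transforms. Both sides have Fourier transform \(\fourier[\mu_1](\xi_1)\fourier[\Phi_1](\xi_1)\fourier[\mu_2](\xi_2)\fourier[\Phi_2](\xi_2)\), so injectivity of the Fourier transform on \(\sM(\R^2)\), already used in Lemma~\ref{lem:nonzero-convolution}, gives equality directly. I would likely present this Fourier route as the shortest rigorous argument.
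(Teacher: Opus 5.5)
Your main argument is the paper's proof. You reduce to one product term by bilinearity, integrate a bounded measurable $f$ against both sides, and use the convolution integration formula \cite[Proposition~8.48]{folland99} plus Fubini to regroup $(x_1,x_2,y_1,y_2)$ into the pairs $(x_1,y_1)$ and $(x_2,y_2)$. For the signed case, you expand each factor into positive and negative parts, so Fubini--Tonelli is applied only to finitely many nonnegative product measures; that is the right justification, and the paper leaves it implicit.

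The Fourier route you say you would present is also correct. Both sides have transform $\fourier[\mu_1](\xi_1)\fourier[\Phi_1](\xi_1)\fourier[\mu_2](\xi_2)\fourier[\Phi_2](\xi_2)$, and uniqueness on $\sM(\R^2)$ concludes. It is shorter on the page and fits the Fourier tools used in the surrounding proofs, but it is not more elementary. The factorization $\fourier[\nu_1\times\nu_2](\xi)=\fourier[\nu_1](\xi_1)\fourier[\nu_2](\xi_2)$ and the convolution theorem are the same Fubini computation, specialized to the characters $e^{-i\langle\xi,\cdot\rangle}$. After that you must also invoke Fourier--Stieltjes uniqueness. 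The direct computation instead tests against all bounded measurable functions at once, so it needs only the fact that a measure is determined by its values on Borel sets.
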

    \begin{proof}[Proof of Lemma~\ref{lem:product-convolution}]
        By linearity of convolution, it suffices to show that
        \[
            (\mu_1\times\mu_2)*(\Phi_1\times\Phi_2)
            =
            (\mu_1*\Phi_1)\times(\mu_2*\Phi_2).
        \]
        Let \(f:\R^2\to\R\) be bounded and measurable. The convolution
        integration formula and Fubini's theorem give
        \begin{align*}
        &\int_{\R^2}
        f\,d\bigl((\mu_1\times\mu_2)*(\Phi_1\times\Phi_2)\bigr)\\
        &\qquad=
        \int_{\R^2}\int_{\R^2}
        f(x_1+y_1,x_2+y_2)\,
        d(\mu_1\times\mu_2)(x_1,x_2)\,
        d(\Phi_1\times\Phi_2)(y_1,y_2)\\
        &\qquad=
        \int_{\R^2}
        f\,d\bigl((\mu_1*\Phi_1)\times(\mu_2*\Phi_2)\bigr).
        \end{align*}
        Hence
        \[
            (\mu_1\times\mu_2)*(\Phi_1\times\Phi_2)
            =
            (\mu_1*\Phi_1)\times(\mu_2*\Phi_2).
        \]
        Summing this identity over \(i\in[k]\) proves the result.
    \end{proof}
    The proof of Theorem~\ref{thm:2k-necessary} follows from the following more general nullspace construction.
    \begin{proposition}\label{prop:balanced-projection-nullspace}
        Let \(m\in\N\) and let \(v_1,\ldots,v_m\in\R^2\).
        Then there exists a nonzero compactly supported \(\eta\in\sM(\R^2)\) of rank at most \(m+1\) such that
        \[
            \pi_{v_\ell}\#\eta=0,
            \qquad \ell\in[m].
        \]
        Moreover, \(\eta\) admits a decomposition
        \[
            \eta=\mu-\mu',
        \]
        where \(\mu,\mu'\) are compactly supported positive measures of equal nonzero mass and admit representations
        \[
            \mu
            =
            \sum_{i=1}^{r_+}
            \mu_{i,1}\times\mu_{i,2},
            \qquad
            \mu'
            =
            \sum_{i=1}^{r_-}
            \mu'_{i,1}\times\mu'_{i,2},
        \]
        where
        \[
            r_+
            \le
            \left\lceil\frac{m+1}{2}\right\rceil,
            \qquad
            r_-
            \le
            \left\lfloor\frac{m+1}{2}\right\rfloor,
        \]
        and all of the measures
        \(\mu_{i,j}\) and \(\mu'_{i,j}\) are compactly supported and
        nonnegative.
    \end{proposition}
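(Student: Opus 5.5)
The construction is a product of ``difference'' factors, using Lemmas~\ref{lem:pushforward-convolution}, \ref{lem:nonzero-convolution} and \ref{lem:product-convolution}. For each \(\ell\in[m]\), pick a nonzero vector \(x_\ell\in\R^2\) with \(\langle v_\ell,x_\ell\rangle=0\); if \(v_\ell=0\), any nonzero \(x_\ell\) will do. Then \(\pi_{v_\ell}\#(\delta_0-\delta_{x_\ell})=\delta_0-\delta_0=0\). Define the atomic measure
\[
    \eta_0=(\delta_0-\delta_{x_1})*\cdots*(\delta_0-\delta_{x_m}).
\]
Lemma~\ref{lem:nonzero-convolution} gives \(\eta_0\neq0\). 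Lemma~\ref{lem:pushforward-convolution} shows that \(\pi_{v_\ell}\#\eta_0\) is a convolution with a zero factor, so \(\pi_{v_\ell}\#\eta_0=0\) for every \(\ell\). To control rank, group the factors by expansion. Each factor equals \(\delta_0\times\delta_0-\delta_{x_{\ell,1}}\times\delta_{x_{\ell,2}}\), a difference of two point masses in \(\R^2\), and multiplying out gives \(\eta_0=\sum_{S\subseteq[m]}(-1)^{|S|}\delta_{\sum_{\ell\in S}x_\ell}\).

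For the rank bound, I would not count individual atoms, since there are \(2^m\) of them. Instead, take the first coordinate of each atom, which is \(\sum_{\ell\in S}x_{\ell,1}\). I expect this step to be the main obstacle, and the plan is to choose the \(x_\ell\) compatibly. Each \(x_\ell\) is constrained only to the line \(v_\ell^\perp\), but can be scaled freely. If every line \(v_\ell^\perp\) is non-vertical, scale so that \(x_{\ell,1}=1\) for all \(\ell\). Then the first coordinate of an atom is \(|S|\in\{0,\ldots,m\}\). Grouping atoms by \(s=|S|\) gives
\[
    \eta_0=\sum_{s=0}^m(-1)^s\,\delta_s\times\nu_s,
\]
where \(\nu_s\) is a nonnegative atomic measure. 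This has rank at most \(m+1\). It also splits as \(\mu-\mu'\): \(\mu\) collects the even \(s\) and has at most \(\lceil (m+1)/2\rceil\) terms, and \(\mu'\) collects the odd \(s\) and has at most \(\lfloor (m+1)/2\rfloor\) terms. Every piece is nonnegative and compactly supported. The masses are equal because \(\eta_0(\R^2)=\prod_\ell(1-1)=0\), and they are nonzero because \(\eta_0\neq0\).

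The vertical lines need separate treatment. A line \(v_\ell^\perp\) is vertical exactly when \(v_\ell\) is a multiple of \([1,0]^\top\); in that case \(x_\ell=(0,1)\) and the first coordinate is unchanged. I would then let the first coordinate of an atom be the number of chosen non-vertical factors. This gives at most \(m'+1\le m+1\) groups, where \(m'\) is the number of non-vertical factors. Each group \(\delta_s\times\nu_s\) is now a signed measure rather than a nonnegative one. To recover signs, I would write \(\eta_0=\eta_1*\eta_2\), with \(\eta_1\) the product over the non-vertical factors and \(\eta_2=\delta_0\times(\delta_0-\delta_1)^{*p}\) the product over the \(p\) vertical ones. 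If \(p\ge1\), I would instead group by the second coordinate, which is symmetric, unless both directions \([1,0]^\top\) and \([0,1]^\top\) occur. The simpler route is to replace each vertical \(x_\ell\) by any nonzero vector on a nearby non-vertical line. This is not allowed, since it breaks \(\langle v_\ell,x_\ell\rangle=0\). So, if needed, I would instead rotate the grouping coordinate: use the first coordinate after a generic change of grouping. That fails for products, however, because the product structure is tied to the coordinate axes. The honest fallback is to absorb the vertical factors into the second marginals. Then \(\delta_s\times(\nu_s*(\delta_0-\delta_1)^{*p})\) has a signed second factor, and I would split each such group into its positive and negative second-marginal parts. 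This needs care to keep the counts \(r_\pm\), and I flag it as the delicate point.

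Finally, to obtain the smooth and compactly supported version, if needed, convolve \(\eta_0\) with \(\phi\times\phi\), where \(\phi\) is a smooth bump probability density. By Lemma~\ref{lem:product-convolution} this preserves the product decomposition term by term and keeps every factor nonnegative. By Lemma~\ref{lem:pushforward-convolution} the projections remain zero. The result stays nonzero, because its Fourier transform is \(\fourier[\eta_0]\) multiplied by a nonvanishing transform if \(\phi\) is chosen as a Gaussian-like compactly supported density with transform that is nonzero almost everywhere.
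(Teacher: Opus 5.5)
Your construction of $\eta_0$, the rank bound $m+1$, and the even/odd split are correct. They match the paper's argument whenever multiples of $[1,0]^\top$ and multiples of $[0,1]^\top$ do not both occur among the $v_\ell$; your switch to grouping by the second coordinate is the paper's ``interchange the coordinates'' step. The gap is the remaining case, where both occur, and your fallback there cannot achieve the stated counts.

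Splitting each group $\delta_s\times\sigma_s$ into $\delta_s\times\sigma_s^+$ and $\delta_s\times\sigma_s^-$ produces exactly the Jordan parts of $\eta_0$, because the groups sit on distinct vertical lines. This gives up to $m'+1$ terms on each side, and the Jordan parts themselves can have too large a rank. Take $m=2$, $v_1=[1,0]^\top$, $v_2=[0,1]^\top$. Then $\eta_0=(\delta_0-\delta_1)\times(\delta_0-\delta_1)$, and its negative part $\delta_{(0,1)}+\delta_{(1,0)}$ has rank $2$, while the statement demands $r_-\le\lfloor 3/2\rfloor=1$. So in this case $\mu$ and $\mu'$ \emph{cannot} be mutually singular. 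A common nonnegative measure must be added to both, and no regrouping of positive and negative parts will do that.

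The missing idea is the paper's ladder (summation-by-parts) decomposition, which uses a spare index.
When both axis directions occur, some $x_\ell=[0,1]^\top$. Hence $\eta_0=\sum_{j=0}^{m-1}\delta_j\times\rho_j$ occupies only $m$ first-coordinate levels, with signed $\rho_j$.
Set $\zeta_0=0$, $\zeta_{j+1}=\zeta_j+(-1)^{j+1}\rho_j$, $\lambda=\sum_{r=0}^m|\zeta_r|$, and $\gamma_r=\lambda+\zeta_r\ge0$. Then $\rho_j=(-1)^j(\gamma_j-\gamma_{j+1})$.
Reindexing gives $\eta_0=\sum_{r=0}^m(-1)^r\alpha_r\times\gamma_r$, with $\alpha_0=\delta_0$, $\alpha_r=\delta_{r-1}+\delta_r$ for $1\le r\le m-1$, and $\alpha_m=\delta_{m-1}$.
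These are $m+1$ alternating-sign terms with nonnegative factors. The even/odd split therefore gives exactly $\lceil (m+1)/2\rceil$ and $\lfloor (m+1)/2\rfloor$ terms.
In the $m=2$ example, $\mu'=(\delta_0+\delta_1)\times\gamma_1$ is a single product.

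Your closing smoothing step is not part of this proposition; it belongs to the proof of Theorem~\ref{thm:2k-necessary}.
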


    \begin{proof}[Proof of Proposition \ref{prop:balanced-projection-nullspace}]
        It suffices to prove the result when \(v_i\neq0\) for every \(i\in[m]\). If \(v\neq0\), any \(\eta\) satisfying \(\pi_v\#\eta=0\) must also satisfy \(\eta(\R^2)=0\), equivalently \(\pi_0\#\eta=0\). Replacing a zero \(v_i\) by an arbitrary nonzero vector therefore only strengthens the required conditions.

        Write \(v_i=[a_i,b_i]^\top\). For each \(i\in[m]\), define
        \[
            x_i
            \triangleq
            \begin{cases}
                [1,-a_i/b_i]^\top, & b_i\neq0,\\
                [0,1]^\top, & b_i=0.
            \end{cases}
        \]
        Then \(\langle v_i,x_i\rangle=0\) for every \(i\in[m]\). For each
        \(i\in[m]\), define
        \[
            \Delta_i
            \triangleq
            \delta_{0}-\delta_{x_i}.
        \]
        It follows that
        \[
            \pi_{v_i}\#\Delta_i
            =
            \delta_{0}-\delta_{\langle v_i,x_i\rangle}
            =
            0.
        \]
        Let $\eta = \Delta_1 * \cdots * \Delta_m$.

        By repeated application of Lemma~\ref{lem:pushforward-convolution},
        for every \(i\in[m]\),
        \[
            \pi_{v_i}\#\eta
            =
            (\pi_{v_i}\#\Delta_1)*\cdots*(\pi_{v_i}\#\Delta_m)
            =
            0,
        \]
        because \(\pi_{v_i}\#\Delta_i=0\). Moreover, Lemma~\ref{lem:nonzero-convolution} implies that \(\eta\neq0\).

        Because each \(x_i\) has first coordinate either \(0\) or \(1\), and because \(\delta_x*\delta_y=\delta_{x+y}\), every point mass appearing in the expansion of the convolution has first coordinate in \(\{0,\ldots,m\}\). Hence \(\eta\) is supported on
        \(\{0,\ldots,m\}\times\R\). Consequently, for suitable compactly supported signed measures \(\rho_0,\ldots,\rho_m\in\sM(\R)\),
        \[
            \eta
            =
            \sum_{j=0}^m\delta_j\times\rho_j,
        \]
        and \(\eta\) has rank at most \(m+1\). This establishes the first assertion of the proposition. 

        To construct the measures \(\mu\) and \(\mu'\) in the proposition, consider two cases. First, suppose that the collection contains at most one coordinate-axis direction. After interchanging the two coordinates if necessary, assume without loss of generality that no \(v_i\) has the form \([q,0]^\top\).

        Write \(x_i=[1,c_i]^\top\), so that
        \[
            \Delta_i
            =
            \delta_{0}-\delta_{x_i}.
        \]
        For \(\varepsilon=(\varepsilon_1,\ldots,\varepsilon_m)
        \in\{0,1\}^m\), let
        \[
            |\varepsilon|
            \triangleq
            \sum_{i=1}^m\varepsilon_i.
        \]
        By distributivity of convolution over addition, expanding \(\Delta_1*\cdots*\Delta_m\) produces one term for each \(\varepsilon\in\{0,1\}^m\). In this indexing, \(\varepsilon_i=0\) corresponds to choosing \(\delta_{0}\) from \(\Delta_i\), whereas \(\varepsilon_i=1\) corresponds to choosing \(-\delta_{x_i}\). Accordingly,
        \begin{align*}
            \eta
                &=
                \sum_{\varepsilon\in\{0,1\}^m}
                (-1)^{|\varepsilon|}
                \left(
                    \delta_{\varepsilon_1x_1}
                    *\cdots*
                    \delta_{\varepsilon_mx_m}
                \right)\\
                &=
                \sum_{\varepsilon\in\{0,1\}^m}
                (-1)^{|\varepsilon|}
                \delta_{\sum_{i=1}^m\varepsilon_i x_i}.
        \end{align*}
        Since \(x_i=[1,c_i]^\top\),
        \[
            \sum_{i=1}^m\varepsilon_i x_i
            =
            \left[
                |\varepsilon|,
                \sum_{i=1}^m\varepsilon_i c_i
            \right]^\top.
        \]
        For \(j=0,\ldots,m\), define
        \[
            \eta_j
            \triangleq
            \sum_{\substack{\varepsilon\in\{0,1\}^m\\
            |\varepsilon|=j}}
            \delta_{\sum_{i=1}^m\varepsilon_i c_i}.
        \]
        Each \(\eta_j\) is a compactly supported nonnegative measure on
        \(\R\), and grouping the preceding expansion according to
        \(|\varepsilon|\) gives
        \[
            \eta
            =
            \sum_{j=0}^m
            (-1)^j\delta_j\times\eta_j.
        \]

        Define
        \[
            \mu
            \triangleq
            \sum_{\substack{0\le j\le m\\j\ \mathrm{even}}}
            \delta_j\times\eta_j,
            \qquad
            \mu'
            \triangleq
            \sum_{\substack{0\le j\le m\\j\ \mathrm{odd}}}
            \delta_j\times\eta_j.
        \]
        Then \(\mu\) and \(\mu'\) are compactly supported nonnegative measures,
        and
        \[
            \eta=\mu-\mu'.
        \]
        There are
        \[
            \left\lceil\frac{m+1}{2}\right\rceil
        \]
        even integers and
        \[
            \left\lfloor\frac{m+1}{2}\right\rfloor
        \]
        odd integers in \(\{0,\ldots,m\}\). Thus, the displayed expressions
        write \(\mu\) and \(\mu'\) as sums of at most the required numbers of
        product measures with compactly supported nonnegative factors.

        Finally, pushforward preserves total mass. Hence, for any \(i\in[m]\),
        \[
            0
            =
            \bigl(\pi_{v_i}\#\eta\bigr)(\R)
            =
            \eta(\R^2)
            =
            \mu(\R^2)-\mu'(\R^2).
        \]
        Thus \(\mu\) and \(\mu'\) have equal mass. Since they are nonnegative
        and \(\eta=\mu-\mu'\neq0\), their common mass is nonzero.

        It remains to consider the case in which both coordinate-axis directions occur. Then \(v_i=[q,0]^\top\) for some \(i\), where \(q\neq0\), and the corresponding vector is \(x_i=[0,1]^\top\). Hence at most \(m-1\) of the vectors \(x_1,\ldots,x_m\) have nonzero first coordinate. The preceding convolution argument therefore implies that \(\eta\) is supported on \(\{0,\ldots,m-1\}\times\R\). Consequently, there exist compactly supported signed measures \(\rho_0,\ldots,\rho_{m-1}\in\sM(\R)\) such that
        \[
            \eta
            =
            \sum_{j=0}^{m-1}\delta_j\times\rho_j.
        \]

        Define compactly supported signed measures
        \(\zeta_0,\ldots,\zeta_m\) recursively by
        \[
            \zeta_0=0,
            \qquad
            \zeta_{j+1}
            =
            \zeta_j+(-1)^{j+1}\rho_j,
            \qquad j=0,\ldots,m-1.
        \]
        Let
        \[
            \lambda
            =
            \sum_{r=0}^m|\zeta_r|,
            \qquad
            \gamma_r
            =
            \lambda+\zeta_r,
            \qquad r=0,\ldots,m.
        \]
        Since \(\lambda\ge|\zeta_r|\), each \(\gamma_r\) is a compactly supported nonnegative measure.
        Moreover, for \(j=0,\ldots,m-1\),
        \begin{align*}
            \gamma_j-\gamma_{j+1}
                &=
                (\lambda+\zeta_j)-(\lambda+\zeta_{j+1})\\
                &=
                \zeta_j-\zeta_{j+1}\\
                &=
                (-1)^j\rho_j,
        \end{align*}
        where the final equality follows from the recursion defining the
        \(\zeta_j\). Therefore,
        \[
            \rho_j
            =
            (-1)^j(\gamma_j-\gamma_{j+1}).
        \]

        For \(r=0,\ldots,m\), define
        \[
            \alpha_r
            =
            \begin{cases}
                \delta_0,
            & r=0,\\
            \delta_{r-1}+\delta_r,
            & 1\le r\le m-1,\\
            \delta_{m-1},
            & r=m.
            \end{cases}
        \]
        Substituting the preceding expression for \(\rho_j\) and
        reindexing the terms involving \(\gamma_{j+1}\) gives
        \begin{align*}
            \eta
                &=
                \sum_{j=0}^{m-1}
                (-1)^j\delta_j\times(\gamma_j-\gamma_{j+1})\\
                &=
                \sum_{j=0}^{m-1}
                (-1)^j\delta_j\times\gamma_j
                +
                \sum_{j=0}^{m-1}
                (-1)^{j+1}\delta_j\times\gamma_{j+1}\\
                &=
                \sum_{j=0}^{m-1}
                (-1)^j\delta_j\times\gamma_j
                +
                \sum_{r=1}^{m}
                (-1)^r\delta_{r-1}\times\gamma_r\\
                &=
                \delta_0\times\gamma_0
                +
                \sum_{r=1}^{m-1}
                (-1)^r(\delta_{r-1}+\delta_r)\times\gamma_r
                +
                (-1)^m\delta_{m-1}\times\gamma_m\\
                &=
                \sum_{r=0}^m
                (-1)^r\alpha_r\times\gamma_r.
        \end{align*}
        Therefore, set
        \[
            \mu
            =
            \sum_{\substack{0\le r\le m\\r\ \mathrm{even}}}
            \alpha_r\times\gamma_r,
            \qquad
            \mu'
            =
            \sum_{\substack{0\le r\le m\\r\ \mathrm{odd}}}
            \alpha_r\times\gamma_r.
        \]
        Then \(\eta=\mu-\mu'\), with $\alpha_r$ and $\gamma_r$ compactly supported and nonnegative for all $r$. The numbers of even and odd indices in
        \(\{0,\ldots,m\}\) are respectively
        \(\lceil(m+1)/2\rceil\) and \(\lfloor(m+1)/2\rfloor\), giving the
        required numbers of product terms.

        As in the first case, the identities \(\pi_{v_i}\#\eta=0\) imply that \(\mu\) and \(\mu'\) have equal nonzero mass. This completes the proof.
    \end{proof}

    \begin{proof}[Proof of Theorem~\ref{thm:2k-necessary}]
        Apply Proposition~\ref{prop:balanced-projection-nullspace} with \(m=2k-1\). Normalizing the two positive measures by their common mass, normalizing their factors, absorbing the factor masses into mixture weights, and adding zero-weight components if necessary gives nonnegative weights \(w_i,w_i'\) and compactly supported probability measures \(\rho_{i,j},\rho_{i,j}'\), for \(i\in[k]\) and \(j\in[2]\), such that
        \[
            \sum_{i=1}^k w_i
            =
            \sum_{i=1}^k w_i'
            =
            1,
        \]
        and the distinct probability measures
        \[
            \rho
            =
            \sum_{i=1}^k
            w_i\,\rho_{i,1}\times\rho_{i,2},
            \qquad
            \rho'
            =
            \sum_{i=1}^k
            w_i'\,\rho_{i,1}'\times\rho_{i,2}'
        \]
        satisfy
        \[
            \pi_{v_\ell}\#\rho
            =
            \pi_{v_\ell}\#\rho'
            \qquad
            \text{for every }\ell\in[2k-1].
        \]

        Let \(\Phi_1,\Phi_2\) be probability measures with smooth, compactly
        supported densities, and define
        \[
            \mu_{i,j}
            =
            \rho_{i,j}*\Phi_j,
            \qquad
            \mu_{i,j}'
            =
            \rho_{i,j}'*\Phi_j,
            \qquad
            i\in[k],\ j\in[2].
        \]
        Each \(\mu_{i,j}\) and \(\mu_{i,j}'\) thus has a smooth, compactly supported
        probability density. Set
        \[
            \mu
            =
            \sum_{i=1}^k
            w_i\,\mu_{i,1}\times\mu_{i,2},
            \qquad
            \mu'
            =
            \sum_{i=1}^k
            w_i'\,\mu_{i,1}'\times\mu_{i,2}'.
        \]
        Writing \(\Phi=\Phi_1\times\Phi_2\), Lemma~\ref{lem:product-convolution}
        gives
        \[
            \mu=\rho*\Phi,
            \qquad
            \mu'=\rho'*\Phi.
        \]
        Hence, by Lemma~\ref{lem:pushforward-convolution}, for every
        \(\ell\in[2k-1]\),
        \begin{align*}
            \pi_{v_\ell}\#\mu
        &=
        (\pi_{v_\ell}\#\rho)*(\pi_{v_\ell}\#\Phi)\\
        &=
        (\pi_{v_\ell}\#\rho')*(\pi_{v_\ell}\#\Phi)\\
        &=
        \pi_{v_\ell}\#\mu'.
        \end{align*}

        Finally, \(\rho-\rho'\) is nonzero and compactly supported, so its
        Fourier transform is nonzero and analytic. Since
        \(\fourier[\Phi](0)=1\), the convolution theorem implies that
        \((\rho-\rho')*\Phi\neq0\). Therefore, \(\mu\neq\mu'\).
    \end{proof}

    The proof of Proposition~\ref{prop:2k-geometry-necessary} again uses the normalization and smoothing steps from the proof of Theorem~\ref{thm:2k-necessary}, so those parts are treated briefly. The main task is to exploit the symmetry of the paired directions to construct a nonzero signed measure that is annihilated by all \(2k\) projections and, after normalization, can be written as the difference of two mixture-of-products models, each with at most \(\lfloor k/2\rfloor+1\) components.
    \begin{proof}[Proof of Proposition~\ref{prop:2k-geometry-necessary}]
        For \(r\in[k]\), let
        \[
            x_r^+=[1,-a_r]^\top,
            \qquad
            x_r^-=[1,a_r]^\top.
        \]
        Then
        \[
            \pi_{v_r^+}(x_r^+)=0,
            \qquad
            \pi_{v_r^-}(x_r^-)=0.
        \]
        Define
        \[
            \Delta_r^+
            =
            \delta_{0}-\delta_{x_r^+},
            \qquad
            \Delta_r^-
            =
            \delta_{0}-\delta_{x_r^-},
        \]
        and set
        \[
            \Gamma_r=\Delta_r^+*\Delta_r^-.
        \]
        Since pushforward commutes with convolution,
        \[
            \pi_{v_r^+}\#\Gamma_r=0,
            \qquad
            \pi_{v_r^-}\#\Gamma_r=0.
        \]
        Consequently, the measure
        \[
            \eta
            =
            \Gamma_1*\cdots*\Gamma_k
        \]
        satisfies
        \[
            \pi_{v_r^+}\#\eta
            =
            \pi_{v_r^-}\#\eta
            =
            0
            \qquad
            \text{for every }r\in[k],
        \]
        and is nonzero by Lemma \ref{lem:nonzero-convolution}.

        Let
        \[
            S_a
            =
            \frac12(\delta_{-a}+\delta_a),
            \qquad
            A=\delta_0+\delta_2.
        \]
        Expanding \(\Gamma_r\) gives
        \[
            \Gamma_r
            =
            \delta_{0}
            -
            \delta_{[1,-a_r]^\top}
            -
            \delta_{[1,a_r]^\top}
            +
            \delta_{[2,0]^\top}
            =
            A\times\delta_0
            -
            2\delta_1\times S_{a_r}.
        \]
        For \(s=0,\ldots,k\), define the positive measures
        \[
            \alpha_s
            =
            2^s\delta_s*A^{*(k-s)}
        \]
        and
        \[
            \beta_s
            =
            \sum_{\substack{J\subseteq[k]\\|J|=s}}
            \mathop{\scalebox{1.5}{\raisebox{-0.2ex}{$\ast$}}}\limits_{r\in J}
            S_{a_r},
        \]
        where the large asterisk denotes the convolution of the measures \(S_{a_r}\) over \(r\in J\), with the empty convolution interpreted as \(\delta_0\).
        Expanding the convolution according to the number of factors from
        which the second term is selected gives
        \[
            \eta
            =
            \sum_{s=0}^k
            (-1)^s\alpha_s\times\beta_s.
        \]
        Hence, setting
        \[
            \eta_\textrm{even}
            =
            \sum_{\substack{0\leq s\leq k\\s\text{ even}}}
            \alpha_s\times\beta_s,
            \qquad
            \eta_\textrm{odd}
            =
            \sum_{\substack{0\leq s\leq k\\s\text{ odd}}}
            \alpha_s\times\beta_s,
        \]
        gives
        \[
            \eta=\eta_\mathrm{even}-\eta_\mathrm{odd}.
        \]
        The measures \(\eta_\mathrm{even}\) and \(\eta_\mathrm{odd}\) are positive and compactly
        supported, with ranks at most
        \[
            \left\lfloor\frac{k}{2}\right\rfloor+1
            \qquad\text{and}\qquad
            \left\lceil\frac{k}{2}\right\rceil,
        \]
        respectively. 
        The projection equalities imply that \(\eta(\R^2)=0\), so
        \(\eta_\mathrm{even}\) and \(\eta_\mathrm{odd}\) have the same nonzero mass.
        Set
        \[
            q=\left\lfloor\frac{k}{2}\right\rfloor+1.
        \]
        Normalizing as
        in the proof of Theorem~\ref{thm:2k-necessary} and padding the smaller
        representation with a zero-weight component gives weights
        \(w_i,w_i'\geq0\) and compactly supported probability measures
        \(\rho_{i,j},\rho_{i,j}'\in\sP(\R)\), for \(i\in[q]\) and \(j\in[2]\),
        such that
        \[
            \sum_{i=1}^q w_i
            =
            \sum_{i=1}^q w_i'
            =
            1,
        \]
        and the distinct probability measures
        \[
            \rho
            =
            \sum_{i=1}^q
            w_i\,\rho_{i,1}\times\rho_{i,2},
            \qquad
            \rho'
            =
            \sum_{i=1}^q
            w_i'\,\rho_{i,1}'\times\rho_{i,2}'
        \]
        have equal projections along every \(v_r^\pm\).

        Let \(\Phi_1,\Phi_2\) be probability measures with smooth, compactly
        supported densities, and define
        \[
            \mu_{i,j}
            =
            \rho_{i,j}*\Phi_j,
            \qquad
            \mu_{i,j}'
            =
            \rho_{i,j}'*\Phi_j,
            \qquad
            i\in[q],\ j\in[2].
        \]
        Each of these measures has a smooth, compactly supported density. Set
        \[
            \mu
            =
            \sum_{i=1}^q
            w_i\,\mu_{i,1}\times\mu_{i,2},
            \qquad
            \mu'
            =
            \sum_{i=1}^q
            w_i'\,\mu_{i,1}'\times\mu_{i,2}'.
        \]
        By Lemma~\ref{lem:product-convolution}, these are obtained by convolving
        \(\rho\) and \(\rho'\) with
        \(\Phi_1\times\Phi_2\). Lemma~\ref{lem:pushforward-convolution} therefore
        preserves all the prescribed projection equalities, while the same
        Fourier-analytic argument used in the proof of Theorem~\ref{thm:2k-necessary} gives
        \(\mu\neq\mu'\).
    \end{proof}

    \begin{proof}[Proof of Lemma~\ref{lem:proj-null}]
        Recall that the one-dimensional Schwartz space \(\schwartz(\R)\) consists of smooth functions \(f\in C^\infty(\R)\) satisfying
        \[
            |f|_{N,\alpha}
            \triangleq
            \sup_{x\in\R}(1+|x|)^N|f^{(\alpha)}(x)|
            <
            \infty
            \qquad
            \text{for all integers }N,\alpha\ge0,
        \]
        and that \(\schwartz(\R)\subset L^1(\R)\)
        \cite[p.~237]{folland99}. Let \(\fourier\) denote the Fourier
        transform.

        Write
        \[
            v_i=[a_i,b_i]^\top,
            \qquad i\in[\ell],
        \]
        and choose \(c\ge0\) such that
        \[
            \left|\frac{b_i}{a_i}\right|\le c
            \qquad
            \text{whenever }a_i\neq0.
        \]
        Such a \(c\) exists because the collection of directions is finite.

        The measures will be constructed in the Fourier domain. Let
        \(\phi\in\schwartz(\R)\) be a nonnegative, even \(C^\infty\) bump
        function \cite[Section~13]{tu10} such that
        \[
            \phi(0)=1,
            \qquad
            \supp(\phi)\subset[-1,1].
        \]
        Define
        \[
            \widehat{\mu}(\xi)
            =
            \xi^2\phi(\xi),
            \qquad
            \widehat{\nu}(\xi)
            =
            \phi(\xi-(c+2))
            +
            \phi(\xi+(c+2)).
        \]
        These functions are nonzero, real-valued, even, and belong to
        \(\schwartz(\R)\). Moreover,
        \[
            \supp(\widehat{\mu})\subset[-1,1],
            \qquad
            \supp(\widehat{\nu})
            \subset
            [c+1,c+3]\cup[-c-3,-c-1],
        \]
        and
        \[
            \widehat{\mu}(0)=\widehat{\nu}(0)=0.
        \]

        Let
        \[
            f_\mu=\fourier^{-1}[\widehat{\mu}],
            \qquad
            f_\nu=\fourier^{-1}[\widehat{\nu}].
        \]
        Since the Fourier transform maps \(\schwartz(\R)\) to itself \cite[Corollary~8.23]{folland99}, one has \(f_\mu,f_\nu\in\schwartz(\R)\subset L^1(\R)\). Because \(\widehat{\mu}\) and \(\widehat{\nu}\) are real-valued and even, their inverse Fourier transforms are real-valued. They are also nonzero by the injectivity of the Fourier transform.
        Let $\mu,\nu \in \sM(\R)$ be the signed measures corresponding to the densities $f_\mu,f_\nu$, so $\fourier[\mu] = \widehat{\mu}$ and $\fourier[\nu] = \widehat{\nu}$.
        From this it follows that
        \[
            \mu(\R)=\widehat{\mu}(0)=0,
            \qquad
            \nu(\R)=\widehat{\nu}(0)=0.
        \]

        Fix \(i\in[\ell]\). If \(a_i=0\), then
        \[
            \pi_{v_i}\#(\mu\times\nu)
            =
            \mu(\R)\,T_{b_i}\#\nu
            =
            0,
        \]
        where \(T_b(x)=bx\).

        Now suppose that \(a_i\neq0\), and set
        \[
            s_i=\frac{b_i}{a_i}.
        \]
        Since
        \[
            \pi_{[1,s_i]}\#(\mu\times\nu)
            =
            \mu*(T_{s_i}\#\nu),
        \]
        its Fourier transform satisfies
        \[
            \fourier\!\left[
                \pi_{[1,s_i]}\#(\mu\times\nu)
            \right](\xi)
            =
            \widehat{\mu}(\xi)\widehat{\nu}(s_i\xi).
        \]
        If \(\xi\in\supp(\widehat{\mu})\), then \(|\xi|\le1\), and hence
        \[
            |s_i\xi|
            \le
            |s_i|
            \le
            c.
        \]
        Since \(\widehat{\nu}(\eta)=0\) whenever \(|\eta|\le c\), one has \(\widehat{\nu}(s_i\xi)=0\) whenever \(\xi\in\supp(\widehat{\mu})\). Thus,
        \[
            \widehat{\mu}(\xi)\widehat{\nu}(s_i\xi)=0
            \qquad
            \text{for every }\xi\in\R.
        \]
        The injectivity of the Fourier transform therefore gives
        \[
            \pi_{[1,s_i]}\#(\mu\times\nu)=0.
        \]
        Since
        \[
            \pi_{v_i}
            =
            T_{a_i}\circ\pi_{[1,s_i]},
        \]
        it follows that
        \[
            \pi_{v_i}\#(\mu\times\nu)
            =
            T_{a_i}\#
            \left(
                \pi_{[1,s_i]}\#(\mu\times\nu)
            \right)
            =
            0.
        \]
        Thus \(\pi_{v_i}\#(\mu\times\nu)=0\) for every \(i\in[\ell]\).
    \end{proof}
    \begin{proof}[Proof of Corollary~\ref{cor:proj-null}]
        By the construction in the proof of Lemma~\ref{lem:proj-null}, there exist nonzero signed measures \(\alpha,\beta\in\sM(\R)\), each admitting a density in \(\schwartz(\R)\), such that
        \[
            \alpha(\R)=\beta(\R)=0
        \]
        and
        \[
            \pi_{v_i}\#(\alpha\times\beta)=0
            \qquad
            \text{for every }i\in[\ell].
        \]
        Write
        \[
            \alpha=\alpha^+-\alpha^-,
            \qquad
            \beta=\beta^+-\beta^-.
        \]
        Expanding \(\alpha\times\beta\) and using linearity gives
        \[
            \pi_{v_i}\#
            \bigl(
                \alpha^+\times\beta^+
                +
                \alpha^-\times\beta^-
            \bigr)
            =
            \pi_{v_i}\#
            \bigl(
                \alpha^+\times\beta^-
                +
                \alpha^-\times\beta^+
            \bigr)
        \]
        for every \(i\in[\ell]\).

        Since \(\alpha\) and \(\beta\) are nonzero and have total mass zero,
        \[
            \alpha^+(\R)=\alpha^-(\R)>0,
            \qquad
            \beta^+(\R)=\beta^-(\R)>0.
        \]
        Normalizing the four factors therefore turns each side into a
        rank-\(\le2\) mixture-of-products probability measure with both
        weights equal to \(1/2\). The resulting models are distinct because
        their difference is a nonzero multiple of \(\alpha\times\beta\).

        Finally, if \(f_\alpha\) and \(f_\beta\) denote the Schwartz-class densities of \(\alpha\) and \(\beta\), then the densities of their Jordan components are bounded pointwise by \(|f_\alpha|\) and \(|f_\beta|\). They therefore decay faster than any polynomial, and normalization preserves this property.
    \end{proof}

    The proof of Theorem~\ref{thm:boundedness-necessary} uses the following form of the uniform boundedness principle.

    \begin{theorem}[Banach--Steinhaus theorem {\cite[Theorem~3.15]{fabian01}}]\label{thm:uniform-boundedness-principle}
        Let \(X\) be a Banach space and let \(M\subset X^*\). If
        \[
            \sup_{m\in M}|m(x)|<\infty
            \qquad\text{for every }x\in X,
        \]
        then \(M\) is bounded in norm.
    \end{theorem}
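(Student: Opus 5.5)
This is the classical uniform boundedness principle, and I would prove it by the standard Baire category argument, using only that \(X\) is complete and that each \(m\in M\) is a continuous linear functional. For each \(n\in\N\) I define
\[
    F_n \triangleq \Bigl\{x\in X:\ \sup_{m\in M}|m(x)|\le n\Bigr\}
    = \bigcap_{m\in M}\{x\in X: |m(x)|\le n\}.
\]
Each set \(\{x:|m(x)|\le n\}\) is norm closed because \(m\) is continuous. Hence \(F_n\), being an intersection of closed sets, is closed. The pointwise boundedness hypothesis says exactly that \(X=\bigcup_{n\in\N}F_n\).

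Since \(X\) is a complete metric space, the Baire category theorem shows that some \(F_N\) has nonempty interior. So there exist \(x_0\in X\) and \(r>0\) with \(x_0+r\ball_X\subseteq F_N\). This is the only nontrivial step, and it is where completeness of \(X\) is used. Without completeness the statement fails, for example on the space of finitely supported sequences.

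Next I transfer the bound from the ball around \(x_0\) to the unit ball by linearity. For \(\|x\|\le1\) and \(m\in M\), both \(x_0+rx\) and \(x_0\) lie in \(F_N\). Therefore
\[
    |m(x)| = \frac{1}{r}\bigl|m(x_0+rx)-m(x_0)\bigr| \le \frac{2N}{r}.
\]
Taking the supremum over \(\|x\|\le 1\) gives \(\|m\|\le 2N/r\) for every \(m\in M\), so \(M\) is bounded in norm.

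No step here is a real obstacle beyond invoking Baire's theorem correctly. In particular, I need to check that the \(F_n\) are closed and not merely measurable, which follows from the intersection representation above.
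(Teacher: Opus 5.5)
Your proof is correct and is the classical Baire category argument: the closed sets $F_n$ cover $X$, one of them contains a closed ball about some $x_0$, and translating back to the origin gives $\|m\|\le 2N/r$ uniformly over $M$. The paper does not prove this statement; it cites it from Fabian et al.\ as a standard tool for the proof of Theorem~\ref{thm:boundedness-necessary}, and your argument is the textbook proof of that result, so nothing is missing.
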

    \begin{proof}[Proof of Theorem~\ref{thm:boundedness-necessary}]
        Let \(V\) denote the map in the theorem. Write \(v_i=[a_i,b_i]^\top\) for \(i\in[\ell]\), and equip \(\sM(\R)^\ell\) with the norm
        \[
            \|(\nu_1,\ldots,\nu_\ell)\|
            =
            \max_{i\in[\ell]}\|\nu_i\|.
        \]
        If \(V\) is not injective on
        \(\rankmeas_{\le1}([0,1],[0,1])\), then it is not a homeomorphic
        embedding. Hence, assume that \(V\) is injective on this class.

        Choose nonnegative functions
        \(p^+,p^-\in C_c^\infty((0,1))\) with disjoint supports and
        \[
            \int_{\R}p^+(x)\,dx
            =
            \int_{\R}p^-(x)\,dx
            =
            1.
        \]
        Let \(\mu^+\) and \(\mu^-\) be the corresponding probability measures
        and define
        \[
            \alpha=\mu^+-\mu^-,
            \qquad
            f=p^+-p^-.
        \]
        Then
        \[
            \alpha(\R)=0,
            \qquad
            \|\alpha\|=2.
        \]
        Since \(f\) is smooth and compactly supported, it has a finite
        Lipschitz constant, denoted by \(L\).

        For each \(\delta\in(0,1)\), choose nonnegative functions
        \(r_\delta^+,r_\delta^-\in C_c^\infty(\R)\) satisfying
        \[
            \int_{\R}r_\delta^+(y)\,dy
            =
            \int_{\R}r_\delta^-(y)\,dy
            =
            1
        \]
        and
        \[
            \supp(r_\delta^+)\subset(0,\delta/3),
            \qquad
            \supp(r_\delta^-)\subset(2\delta/3,\delta).
        \]
        Let \(\rho_\delta^+\) and \(\rho_\delta^-\) be the corresponding
        probability measures and set
        \[
            \beta_\delta=\rho_\delta^+-\rho_\delta^-,
            \qquad
            z_\delta=\frac14\,\alpha\times\beta_\delta.
        \]
        The measure \(z_\delta\) belongs to
        \(\rankmeas_{\le1}([0,1],[0,1])\) and has the \(C^\infty\) density
        \[
            (x,y)\longmapsto
            \frac14 f(x)\bigl(r_\delta^+(y)-r_\delta^-(y)\bigr).
        \]
        Moreover, the four product measures obtained by expanding
        \(\alpha\times\beta_\delta\) are mutually singular. Therefore,
        \[
            \|z_\delta\|
            =
            \frac14\|\alpha\times\beta_\delta\|
            =
            1.
        \]

        It remains to bound the projections of \(z_\delta\). For \(c\in\R\), let \(T_c(x)=cx\). By the convolution formula for projections of product measures,
        \[
            \pi_{[a_i,b_i]}\#(\alpha\times\beta_\delta)
            =
            T_{a_i}\#\alpha*T_{b_i}\#\beta_\delta.
        \]
        If \(a_i=0\), then
        \[
            T_{a_i}\#\alpha
            =
            \alpha(\R)\delta_0
            =
            0.
        \]
        Similarly, if \(b_i=0\), then
        \[
            T_{b_i}\#\beta_\delta
            =
            \beta_\delta(\R)\delta_0
            =
            0.
        \]
        Thus
        \[
            \pi_{v_i}\#z_\delta=0
        \]
        whenever \(a_i=0\) or \(b_i=0\).

        Now suppose that \(a_i\neq0\) and \(b_i\neq0\). The measure
        \(T_{a_i}\#\alpha\) has density
        \[
            f_{a_i}(x)
            =
            \frac{1}{|a_i|}f(x/a_i),
        \]
        whose Lipschitz constant is \(L/|a_i|^2\). Let \(h_{\delta,i}\) denote the density of
        \(\pi_{v_i}\#(\alpha\times\beta_\delta)\).

        To determine the density of the projection, consider first
        \(\mu^+\times\rho_\delta^+\). If \(X\sim\mu^+\) and
        \(Y\sim\rho_\delta^+\) are independent, then
        \[
            \pi_{v_i}\#(\mu^+\times\rho_\delta^+)
            =
            \law(a_iX+b_iY).
        \]
        Conditioning on \(Y=y\), the random variable \(a_iX+b_iY\) has density
        \[
            x\longmapsto
            \frac{1}{|a_i|}
            p^+\!\left(\frac{x-b_i y}{a_i}\right).
        \]
        Averaging this conditional density over the distribution of \(Y\)
        gives
        \[
            x\longmapsto
            \int_{\R}
            \frac{1}{|a_i|}
            p^+\!\left(\frac{x-b_i y}{a_i}\right)
            r_\delta^+(y)\,dy.
        \]
        Applying the same argument to the other three terms in
        \[
            \alpha\times\beta_\delta
            =
            \mu^+\times\rho_\delta^+
            -
            \mu^+\times\rho_\delta^-
            -
            \mu^-\times\rho_\delta^+
            +
            \mu^-\times\rho_\delta^-,
        \]
        and using linearity, shows that
        \(\pi_{v_i}\#(\alpha\times\beta_\delta)\) has density
        \[
            h_{\delta,i}(x)
            =
            \int_{\R}
            f_{a_i}(x-b_i y)
            \bigl(r_\delta^+(y)-r_\delta^-(y)\bigr)\,dy.
        \]
        Using the fact that \(r_\delta^+\) and \(r_\delta^-\) integrate to one, Fubini's theorem gives
        \begin{align*}
            h_{\delta,i}(x)
    &=
    \int_{\R}
    f_{a_i}(x-b_i y)r_\delta^+(y)\,dy
    -
    \int_{\R}
    f_{a_i}(x-b_i z)r_\delta^-(z)\,dz\\
    &=
    \left(
        \int_{\R}
        f_{a_i}(x-b_i y)r_\delta^+(y)\,dy
    \right)
    \left(
        \int_{\R}r_\delta^-(z)\,dz
    \right)\\
    &\quad-
    \left(
        \int_{\R}r_\delta^+(y)\,dy
    \right)
    \left(
        \int_{\R}
        f_{a_i}(x-b_i z)r_\delta^-(z)\,dz
    \right)\\
    &=
    \iint_{\R^2}
    f_{a_i}(x-b_i y)
    r_\delta^+(y)r_\delta^-(z)\,dy\,dz\\
    &\quad-
    \iint_{\R^2}
    f_{a_i}(x-b_i z)
    r_\delta^+(y)r_\delta^-(z)\,dy\,dz\\
    &=
    \iint_{\R^2}
    \bigl(
        f_{a_i}(x-b_i y)
        -
        f_{a_i}(x-b_i z)
    \bigr)
    r_\delta^+(y)r_\delta^-(z)\,dy\,dz.
        \end{align*}

        Since \(f_{a_i}\) has Lipschitz constant \(L/|a_i|^2\) and \(y,z\in[0,\delta]\) on the supports of the integrand,
        \[
            |h_{\delta,i}(x)|
            \le
            \frac{L|b_i|}{|a_i|^2}\,\delta.
        \]
        The support of \(h_{\delta,i}\) is contained in an interval of length
        at most \(|a_i|+|b_i|\), independently of \(\delta\). Consequently,
        there exists \(C_i<\infty\), independent of \(\delta\), such that
        \[
            \|\pi_{v_i}\#z_\delta\|
            =
            \frac14\|h_{\delta,i}\|_1
            \le
            C_i\delta.
        \]
        Since the collection of directions is finite, there exists
        \(C<\infty\) such that
        \[
            \|V(z_\delta)\|
            \le
            C\delta
            \qquad\text{for every }\delta\in(0,1).
        \]

        For \(n\ge2\), define
        \[
            x_n=nz_{1/n^2}.
        \]
        Then \(x_n\in\rankmeas_{\le1}([0,1],[0,1])\) and
        \[
            \|x_n\|=n,
            \qquad
            \|V(x_n)\|
            \le
            \frac{C}{n}.
        \]
        Hence \(V(x_n)\) converges to zero in norm and therefore also
        weak\(^*\).

        Let
        \[
            \widetilde V
            =
            \left(
                V\big|_{\rankmeas_{\le1}([0,1],[0,1])}
            \right)^{-1}.
        \]
        If \(\widetilde V\) were weak\(^*\)--weak\(^*\) continuous at zero,
        then
        \[
            x_n
            =
            \widetilde V(V(x_n))
            \towstar
            \widetilde V(0)
            =
            0.
        \]
        In particular, for every \(f\in C([0,1]^2)\), the sequence \(\bigl(\langle x_n,f\rangle\bigr)_{n=2}^\infty\) would be bounded. Theorem~\ref{thm:uniform-boundedness-principle} would then imply
        \[
            \sup_{n\ge2}\|x_n\|<\infty,
        \]
        contradicting \(\|x_n\|=n\). Thus \(\widetilde V\) is not
        weak\(^*\)--weak\(^*\) continuous, and \(V\) is not a weak\(^*\)--weak\(^*\)
        homeomorphic embedding.

        Finally, every \(x_n\) constructed above admits a \(C^\infty\)
        density. The same argument therefore applies when the domain is
        restricted to rank-\(\le1\) signed measures admitting \(C^\infty\)
        densities.
    \end{proof}
    \section*{Acknowledgments}
The author would like to thank his wife, Julia Vandermeulen, for her support throughout this project.

    \clearpage

    \appendix

    \section{Equivalence of RIP, RSC, and Injectivity} 
    \label{appx:rip-rsc-inj-equiv}
    This appendix shows that, for linear maps on matrix spaces, injectivity on rank-\(\le k\) matrices is equivalent to restricted strong convexity on rank-\(\le 2k\) matrices and, up to rescaling, to restricted isometry on rank-\(\le 2k\) matrices. The argument works with any choice of norms on \(\R^{m\times n}\) and \(\R^\ell\). For definiteness, equip these spaces with the Frobenius and Euclidean norms, respectively, both denoted by \(\|\cdot\|\). Linear operators between these spaces are equipped with the corresponding operator norm. For $q \in \N$, let \(R_{\le q} \subseteq \R^{m \times n}\) denote the set of matrices of rank at most \(q\), and let \(T : \R^{m \times n} \to \R^\ell\) be linear.

    Let $k \in \N$. To begin, assume that \(T\) is injective on \(R_{\le k}\). Any nonzero \(x\in R_{\le 2k}\) can be written as \(x=y-z\) with \(y,z \in R_{\le k}\) and \(y\neq z\). Since \(T\) is injective on \(R_{\le k}\), it follows that \(T(y)\neq T(z)\), and hence \(T(x)\neq0\).

    Since \(R_{\le 2k}\) is closed, the set
    \[
        S_{2k}
        \triangleq
        R_{\le 2k}\cap \{x:\|x\|=1\}
    \]
    is compact. Consider
    \[
        \kappa
        \triangleq
        \min_{x\in S_{2k}} \|T(x)\|^2.
    \]
    The minimum is attained by compactness and continuity, and \(\kappa>0\) because \(T(x)\neq0\) for every \(x\in S_{2k}\). Therefore, for any nonzero \(x\in R_{\le 2k}\),
    \[
        \left\|T\!\left(\frac{x}{\|x\|}\right)\right\|^2
        \ge \kappa,
    \]
    and hence
    \begin{equation}\label{eqn:rsc-def}
        \|T(x)\|^2 \ge \kappa\|x\|^2,
        \qquad x\in R_{\le 2k},
    \end{equation}
    which corresponds to restricted strong convexity of the quadratic loss \(x \mapsto \|T(x)\|^2\) on \(R_{\le 2k}\); see \cite{negahban11}.
    In \cite{negahban11}, this condition is stated using the Frobenius norm on \(\R^{m\times n}\) and the Euclidean norm on \(\R^\ell\). Thus injectivity of \(T\) on \(R_{\le k}\) implies restricted strong convexity on \(R_{\le 2k}\).

    Now assume restricted strong convexity in the form \eqref{eqn:rsc-def} for some $\kappa >0$. For \(x \in R_{\le 2k}\) one has
    \[
        \|T(x)\|^2 \;\ge\; \kappa \|x\|^2.
    \]
    Also, since \(T\) is linear on a finite-dimensional normed space, its operator
    norm is finite, and
    \[
        \|T(x)\|^2
        \le
        \|T\|^2\,\|x\|^2.
    \]
    Define the rescaled operator
    \[
        \widetilde T
        =
        \sqrt{\frac{2}{\kappa+\|T\|^2}}\,T,
        \qquad
        \delta
        =
        \frac{\|T\|^2-\kappa}{\|T\|^2+\kappa}
        \in[0,1).
    \]
    Then, for every \(x\in R_{\le 2k}\),
    \begin{equation}\label{eqn:rip-def}
        (1-\delta)\|x\|^2
        \le
        \|\widetilde T(x)\|^2
        \le
        (1+\delta)\|x\|^2.
    \end{equation}
    Thus the restricted strong convexity lower bound on \(R_{\le 2k}\) implies, after rescaling, the restricted isometry property on rank-\(\le 2k\) matrices.

    Now assume that \(T\) satisfies the restricted isometry property on rank-\(\le 2k\) matrices with some \(\delta\in[0,1)\).
    For \(x,y\in R_{\le k}\), the difference \(x-y\) has rank at most \(2k\), so
    \[
        (1-\delta)\|x-y\|^2
        \le
        \|T(x-y)\|^2
        \le
        (1+\delta)\|x-y\|^2.
    \]
    If \(x,y \in R_{\le k}\) with \(x\neq y\), then \(\|x-y\|>0\). Hence \(\|T(x-y)\|>0\), so \(T(x)\neq T(y)\). Thus the restricted isometry property on  rank-\(\le 2k\) matrices implies injectivity of \(T\) on \(R_{\le k}\).

    Combining the preceding implications shows that injectivity of \(T\) on \(R_{\le k}\), restricted strong convexity on \(R_{\le 2k}\), and, up to rescaling, the restricted isometry property on rank-\(\le 2k\) matrices are equivalent.

    \section{Experimental Details} \label{appx:experimental}
    \subsection{Implementation and Runtime Details}
In the experiments of Section~\ref{sec:exp-real-world}, 250 HEPMASS samples were used for training and 250 for validation, while 1000 kin8nm samples were used for training and 300 for validation. For HEPMASS, a fixed test set of 20,000 samples was used across all 20 runs; for kin8nm, the remaining samples formed the test set in each split. These splits were chosen to balance computational cost and stability of model selection, providing sufficient data for validation-based tuning while keeping the overall runtime manageable.

    For SME, partitioning was performed on samples whose coordinates had been rounded to the nearest multiple of \(0.125\) for HEPMASS and \(0.25\) for kin8nm. This allows the use of weighted empirical measures rather than full empirical measures, significantly reducing the cost of Wasserstein distance computations. The rounding was used only for partitioning; the KDEs were fit to the original, unrounded samples using the resulting component labels. In preliminary testing, this rounding step did not appear to noticeably affect density-estimation performance.

    This rounding and the incremental update strategy described in Section~\ref{sec:application-algorithm}, together with parallelization across 9 CPU cores (Apple M2), substantially reduced computation time relative to a naive implementation of the SME objective. For HEPMASS, after the initial objective evaluation has cached the block-level contributions to the projected measures, evaluating the objective for a single candidate label change—that is, the loss resulting from moving one sample to a different component—takes approximately 6--7 ms. This evaluation includes computing and averaging the Wasserstein distances across all projections (42 graph edges with 4 projections per edge). With \(k=4\) components, the three alternative labels for one HEPMASS sample are evaluated separately and take approximately 21 ms in total, consistent with the 6--7 ms cost per label. For kin8nm, evaluating one alternative label for one sample takes approximately 3--4 ms.

    For HEPMASS, each objective evaluation comprises 168 independent computations, each consisting of a projection and its associated Wasserstein-distance evaluation. The experiments reported here used 9 CPU cores, leaving room for finer-grained parallelization and further speedups.

    \subsubsection{MAF Implementation Details}
    For both datasets, MAF was implemented using \texttt{zuko.flows.MAF} with five autoregressive transformations and random feature permutations enabled. The hidden-layer widths were \((64,64)\) for HEPMASS and \((32,32)\) for kin8nm. Models were trained by minimizing the mean negative log-likelihood using Adam with learning rate \(10^{-3}\) and weight decay \(10^{-6}\). HEPMASS used full-batch updates, while kin8nm used mini-batches of size \(50\).

    Training continued until the validation negative log-likelihood failed to improve for \(10\) consecutive epochs for HEPMASS or \(30\) consecutive epochs for kin8nm. The parameter state attaining the lowest validation negative log-likelihood was restored before evaluating the test log-likelihood. All remaining Zuko and Adam settings were left at their default values.

    \subsection{Additional Experimental Results}

    This subsection presents additional experimental results examining the behavior of the SME objective during optimization and its relationship to downstream density-estimation performance. The goal is to better understand the objective and the optimization process rather than provide an additional benchmark comparison.

    \begin{figure}[!t]
        \centering
        \subfloat[\small Single run with 20,000 training samples and \(k=80\).
        \label{fig:sme-diagnostic}]
        {
            \includegraphics[width=0.48\linewidth]
            {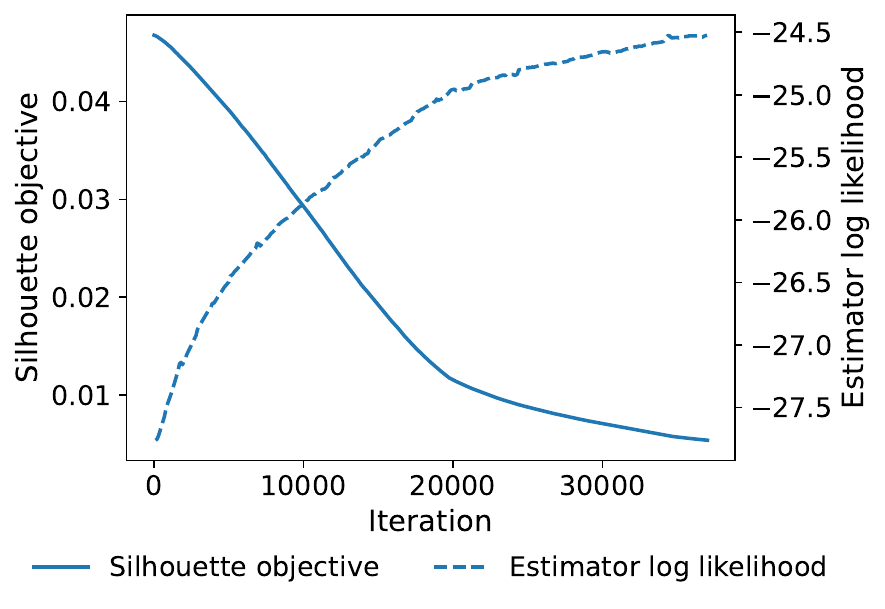}
        }
        \hfill
        \subfloat[\small Mean and standard deviation over 20 runs (\(500\) training samples, \(k=5\)).
        \label{fig:sme-diagnostic-mean-std}]
        {
            \includegraphics[width=0.48\linewidth]
            {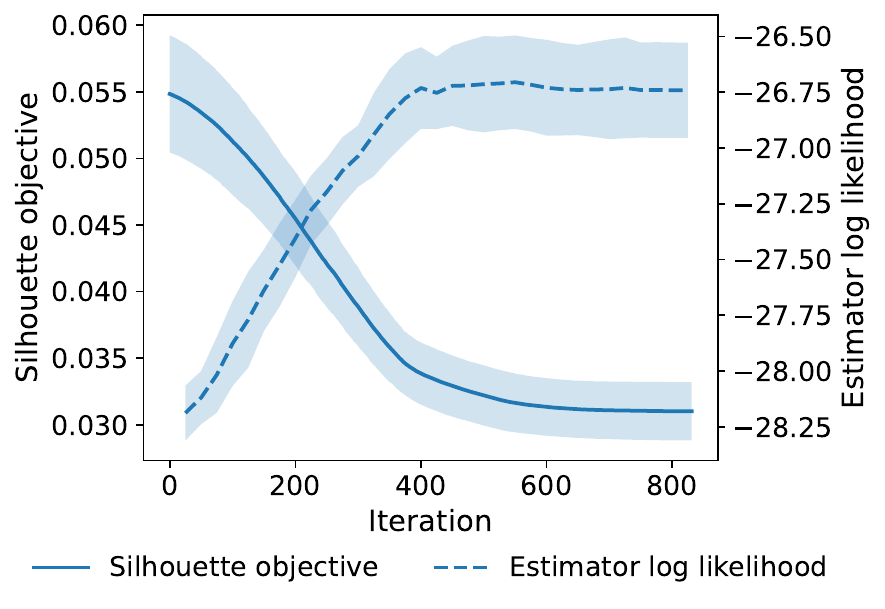}
        }
        \caption{Silhouette objective and estimator log-likelihood during SME optimization on HEPMASS.}
    \end{figure}

    The first experiment used HEPMASS with 20,000 training samples and \(k=80\) components, with the SME objective optimized for approximately 37,000 steps. Every 100 optimization steps, the corresponding mixture-of-Naive-Bayes KDE estimator was evaluated on a held-out test set of size 10,000. The graph structure, projection parameters, KDE fitting, and rounding procedure were the same as in the HEPMASS experiments of Section~\ref{sec:exp-real-world}. Bandwidths were selected by \(5\)-fold cross-validation over a logarithmic grid of \(40\) values ranging from \(10^{-3}\) to \(1\).

    The full experiment required approximately 90 hours, roughly one half of which was spent periodically fitting the mixture-of-Naive-Bayes KDE estimators and evaluating them on the test set. Evaluating all 79 alternative labels for a single sample took approximately 4.5 seconds. Figure~\ref{fig:sme-diagnostic} shows the silhouette objective and test log-likelihood throughout optimization.

    As the silhouette objective decreases during optimization, test log-likelihood improves nearly monotonically. This provides empirical evidence that minimizing the silhouette objective tends to produce partitions that are increasingly useful for constructing density estimators of the form \eqref{eqn:lr-kde}.

    Figure~\ref{fig:sme-diagnostic-mean-std} shows the mean and standard deviation of the corresponding trajectories over 20 independent HEPMASS runs with 500 training samples and \(k=5\) components. Each run continued until no single sample reassignment decreased the objective. Because the runs converged after different numbers of steps, each trajectory was extended after convergence by holding its last recorded value constant.

    As in the larger experiment, estimator log-likelihood improves as the silhouette objective decreases. The improvement then plateaus, with a slight decline late in optimization even as the silhouette objective continues to decrease. The cause of this behavior is unclear.

    \bibliographystyle{unsrtnat}
    \bibliography{rvdm}

\begin{thebibliography}{90}
\providecommand{\natexlab}[1]{#1}
\providecommand{\url}[1]{\texttt{#1}}
\expandafter\ifx\csname urlstyle\endcsname\relax
  \providecommand{\doi}[1]{doi: #1}\else
  \providecommand{\doi}{doi: \begingroup \urlstyle{rm}\Url}\fi

\bibitem[Candes and Tao(2005)]{candes05}
E.~J. Candes and T.~Tao.
\newblock Decoding by linear programming.
\newblock \emph{IEEE Trans. Inf. Theor.}, 51\penalty0 (12):\penalty0 4203–4215, December 2005.
\newblock ISSN 0018-9448.
\newblock \doi{10.1109/TIT.2005.858979}.
\newblock URL \url{https://doi.org/10.1109/TIT.2005.858979}.

\bibitem[Recht et~al.(2010)Recht, Fazel, and Parrilo]{recht10}
Benjamin Recht, Maryam Fazel, and Pablo~A. Parrilo.
\newblock Guaranteed minimum-rank solutions of linear matrix equations via nuclear norm minimization.
\newblock \emph{SIAM Rev.}, 52\penalty0 (3):\penalty0 471--501, August 2010.
\newblock ISSN 0036-1445.
\newblock \doi{10.1137/070697835}.
\newblock URL \url{http://dx.doi.org/10.1137/070697835}.

\bibitem[Negahban and Wainwright(2011)]{negahban11}
Sahand Negahban and Martin~J. Wainwright.
\newblock Estimation of (near) low-rank matrices with noise and high-dimensional scaling.
\newblock \emph{Ann. Statist.}, 39\penalty0 (2):\penalty0 1069--1097, 04 2011.
\newblock \doi{10.1214/10-AOS850}.
\newblock URL \url{https://doi.org/10.1214/10-AOS850}.

\bibitem[Bonnotte(2013)]{bonnotte13}
Nicolas Bonnotte.
\newblock \emph{{Unidimensional and Evolution Methods for Optimal Transportation}}.
\newblock Theses, {Universit{\'e} Paris Sud - Paris XI ; Scuola normale superiore (Pise, Italie)}, December 2013.
\newblock URL \url{https://theses.hal.science/tel-00946781}.

\bibitem[Bonneel et~al.(2015)Bonneel, Rabin, Peyr{\'e}, and Pfister]{bonneel15}
Nicolas Bonneel, Julien Rabin, Gabriel Peyr{\'e}, and Hanspeter Pfister.
\newblock Sliced and {Radon} {Wasserstein} barycenters of measures.
\newblock \emph{Journal of Mathematical Imaging and Vision}, 51\penalty0 (1):\penalty0 22--45, Jan 2015.
\newblock ISSN 1573-7683.
\newblock \doi{10.1007/s10851-014-0506-3}.
\newblock URL \url{https://doi.org/10.1007/s10851-014-0506-3}.

\bibitem[Nadjahi et~al.(2020)Nadjahi, Durmus, Chizat, Kolouri, Shahrampour, and Simsekli]{nadjahi20}
Kimia Nadjahi, Alain Durmus, L\'{e}na\"{i}c Chizat, Soheil Kolouri, Shahin Shahrampour, and Umut Simsekli.
\newblock Statistical and topological properties of sliced probability divergences.
\newblock In H.~Larochelle, M.~Ranzato, R.~Hadsell, M.F. Balcan, and H.~Lin, editors, \emph{Advances in Neural Information Processing Systems}, volume~33, pages 20802--20812. Curran Associates, Inc., 2020.
\newblock URL \url{https://proceedings.neurips.cc/paper_files/paper/2020/file/eefc9e10ebdc4a2333b42b2dbb8f27b6-Paper.pdf}.

\bibitem[Nguyen et~al.(2021)Nguyen, Ho, Pham, and Bui]{nguyen21distributional}
Khai Nguyen, Nhat Ho, Tung Pham, and Hung Bui.
\newblock Distributional sliced-{Wasserstein} and applications to generative modeling.
\newblock In \emph{International Conference on Learning Representations}, 2021.
\newblock URL \url{https://openreview.net/forum?id=QYjO70ACDK}.

\bibitem[Scott et~al.(2013)Scott, Blanchard, and Handy]{scott13}
Clayton Scott, Gilles Blanchard, and Gregory Handy.
\newblock Classification with asymmetric label noise: Consistency and maximal denoising.
\newblock In Shai Shalev-Shwartz and Ingo Steinwart, editors, \emph{Proceedings of the 26th Annual Conference on Learning Theory}, volume~30 of \emph{Proceedings of Machine Learning Research}, pages 489--511, Princeton, NJ, USA, 12--14 Jun 2013. PMLR.
\newblock URL \url{https://proceedings.mlr.press/v30/Scott13.html}.

\bibitem[Blanchard and Scott(2014)]{blanchard14}
Gilles Blanchard and Clayton Scott.
\newblock Decontamination of mutually contaminated models.
\newblock In \emph{Proceedings of the Seventeenth International Conference on Artificial Intelligence and Statistics, {AISTATS} 2014, Reykjavik, Iceland, April 22-25, 2014}, pages 1--9, 2014.
\newblock URL \url{http://jmlr.org/proceedings/papers/v33/blanchard14.html}.

\bibitem[Arora et~al.(2012{\natexlab{a}})Arora, Ge, Kannan, and Moitra]{arora12}
Sanjeev Arora, Rong Ge, Ravindran Kannan, and Ankur Moitra.
\newblock Computing a nonnegative matrix factorization -- provably.
\newblock In \emph{Proceedings of the Forty-fourth Annual ACM Symposium on Theory of Computing}, STOC '12, pages 145--162, New York, NY, USA, 2012{\natexlab{a}}. ACM.
\newblock ISBN 978-1-4503-1245-5.
\newblock \doi{10.1145/2213977.2213994}.
\newblock URL \url{http://doi.acm.org/10.1145/2213977.2213994}.

\bibitem[Hall and Zhou(2003)]{hall03}
Peter Hall and Xiao-Hua Zhou.
\newblock {Nonparametric estimation of component distributions in a multivariate mixture}.
\newblock \emph{The Annals of Statistics}, 31\penalty0 (1):\penalty0 201 -- 224, 2003.
\newblock \doi{10.1214/aos/1046294462}.
\newblock URL \url{https://doi.org/10.1214/aos/1046294462}.

\bibitem[Hall et~al.(2005)Hall, Neeman, Pakyari, and Elmore]{hall05}
Peter Hall, Amnon Neeman, Reza Pakyari, and Ryan Elmore.
\newblock {Nonparametric inference in multivariate mixtures}.
\newblock \emph{Biometrika}, 92\penalty0 (3):\penalty0 667--678, 09 2005.
\newblock ISSN 0006-3444.
\newblock \doi{10.1093/biomet/92.3.667}.
\newblock URL \url{https://doi.org/10.1093/biomet/92.3.667}.

\bibitem[Vandermeulen and Ledent(2021)]{vandermeulen21}
Robert~A Vandermeulen and Antoine Ledent.
\newblock Beyond smoothness: Incorporating low-rank analysis into nonparametric density estimation.
\newblock In M.~Ranzato, A.~Beygelzimer, Y.~Dauphin, P.S. Liang, and J.~Wortman Vaughan, editors, \emph{Advances in Neural Information Processing Systems}, volume~34, pages 12180--12193. Curran Associates, Inc., 2021.
\newblock URL \url{https://proceedings.neurips.cc/paper/2021/file/6591d327f6f731e589b0e869adadf940-Paper.pdf}.

\bibitem[Kwon and Mbakop(2021)]{kwon21}
Caleb Kwon and Eric Mbakop.
\newblock {Estimation of the number of components of nonparametric multivariate finite mixture models}.
\newblock \emph{The Annals of Statistics}, 49\penalty0 (4):\penalty0 2178 -- 2205, 2021.
\newblock \doi{10.1214/20-AOS2032}.
\newblock URL \url{https://doi.org/10.1214/20-AOS2032}.

\bibitem[Amiridi et~al.(2022)Amiridi, Kargas, and Sidiropoulos]{amiridi22I}
Magda Amiridi, Nikos Kargas, and Nicholas~D. Sidiropoulos.
\newblock Low-rank characteristic tensor density estimation part {I}: Foundations.
\newblock \emph{IEEE Transactions on Signal Processing}, 70:\penalty0 2654--2668, 2022.
\newblock \doi{10.1109/TSP.2022.3175608}.

\bibitem[Chhor et~al.(2025)Chhor, Klopp, and Tsybakov]{chhor25}
Julien Chhor, Olga Klopp, and Alexandre~B. Tsybakov.
\newblock Generalized multi-view model: Adaptive density estimation under low-rank constraints.
\newblock \emph{Journal of Machine Learning Research}, 26\penalty0 (236):\penalty0 1--52, 2025.
\newblock URL \url{http://jmlr.org/papers/v26/24-1729.html}.

\bibitem[Lyu and Yang(2025)]{lyu25}
Yichen Lyu and Pengkun Yang.
\newblock Identifiability and estimation in high-dimensional nonparametric latent structure models.
\newblock In Nika Haghtalab and Ankur Moitra, editors, \emph{Proceedings of Thirty Eighth Conference on Learning Theory}, volume 291 of \emph{Proceedings of Machine Learning Research}, pages 3879--3880. PMLR, 30 Jun--04 Jul 2025.
\newblock URL \url{https://proceedings.mlr.press/v291/lyu25a.html}.

\bibitem[Benaglia et~al.(2009)Benaglia, Chauveau, and Hunter]{benaglia09}
Tatiana Benaglia, Didier Chauveau, and David~R. Hunter.
\newblock An {EM}-like algorithm for semi- and nonparametric estimation in multivariate mixtures.
\newblock \emph{Journal of Computational and Graphical Statistics}, 18\penalty0 (2):\penalty0 505--526, 2009.
\newblock \doi{10.1198/jcgs.2009.07175}.
\newblock URL \url{https://doi.org/10.1198/jcgs.2009.07175}.

\bibitem[Vandermeulen(2025)]{vandermeulen25pmode}
Robert~A. Vandermeulen.
\newblock Pmode: Theoretically grounded and modular mixture modeling, 2025.
\newblock URL \url{https://arxiv.org/abs/2508.21396}.

\bibitem[Fabian et~al.(2001)Fabian, Habala, H{\'a}jek, Santaluc{\'i}a, Pelant, and Zizler]{fabian01}
Mari{\'a}n Fabian, Petr Habala, Petr H{\'a}jek, Vicente~Montesinos Santaluc{\'i}a, Jan Pelant, and V{\'a}clav Zizler.
\newblock \emph{Functional Analysis and Infinite-Dimensional Geometry}.
\newblock Springer New York, New York, NY, 2001.
\newblock ISBN 978-1-4757-3480-5.
\newblock \doi{10.1007/978-1-4757-3480-5}.
\newblock URL \url{https://doi.org/10.1007/978-1-4757-3480-5}.

\bibitem[Petitjean(2025)]{petitjean25}
Colin Petitjean.
\newblock The set of elementary tensors is weakly closed in projective tensor products.
\newblock \emph{Bulletin of the Australian Mathematical Society}, 111\penalty0 (2):\penalty0 336--343, 2025.
\newblock \doi{10.1017/S0004972724000376}.

\bibitem[Donoho and Stodden(2004)]{donoho04}
David Donoho and Victoria Stodden.
\newblock When does non-negative matrix factorization give a correct decomposition into parts?
\newblock In S.~Thrun, L.~K. Saul, and B.~Sch\"{o}lkopf, editors, \emph{Advances in Neural Information Processing Systems 16}, pages 1141--1148. MIT Press, 2004.

\bibitem[Arora et~al.(2012{\natexlab{b}})Arora, Ge, and Moitra]{arora12b}
Sanjeev Arora, Rong Ge, and Ankur Moitra.
\newblock Learning topic models -- going beyond {SVD}.
\newblock In \emph{Proceedings of the 2012 IEEE 53rd Annual Symposium on Foundations of Computer Science}, FOCS '12, pages 1--10, Washington, DC, USA, 2012{\natexlab{b}}. IEEE Computer Society.
\newblock ISBN 978-0-7695-4874-6.
\newblock \doi{10.1109/FOCS.2012.49}.
\newblock URL \url{http://dx.doi.org/10.1109/FOCS.2012.49}.

\bibitem[Kargas et~al.(2018)Kargas, Sidiropoulos, and Fu]{kargas18}
Nikos Kargas, Nicholas~D. Sidiropoulos, and Xiao Fu.
\newblock Tensors, learning, and ``{Kolmogorov} extension'' for finite-alphabet random vectors.
\newblock \emph{IEEE Transactions on Signal Processing}, 66\penalty0 (18):\penalty0 4854--4868, 2018.
\newblock \doi{10.1109/TSP.2018.2862383}.

\bibitem[Allman et~al.(2009)Allman, Matias, and Rhodes]{allman09}
Elizabeth~S. Allman, Catherine Matias, and John~A. Rhodes.
\newblock Identifiability of parameters in latent structure models with many observed variables.
\newblock \emph{Ann. Statist.}, 37\penalty0 (6A):\penalty0 3099--3132, 12 2009.
\newblock \doi{10.1214/09-AOS689}.
\newblock URL \url{http://dx.doi.org/10.1214/09-AOS689}.

\bibitem[Yakowitz and Spragins(1968)]{yakowitz68}
Sidney~J. Yakowitz and John~D. Spragins.
\newblock On the identifiability of finite mixtures.
\newblock \emph{Ann. Math. Statist.}, 39\penalty0 (1):\penalty0 209--214, 02 1968.
\newblock \doi{10.1214/aoms/1177698520}.
\newblock URL \url{http://dx.doi.org/10.1214/aoms/1177698520}.

\bibitem[Anandkumar et~al.(2014)Anandkumar, Ge, Hsu, Kakade, and Telgarsky]{anandkumar14}
Animashree Anandkumar, Rong Ge, Daniel Hsu, Sham~M. Kakade, and Matus Telgarsky.
\newblock Tensor decompositions for learning latent variable models.
\newblock \emph{Journal of Machine Learning Research}, 15:\penalty0 2773--2832, 2014.
\newblock URL \url{http://jmlr.org/papers/v15/anandkumar14b.html}.

\bibitem[Tang et~al.(2026)Tang, Chhor, Klopp, Tsybakov, and Zhang]{tang26}
Runshi Tang, Julien Chhor, Olga Klopp, Alexandre~B. Tsybakov, and Anru~R. Zhang.
\newblock Optimal estimation of discrete multiview distributions under heteroskedastic multinomial sampling, 2026.
\newblock URL \url{https://arxiv.org/abs/2605.24858}.

\bibitem[Song and Dai(2013)]{song13}
Le~Song and Bo~Dai.
\newblock Robust low rank kernel embeddings of multivariate distributions.
\newblock In C.~J.~C. Burges, L.~Bottou, M.~Welling, Z.~Ghahramani, and K.~Q. Weinberger, editors, \emph{Advances in Neural Information Processing Systems}, volume~26. Curran Associates, Inc., 2013.
\newblock URL \url{https://proceedings.neurips.cc/paper/2013/file/49b8b4f95f02e055801da3b4f58e28b7-Paper.pdf}.

\bibitem[Song et~al.(2014)Song, Anandkumar, Dai, and Xie]{song14}
Le~Song, Animashree Anandkumar, Bo~Dai, and Bo~Xie.
\newblock Nonparametric estimation of multi-view latent variable models.
\newblock In Eric~P. Xing and Tony Jebara, editors, \emph{Proceedings of the 31st International Conference on Machine Learning}, volume~32 of \emph{Proceedings of Machine Learning Research}, pages 640--648, Beijing, China, 22--24 Jun 2014. PMLR.
\newblock URL \url{https://proceedings.mlr.press/v32/songa14.html}.

\bibitem[Bonhomme et~al.(2016)Bonhomme, Jochmans, and Robin]{bonhomme16}
St{\'e}phane Bonhomme, Koen Jochmans, and Jean-Marc Robin.
\newblock {Estimating multivariate latent-structure models}.
\newblock \emph{The Annals of Statistics}, 44\penalty0 (2):\penalty0 540 -- 563, 2016.
\newblock \doi{10.1214/15-AOS1376}.
\newblock URL \url{https://doi.org/10.1214/15-AOS1376}.

\bibitem[Kargas and Sidiropoulos(2019)]{kargas19}
Nikos Kargas and Nicholas~D. Sidiropoulos.
\newblock Learning mixtures of smooth product distributions: Identifiability and algorithm.
\newblock In Kamalika Chaudhuri and Masashi Sugiyama, editors, \emph{Proceedings of the Twenty-Second International Conference on Artificial Intelligence and Statistics}, volume~89 of \emph{Proceedings of Machine Learning Research}, pages 388--396. PMLR, 16--18 Apr 2019.
\newblock URL \url{https://proceedings.mlr.press/v89/kargas19a.html}.

\bibitem[{Tahmasebi} et~al.(2018){Tahmasebi}, {Abolfazl Motahari}, and {Maddah-Ali}]{tahmasebi18}
Behrooz {Tahmasebi}, Seyed {Abolfazl Motahari}, and Mohammad~Ali {Maddah-Ali}.
\newblock {On the Identifiability of Finite Mixtures of Finite Product Measures}.
\newblock \emph{arXiv e-prints}, art. arXiv:1807.05444, July 2018.
\newblock \doi{10.48550/arXiv.1807.05444}.

\bibitem[Stone(1980)]{stone80}
Charles~J. Stone.
\newblock {Optimal Rates of Convergence for Nonparametric Estimators}.
\newblock \emph{The Annals of Statistics}, 8\penalty0 (6):\penalty0 1348 -- 1360, 1980.
\newblock \doi{10.1214/aos/1176345206}.
\newblock URL \url{https://doi.org/10.1214/aos/1176345206}.

\bibitem[Stone(1982)]{stone82}
Charles~J. Stone.
\newblock {Optimal Global Rates of Convergence for Nonparametric Regression}.
\newblock \emph{The Annals of Statistics}, 10\penalty0 (4):\penalty0 1040 -- 1053, 1982.
\newblock \doi{10.1214/aos/1176345969}.
\newblock URL \url{https://doi.org/10.1214/aos/1176345969}.

\bibitem[Tsybakov(2008)]{tsybakov08}
Alexandre~B. Tsybakov.
\newblock \emph{Introduction to Nonparametric Estimation}.
\newblock Springer Publishing Company, Incorporated, 1st edition, 2008.

\bibitem[Vandermeulen(2023)]{vandermeulen23}
Robert~A. Vandermeulen.
\newblock Sample complexity using infinite multiview models, 2023.
\newblock URL \url{https://arxiv.org/abs/2302.04292}.

\bibitem[Rényi(1952)]{renyi52}
A.~Rényi.
\newblock On projections of probability distributions.
\newblock \emph{Acta Mathematica Academiae Scientiarum Hungaricae}, 3\penalty0 (3):\penalty0 131--142, September 1952.
\newblock ISSN 0001-5954, 1588-2632.
\newblock \doi{10.1007/BF02022515}.
\newblock URL \url{http://link.springer.com/10.1007/BF02022515}.

\bibitem[Gilbert(1955)]{gilbert55}
Walter~M. Gilbert.
\newblock Projections of probability distributions.
\newblock \emph{Acta Mathematica Academiae Scientiarum Hungarica}, 6\penalty0 (1):\penalty0 195--198, March 1955.
\newblock ISSN 1588-2632.
\newblock \doi{10.1007/BF02021275}.
\newblock URL \url{https://doi.org/10.1007/BF02021275}.

\bibitem[Cuesta-Albertos et~al.(2007)Cuesta-Albertos, Fraiman, and Ransford]{cuesta-albertos07}
Juan~Antonio Cuesta-Albertos, Ricardo Fraiman, and Thomas Ransford.
\newblock A {Sharp} {Form} of the {Cram\'er--Wold} {Theorem}.
\newblock \emph{Journal of Theoretical Probability}, 20\penalty0 (2):\penalty0 201--209, June 2007.
\newblock ISSN 1572-9230.
\newblock \doi{10.1007/s10959-007-0060-7}.
\newblock URL \url{https://doi.org/10.1007/s10959-007-0060-7}.

\bibitem[Teicher(1963)]{teicher63}
Henry Teicher.
\newblock Identifiability of finite mixtures.
\newblock \emph{Ann. Math. Statist.}, 34\penalty0 (4):\penalty0 1265--1269, 12 1963.
\newblock \doi{10.1214/aoms/1177703862}.
\newblock URL \url{http://dx.doi.org/10.1214/aoms/1177703862}.

\bibitem[Rabani et~al.(2014)Rabani, Schulman, and Swamy]{rabani14}
Yuval Rabani, Leonard~J. Schulman, and Chaitanya Swamy.
\newblock Learning mixtures of arbitrary distributions over large discrete domains.
\newblock In \emph{Proceedings of the 5th Conference on Innovations in Theoretical Computer Science}, ITCS '14, pages 207--224, New York, NY, USA, 2014. ACM.
\newblock ISBN 978-1-4503-2698-8.
\newblock \doi{10.1145/2554797.2554818}.
\newblock URL \url{http://doi.acm.org/10.1145/2554797.2554818}.

\bibitem[Vandermeulen and Scott(2019)]{vandermeulen19}
Robert~A. Vandermeulen and Clayton~D. Scott.
\newblock An operator theoretic approach to nonparametric mixture models.
\newblock \emph{Ann. Statist.}, 47\penalty0 (5):\penalty0 2704--2733, 10 2019.
\newblock \doi{10.1214/18-AOS1762}.
\newblock URL \url{https://doi.org/10.1214/18-AOS1762}.

\bibitem[Ritchie et~al.(2020)Ritchie, Vandermeulen, and Scott]{ritchie21}
Alexander Ritchie, Robert~A Vandermeulen, and Clayton Scott.
\newblock Consistent estimation of identifiable nonparametric mixture models from grouped observations.
\newblock In H.~Larochelle, M.~Ranzato, R.~Hadsell, M.~F. Balcan, and H.~Lin, editors, \emph{Advances in Neural Information Processing Systems}, volume~33, pages 11676--11686. Curran Associates, Inc., 2020.
\newblock URL \url{https://proceedings.neurips.cc/paper/2020/file/866d90e0921ac7b024b47d672445a086-Paper.pdf}.

\bibitem[Wei and Nguyen(2022)]{wei22}
Yun Wei and XuanLong Nguyen.
\newblock {Convergence of {de Finetti's} mixing measure in latent structure models for observed exchangeable sequences}.
\newblock \emph{The Annals of Statistics}, 50\penalty0 (4):\penalty0 1859 -- 1889, 2022.
\newblock \doi{10.1214/21-AOS2120}.
\newblock URL \url{https://doi.org/10.1214/21-AOS2120}.

\bibitem[Vandermeulen and Saitenmacher(2024)]{vandermeulen24grouped}
Robert~A. Vandermeulen and René Saitenmacher.
\newblock Generalized identifiability bounds for mixture models with grouped samples.
\newblock \emph{IEEE Transactions on Information Theory}, 70\penalty0 (4):\penalty0 2746--2758, 2024.
\newblock \doi{10.1109/TIT.2024.3367433}.

\bibitem[Blei et~al.(2003)Blei, Ng, and Jordan]{LDA}
David~M. Blei, Andrew~Y. Ng, and Michael~I. Jordan.
\newblock Latent {Dirichlet} allocation.
\newblock \emph{J. Mach. Learn. Res.}, 3:\penalty0 993--1022, March 2003.
\newblock ISSN 1532-4435.
\newblock URL \url{http://dl.acm.org/citation.cfm?id=944919.944937}.

\bibitem[Do et~al.(2025)Do, Chakraborty, Terhorst, and Nguyen]{do25}
Dat Do, Sunrit Chakraborty, Jonathan Terhorst, and XuanLong Nguyen.
\newblock {Dirichlet} moment tensors and the correspondence between admixture and mixture of product models, 2025.
\newblock URL \url{https://arxiv.org/abs/2509.25441}.

\bibitem[Rauhut et~al.(2017)Rauhut, Schneider, and Željka Stojanac]{rauhut17}
Holger Rauhut, Reinhold Schneider, and Željka Stojanac.
\newblock Low rank tensor recovery via iterative hard thresholding.
\newblock \emph{Linear Algebra and its Applications}, 523:\penalty0 220--262, 2017.
\newblock ISSN 0024-3795.
\newblock \doi{https://doi.org/10.1016/j.laa.2017.02.028}.
\newblock URL \url{https://www.sciencedirect.com/science/article/pii/S0024379517301295}.

\bibitem[{Donoho}(2006)]{donoho06}
D.~L. {Donoho}.
\newblock Compressed sensing.
\newblock \emph{IEEE Transactions on Information Theory}, 52\penalty0 (4):\penalty0 1289--1306, April 2006.
\newblock ISSN 0018-9448.
\newblock \doi{10.1109/TIT.2006.871582}.

\bibitem[Candes and Tao(2008)]{candes08}
Emmanuel~J Candes and Terence Tao.
\newblock Reflections on compressed sensing.
\newblock \emph{IEEE Information Theory Society Newsletter}, 58\penalty0 (4):\penalty0 20--23, 2008.

\bibitem[Baraniuk et~al.(2008)Baraniuk, Davenport, DeVore, and Wakin]{baraniuk08}
Richard Baraniuk, Mark Davenport, Ronald DeVore, and Michael Wakin.
\newblock A simple proof of the restricted isometry property for random matrices.
\newblock \emph{Constructive Approximation}, 28\penalty0 (3):\penalty0 253--263, Dec 2008.
\newblock ISSN 1432-0940.
\newblock \doi{10.1007/s00365-007-9003-x}.
\newblock URL \url{https://doi.org/10.1007/s00365-007-9003-x}.

\bibitem[Gilbert and Indyk(2010)]{gilbert10}
Anna Gilbert and Piotr Indyk.
\newblock Sparse recovery using sparse matrices.
\newblock \emph{Proceedings of the IEEE}, 98\penalty0 (6):\penalty0 937--947, 2010.
\newblock \doi{10.1109/JPROC.2010.2045092}.

\bibitem[Bandeira et~al.(2013)Bandeira, Dobriban, Mixon, and Sawin]{banderia13}
Afonso~S. Bandeira, Edgar Dobriban, Dustin~G. Mixon, and William~F. Sawin.
\newblock Certifying the restricted isometry property is hard.
\newblock \emph{IEEE Transactions on Information Theory}, 59\penalty0 (6):\penalty0 3448--3450, 2013.
\newblock \doi{10.1109/TIT.2013.2248414}.

\bibitem[Liu et~al.(2011)Liu, Xu, Gu, Gupta, Lafferty, and Wasserman]{liu11}
Han Liu, Min Xu, Haijie Gu, Anupam Gupta, John Lafferty, and Larry Wasserman.
\newblock Forest density estimation.
\newblock \emph{J. Mach. Learn. Res.}, 12:\penalty0 907--951, July 2011.
\newblock ISSN 1532-4435.
\newblock URL \url{http://dl.acm.org/citation.cfm?id=1953048.2021032}.

\bibitem[Györfi et~al.(2023)Györfi, Kontorovich, and Weiss]{gyorfi23}
László Györfi, Aryeh Kontorovich, and Roi Weiss.
\newblock Tree density estimation.
\newblock \emph{IEEE Transactions on Information Theory}, 69\penalty0 (2):\penalty0 1168--1176, 2023.
\newblock \doi{10.1109/TIT.2022.3214888}.

\bibitem[Vandermeulen et~al.(2024)Vandermeulen, Tai, and Aragam]{vandermeulen24mrf}
Robert~A. Vandermeulen, Wai~Ming Tai, and Bryon Aragam.
\newblock Breaking the curse of dimensionality in structured density estimation.
\newblock In A.~Globerson, L.~Mackey, D.~Belgrave, A.~Fan, U.~Paquet, J.~Tomczak, and C.~Zhang, editors, \emph{Advances in Neural Information Processing Systems}, volume~37, pages 65852--65883. Curran Associates, Inc., 2024.
\newblock \doi{10.52202/079017-2105}.

\bibitem[Vandermeulen et~al.(2025)Vandermeulen, Tai, and Aragam]{vandermeulen25mrf}
Robert~A. Vandermeulen, Wai~Ming Tai, and Bryon Aragam.
\newblock Dimension-independent rates for structured neural density estimation.
\newblock In Aarti Singh, Maryam Fazel, Daniel Hsu, Simon Lacoste-Julien, Felix Berkenkamp, Tegan Maharaj, Kiri Wagstaff, and Jerry Zhu, editors, \emph{Proceedings of the 42nd International Conference on Machine Learning}, volume 267 of \emph{Proceedings of Machine Learning Research}, pages 60857--60879. PMLR, 13--19 Jul 2025.
\newblock URL \url{https://proceedings.mlr.press/v267/vandermeulen25a.html}.

\bibitem[Fan et~al.(2025)Fan, Gu, and Li]{fan25}
Jianqing Fan, Yihong Gu, and Ximing Li.
\newblock Optimal estimation of a factorizable density using diffusion models with relu neural networks, 2025.
\newblock URL \url{https://arxiv.org/abs/2510.03994}.

\bibitem[Kwon et~al.(2026)Kwon, Kim, Ohn, and Chae]{kwon26}
Hyeok~Kyu Kwon, Dongha Kim, Ilsang Ohn, and Minwoo Chae.
\newblock Nonparametric estimation of a factorizable density using diffusion models.
\newblock \emph{Journal of Machine Learning Research}, 27\penalty0 (22):\penalty0 1--125, 2026.
\newblock URL \url{http://jmlr.org/papers/v27/25-0121.html}.

\bibitem[Villani(2009)]{villani09}
C{\'e}dric Villani.
\newblock \emph{Optimal Transport: Old and New}, volume 338 of \emph{Grundlehren der mathematischen Wissenschaften}.
\newblock Springer-Verlag, Berlin, 2009.
\newblock ISBN 978-3-540-71049-3.

\bibitem[Xi and Niles-Weed(2022)]{xi22}
Jiaqi Xi and Jonathan Niles-Weed.
\newblock Distributional convergence of the sliced {Wasserstein} process.
\newblock In Alice~H. Oh, Alekh Agarwal, Danielle Belgrave, and Kyunghyun Cho, editors, \emph{Advances in Neural Information Processing Systems}, 2022.
\newblock URL \url{https://openreview.net/forum?id=m8YYs8nJF3T}.

\bibitem[Niles-Weed and Berthet(2022)]{weed22}
Jonathan Niles-Weed and Quentin Berthet.
\newblock Minimax estimation of smooth densities in wasserstein distance.
\newblock \emph{The Annals of Statistics}, 50\penalty0 (3):\penalty0 1519--1540, 2022.
\newblock ISSN 00905364, 21688966.
\newblock URL \url{https://www.jstor.org/stable/27172252}.

\bibitem[Deshpande et~al.(2018)Deshpande, Zhang, and Schwing]{deshpande18}
Ishan Deshpande, Ziyu Zhang, and Alexander Schwing.
\newblock Generative modeling using the sliced {Wasserstein} distance.
\newblock In \emph{2018 IEEE/CVF Conference on Computer Vision and Pattern Recognition}, pages 3483--3491, 2018.
\newblock \doi{10.1109/CVPR.2018.00367}.

\bibitem[Wang et~al.(2021)Wang, Gao, and Xie]{wang21twosample}
Jie Wang, Rui Gao, and Yao Xie.
\newblock Two-sample test using projected {Wasserstein} distance.
\newblock In \emph{2021 IEEE International Symposium on Information Theory (ISIT)}, pages 3320--3325. IEEE Press, 2021.
\newblock \doi{10.1109/ISIT45174.2021.9518186}.
\newblock URL \url{https://doi.org/10.1109/ISIT45174.2021.9518186}.

\bibitem[Meunier et~al.(2022)Meunier, Pontil, and Ciliberto]{meunier22}
Dimitri Meunier, Massimiliano Pontil, and Carlo Ciliberto.
\newblock Distribution regression with sliced {W}asserstein kernels.
\newblock In Kamalika Chaudhuri, Stefanie Jegelka, Le~Song, Csaba Szepesvari, Gang Niu, and Sivan Sabato, editors, \emph{Proceedings of the 39th International Conference on Machine Learning}, volume 162 of \emph{Proceedings of Machine Learning Research}, pages 15501--15523. PMLR, 17--23 Jul 2022.
\newblock URL \url{https://proceedings.mlr.press/v162/meunier22b.html}.

\bibitem[Nguyen et~al.(2023)Nguyen, Nguyen, and Ho]{nguyen23selfattention}
Khai Nguyen, Dang Nguyen, and Nhat Ho.
\newblock Self-attention amortized distributional projection optimization for sliced {W}asserstein point-cloud reconstruction.
\newblock In Andreas Krause, Emma Brunskill, Kyunghyun Cho, Barbara Engelhardt, Sivan Sabato, and Jonathan Scarlett, editors, \emph{Proceedings of the 40th International Conference on Machine Learning}, volume 202 of \emph{Proceedings of Machine Learning Research}, pages 26008--26030. PMLR, 23--29 Jul 2023.
\newblock URL \url{https://proceedings.mlr.press/v202/nguyen23e.html}.

\bibitem[Bing et~al.(2024)Bing, Bunea, and Niles-Weed]{bing24estimationinferencewassersteindistance}
Xin Bing, Florentina Bunea, and Jonathan Niles-Weed.
\newblock Estimation and inference for the {Wasserstein} distance between mixing measures in topic models, 2024.
\newblock URL \url{https://arxiv.org/abs/2206.12768}.

\bibitem[Kolouri et~al.(2018)Kolouri, Rohde, and Hoffmann]{kolouri18}
Soheil Kolouri, Gustavo~K. Rohde, and Heiko Hoffmann.
\newblock Sliced {Wasserstein} distance for learning {Gaussian} mixture models.
\newblock In \emph{2018 IEEE/CVF Conference on Computer Vision and Pattern Recognition}, pages 3427--3436, 2018.
\newblock \doi{10.1109/CVPR.2018.00361}.

\bibitem[Fraiman et~al.(2025)Fraiman, Moreno, and Ransford]{fraiman25}
Ricardo Fraiman, Leonardo Moreno, and Thomas Ransford.
\newblock {A {Cram\'er--Wold} theorem for mixtures}.
\newblock \emph{Electronic Communications in Probability}, 30:\penalty0 1 -- 10, 2025.
\newblock \doi{10.1214/25-ECP711}.
\newblock URL \url{https://doi.org/10.1214/25-ECP711}.

\bibitem[Fraiman et~al.(2023)Fraiman, Moreno, and Ransford]{fraiman23}
Ricardo Fraiman, Leonardo Moreno, and Thomas Ransford.
\newblock A {Cram\'er--Wold} theorem for elliptical distributions.
\newblock \emph{J. Multivar. Anal.}, 196\penalty0 (C), July 2023.
\newblock ISSN 0047-259X.
\newblock \doi{10.1016/j.jmva.2023.105176}.
\newblock URL \url{https://doi.org/10.1016/j.jmva.2023.105176}.

\bibitem[Helgason(2010)]{helgason10}
Sigurdur Helgason.
\newblock \emph{Integral Geometry and {Radon} Transforms}.
\newblock Springer, New York, NY, 2010.
\newblock ISBN 978-1-4419-6055-9.
\newblock \doi{10.1007/978-1-4419-6055-9}.

\bibitem[Kak and Slaney(2001)]{kak01}
Avinash~C. Kak and Malcolm Slaney.
\newblock \emph{Principles of Computerized Tomographic Imaging}.
\newblock Society for Industrial and Applied Mathematics, 2001.
\newblock \doi{10.1137/1.9780898719277}.
\newblock URL \url{https://epubs.siam.org/doi/abs/10.1137/1.9780898719277}.

\bibitem[Campisi and Egiazarian(2007)]{campisi07}
Patrizio Campisi and Karen Egiazarian, editors.
\newblock \emph{Blind Image Deconvolution: Theory and Applications}.
\newblock CRC Press, 2007.
\newblock \doi{10.1201/9781420007299}.

\bibitem[Ahmed et~al.(2014)Ahmed, Recht, and Romberg]{ahmed14}
Ali Ahmed, Benjamin Recht, and Justin Romberg.
\newblock Blind deconvolution using convex programming.
\newblock \emph{IEEE Transactions on Information Theory}, 60\penalty0 (3):\penalty0 1711--1732, 2014.
\newblock \doi{10.1109/TIT.2013.2294644}.

\bibitem[Fan(1991)]{fan91}
Jianqing Fan.
\newblock {On the Optimal Rates of Convergence for Nonparametric Deconvolution Problems}.
\newblock \emph{The Annals of Statistics}, 19\penalty0 (3):\penalty0 1257 -- 1272, 1991.
\newblock \doi{10.1214/aos/1176348248}.
\newblock URL \url{https://doi.org/10.1214/aos/1176348248}.

\bibitem[Biau and Devroye(2015)]{biau15}
G{\'e}rard Biau and Luc Devroye.
\newblock \emph{Lectures on the Nearest Neighbor Method}.
\newblock Springer International Publishing, Cham, 2015.
\newblock ISBN 978-3-319-25388-6.
\newblock \doi{10.1007/978-3-319-25388-6}.

\bibitem[Whiteson(2016)]{hepmass}
Daniel Whiteson.
\newblock {HEPMASS}.
\newblock UCI Machine Learning Repository, 2016.

\bibitem[Ghahramani(1996)]{kin}
Zoubin Ghahramani.
\newblock Kin family of datasets.
\newblock {DELVE}, University of Toronto, 1996.
\newblock URL \url{https://www.cs.toronto.edu/~delve/data/kin/desc.html}.

\bibitem[Papamakarios et~al.(2017)Papamakarios, Pavlakou, and Murray]{papamakarios17}
George Papamakarios, Theo Pavlakou, and Iain Murray.
\newblock Masked autoregressive flow for density estimation.
\newblock In I.~Guyon, U.~Von Luxburg, S.~Bengio, H.~Wallach, R.~Fergus, S.~Vishwanathan, and R.~Garnett, editors, \emph{Advances in Neural Information Processing Systems}, volume~30. Curran Associates, Inc., 2017.
\newblock URL \url{https://proceedings.neurips.cc/paper_files/paper/2017/file/6c1da886822c67822bcf3679d04369fa-Paper.pdf}.

\bibitem[Pedregosa et~al.(2011)Pedregosa, Varoquaux, Gramfort, Michel, Thirion, Grisel, Blondel, Prettenhofer, Weiss, Dubourg, Vanderplas, Passos, Cournapeau, Brucher, Perrot, and Duchesnay]{sklearn}
F.~Pedregosa, G.~Varoquaux, A.~Gramfort, V.~Michel, B.~Thirion, O.~Grisel, M.~Blondel, P.~Prettenhofer, R.~Weiss, V.~Dubourg, J.~Vanderplas, A.~Passos, D.~Cournapeau, M.~Brucher, M.~Perrot, and E.~Duchesnay.
\newblock Scikit-learn: Machine learning in {P}ython.
\newblock \emph{Journal of Machine Learning Research}, 12:\penalty0 2825--2830, 2011.

\bibitem[Rozet et~al.(2022)]{zuko}
François Rozet et~al.
\newblock {Zuko}: Normalizing flows in {PyTorch}, 2022.
\newblock URL \url{https://pypi.org/project/zuko}.

\bibitem[Folland(1999)]{folland99}
Gerald~B. Folland.
\newblock \emph{Real analysis: modern techniques and their applications}.
\newblock Pure and applied mathematics. Wiley, 1999.
\newblock ISBN 9780471317166.
\newblock URL \url{http://books.google.com/books?id=uPkYAQAAIAAJ}.

\bibitem[Aliprantis and Border(2006)]{aliprantis06}
Charalambos~D. Aliprantis and Kim~C. Border.
\newblock \emph{Infinite Dimensional Analysis: A Hitchhiker's Guide}.
\newblock Springer, 2006.
\newblock ISBN 978-3-540-29586-0.
\newblock \doi{10.1007/3-540-29587-9}.

\bibitem[DasGupta(2010)]{dasgupta2010}
Anirban DasGupta.
\newblock \emph{Fundamentals of Probability: A First Course}.
\newblock Springer New York, New York, NY, 2010.
\newblock ISBN 978-1-4419-5780-1.
\newblock \doi{10.1007/978-1-4419-5780-1}.
\newblock URL \url{https://doi.org/10.1007/978-1-4419-5780-1}.

\bibitem[Kallenberg(2002)]{fomp}
O.~Kallenberg.
\newblock \emph{Foundations of Modern Probability}.
\newblock Probability and Its Applications. Springer New York, 2002.
\newblock ISBN 9780387953137.
\newblock URL \url{https://books.google.com/books?id=L6fhXh13OyMC}.

\bibitem[Ryan(2002)]{ryan02}
Raymond~A. Ryan.
\newblock \emph{Introduction to Tensor Products of {Banach} Spaces}.
\newblock Springer London, London, 2002.
\newblock ISBN 978-1-4471-3903-4.
\newblock \doi{10.1007/978-1-4471-3903-4}.
\newblock URL \url{https://doi.org/10.1007/978-1-4471-3903-4}.

\bibitem[Szankowski(1981)]{szankowsk81}
Andrzej Szankowski.
\newblock {B(H) does not have the approximation property}.
\newblock \emph{Acta Mathematica}, 147:\penalty0 89 -- 108, 1981.
\newblock \doi{10.1007/BF02392870}.
\newblock URL \url{https://doi.org/10.1007/BF02392870}.

\bibitem[Kelley(1955)]{kelley55}
John~L. Kelley.
\newblock \emph{General topology}.
\newblock D. Van Nostrand Company, Inc., Toronto-New York-London, 1955.

\bibitem[Tu(2010)]{tu10}
L.W. Tu.
\newblock \emph{An Introduction to Manifolds}.
\newblock Universitext. Springer New York, 2010.
\newblock ISBN 9781441973993.
\newblock URL \url{https://books.google.de/books?id=br1KngEACAAJ}.

\end{thebibliography}

    \vfill

    \end{document}